\numberwithin{equation}{section}
\numberwithin{figure}{section}
\theoremstyle{plain}
\newtheorem{thm}{\protect\theoremname}[section]
\theoremstyle{remark}
\newtheorem{rem}[thm]{\protect\remarkname}
\theoremstyle{plain}
\newtheorem{cor}[thm]{\protect\corollaryname}
\theoremstyle{definition}
\newtheorem{problem}[thm]{\protect\problemname}
\theoremstyle{remark}
\newtheorem*{acknowledgement*}{\protect\acknowledgementname}
\theoremstyle{plain}
\newtheorem{lem}[thm]{\protect\lemmaname}
\theoremstyle{definition}
\newtheorem{defn}[thm]{\protect\definitionname}
\theoremstyle{plain}
\newtheorem{prop}[thm]{\protect\propositionname}
\DeclareMathOperator{\Hom}{Hom}
\DeclareMathOperator{\Mat}{Mat}
\providecommand{\acknowledgementname}{Acknowledgement}
\providecommand{\corollaryname}{Corollary}
\providecommand{\definitionname}{Definition}
\providecommand{\lemmaname}{Lemma}
\providecommand{\problemname}{Problem}
\providecommand{\propositionname}{Proposition}
\providecommand{\remarkname}{Remark}
\providecommand{\theoremname}{Theorem}
\begin{document}
\title{Fourier and small ball estimates for word maps on unitary groups}
\author{Nir Avni}
\email{avni.nir@gmail.com}
\address{Department of Mathematics\\
 Northwestern University\\
 2033 Sheridan Road, Evanston, IL 60208\\
 U.S.A.}
\author{Itay Glazer}
\email{itayglazer@gmail.com}
\address{Department of Mathematics\\
 University of Oxford \\
 Andrew Wiles Building, Radcliffe Observatory Quarter (550), Woodstock
Road, Oxford, OX2 6GG\\
 UK}
\author{Michael Larsen}
\email{mjlarsen@indiana.edu}
\address{Department of Mathematics\\
 Indiana University \\
 831 East Third Street, Bloomington, IN 47405\\
 U.S.A.}
\begin{abstract}
To a non-trivial word $w(x_{1},...,x_{r})$ in a free group $F_{r}$
on $r$ elements and a group $G$, one can associate the \emph{word
map }$w_{G}:G^{r}\rightarrow G$ that takes an $r$-tuple $(g_{1},...,g_{r})$
in $G^{r}$ to $w(g_{1},...,g_{r})$. If $G$ is compact, we further
associate the \emph{word measure} $\tau_{w,G}$, defined as the distribution
of $w_{G}(\mathsf{X}_{1},...,\mathsf{X}_{r})$, where $\mathsf{X}_{1},...,\mathsf{X}_{r}$
are independent and Haar-random elements in $G$. In this paper we
study word maps and word measures on the family of special unitary
groups $\left\{ \mathrm{SU}_{n}\right\} _{n\geq2}$.

Our first result is a small ball estimate for $w_{\mathrm{SU}_{n}}$.
We show that for every $w\in F_{r}\smallsetminus\left\{ 1\right\} $
there are $\epsilon(w),\delta(w)>0$ such that if $B\subseteq\mathrm{SU}_{n}$
is a ball of radius at most $\delta(w)\mathrm{diam}(\mathrm{SU}_{n})$
in the Hilbert\textendash Schmidt metric, then $\tau_{w,\mathrm{SU}_{n}}(B)\leq(\mu_{\mathrm{SU}_{n}}(B))^{\epsilon(w)}$,
where $\mu_{\mathrm{SU}_{n}}$ is the Haar probability measure.

Our second main result is about the random walks generated by $\tau_{w,\mathrm{SU}_{n}}$.
We provide exponential upper bounds on the large Fourier coefficients
of $\tau_{w,\mathrm{SU}_{n}}$, and as a consequence we show there
exists $t(w)\in\mathbb{N}$, such that $\tau_{w,\mathrm{SU}_{n}}^{*t}$
has bounded density for every $t\geq t(w)$ and every $n\geq2$, answering
a conjecture by the first two authors. As a key step in the proof,
we establish, for every large irreducible character $\rho$ of $\mathrm{SU}_{n}$,
an exponential upper bound of the form $\left|\rho(g)\right|<\rho(1)^{1-\epsilon}$,
for elements $g$ in $\mathrm{SU}_{n}$ whose eigenvalues are sufficiently
spread out on the unit circle in $\mathbb{C^{\times}}$.
\end{abstract}

\maketitle
\global\long\def\N{\mathbb{N}}%
\global\long\def\R{\mathbb{\mathbb{R}}}%
\global\long\def\Z{\mathbb{Z}}%
\global\long\def\val{\mathbb{\mathrm{val}}}%
\global\long\def\Qp{\mathbb{Q}_{p}}%
\global\long\def\Zp{\mathbb{\mathbb{Z}}_{p}}%
\global\long\def\ac{\mathbb{\mathrm{ac}}}%
\global\long\def\C{\mathbb{\mathbb{C}}}%
\global\long\def\Q{\mathbb{\mathbb{Q}}}%
\global\long\def\supp{\mathbb{\mathrm{supp}}}%
\global\long\def\VF{\mathbb{\mathrm{VF}}}%
\global\long\def\RF{\mathbb{\mathrm{RF}}}%
\global\long\def\VG{\mathbb{\mathrm{VG}}}%
\global\long\def\spec{\operatorname{Spec}}%
\global\long\def\Ldp{\mathbb{\mathcal{L}_{\mathrm{DP}}}}%
\global\long\def\sgn{\mathrm{sgn}}%
\global\long\def\id{\mathrm{Id}}%
\global\long\def\Sym{\mathrm{Sym}}%
\global\long\def\Vol{\mathrm{Vol}}%
\global\long\def\cyc{\mathrm{cyc}}%
\global\long\def\U{\mathrm{U}}%
\global\long\def\SU{\mathrm{SU}}%
\global\long\def\Wg{\mathrm{Wg}}%
\global\long\def\E{\mathbb{E}}%
\global\long\def\Irr{\mathrm{Irr}}%
\global\long\def\P{\mathbb{P}}%
\global\long\def\bh{\mathbf{h}}%
\global\long\def\Span{\operatorname{Span}}%
\global\long\def\pr{\operatorname{pr}}%
\global\long\def\sgn{\operatorname{sgn}}%
\global\long\def\sG{\mathsf{G}}%
\global\long\def\sW{\mathsf{W}}%
\global\long\def\sX{\mathsf{X}}%
\global\long\def\sY{\mathsf{Y}}%
\global\long\def\sZ{\mathsf{Z}}%
\global\long\def\sH{\mathsf{H}}%
\global\long\def\sV{\mathsf{V}}%
\global\long\def\sT{\mathsf{T}}%
\global\long\def\v{\mathsf{v}}%
\global\long\def\d{\mathsf{d}}%
\global\long\def\tr{\operatorname{tr}}%
\global\long\def\lct{\operatorname{lct}}%
\global\long\def\fraku{\mathfrak{u}}%

\raggedbottom

\section{Introduction}

Let $w$ be a non-trivial word in the variables $x_{1},\ldots,x_{r}$,
i.e., a non-identity element of the free group $F_{r}$ generated
by the set $\left\{ x_{i}\right\} $. We assume that $w$ is reduced
and denote its length by $\ell(w)$. Given a group $G$ and elements
$g_{1},\ldots,g_{r}\in G$, denote by $w_{G}(g_{1},\ldots,g_{r})$
the element of $G$ obtained by substituting $x_{i}\leftarrow g_{i}$
in $w$. In this paper, we study random $w$-images in $\SU_{n}$,
i.e., the $\SU_{n}$-valued random variable $w_{\SU_{n}}(\mathsf{G}_{1},\ldots,\mathsf{G}_{r})$,
where $\mathsf{G}_{1},\ldots,\mathsf{G}_{r}$ are independent, Haar-distributed
random elements in $\SU_{n}$.

Denote the distribution of $w_{\SU_{n}}(\mathsf{G}_{1},\ldots,\mathsf{G}_{r})$
by $\tau_{w,\SU_{n}}$. By \cite{Bor83}, the word map $w_{\SU_{n}}:\SU_{n}^{r}\rightarrow\SU_{n}$
is a submersion outside a proper subvariety. It follows that $\tau_{w,\SU_{n}}$
is absolutely continuous with respect to the Haar probability measure
$\mu_{\SU_{n}}$ of $\SU_{n}$. In general, the density\textemdash that
is, the Radon\textendash Nikodym derivative $\frac{\tau_{w,\SU_{n}}}{\mu_{\SU_{n}}}$\textemdash is
unbounded.

Our first result is an upper bound on the probability that $w_{\SU_{n}}(\mathsf{G}_{1},\ldots,\mathsf{G}_{r})$
belongs to a small ball. We equip $\SU_{n}$ with the bi-invariant
Riemannian metric $\gamma$ coming from the inner product $\langle A,B\rangle_{\widetilde{HS}}:=\frac{n}{4\pi^{2}\lfloor n/2\rfloor\lceil n/2\rceil}\tr(AB^{*})$
on the Lie algebra $\mathfrak{su}_{n}$. The normalization factor
is chosen so that the diameter of $\SU_{n}$ is 1\footnote{To see that $\gamma$ has diameter 1, see \cite[Table 4.2, line 1]{Yang};
note that the normalization chosen in this paper is $1/\sqrt{2}$
of the Killing metric, as in \cite[Chapter VII, Section 11, pp. 339--340]{Hel01}.}. We denote the $\gamma$-ball of radius $r$ around $g\in\SU_{n}$
by $B(g,r)$. 
\begin{thm}
\label{thm:small ball estimate}For every non-trivial word $w\in F_{r}$
there exist $\delta(w),\epsilon(w)>0$ such that for every $n$, every
$0<\delta<\delta(w)$, and every $g\in\SU_{n}$, 
\begin{equation}
\mathbb{P}\Big(w_{\SU_{n}}(\mathsf{G}_{1},\ldots,\mathsf{G}_{r})\in B(g,\delta)\Big)\leq\left(\mu_{\SU_{n}}(B(g,\delta))\right)^{\epsilon(w)}.\label{eq:small ball estimate}
\end{equation}
If $n>8\ell(w)$, one can take $\epsilon(w)=\frac{1}{256(\ell(w)+1)}$.
Otherwise, $\epsilon(w)=\frac{1}{2\cdot10^{4}\ell(w)^{3}}$ suffices. 
\end{thm}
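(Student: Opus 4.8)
The plan is to deduce the small-ball estimate from a uniform-in-$n$ bound on an $L^{p}$ norm of the density $f_{n}=d\tau_{w,\SU_{n}}/d\mu_{\SU_{n}}$, which exists by the discussion above. Suppose one can show $\|f_{n}\|_{L^{p(w)}(\mu_{\SU_{n}})}\le C(w)$ with $p(w)>1$ and $C(w)$ depending only on $w$. Then for a ball $B=B(g,\delta)$, H\"older's inequality gives
\[
\mathbb{P}\big(w_{\SU_{n}}(\mathsf{G}_{1},\dots,\mathsf{G}_{r})\in B\big)=\int_{B}f_{n}\,d\mu_{\SU_{n}}\le C(w)\,\mu_{\SU_{n}}(B)^{1-1/p(w)}.
\]
Since $\mu_{\SU_{n}}(B(g,\delta))\to0$ as $\delta\to0$ uniformly in $n$ (and is independent of $g$), one can fix $\delta(w)>0$ small enough that the right-hand side is $\le\mu_{\SU_{n}}(B)^{\epsilon(w)}$ for all $0<\delta<\delta(w)$ and all $n$, with any prescribed $\epsilon(w)<1-1/p(w)$. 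Everything thus reduces to the density bound, and the two constants in the statement correspond to establishing it with $p(w)\approx1+\tfrac{c}{\ell(w)}$ when $n>8\ell(w)$ and $p(w)\approx1+\tfrac{c}{\ell(w)^{3}}$ in general. (Equivalently, and more transparently, one bounds the volume of the tube $w_{\SU_{n}}^{-1}(B(g,\delta))\subseteq\SU_{n}^{r}$ against a small fixed power of $\mu_{\SU_{n}}(B(g,\delta))$.)

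Two harmless reductions dispose of the trivial cases. If some variable $x_{i}$ occurs exactly once in the reduced word, then $w(\mathsf{G}_{1},\dots,\mathsf{G}_{r})=A\,\mathsf{G}_{i}^{\pm1}\,B$ with $A,B$ independent of $\mathsf{G}_{i}$ and hence Haar, so $\tau_{w,\SU_{n}}=\mu_{\SU_{n}}$ has density $1$; we may therefore assume every occurring variable occurs at least twice. As orientation for the hard case, consider $x\mapsto x^{m}$ with $m\ge2$: the pushforward of Haar under this one-variable map is unbounded, but it blows up only along the codimension-$3$ discriminant of $\SU_{n}$ (elements with a repeated eigenvalue), with transverse ramification of order bounded in terms of $m$; so its density lies in $L^{p}$ for some $p>1$ independent of $n$. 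The engine here is the Weyl integration formula with its Vandermonde weight $\prod_{j<k}|e^{i\theta_{j}}-e^{i\theta_{k}}|^{2}$, and this is the local model one expects in general.

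For a general $w$ the problem is to control the singularities of $w_{\SU_{n}}$ — equivalently of the complexified map $w_{\mathrm{SL}_{n}}\colon\mathrm{SL}_{n}^{r}\to\mathrm{SL}_{n}$ — uniformly in $n$: one needs the critical locus (where the differential $dw$ drops rank) to have codimension growing linearly in $n$ once $n>8\ell(w)$, and the ramification of $w$ transverse to its critical image to be of order $O(\ell(w))$; the $L^{p}$ bound then follows, e.g.\ through the relation between $L^{p}$-integrability of pushforwards and log-canonical thresholds. The natural route is an induction on $\ell(w)$ using the Fox-calculus expression for $dw$ as an alternating sum $\sum_{k}\pm\,\mathrm{Ad}(y_{1}\cdots y_{k-1})$ over the letters $y_{k}$ of $w$: a substantial rank drop at a tuple $(g_{1},\dots,g_{r})$ forces many independent ``coincidences'' — shared eigenvectors or commuting relations among the partial products — and since each such coincidence constrains only $O(1)$ of the $n$ eigenvalues and costs a definite power of $\delta$, a rank drop by $k$ forces $\Omega\!\big(\min(k,n)/\ell(w)\big)$ of them; this gives both the codimension and ramification bounds when $n$ is large, and one settles for a cruder estimate (of {\L}ojasiewicz type, losing powers of $\ell(w)$) when $n\le8\ell(w)$.

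The heart of the matter, and the main obstacle, is exactly this last point: bounding, independently of $n$, the singularities of the word map $w_{\mathrm{SL}_{n}}$ — pinning down which degenerations of an $r$-tuple of matrices can make $dw$ drop rank and showing they occur with codimension $\gtrsim n/\ell(w)$. This requires a genuine analysis of the geometry of word maps on $\mathrm{SL}_{n}$. By comparison, the reductions above, the one-variable model, and the final passage from the complex-algebraic singularity estimate back to the analytic tube/$L^{p}$ statement on the compact form $\SU_{n}$ — together with extracting the explicit constants $\tfrac{1}{256(\ell(w)+1)}$ and $\tfrac{1}{2\cdot10^{4}\,\ell(w)^{3}}$ — are comparatively routine.
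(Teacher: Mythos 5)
Your reduction step is fine as far as it goes: if one had $\|d\tau_{w,\SU_{n}}/d\mu_{\SU_{n}}\|_{L^{p(w)}}\leq C(w)$ with $p(w)>1$ and $C(w)$ independent of $n$, then H\"older plus the uniform ball-volume estimates (Bishop--Gromov, as in Corollary \ref{cor:Estimates on volume of metric balls}) would indeed give (\ref{eq:small ball estimate}) for $\delta<\delta(w)$. The genuine gap is that this uniform $L^{p}$ density bound --- which you yourself identify as ``the heart of the matter'' --- is never proved: the Fox-calculus sketch (``a rank drop by $k$ forces $\Omega(\min(k,n)/\ell(w))$ coincidences'') is a heuristic, with no argument that the critical locus of $w_{\mathrm{SL}_{n}}$ has codimension growing like $n/\ell(w)$, and no argument that such a codimension bound, together with a ``ramification of order $O(\ell(w))$,'' would yield a dimension-independent log-canonical threshold or $L^{1+c/\ell(w)}$ bound (lct is not controlled by the codimension of the critical locus alone; one needs uniform control of multiplicities or resolution data, which is precisely the hard part). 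Note also that the target of your reduction is stronger than anything in the paper: the paper never establishes a dimension-independent density bound even for high convolution powers (Remark \ref{rem:remark on main theorem}(3) only gives $2^{Cn^{2}\log n}$), and its lct estimate for the word map (proof of Proposition \ref{prop:reduction to Lq maps}) is $\geq\frac{1}{36\ell(w)n^{2}}$, which is dimension-dependent. So calling the remaining work ``comparatively routine'' misjudges where the difficulty lies.

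For comparison, the paper's proof avoids any uniform density bound. For $n>8\ell(w)$ it argues probabilistically: Theorem \ref{thm:probabilistic result} (proved by the approximate-eigenvector/trajectory argument of \S\ref{sec:Probability-bounds}) shows a random $w$-value fails to be $(\frac{1}{2},\epsilon)$-spread with probability at most $2^{3n^{2}}\epsilon^{n^{2}/(16(\ell+1))}$; an element of $B(I_{n},\delta)$ is automatically poorly spread, which gives the estimate at $g=I_{n}$, and general centers are handled by passing to the doubled word $\widetilde{w}(x,y)=w(x)w(y)^{-1}$, with Corollary \ref{cor:Estimates on volume of metric balls} converting powers of $\delta$ into powers of $\mu_{\SU_{n}}(B)$. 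For $n\leq8\ell(w)$ the paper does use singularity theory, but only for a large convolution power ($w^{*t}$ with $t\geq80\ell(w)n^{2}$, via the (FRS)/jet-scheme criterion), where dimension-dependent constants are harmless; this yields exponent $\frac{1}{2t}$, not a bound on $\tau_{w,\SU_{n}}$ itself. Your proposal, as written, rests entirely on an unproven (and quite possibly very difficult) uniform singularity statement, so it does not constitute a proof.
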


\begin{rem}
Theorem \ref{thm:small ball estimate} also holds if $B(g,\delta)$
is replaced with $\widetilde{B}(g,\delta):=\left\{ h\in\SU_{n}:\left\Vert g-h\right\Vert _{\mathrm{\widetilde{HS}}}\leq\delta\right\} $.
This follows from the inequality $B(g,\delta)\subseteq\widetilde{B}(g,\delta)\subseteq B\left(g,\frac{\pi}{2}\delta\right)$
(see Lemma \ref{lem:compare to hilbert Schmidt norm}) and Corollary
\ref{cor:Estimates on volume of metric balls}.
\end{rem}

In the terminology of local dimension of measure \cite[(10.1)]{Fal97},
Theorem \ref{thm:small ball estimate} immediately implies: 
\begin{cor}
For every $1\neq w\in F_{r}$, the lower local dimension of $\tau_{w,\SU_{n}}$
is at least $\epsilon(w)(n^{2}-1)$. 
\end{cor}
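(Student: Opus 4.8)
The plan is to deduce this directly from Theorem~\ref{thm:small ball estimate}, combined with the standard scaling of the volume of small balls in a compact Riemannian manifold. Recall from \cite[(10.1)]{Fal97} that the lower local dimension of a finite Borel measure $\nu$ on $\SU_n$ at a point $g$ is
\[
\underline{\dim}_{\mathrm{loc}}\nu(g):=\liminf_{\delta\to0}\frac{\log\nu\big(B(g,\delta)\big)}{\log\delta},
\]
so the assertion to be proved is that $\underline{\dim}_{\mathrm{loc}}\tau_{w,\SU_n}(g)\ge\epsilon(w)(n^2-1)$ for every $g\in\SU_n$ (which in particular bounds below the essential infimum over $g$ that \cite[(10.1)]{Fal97} calls the lower local dimension of the measure).

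First I would invoke Corollary~\ref{cor:Estimates on volume of metric balls}: since $\SU_n$ is a compact Riemannian manifold of dimension $n^2-1$ and $\mu_{\SU_n}$ is a smooth probability measure, there are constants $0<c_n\le C_n$ with $c_n\delta^{n^2-1}\le\mu_{\SU_n}(B(g,\delta))\le C_n\delta^{n^2-1}$ for all $g\in\SU_n$ and all sufficiently small $\delta>0$; consequently
\[
\lim_{\delta\to0}\frac{\log\mu_{\SU_n}(B(g,\delta))}{\log\delta}=n^2-1 .
\]
Next, fix $g$ and take $\delta$ with $0<\delta<\min(1,\delta(w))$, so that $\log\delta<0$ and, since $\mu_{\SU_n}(B(g,\delta))\le1$, also $\log\mu_{\SU_n}(B(g,\delta))\le0$. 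Applying \eqref{eq:small ball estimate} and taking logarithms gives $\log\tau_{w,\SU_n}(B(g,\delta))\le\epsilon(w)\log\mu_{\SU_n}(B(g,\delta))$; dividing by the negative number $\log\delta$ reverses the inequality, so
\[
\frac{\log\tau_{w,\SU_n}(B(g,\delta))}{\log\delta}\ge\epsilon(w)\,\frac{\log\mu_{\SU_n}(B(g,\delta))}{\log\delta}.
\]
Passing to the $\liminf$ as $\delta\to0$, pulling the positive constant $\epsilon(w)$ out of the $\liminf$, and using the displayed limit yields $\underline{\dim}_{\mathrm{loc}}\tau_{w,\SU_n}(g)\ge\epsilon(w)(n^2-1)$, and since $g$ was arbitrary the same bound holds for the lower local dimension of $\tau_{w,\SU_n}$.

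There is essentially no obstacle here: the only ingredient beyond Theorem~\ref{thm:small ball estimate} is the two-sided volume estimate for metric balls, which is exactly Corollary~\ref{cor:Estimates on volume of metric balls}, and the only point requiring care is that dividing through by $\log\delta<0$ flips the inequality. The hypothesis $\delta<\delta(w)$ in Theorem~\ref{thm:small ball estimate} is harmless, since the local dimension depends only on the behaviour as $\delta\to0$.
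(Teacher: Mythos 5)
Your proposal is correct and is exactly the argument the paper has in mind: the corollary is stated as an immediate consequence of Theorem \ref{thm:small ball estimate}, and your filling-in uses precisely the intended ingredients, namely the small ball bound $\tau_{w,\SU_{n}}(B(g,\delta))\leq\mu_{\SU_{n}}(B(g,\delta))^{\epsilon(w)}$ together with the two-sided volume estimates of Corollary \ref{cor:Estimates on volume of metric balls}, with the sign of $\log\delta$ handled correctly.
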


Our next result concerns the random walk generated by $\tau_{w,\SU_{n}}$.
In \cite[Proposition 7.2]{AG}, two of us showed that there is $t(w,n)\in\N$
such that, for $t>t(w,n)$, the $t$-th self convolution $\tau_{w,\SU_{n}}^{*t}$
is in $L^{\infty}(\SU_{n})$, and asked \cite[Question 1.7]{AG} whether
$t(w,n)$ can be taken to be \textbf{independent of $n$} and whether
it has a polynomial dependence on the length $\ell(w)$. In this paper,
we show that the answer is positive:
\begin{thm}
\label{thm: A, bounded density}For every non-trivial word $w\in F_{r}$
and every $n$, 
\begin{enumerate}
\item $\tau_{w,\SU_{n}}^{*t}\in L^{\infty}(\SU_{n})$, for all $t>t_{1}(w):=5120\cdot\ell(w)^{3}$. 
\item If $n>8\ell(w)$, then $\tau_{w,\SU_{n}}^{*t}\in L^{\infty}(\SU_{n})$,
for all $t>t_{2}(w):=3\cdot2^{10}\cdot\ell(w)$. 
\end{enumerate}
\end{thm}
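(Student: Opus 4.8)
The plan is to deduce Theorem \ref{thm: A, bounded density} from the Fourier-theoretic input announced in the abstract, namely the exponential bound $|\rho(g)| < \rho(1)^{1-\epsilon}$ for large irreducible characters $\rho$ of $\SU_n$ and sufficiently spread-out elements $g$, combined with Theorem \ref{thm:small ball estimate}. Recall that for a conjugation-invariant measure like $\tau_{w,\SU_n}$ (it is invariant since $w_{\SU_n}$ is equivariant for simultaneous conjugation) the Fourier coefficient at an irreducible representation $\rho$ is the scalar $\widehat{\tau_{w,\SU_n}}(\rho) = \frac{1}{\rho(1)}\int_{\SU_n}\overline{\rho(g)}\,d\tau_{w,\SU_n}(g)$, and Parseval gives $\|\tau_{w,\SU_n}^{*t}\|_{L^2}^2 = \sum_{\rho\in\Irr(\SU_n)}\rho(1)^2\,|\widehat{\tau_{w,\SU_n}}(\rho)|^{2t}$. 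Thus it suffices to show this sum converges, with a bound independent of $n$, once $t$ exceeds the stated threshold; indeed $\tau_{w,\SU_n}^{*t}\in L^2$ for $t>t_i(w)$ forces $\tau_{w,\SU_n}^{*2t}\in L^\infty$ after replacing $t$ by $2t$, but one arranges the constants so that the $L^\infty$ threshold is exactly $t_i(w)$ by running the $L^2$ argument at $t_i(w)/2$ — or, more cleanly, by using $\|\mu*\nu\|_\infty \le \|\mu\|_2\|\nu\|_2$ and splitting the convolution power into two halves.

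The estimation of the Fourier sum I would organize by splitting $\Irr(\SU_n)$ according to the dimension of $\rho$. For the low-dimensional representations — those with $\rho(1)$ below a suitable polynomial-in-$n$ threshold, or more precisely the ones not covered by the "large character" regime — one uses that the number of irreducibles of dimension at most $D$ grows subexponentially in a controlled way (Weyl dimension formula bookkeeping), together with the trivial bound $|\widehat{\tau_{w,\SU_n}}(\rho)| \le 1$ improved to $|\widehat{\tau_{w,\SU_n}}(\rho)|\le \rho(1)^{-\eta}$ for some $\eta>0$; this last improvement is where Theorem \ref{thm:small ball estimate} enters, because a small-ball estimate on $\tau_{w,\SU_n}$ translates (via the fact that matrix coefficients of a $D$-dimensional representation are Lipschitz with constant $\sim D$, so $\rho$ is nearly constant on balls of radius $\sim 1/D$) into a nontrivial cancellation in $\int \overline{\rho}\,d\tau_{w,\SU_n}$. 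For the high-dimensional (large) representations, one invokes the exponential character bound: since $\tau_{w,\SU_n}$-random elements have eigenvalues that are "spread out" with high probability — this itself needs a short argument, e.g. because the word measure of even a single letter is Haar and one letter's worth of randomness already spreads the spectrum, or by a direct small-ball/anticoncentration estimate on the eigenvalue configuration — we get $|\widehat{\tau_{w,\SU_n}}(\rho)| \le \rho(1)^{-\epsilon} + (\text{small error from the bad event})$, and then $\sum_\rho \rho(1)^2 \rho(1)^{-2t\epsilon}$ converges for $t > 1/\epsilon$ uniformly in $n$ by Witten's zeta-function estimate $\sum_{\rho}\rho(1)^{-s}<\infty$ for $s$ bounded below (here using $\SU_n$, $n\ge 2$, so the relevant abscissa of convergence is $\le 2/3$ or similar, in any case bounded).

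Carrying this out, the explicit constants $t_1(w)=5120\,\ell(w)^3$ and $t_2(w)=3\cdot 2^{10}\,\ell(w)$ should come out by bookkeeping: $t_2(w)$ in the range $n>8\ell(w)$ matches the small-ball exponent $\epsilon(w)=\frac{1}{256(\ell(w)+1)}$ from Theorem \ref{thm:small ball estimate} (note $3\cdot2^{10} = 12\cdot 256$, suggesting one needs about $12(\ell(w)+1)$ convolution factors to beat the dimension growth, then a factor of $2$ to pass from $L^2$ to $L^\infty$), while $t_1(w)$ matches $\epsilon(w)=\frac{1}{2\cdot10^4\ell(w)^3}$ in the small-$n$ range (where one cannot use the large-$n$ spectral spreading as efficiently and must rely more heavily on the small-ball estimate across all representations). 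The one genuinely delicate point — the main obstacle — is uniformity in $n$: the naive approach would lose a factor growing with $n$ either in the count of small-dimensional representations or in the Lipschitz-to-cancellation conversion, so the argument must be arranged so that Witten's zeta bound absorbs all the $n$-dependence, which is exactly why the small-ball exponent $\epsilon(w)$ being bounded below independently of $n$ (as guaranteed by Theorem \ref{thm:small ball estimate}) and the character bound exponent $\epsilon$ being $n$-independent are both essential; I would keep careful track of where each is used and verify that no hidden $n$-dependent constant sneaks in through the transition between the low- and high-dimensional regimes. Finally, once $\tau_{w,\SU_n}^{*t}\in L^\infty$ for one $t$ above the threshold, monotonicity ($\|\mu*\nu\|_\infty\le\|\mu\|_\infty\|\nu\|_1=\|\mu\|_\infty$ when $\nu$ is a probability measure) gives it for all larger $t$, completing the proof of both parts.
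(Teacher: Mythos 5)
There are two genuine gaps. First, your treatment of the low-dimensional representations does not work and is also unnecessary. You propose to upgrade the trivial bound to $\left|\widehat{\tau_{w,\SU_{n}}}(\rho)\right|\le\rho(1)^{-\eta}$ for small $\rho$ by combining Theorem \ref{thm:small ball estimate} with a Lipschitz argument ("$\rho$ is nearly constant on balls of radius $\sim1/D$, so small-ball non-concentration forces cancellation in $\int\overline{\rho}\,d\tau$"). Non-concentration on balls does not imply Fourier decay: a conjugation-invariant measure can satisfy a small-ball estimate with an $n$-independent exponent and still have a Fourier coefficient of size $\rho(1)^{1-o(1)}$ at a fixed low-degree $\rho$ (this is precisely why (\ref{goal}) for \emph{all} characters remains a conjecture, and why the paper can only prove it for $\rho(1)\leq2^{Cn^{\alpha}}$ via Weingarten calculus in Theorem \ref{thm C:small Fourier coefficients}, leaving a gap up to $2^{Cn^{2}\log n}$). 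Fortunately the theorem does not require this: only the threshold $t(w)$ must be independent of $n$, while $\|\tau_{w,\SU_{n}}^{*t}\|_{\infty}$ may depend on $n$. So the paper simply bounds the finitely many characters with $\rho(1)\leq2^{512(\ell(w)+1)n^{2}\log n}$ trivially by $\rho(1)$, handles the large characters by Theorem \ref{thm:Detalied Theorem B} (giving $\left|a_{w^{*t},\SU_{n},\rho}\right|<\rho(1)^{-2}$ once $t>3\cdot2^{10}\ell(w)$), and concludes absolute convergence of $\sum_{\rho}\left|a_{w^{*t},\SU_{n},\rho}\right|\rho(1)$ using the Witten zeta bound. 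Your high-degree regime is in the right spirit, but note that the "short argument" you suggest for spectral spreading of $w$-values (one Haar letter spreads the spectrum) is false in general; the actual input is the trajectory/approximate-eigenvector analysis of Theorem \ref{thm:probabilistic result}, or simply a citation of Theorem \ref{Thm B, large Fourier coefficients}. Your guess for where $t_{2}(w)$ comes from is also off: it is forced by $t\cdot2^{-10}/\ell(w)\geq3$ in the high-degree Fourier bound, not by the small-ball exponent $\frac{1}{256(\ell(w)+1)}$.

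Second, and more seriously, your plan has no mechanism for Item (1) in the bounded-rank range $n\leq8\ell(w)$, which is where the constant $t_{1}(w)=5120\,\ell(w)^{3}$ actually comes from. The spreading machinery (Theorem \ref{thm:probabilistic result}, hence the exponential character bounds for random $w$-values) requires $n$ to grow linearly in $\ell(w)$ and is unavailable there, and your fallback of "relying more heavily on the small-ball estimate across all representations" again runs into the fact that small-ball bounds do not yield Fourier decay; worse, in bounded rank the small-ball estimate of Theorem \ref{thm:small ball estimate} is itself deduced from the bounded-density statement, so this direction would be circular. The paper instead proves bounded rank by an entirely different, algebro-geometric route: Theorem \ref{thm:word maps on SLn are (FRS) after enough convolutions} shows $w_{\mathrm{SL}_{n}}^{*t}$ is (FRS) for $t\geq80\ell(w)n^{2}$ (via jet schemes and log-canonical threshold bounds), and Corollary \ref{cor:bounded density for small rank} transfers this to $\tau_{w,\SU_{n}}^{*t}\in L^{\infty}(\SU_{n})$; evaluating $80\ell(w)n^{2}$ at $n\leq8\ell(w)$ gives $t_{1}(w)$. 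Without this (or some substitute valid for $n=O(\ell(w))$), your argument proves only Item (2), not Item (1).
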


\begin{rem}
~\label{rem:remark on main theorem} 
\begin{enumerate}
\item The claim $\tau_{w,\SU_{n}}^{*t}\in L^{\infty}(\SU_{n})$ can be seen
as a weaker form of uniform $L^{\infty}$-mixing for the family of
random walks on the groups $\SU_{n}$ induced by the word measures
$\tau_{w,\SU_{n}}$. 
\item By analyzing the power word $w=x^{\ell}$ in \S\ref{subsec:Optimality-of-the},
we show that $t_{2}(w)$ grows at least linearly with $\ell(w)$.
In particular, in Theorem \ref{thm: A, bounded density}(2) we obtain
optimal dependence on $\ell(w)$, up to an absolute multiplicative
constant. 
\item The proof of Theorem \ref{thm: A, bounded density} also implies that
the density of $\tau_{w,\SU_{n}}^{*t}$ is bounded by $2^{Cn^{2}\log n}$
for some constant $C>0$ depending only on $w$. 
\end{enumerate}
\end{rem}

The proofs of Theorems \ref{thm:small ball estimate} and \ref{thm: A, bounded density}
use a non-concentration result for the spectrum of $w_{\SU_{n}}(\mathsf{G}_{1},\ldots,\mathsf{G}_{r})$
which we now state. For $\epsilon>0$, we say that an element $g\in\SU_{n}$
is \emph{$\left(\frac{1}{2},\epsilon\right)$-spread} if no arc of
diameter $2\epsilon$ in $\U_{1}$ contains more than $\frac{1}{2}n$
of the eigenvalues of $g$. The following is a special case of Theorem
\ref{thm:probabilistic result}: 
\begin{thm}
Let $w\in F_{r}$ be a non-trivial word, let $\epsilon>0$, and let
$n>\max\left\{ 8\ell(w),16\right\} $. Then, 
\[
\P\left(w_{\U_{n}}(\mathsf{G}_{1},\ldots,\mathsf{G}_{r})\text{ is not \ensuremath{\left(\frac{1}{2},\epsilon\right)}-spread }\right)<2^{3n^{2}}\epsilon^{\frac{n^{2}}{16(\ell(w)+1)}}.
\]
\end{thm}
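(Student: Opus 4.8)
The plan is to bound the probability that $w_{\U_n}(\mathsf{G}_1,\ldots,\mathsf{G}_r)$ fails to be $(\tfrac12,\epsilon)$-spread by a union bound over arcs and a moment/character estimate for the number of eigenvalues in a fixed arc. Fix an arc $I\subseteq\U_1$ of diameter $2\epsilon$. Let $N_I(g)$ denote the number of eigenvalues of $g\in\U_n$ lying in $I$. The event that $g$ is not $(\tfrac12,\epsilon)$-spread means that \emph{some} arc of diameter $2\epsilon$ contains more than $n/2$ eigenvalues; by a standard covering argument it suffices to control $\P(N_I(w_{\U_n}(\mathsf{G}_1,\ldots,\mathsf{G}_r))\geq n/4)$ for $I$ ranging over a fixed net of $O(1/\epsilon)$ arcs of diameter $\epsilon$ (any arc of diameter $2\epsilon$ is covered by two consecutive net-arcs, so if it contains $>n/2$ eigenvalues one of the net-arcs contains $>n/4$). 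The number of such arcs contributes only a polynomial-in-$1/\epsilon$ factor, which is harmlessly absorbed.

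Next I would estimate a high moment of $N_I$ against the word measure. The key identity is that $\E_{\tau_{w,\U_n}}\big[\tbinom{N_I}{k}\big]$ is expressible in terms of the expected value under the word measure of a symmetric function in the eigenvalues — concretely, $\binom{N_I}{k}$ is $\sum$ of products of $k$ distinct ``indicator'' functions $\mathbf{1}_I(\lambda_j)$, and smoothing $\mathbf{1}_I$ by a suitable trigonometric polynomial of degree $\lesssim 1/\epsilon$ lets one write this as a short linear combination of the functions $g\mapsto \prod p_{m_i}(g)$ where $p_m(g)=\tr(g^m)$ are power sums with $\sum|m_i|\lesssim k/\epsilon$. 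Now invoke the structure of word measures on $\U_n$: by the representation-theoretic analysis underlying \cite[Proposition 7.2]{AG} (and the classical fact that $w_{\U_n}$ is a submersion off a subvariety, \cite{Bor83}), the word measure $\tau_{w,\U_n}$ has controlled Fourier expansion, and in particular the expectation $\E_{\tau_{w,\U_n}}[\chi(g)]$ of an irreducible character $\chi$ indexed by a signed partition of total size $s$ is bounded by a quantity of the form $n^{-s/\ell(w)}$ times a combinatorial factor, uniformly for $n$ large compared to $\ell(w)$; equivalently the mixed moments $\E[\prod p_{m_i}]$ are small once $\sum|m_i|$ is a small multiple of $n$. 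Choosing $k$ of order $n^2$ and optimizing, the $k$-th moment of $N_I$ is at most $(\text{const})^{n^2}\cdot\epsilon^{ck}$ for $c$ comparable to $1/\ell(w)$; Markov's inequality applied to $\binom{N_I}{k}\geq\binom{n/4}{k}$ then yields $\P(N_I\geq n/4)\leq 2^{O(n^2)}\epsilon^{c'n^2/\ell(w)}$, and summing over the net of arcs gives the stated bound $2^{3n^2}\epsilon^{n^2/(16(\ell(w)+1))}$ after tracking constants (the hypotheses $n>8\ell(w)$ and $n>16$ are exactly what is needed to make the ``$n$ large'' regime of the character bound apply and to absorb the $O(1/\epsilon)$ arc count and the binomial factors into the $2^{3n^2}$).

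The main obstacle is the character/moment estimate: one needs that $\E_{\tau_{w,\U_n}}[p_{m_1}\cdots p_{m_j}]$ decays geometrically in $\sum|m_i|$ at a rate depending only on $\ell(w)$ (not on $n$), valid in the range $\sum|m_i|$ up to a positive fraction of $n^2$. This is where the word-length enters the exponent $\epsilon(w)$, and it presumably rests on the commutator-expansion / Gelfand-pair techniques used to prove the $L^\infty$ statement, pushed to give quantitative decay rather than mere summability; making the constants uniform in $n$ and explicit is the delicate point. Everything else — the covering argument over arcs, the smoothing of $\mathbf{1}_I$, and the Markov-inequality endgame — is routine bookkeeping. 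I would therefore structure the section so that the uniform character estimate is isolated as its own lemma (the ``key step'' advertised in the abstract), and then deduce the non-concentration statement as above.
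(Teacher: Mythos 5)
Your reduction to controlling the eigenvalue-counting function $N_I$ over a net of arcs is fine bookkeeping, but the heart of your argument rests on an estimate that is not available and, in this paper, is essentially the point at issue: you assume that the word measure satisfies $\E_{\tau_{w,\U_n}}\bigl[p_{m_1}\cdots p_{m_j}\bigr]$ decays geometrically in $\sum_i|m_i|$ at a rate depending only on $\ell(w)$, uniformly in $n$, in the range $\sum_i|m_i|$ up to a constant fraction of $n^2$ (and in fact, after smoothing $\mathbf{1}_I$ by a trigonometric polynomial of degree $\sim1/\epsilon$, up to $\sim k/\epsilon$ with $k\sim n^2$ and $\epsilon$ arbitrarily small). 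No such bound is known. The mixed power-sum moments in that range decompose into irreducible characters whose degrees lie exactly in the regime of the uniform Fourier-decay conjecture (\ref{goal}), which the paper does not prove in general; the unconditional moment bound it does have via Weingarten calculus (Lemma \ref{lem:moments.trace}) requires $(8M\ell)^{7/4}\le n$, i.e.\ total degree only of order $n^{4/7}$, and \cite[Proposition 7.2]{AG} gives fixed-$n$ statements with constants depending on $n$, not uniform decay. Worse, the logical order in the paper is the reverse of yours: the non-concentration statement you are asked to prove (Theorem \ref{thm:probabilistic result}) is the key \emph{input} to the character bound for high-degree representations (Theorem \ref{thm:Detalied Theorem B}), so any attempt to quote that Fourier bound, or anything downstream of it, to control your moments would be circular.

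The paper's actual proof avoids Fourier analysis of $\tau_{w,\U_n}$ altogether. It works with $\epsilon$-approximate eigenvectors: if $w(\mathsf{G}_1,\ldots,\mathsf{G}_r)$ is not $(\beta,\epsilon)$-spread, then independently chosen random unit vectors are all $2\epsilon$-approximate eigenvectors with probability at least $4^{-2nm}\epsilon^{2\beta nm}$ (Lemma \ref{Badly separated case}); on the other hand, tracking the trajectory of each vector through the letters of $w$ and using the conditional uniformity of $\mathsf{G}_k(v)$ on spheres orthogonal to previously revealed data (Lemma \ref{lem:cond.independence} together with the projection estimates of Lemma \ref{sphere.lemma}), the probability that $m$ random vectors are all approximate eigenvectors of a random $w$-value is at most $2^{3nm(\ell+1)}\epsilon^{m(2n-2m(\ell+1)-1)}$ (Proposition \ref{Prop: Random faults}). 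Comparing the two bounds and optimizing $m\approx\frac{n(1-\beta)}{2(\ell+1)}$ yields the stated inequality; this is where the hypotheses $n>8\ell(w)$ and $n>16$ enter, not through any character estimate. If you want to salvage your outline, you would need to supply precisely such a direct probabilistic mechanism in place of the assumed moment decay.
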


Theorem \ref{thm: A, bounded density} is proved by bounding the Fourier
coefficients of the measures $\tau_{w,\SU_{n}}$. We denote the set
of irreducible characters of a group $G$ by $\Irr(G)$. In \cite[Conjecture 1.5]{AG},
the first two authors conjectured that for every non-trivial word
$w\in F_{r}$ there exists a constant $\epsilon=\epsilon(w)>0$ such
that for every $n$ and every $\rho\in\Irr(\mathrm{U}_{n})$, 
\begin{equation}
\left|\E(\rho(w_{\U_{n}}(\mathsf{G}_{1},\ldots,\mathsf{G}_{r})))\right|\le\rho(1)^{1-\epsilon}.\label{goal}
\end{equation}

This conjecture implies a stronger version of Theorem \ref{thm: A, bounded density},
namely, uniform bounds on the $L^{\infty}$-norms for the measures
$\tau_{w,\SU_{n}}^{*t(w)}$ as $n$ varies. In \cite[Theorem 1.1]{AG},
the conjecture was proved for the characters of the fundamental representations
of $\U_{n}$. Moreover, an analogous estimate is known to hold for
all sufficiently large finite simple groups, see \cite[Theorem 4]{LST19}.

The following result, which is the heart of this paper, is the major
step in the proof of Theorem \ref{thm: A, bounded density} and gives
evidence to the conjecture that the bounds (\ref{goal}) hold uniformly. 
\begin{thm}[Fourier bounds for high-degree characters]
\label{Thm B, large Fourier coefficients}For every non-trivial word
$w\in F_{r}$ there exist constants $C>0$ and $\epsilon>0$ such
that if $\rho\in\Irr(\U_{n})$ and $\rho(1)\geq2^{Cn^{2}\log n}$,
then (\ref{goal}) holds. 
\end{thm}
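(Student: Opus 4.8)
The plan is to reduce the Fourier estimate for a general word $w$ to the non-concentration of the spectrum of $w_{\U_n}(\mathsf{G}_1,\ldots,\mathsf{G}_r)$ via a character-theoretic bound that decays in the degree whenever the argument is spectrally spread out. Concretely, I would proceed in three stages. First, establish a \emph{pointwise} character bound: there are absolute constants $c_0,c_1>0$ such that if $\rho\in\Irr(\U_n)$ and $g\in\U_n$ is $\left(\frac12,\epsilon\right)$-spread, then $|\rho(g)|\le\rho(1)^{1-c_0}$ provided $\rho(1)\ge 2^{c_1 n^2\log n}$ (this is exactly the ``heart of the paper'' statement promised in the abstract). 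Second, combine this with the probabilistic spread estimate quoted above: the set of ``bad'' $g$'s (not $\left(\tfrac12,\epsilon\right)$-spread) has $\tau_{w,\U_n}$-mass at most $2^{3n^2}\epsilon^{n^2/(16(\ell(w)+1))}$. Third, balance the two contributions by choosing $\epsilon=\epsilon(w)$ small enough that even the trivial bound $|\rho(g)|\le\rho(1)$ on the bad set is swamped, since on the bad set we can afford to lose a factor $\rho(1)\le 2^{O(n^2\log n)}$ but gain $\epsilon^{\Theta(n^2)}$.

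For the first stage I would use the combinatorial parametrization of $\Irr(\U_n)$ by pairs of partitions (or, passing through $\mathrm{GL}_n$, by dominant weights / rational Schur functors), and express $\rho(g)$ as a ratio of determinants in the eigenvalues $z_1,\ldots,z_n$ of $g$ — a Weyl character formula / bialternant expression. The degree $\rho(1)$ being at least $2^{Cn^2\log n}$ forces the highest weight $\lambda$ to be ``large'' in the sense that its rows/columns grow with $n$, so one is genuinely in the regime where $\log\rho(1)=\Theta(n^2\log n)$ and a power saving $\rho(1)^{1-\epsilon}$ is a nontrivial multiplicative gain of size $2^{\epsilon c n^2\log n}$. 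The mechanism for the saving is that when the $z_i$ are spread out on the circle, the Schur polynomial $s_\lambda(z_1,\ldots,z_n)$ exhibits cancellation: one can bound $|s_\lambda(z)|$ by a product over the cells of $\lambda$ of local averages of $z$-powers, and spreadness makes each such average strictly less than $1$ in modulus by a fixed amount. An alternative, cleaner route is to use the Murnaghan–Nakayama type expansion together with the fact that power sums $p_k(z)=\sum z_i^k$ are uniformly bounded away from $n$ once $g$ is spread (many eigenvalues in each arc means $|p_k(z)|\le(1-c)n$ for the relevant $k$), and then feed this into a Gelfand–Tsetlin / crystal counting bound for $\rho(1)$. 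Either way the output is: the ratio $|\rho(g)|/\rho(1)$ is at most $(1-c)^{(\text{number of cells of }\lambda)}\le (1-c)^{\Omega(n\cdot\log\rho(1)/(n\log n))}$, which is $\le\rho(1)^{-\epsilon}$ once $\rho(1)\ge 2^{Cn^2\log n}$.

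Having the pointwise bound in hand, stage two and three are a clean two-case split. Write
\[
\left|\E\big(\rho(w_{\U_n}(\mathsf{G}_1,\ldots,\mathsf{G}_r))\big)\right|\le \int_{g\text{ spread}}|\rho(g)|\,d\tau_{w,\U_n}(g)+\int_{g\text{ not spread}}|\rho(g)|\,d\tau_{w,\U_n}(g).
\]
On the first region each integrand is at most $\rho(1)^{1-c_0}$, contributing at most $\rho(1)^{1-c_0}$. On the second region bound $|\rho(g)|\le\rho(1)$ and the measure by $2^{3n^2}\epsilon^{n^2/(16(\ell(w)+1))}$; since $\rho(1)\le 2^{O(n^2\log n)}$ this whole term is at most $2^{O(n^2\log n)}\epsilon^{\Omega(n^2/\ell(w))}$, which for $\epsilon=\epsilon(w)$ small enough (depending only on $w$) is $\le \rho(1)^{1-c_0/2}$ say. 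Adding the two gives (\ref{goal}) with $\epsilon:=c_0/2$ and $C$ the absolute constant from stage one, possibly enlarged to absorb the small-$n$ cases (handled by the hypothesis $\rho(1)\ge 2^{Cn^2\log n}$, which for bounded $n$ already forces $\rho$ into a range where ad hoc bounds apply).

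The main obstacle is unquestionably stage one: converting ``eigenvalues of $g$ are spread on the circle'' into a genuine multiplicative character bound $|\rho(g)|<\rho(1)^{1-\epsilon}$ \emph{uniformly over all} high-degree $\rho$, with the correct threshold $2^{Cn^2\log n}$ on the degree. The difficulty is that $\Irr(\U_n)$ is vast and its characters behave very differently (e.g.\ near-rectangular versus near-staircase highest weights), so one needs a bound on $|s_\lambda(z)|/s_\lambda(1,\ldots,1)$ that is insensitive to the shape of $\lambda$ and only exploits spreadness of $z$; controlling the determinant ratio in the Weyl character formula uniformly, or equivalently extracting cancellation from the Gelfand–Tsetlin sum, is the technical core and is where I expect the bulk of the work — including carefully tracking why the degree lower bound $2^{Cn^2\log n}$ is exactly what is needed to make the per-cell loss add up to a power saving.
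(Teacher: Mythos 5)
Your overall architecture (a pointwise character bound for spread elements, the non-concentration estimate for the spectrum of $w(\mathsf{G}_1,\ldots,\mathsf{G}_r)$, and a two-case split) is the same as the paper's, but the balancing step as you set it up does not work, and the reason is a false claim: you write ``$\rho(1)\le 2^{O(n^2\log n)}$''. The hypothesis of the theorem is only a \emph{lower} bound on $\rho(1)$; for fixed $n$ the degrees of irreducible characters of $\U_n$ are unbounded (e.g.\ large symmetric powers), so $\rho(1)$ can vastly exceed $2^{Cn^2\log n}$. With a fixed $\epsilon=\epsilon(w)$, the bad-set contribution is $\rho(1)\cdot 2^{3n^2}\epsilon^{\Theta(n^2/\ell(w))}=\rho(1)\cdot 2^{3n^2-c(\epsilon)n^2}$, which is \emph{not} $\le\rho(1)^{1-\delta}$ once $\rho(1)\gg 2^{n^2\log n}$: a gain of size $2^{\Theta(n^2)}$ cannot produce a power saving in an unbounded quantity $\rho(1)$. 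The paper avoids this by letting $\epsilon$ depend on $\rho(1)$, namely $\epsilon=\rho(1)^{-3/(32n^2)}$, so that Theorem \ref{thm:probabilistic result} turns the exceptional probability into a negative \emph{power of $\rho(1)$}, $2^{3n^2}\rho(1)^{-3/(512(\ell(w)+1))}$, which after multiplying by the trivial bound $\rho(1)$ still yields $\rho(1)^{1-\Theta(1/\ell(w))}$ (the hypothesis $\rho(1)\ge 2^{Cn^2\log n}$ is exactly what absorbs the $2^{3n^2}$ factor).

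This forces a corresponding change in your stage one, which you only sketch and flag as the main obstacle: the character bound you need is not ``$|\rho(g)|\le\rho(1)^{1-c_0}$ for $(\tfrac12,\epsilon)$-spread $g$ with $\epsilon$ a constant'', but the same conclusion with $\epsilon$ shrinking like $\rho(1)^{-c/n^2}$ (Corollary \ref{cor:character estimates for spread elements}). Your heuristics for it are also off in places: the degree hypothesis does \emph{not} force $\log\rho(1)=\Theta(n^2\log n)$ (again, no upper bound), and a per-cell cancellation estimate for $s_\lambda$ or a Murnaghan--Nakayama/power-sum argument uniform over all shapes $\lambda$ is not supplied. The paper's actual mechanism is different and is the real content: pass from $(\beta,\epsilon)$-spread to $(\gamma,\epsilon/n)$-separated (Lemma \ref{Lem: reduction from spread to separated}), restrict to $H=\U_m\times\U_{n-m}$ along the separating partition of the spectrum, use the Weyl character formula so that the roots of $G$ outside $H$ cost only $\epsilon^{-m(n-m)}$ (Proposition \ref{Chi of t bound}), and then win because every $H$-constituent appearing has dimension at most roughly $\rho(1)^{\frac{n^2-2mn+2m^2}{n^2-mn+m^2}}$ with exponent strictly below $1$ (Proposition \ref{Dim ineq}, via the elementary Lemmas \ref{Metric} and \ref{Products}); choosing $\epsilon=\rho(1)^{-\gamma(1-\gamma)/n^2}$ the loss $\epsilon^{-n^2/4}$ eats only half of that saving. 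Finally, note that the reduction to large $n$ is not automatic from the degree hypothesis as you suggest; the paper invokes \cite[Proposition 7.2(1)]{AG} to dispose of $n\le 8\ell(w)$.
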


Additional evidence is given by the following theorems: 
\begin{thm}[Fourier bounds for low-degree characters]
\label{thm C:small Fourier coefficients}For every non-trivial $w\in F_{r}$
there exist constants $C,\alpha,\epsilon>0$ such that if $\rho\in\Irr(\U_{n})$
and $\rho(1)\leq2^{Cn^{\alpha}}$, then (\ref{goal}) holds. 
\end{thm}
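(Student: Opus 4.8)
The plan is to exploit the fact that low-degree irreducible characters of $\U_n$ are ``close'' to characters of small tensor powers of the standard representation, so that bounding $\E(\rho(w_{\U_n}(\mathsf{G}_1,\ldots,\mathsf{G}_r)))$ reduces to an estimate one can extract from the fundamental-representation case already proved in \cite[Theorem 1.1]{AG}. Concretely, I would first invoke the classification of low-dimensional irreducible representations of $\U_n$ (equivalently, polynomial-plus-dual combinatorics of partitions): every $\rho \in \Irr(\U_n)$ with $\rho(1) \le 2^{Cn^\alpha}$ is parametrized by a pair of partitions $(\lambda,\nu)$ each of size $O(n^\alpha)$ (with a small power of $\det$ twist), since the dimension of the irreducible with highest weight built from $(\lambda,\nu)$ grows like $n^{|\lambda|+|\nu|}$ up to lower-order corrections. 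Thus for $\alpha < 1$ and $n$ large the total number of boxes $k := |\lambda|+|\nu|$ is $o(n)$, and in particular $k$ is bounded in terms of $\log \rho(1)$ and $n$.

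The second step is to express such a $\rho$ inside the $(k,k)$-mixed tensor space $V^{\otimes a}\otimes (V^*)^{\otimes b}$ with $a+b = k$, where $V = \C^n$ is the standard representation. By Schur--Weyl duality for $\U_n$ (or the walled Brauer algebra), the character of this mixed tensor space is a fixed universal polynomial in the power sums $p_j(g) = \tr(g^j)$ and $\overline{p_j(g)} = \tr(g^{-j})$ for $1 \le j \le k$, and $\chi_\rho$ appears in its decomposition with bounded multiplicity. Hence it suffices to bound $\E\big(\prod_j p_{j}(w_{\U_n}(\mathsf{G}))^{m_j}\,\overline{p_j(w_{\U_n}(\mathsf{G}))}^{\,m_j'}\big)$ for all multi-indices with $\sum j(m_j+m_j') \le 2k$; these are exactly moments of traces of powers of the word element. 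Now $\tr(w_{\U_n}(\mathsf{G})^j) = \tr(w^{(j)}_{\U_n}(\mathsf{G}))$ where $w^{(j)}$ is the word $w$ raised to the $j$-th power (still non-trivial, of length $j\ell(w)$), so each such expectation is a Fourier coefficient of $\tau_{w^{(j)},\U_n}$ against the standard character. The bound $|\E(\tr(w'_{\U_n}(\mathsf{G})))| \le C' n^{1-\epsilon'}$ for the standard representation, with $\epsilon'$ depending only on the word, is precisely what \cite[Theorem 1.1]{AG} supplies (applied to $w' = w^{(j)}$, $1 \le j \le k$, giving a uniform $\epsilon'$ once $k$ is controlled, since there are only finitely many relevant $j$ for each target degree). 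Multiplying these together over the $O(k)$ factors and comparing with $\rho(1) \ge c\, n^{k}$ (again up to lower-order corrections) yields $|\E(\rho(w_{\U_n}(\mathsf{G})))| \le (\text{const})\cdot n^{k-\epsilon'} \le \rho(1)^{1-\epsilon}$ for a suitable $\epsilon = \epsilon(w) > 0$ and all large $n$; the finitely many small $n$ are handled trivially since $\rho(1)$ is bounded there.

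The main obstacle I anticipate is making the passage from ``$\rho$ low-dimensional'' to ``$\rho$ sits in a mixed tensor space of bounded depth $k$ with $n^k \asymp \rho(1)$'' fully uniform and quantitative: one must (i) pin down the dimension formula $\dim \rho = n^{k}/\big(\prod_{\text{hooks}}\big) \cdot (1 + O(1/n))$ precisely enough to convert the hypothesis $\rho(1) \le 2^{Cn^\alpha}$ into the bound $k \le C' n^\alpha$ and simultaneously to get the matching lower bound $\rho(1) \gg n^{k-o(k)}$ needed for the final comparison, and (ii) control the (possibly growing with $n$) multiplicities and cross-terms when decomposing $V^{\otimes a}\otimes (V^*)^{\otimes b}$, ensuring the combinatorial coefficients in expressing $\chi_\rho$ via power sums do not grow faster than a fixed power of $\rho(1)$. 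Both are classical (Weyl dimension formula, stable representation theory of $\mathrm{GL}_n$ / Koike's mixed-tensor description), but assembling them with explicit constants $C,\alpha,\epsilon$ valid for \emph{all} $n$ simultaneously is where the bookkeeping lives. A secondary technical point is handling the $\det^m$-twists: since $\det(w_{\U_n}(\mathsf{G}))$ is itself a word-type random variable (the abelianization of $w$ evaluated on $\det \mathsf{G}_i$), its $m$-th moments are either trivially bounded or force the relevant coefficient to vanish by equidistribution of the determinant, so this does not affect the exponent.
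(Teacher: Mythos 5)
Your first step\textemdash parametrizing a low-degree $\rho$ by a pair of small partitions and placing it inside a mixed tensor space of depth $k=O(n^{\alpha})$\textemdash parallels what the paper does (it splits $\lambda$ into $\lambda_{\pm}$, reduces to partitions $\mu,\nu$ of small size via Cauchy\textendash Schwarz, and uses $\rho_{\mu,n}\hookrightarrow(\C^{n})^{\otimes m}$). The gap is in the second half. After expanding the mixed-tensor character in power sums you must bound joint moments of the form $\E\bigl(\prod_{j}\tr(w(\mathsf{G})^{j})^{m_{j}}\,\overline{\tr(w(\mathsf{G})^{j})}^{\,m_{j}'}\bigr)$, and these do \emph{not} factor into products of single expectations: all factors are functions of the same random element $w(\mathsf{G}_{1},\ldots,\mathsf{G}_{r})$, so ``multiplying these together over the $O(k)$ factors'' is not a valid step, and \cite[Theorem 1.1]{AG}\textemdash which bounds only first moments $\E(\tr(w'(\mathsf{G})))$ of single fundamental characters\textemdash gives no control on such joint moments. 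The whole difficulty of the low-degree regime sits exactly here: one needs a bound on high moments $\E\left|\tr w(\mathsf{G}_{1},\ldots,\mathsf{G}_{r})\right|^{2m}$ with $m$ growing polynomially in $n$, and the paper supplies this by Weingarten calculus (Theorem \ref{thm:Weingarten}, Lemma \ref{lem:Weingarten.bounds}, and Lemma \ref{lem:moments.trace}, which gives $\E\left|\tr w\right|^{2M}\le2^{\ell}(M\ell)!^{2}$ whenever $(8M\ell)^{7/4}\le n$)\textemdash an ingredient entirely absent from your argument.

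Even granting the factorization, the bookkeeping does not close. With $\rho(1)\asymp n^{k}$ and $k$ as large as $n^{\alpha}$, a saving of a single factor $n^{-\epsilon'}$ yields only $\rho(1)^{1-\epsilon'/k}$, whose exponent tends to $1$ as $n\to\infty$; a uniform $\epsilon>0$ requires a saving that grows like a power of $\rho(1)$, i.e.\ exponentially in $k$ (the paper gets $\E(|\rho_{\mu',n}(w(\mathsf{G}))|^{2})\le2^{\ell}(m\ell)!^{2}<\rho_{\mu',n}(1)^{1/2}$ precisely because $(m\ell)!^{2}$ is negligible against $(n/m)^{m/2}$ in the admissible range of $m$). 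Two further problems: the exponent furnished by \cite[Theorem 1.1]{AG} for the word $w^{j}$ degrades with its length $j\ell(w)$, and $j$ ranges up to $k$, which grows with $n$, so there is no ``uniform $\epsilon'$ over finitely many $j$''; and passing from the containment of $\rho$ in the tensor space to a bound on $\E(\rho(w(\mathsf{G})))$ needs either non-negativity of the Fourier coefficients of $\tau_{w,\U_{n}}$ (the paper arranges this by replacing $w$ with $w*w^{-1}$ and then applying Cauchy\textendash Schwarz) or explicit control of the signed power-sum coefficients; neither is set up in your sketch.
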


\begin{thm}[Fourier bounds for power words]
\label{thm D:Fourier coefficients of power words}For every non-trivial
word $w=x^{\ell}$ in one variable there exists $\epsilon>0$ such
that for every $\rho\in\Irr(\U_{n})$, (\ref{goal}) holds. 
\end{thm}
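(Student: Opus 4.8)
The plan is to exploit the very special structure of the power word $w=x^\ell$, for which $\E(\rho(w_{\U_n}(\sX)))$ is a ``power-sum''-type spectral quantity that can be computed directly from the branching/fusion rules of $\U_n$ rather than from any estimate on word maps. Concretely, for $w=x^\ell$ and $\sX$ Haar-random in $\U_n$, the quantity $\E(\rho(\sX^\ell))$ equals $\langle \chi_\rho \circ (\text{$\ell$-th power map}), 1\rangle$, which by Weyl integration is an integral over the maximal torus of $\chi_\rho(t^\ell)$ against the Weyl measure. Writing $\rho=\rho_\lambda$ for a dominant weight (highest weight) $\lambda$, the character $\chi_\lambda(t^\ell)$ is the Schur-type function $s_\lambda$ evaluated at $\ell$-th powers of the eigenvalues, and the orthogonality of characters lets us express $\E(\rho_\lambda(\sX^\ell))$ combinatorially: it is the number of ways (with signs) that $\lambda$ can be ``assembled'' from $\ell$ copies of itself under the Littlewood--Richardson / plethysm rules, or equivalently a bounded sum of structure constants. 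The upshot I expect is an exact identity showing $\E(\rho_\lambda(\sX^\ell))$ is either $0$ or has absolute value bounded by a quantity that counts a restricted set of combinatorial configurations, uniformly in $n$.

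First I would reduce to $\SU_n$ or work with $\U_n$ directly and set up the Weyl integration formula, obtaining
\[
\E\big(\rho_\lambda(\sX^\ell)\big)=\int_{\U_n}\chi_\lambda(g^\ell)\,d\mu_{\U_n}(g).
\]
Next I would diagonalize: using that the $\ell$-th power map pushes the eigenvalue-angle distribution on the torus forward in an explicit way, and using the Schur function identity $s_\lambda(x_1^\ell,\dots,x_n^\ell)=\sum_\mu c^{\lambda}_{\mu,\ell}\, s_\mu(x_1,\dots,x_n)$ where the $c^\lambda_{\mu,\ell}$ are the (signed) coefficients in the expansion of a plethysm / of the composition with power sums, I would extract that $\E(\rho_\lambda(\sX^\ell))=c^\lambda_{(0),\ell}$ — the coefficient of the trivial character. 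Then the main estimate becomes bounding a single plethysm coefficient. Here I would invoke the standard fact that $p_\ell \circ s_\lambda$ (or the relevant substitution) has coefficients, in the Schur basis, that are bounded by the number of standard-type tableaux of a fixed shape determined by $\ell$ and by the ``$\ell$-core/$\ell$-quotient'' combinatorics of $\lambda$; the key classical input is that $s_\lambda(x^\ell)$ expanded in Schur functions has coefficients $\pm 1$ or $0$ governed by the $\ell$-quotient of $\lambda$ (a theorem of Littlewood on plethysm by power sums). Consequently $\E(\rho_\lambda(\sX^\ell))\in\{-1,0,1\}$ exactly, or is bounded by a constant depending only on $\ell$.

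Given such an identity, the conclusion $|\E(\rho_\lambda(\sX^\ell))|\le \rho_\lambda(1)^{1-\epsilon}$ is then automatic except for a finite list of ``small'' representations where $\rho_\lambda(1)$ is bounded independently of $n$; for those I would note that $\rho_\lambda(1)\ge 2$ forces $\rho_\lambda(1)^{1-\epsilon}\ge 2^{1-\epsilon}>1\ge |\E(\rho_\lambda(\sX^\ell))|$ for any $\epsilon<1$, and handle $\rho_\lambda=1$ trivially (where both sides are $1$). So one may take $\epsilon=\epsilon(\ell)$ any value in $(0,1)$, e.g.\ $\epsilon=1/2$, once the absolute bound $|\E(\rho_\lambda(\sX^\ell))|\le 1$ is in hand. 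The main obstacle I anticipate is making the plethysm identity genuinely uniform in $n$: for fixed $\ell$ the expansion of $s_\lambda(x^\ell)$ in Schur functions is stable, but one must be careful that the dominant weight $\lambda$ of $\U_n$ can have parts going to zero and negative parts (rational representations of $\U_n$ involving $\det^{-1}$), so the clean ``$\pm1$ via $\ell$-quotient'' statement must be extended to this setting; I expect this to be routine but it is the step that needs care. A secondary, easier obstacle is the passage between $\U_n$ and $\SU_n$ and checking that the normalization of the word measure matches $\int\chi_\lambda(g^\ell)\,d\mu$.
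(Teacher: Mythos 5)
Your reduction of $\E(\rho_\lambda(\sX^\ell))$ to an exact representation-theoretic multiplicity is sound and is in the same spirit as the paper, which uses Rains' theorem to get $\E(\rho(\sX^\ell))=\dim\rho^{H_{n,\ell}}$ with $H_{n,\ell}=\U_{\lfloor n/\ell\rfloor+1}^{j}\times\U_{\lfloor n/\ell\rfloor}^{\ell-j}$ (Proposition \ref{prop:reduction to multiplicities}). But the step you lean on afterwards is false: the Schur expansion of $s_\lambda(x_1^{\ell},x_2^{\ell},\ldots)=s_\lambda[p_\ell]$ does \emph{not} have coefficients in $\{0,\pm1\}$. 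You are conflating the Murnaghan--Nakayama/Pieri rule for the \emph{product} $p_\ell\cdot s_\lambda$ (which does have $0,\pm1$ coefficients) with the \emph{plethysm} $s_\lambda[p_\ell]$; for the latter, the coefficient of $s_\mu$ is, up to sign, the Littlewood--Richardson-type multiplicity $\langle s_\lambda,\,s_{\mu^{(0)}}\cdots s_{\mu^{(\ell-1)}}\rangle$ attached to the $\ell$-quotient of $\mu$, and these are unbounded. A concrete counterexample to your bound: for $\ell=n=3$ and $\lambda=(1,0,-1)$ (the $8$-dimensional representation), $s_{(2,1)}[p_3]=\tfrac13(p_3^3-p_9)$ and the coefficient of $s_{(3,3,3)}$ is $\tfrac13\chi^{(3,3,3)}(3,3,3)=2$; equivalently, since $H_{3,3}$ is the maximal torus, $\E(\rho(\sX^3))$ is the zero-weight multiplicity, which equals $2$ here and grows without bound along $\lambda=(k,0,\ldots,0,-k)$. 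More generally, for $\ell\geq n$ and generic large $\lambda$ the zero-weight multiplicity is of order $\rho(1)^{1-2/n}$, so neither the bound $|\E(\rho(\sX^\ell))|\leq C(\ell)$ nor the choice $\epsilon=1/2$ can hold once $\ell$ is large; this is also forced by the paper's optimality result (Proposition \ref{prop:Theorem A is optimal}) and the remark that the admissible exponent is essentially $\frac{1}{\ell-1}$.

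Because of this, the actual analytic core of the theorem is missing from your proposal: one must \emph{prove} a bound of the form $\dim\rho^{H_{n,\ell}}\leq\rho(1)^{1-\frac{1}{\ell-1}+\delta}$. The paper does this by induction on $\ell$ (Proposition \ref{prop:Fourier coefficient of power words}): it restricts $\rho$ to $\U_{n_L}\times\U_{n_R}$ with $n_L\approx\frac{n}{\ell}\lfloor\ell/2\rfloor$, uses that $\mathrm{GL}_{n-k}(\C)\times\mathrm{GL}_k(\C)$ is spherical in $\mathrm{GL}_n(\C)$ to bound branching multiplicities by the degree of the target (Lemma \ref{lem:multiplicities in spherical pairs}) and to bound the number of constituents by $\rho(1)^{\delta}$ (Proposition \ref{prop:bounds}), and then combines Frobenius reciprocity with Jensen's inequality; the base case $\ell=2$ is the Gelfand property of $(\U_n,\U_{\lceil n/2\rceil}\times\U_{\lfloor n/2\rfloor})$. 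Your secondary concerns (rational versus polynomial representations, $\U_n$ versus $\SU_n$) are indeed routine, but they are not where the difficulty lies.
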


\begin{rem}
It follows from the proof of Theorem \ref{thm D:Fourier coefficients of power words},
that $\epsilon$ can be taken to be $\frac{1}{\ell-1}-\delta$, for
$\delta$ arbitrarily small, if we further assume that $n\gg_{\delta,\ell}1$. 
\end{rem}

\begin{thm}[Fourier bounds for symmetric powers]
\label{thm:main thm symmetric powers}For every non-trivial $w\in F_{r}$
there exists $\epsilon>0$ such that for every $n\geq2$ and $m\geq1$,
\[
\mathbb{E}\left(\Big|\tr\left(\Sym^{m}\left(w(\mathsf{G}_{1},\ldots,\mathsf{G}_{r})\right)\right)\Big|^{2}\right)\leq\binom{n+m-1}{m}^{2(1-\epsilon)},
\]
where $\Sym^{m}(g)$ denotes the symmetric power of $g$. 
\end{thm}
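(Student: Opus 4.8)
The plan is to reduce the statement about $\Sym^m$ to the Fourier bounds already established for high- and low-degree characters, together with an elementary argument in the intermediate range. The key observation is that $\Sym^m$ of the standard representation of $\U_n$ is an irreducible representation $\rho_{m}$ of $\U_n$ of dimension $\binom{n+m-1}{m}$, so the left-hand side is exactly $\E\big(|\rho_m(w_{\U_n}(\mathsf{G}_1,\ldots,\mathsf{G}_r))|^2\big)$. Since the complex conjugate $\overline{\rho_m}$ is again an irreducible representation of $\U_n$ (of the same dimension), we have $|\rho_m(g)|^2 = (\rho_m \otimes \overline{\rho_m})(g)$, and by the Cauchy--Schwarz inequality applied to the $L^2$-pairing on word measures (or, equivalently, by noting $\E(|\rho_m(w(\mathsf{G}))|^2) = \E((\rho_m \otimes\overline{\rho_m})(w(\mathsf{G})))$ and that $\rho_m\otimes\overline{\rho_m}$ decomposes into irreducibles of $\U_n$ of dimension at most $\rho_m(1)^2$), it suffices to obtain a power saving on $\E(\rho(w(\mathsf{G})))$ for each irreducible constituent $\rho$ of $\rho_m\otimes\overline{\rho_m}$, with the saving measured against $\binom{n+m-1}{m}^2$.

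Concretely, I would split into three regimes according to the size of $\rho_m(1)=\binom{n+m-1}{m}$ relative to $n$. First, if $m$ is bounded in terms of $n$ so that $\rho_m(1) \leq 2^{Cn^{\alpha}}$, Theorem \ref{thm C:small Fourier coefficients} directly gives $|\E(\rho_m(w(\mathsf{G})))| \leq \rho_m(1)^{1-\epsilon_0}$; but we need the bound for $|\E(\rho_m(w(\mathsf{G})))|^2$, so I would instead apply Theorem \ref{thm C:small Fourier coefficients} to each irreducible constituent $\rho$ of $\rho_m\otimes\overline\rho_m$ (these still satisfy $\rho(1)\le\rho_m(1)^2\le 2^{2Cn^{\alpha}}$, within the low-degree range after adjusting $C$), summing over the at most $\rho_m(1)^2$ constituents to get $\E(|\rho_m(w(\mathsf{G}))|^2) \le \rho_m(1)^2 \cdot \rho_m(1)^{2(1-\epsilon_0)} / \rho_m(1)$... \emph{wait}, one must be careful: the cleanest route is Cauchy--Schwarz in the form $\E(|\rho_m(w)|^2) = \|\widehat{\tau_{w}}(\rho_m)\|_{\mathrm{HS}}^2 \le \rho_m(1)\,\E(|\tfrac{1}{\rho_m(1)}\mathrm{tr}\,\rho_m(w)|^2)\cdot\rho_m(1)$ combined with the known second-moment/mixing estimates; but the simplest correct approach avoids this entirely by using that for \emph{real} or self-paired situations one can bootstrap. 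I will instead argue as follows: write $\E(|\rho_m(w(\mathsf G))|^2)=\sum_{\sigma}\langle \rho_m\otimes\overline{\rho_m},\sigma\rangle\,\E(\sigma(w(\mathsf G)))$ over $\sigma\in\Irr(\U_n)$, where the multiplicities and number of terms are each at most $\rho_m(1)^2=\binom{n+m-1}{m}^2$. For $\sigma=1$ the term contributes exactly $1$. For the remaining $\sigma$, apply Theorem \ref{Thm B, large Fourier coefficients} when $\sigma(1)\ge 2^{Cn^2\log n}$ and Theorem \ref{thm C:small Fourier coefficients} when $\sigma(1)\le 2^{Cn^\alpha}$; this leaves the intermediate band $2^{Cn^\alpha}<\sigma(1)<2^{Cn^2\log n}$ to handle separately.

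The main obstacle is therefore the intermediate range of character degrees, which is exactly the gap not covered by Theorems \ref{Thm B, large Fourier coefficients} and \ref{thm C:small Fourier coefficients}. Here I would exploit the special structure of $\rho_m\otimes\overline{\rho_m}$: its constituents $\sigma$ are indexed by pairs of partitions with at most $n$ parts and total size at most $2m$, and one can describe them explicitly (they are the irreducibles appearing in $\Sym^m(\mathbb C^n)\otimes \Sym^m((\mathbb C^n)^*)$, whose highest weights are of the form $(a_1\ge\cdots\ge a_k\ge 0\ge -b_\ell\ge\cdots\ge -b_1)$ with $\sum a_i=\sum b_j\le m$). For such $\sigma$ in the intermediate band, I would use the Weyl dimension formula to see that $\sigma(1)$ being sub-exponential in $n^2\log n$ forces $m$ itself to be polynomially bounded in $n$, or else forces the partition to be very "thin" (few rows/columns), and in either case one can invoke the known bound (\ref{goal}) for fundamental representations from \cite[Theorem 1.1]{AG} together with tensor-induction/Gelfand--Tsetlin branching to propagate the power saving — alternatively, if $m$ is large and $n$ is bounded, Theorem \ref{thm D:Fourier coefficients of power words} and the low-rank analysis of \cite{AG} apply. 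Summing the three (or four) contributions, each bounded by $\rho_m(1)^{2}\cdot\rho_m(1)^{-c}$ for an absolute $c>0$ coming from the worst of the $\epsilon$'s, and absorbing the polynomially-many error terms into a slightly smaller exponent, yields $\E(|\rho_m(w(\mathsf G))|^2)\le\binom{n+m-1}{m}^{2(1-\epsilon)}$ with $\epsilon$ depending only on $w$, as required. I expect verifying that the intermediate-band constituents genuinely fall under one of the available theorems — i.e. that the degree stratification of $\Irr(\U_n)$ interacts well with the tensor-square of a symmetric power — to be the step requiring the most care.
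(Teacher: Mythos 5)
Your reduction of $\mathbb{E}\bigl(\left|h_{m,n}(w(\mathsf{G}))\right|^{2}\bigr)$ to the Fourier coefficients of the irreducible constituents of $\Sym^{m}(\C^{n})\otimes\overline{\Sym^{m}(\C^{n})}$ is the right starting point (the paper uses exactly this decomposition, and in fact pins it down precisely: it is multiplicity-free, equal to $\sum_{c=0}^{m}\rho_{(c,0,\ldots,0,-c),n}$, a single chain of $m+1$ constituents rather than the $O(\rho_{m}(1)^{2})$ terms you allow for). But the proposal has a genuine gap exactly where you flag "the step requiring the most care": the intermediate band of degrees $2^{Cn^{\alpha}}<\sigma(1)<2^{Cn^{2}\log n}$. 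Your suggestions for closing it --- that sub-exponential degree forces $m$ to be polynomial in $n$ or the partition to be thin, and that one can then invoke the fundamental-representation bounds of \cite[Theorem 1.1]{AG} via "tensor-induction/Gelfand--Tsetlin branching" --- do not constitute an argument. The constituents $\rho_{c}$ with $c$ roughly between $n^{\alpha}$ and $n^{2}$ all land in this band, they are not fundamental representations, and no mechanism is given for propagating a power saving to them. Since this band is precisely the regime not covered by Theorems \ref{Thm B, large Fourier coefficients} and \ref{thm C:small Fourier coefficients}, the proposal reduces the theorem to an open sub-problem rather than solving it. (There is also a quantitative issue in the final summation: with your count of constituents, the prefactor cannot be absorbed for small $n$ without the lower bound $\rho_{m}(1)\geq\binom{n+m-1}{m}^{3/2}$, which requires knowing the decomposition is the multiplicity-free chain above.)

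The paper closes the gap by two devices absent from your proposal. For $m\geq An$ it proves a direct exponential character bound for $h_{m,n}$ itself at $(\beta,\epsilon)$-spread elements via Cauchy's integral formula applied to the generating function $\prod_{i}(1-\zeta_{i}t)^{-1}$ (Proposition \ref{prop:character bound for symmetric}), and combines this with the probabilistic non-concentration result (Theorem \ref{thm:probabilistic result}); this sidesteps the degree stratification entirely. For $\delta(w)n\leq m\leq An$ it bootstraps from the stable range $m\leq\delta(w)n$ (covered by \cite[Theorem 1.4]{AG}) using the embedding $\Sym^{m}\hookrightarrow\Sym^{m-\delta(w)n}\otimes\Sym^{\delta(w)n}$, bounding the first factor trivially by its dimension and the second factor sharply after passing to $w^{*9}$, then recovering the saving against $\binom{n+m-1}{m}$ via the elementary binomial comparison of Lemma \ref{lem:Technical lemma} and H\"older's inequality. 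You would need to supply arguments of this kind (or some substitute) for the proposal to become a proof.
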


\subsection{Methods and ideas of proof}

Let $\rho$ be an irreducible character of $\U_{n}$ whose degree
is sufficiently large in terms of $n$, and let $g$ be an element
of $\U_{n}$ for which $\log_{\rho(1)}\left|\rho(g)\right|$ is sufficiently
close to $1$. Theorem~\ref{thm:Exponential bound for separated}
asserts that in this setting, a large fraction of the eigenvalues
of $g$ must be very close to one another.

The idea behind the proof is that if the eigenvalues of $g$ are not
so distributed, then we can partition them into two large parts, of
sizes $k$ and $n-k$, in such a way that no pair of elements which
belong to distinct parts are very close together. Regarding $g$ as
an element of $\U_{k}\times\U_{n-k}$, this means no eigenvalue of
$g$ acting on $V_{n,k}:=\fraku_{n}/(\fraku_{k}\times\fraku_{n-k})$
is too close to $1$, which gives a lower bound on $\left|\xi(g)\right|$,
where $\xi$ is the character of the virtual $\U_{k}\times\U_{n-k}$
representation 
\[
\sum_{i=0}^{k(n-k)}(-1)^{i}\wedge^{i}V_{n,k}.
\]
We can express $\rho|_{\U_{k}\times\U_{n-k}}\cdot\xi$ as the difference
between two effective characters of $\U_{k}\times\U_{n-k}$ whose
total degree is small compared to $\rho(1)$, thanks to the Weyl dimension
formula, and thus get an upper bound on $\left|\rho(g)\xi(g)\right|$.
Combining this bound with the lower bound on $\left|\xi(g)\right|$
proves Theorem~\ref{thm:Exponential bound for separated}.

Now, an element $g$ for which a large fraction of the eigenvalues
of $g$ are very close to one another has many approximate eigenvectors,
meaning unit vectors $v$ for which the projection $\pr_{v^{\perp}}g(v)$
is very short. For $m/n$ bounded away from $0$, we consider the
probability that for $m$ independent random vectors $\{v_{1},\ldots,v_{m}\}$
on the unit sphere of $\C^{n}$, the $\pr_{v_{i}^{\perp}}(g(v_{i}))$
are all very short. If we can show that the probability of this is
sufficiently small when $g=w(\mathsf{G}_{1},\ldots,\mathsf{G}_{r})$
is a random $w$-image, it will follow that the bound of Theorem~\ref{thm:Exponential bound for separated}
almost always applies, and 
\[
\frac{\log\E(\left|\rho(w(\mathsf{G}_{1},\ldots,\mathsf{G}_{r}))\right|)}{\log\rho(1)}
\]
is bounded away from $1$, which, in turn, will imply (\ref{goal}).
This is how our main high energy result, Theorem~\ref{Thm B, large Fourier coefficients}
is proved.

The proof that $w(\mathsf{G}_{1},\ldots,\mathsf{G}_{r})$ is unlikely
to have many approximate eigenvectors is given in \S\ref{sec:Probability-bounds}.
Writing $w=x_{i_{\ell}}^{\epsilon_{\ell}}\cdots x_{i_{1}}^{\epsilon_{1}}$
as a product of letters and their inverses, the \emph{trajectory}
of a unit vector $v$ is the sequence 
\[
v\,,\,\mathsf{G}_{i_{1}}^{\epsilon_{1}}(v)\,,\,\mathsf{G}_{i_{2}}^{\epsilon_{i_{2}}}\mathsf{G}_{i_{1}}^{\epsilon_{1}}(v)\,,\,\ldots\,,\,\mathsf{G}_{i_{\ell}}^{\epsilon_{\ell}}\cdots\mathsf{G}_{i_{1}}^{\epsilon_{1}}(v)=w(\mathsf{G}_{1},\ldots,\mathsf{G}_{r})(v).
\]
We randomly choose $m$ unit vectors $v_{1},\ldots,v_{m}$ and, for
each $i$, consider the trajectory $v_{i}=h_{i,0},h_{i,1},\ldots,h_{i,\ell}=w(\mathsf{G}_{1},\ldots,\mathsf{G}_{r})(v_{i})$
of $v_{i}$. In this way we get a random array $h_{i,j}$ of unit
vectors. We show that, conditional on the values $h_{i,j}$ for $(i,j)\preceq(a,b)$
in the lexicographic order $\preceq$, the random vector $h_{a,b+1}$
is uniformly distributed in a sphere whose radius is length of the
projection of $h_{a,b}$ on the orthogonal complement of a certain
subset of the lexicographically previous vectors. By such analysis,
we bound the probability that, for all $i$, $h_{i,\ell}=w(\mathsf{G}_{1},\ldots,\mathsf{G}_{r})(v_{i})$
is close to a scalar multiple of $v_{i}$.

The argument presented above requires $n$ to be at least linear with
$\ell(w)$. In \S\ref{sec:Geometry-and-singularities}, we use algebro-geometric
methods developed by Glazer and Handel \cite{GH19,GH21,GHb} to deal
with the low rank case $n=O(\ell(w))$. Combining these results with
the large Fourier coefficient estimates of Theorem~\ref{Thm B, large Fourier coefficients}
for $n>C\ell(w)$ allows us to prove Theorems~\ref{thm: A, bounded density}
and \ref{Thm B, large Fourier coefficients} (See Sections \ref{subsec:Proof-of-Theorem B}
and \ref{subsec:Proof of Thm A}).

In \S\ref{sec:small ball}, we use our upper bound on the probability
that random $w$-images have many approximate eigenvalues (Theorem
\ref{thm:probabilistic result}), to obtain small ball estimates for
word maps on $\SU_{n}$ (Theorem \ref{thm:small ball estimate}).
In order to make Theorem \ref{thm:probabilistic result} fully compatible
with Theorem \ref{thm:small ball estimate}, we apply some estimates
on the volume of metric balls in the unitary group, using the Bishop\textendash Gromov
Volume Comparison Theorem (Lemmas \ref{lem:Bishop=002013Gromov} and
Corollary \ref{cor:Estimates on volume of metric balls}).

In \S\ref{sec:Fourier-coefficients-of power word}, we prove (\ref{goal})
for all characters when $w=x^{\ell}$ is a power word. We first deduce
from works of Rains \cite{Rai97,Rai03} that $\E(\rho(\mathsf{G}^{\ell}))=\dim\rho^{H_{n,\ell}}$,
where $H_{n,\ell}:=\U_{\left\lfloor n/\ell\right\rfloor +1}^{j}\times\U_{\left\lfloor n/\ell\right\rfloor }^{\ell-j}$
and $j\equiv n\text{ (mod \ensuremath{\ell})}$. Then, in order to
bound $\dim\rho^{H_{n,\ell}}$, we use an inductive argument, by first
restricting $\rho$ to $\U_{k}\times\U_{n-k}$ for $k\sim\frac{n}{\ell}\left\lfloor \frac{\ell}{2}\right\rfloor $.
A key property used in the induction step is the sphericity of $\mathrm{GL}_{n-k}(\C)\times\mathrm{GL}_{k}(\C)$
inside $\mathrm{GL}_{n}(\C)$, from which we deduce bounds on branching
multiplicities from $\U_{n}$ to $\U_{k}\times\U_{n-k}$ (see Lemma
\ref{lem:multiplicities in spherical pairs}). This analysis also
shows that, up to a multiplicative constant, the dependence of $\epsilon(w)$
on $w$ given in Theorem~\ref{Thm B, large Fourier coefficients}
is sharp.

In sections 8\textendash 9, we provide further evidence for (\ref{goal})
in low energy, using a variety of techniques (in particular, in \S\ref{sec:Fourier-estimates-for small reps},
the Weingarten calculus).

\subsection{\label{subsec:Related-work-and}Related work and further discussion}

\subsubsection{\label{subsec:Small-ball-estimates-and Fourier decay}Small ball
estimates and decay of Fourier coefficients of pushforward measures}

Word measures are a special case of pushforwards of smooth, compactly
supported measures by polynomial maps. For simplicity of presentation,
we consider the case of polynomial maps between affine spaces. 
\begin{problem}
\label{prob:General problem}Let $F$ be a local field, let $f:F^{n}\rightarrow F^{m}$
be a dominant polynomial map, and let $\mu$ be a smooth\footnote{If $F$ is non-Archimedean, smooth means locally constant.},
compactly supported measure on $F^{n}$. We ask the following 
\begin{enumerate}
\item \textbf{Decay of Fourier coefficients}: Can we find $C,\delta>0$,
such that $\left|\mathcal{F}(f_{*}\mu)(y)\right|<C\left|y\right|_{F}^{-\delta}$,
for every $y\in F^{m}$, where $\mathcal{F}$ denotes the Fourier
transform on $F^{m}$? 
\item \textbf{Small ball estimates}: Let $B(y,r)\subseteq F^{m}$ denotes
a ball of radius $r$ around $y\in F^{m}$. Can we find $C,\delta>0$,
such that: 
\begin{equation}
f_{*}\mu\left(B(y,r)\right)<C\left(\mathrm{Vol}B(y,r)\right)^{\delta}\text{ for all }r>0\text{ and }y\in F^{m}\,?\label{eq:small ball condition}
\end{equation}
\end{enumerate}
Bounds of the form (\ref{goal}) are non-commutative variants of Item
(1), replacing the Fourier transform in $F^{m}$ with the non-commutative
Fourier transform in $\SU_{n}$. 
\end{problem}

Problem \ref{prob:General problem} has been extensively studied in
many different contexts. It is closely related to the singularities
of the polynomial map $f$. When $m=1$, Problem \ref{prob:General problem}
has a positive answer (see e.g.~\cite{Igu78}, \cite[Section 1.4]{Den91a}
and \cite[Theorem 1.5]{CMN19} for the non-Archimedean case, and \cite[Chapter 7]{AGV88}
for the Archimedean case), and $\delta$ can be chosen, in both Items
(1) and (2), to be any number smaller than $\underset{y\in F}{\min}\lct(f-y)$,
where $\lct$ is a certain singularity invariant of polynomial maps
called the \emph{log-canonical threshold} (Definition \ref{def:log canonical threshold}).
When $m>1$, the exponent $\delta$ in Item (2) of Problem \ref{prob:General problem}
is still controlled by $\min_{y\in F^{m}}\lct(f-y)$ (see e.g.~\cite[Corollary 2.9]{VZG08}
and \cite[Theorem 4.12]{CGH23} for the non-Archimedean setting, and
\cite[Corollary 1]{LCVZ13} for the Archimedean setting). The Fourier
bound in Item (1), however, is more mysterious. We use the connection
between Problem \ref{prob:General problem} and the singularities
of $f$ in \S\ref{sec:Geometry-and-singularities} to prove Theorems
\ref{thm: A, bounded density} and \ref{thm:small ball estimate}
in bounded rank.

A natural variant of Problem \ref{prob:General problem} is to consider
families $\left\{ (f_{i},\mu_{i})\right\} _{i\in\N}$ of polynomial
maps $f_{i}:F^{n_{i}}\rightarrow F^{m_{i}}$ and measures $\mu_{i}$
on $F^{n_{i}}$, with $n_{i}\underset{i\rightarrow\infty}{\rightarrow}\infty$,
and ask whether one can take $C,\delta$ to be \textbf{dimension independent}
under suitable normalization on $f_{i}$ and $\mu_{i}$. Questions
such as this arise naturally in high-dimensional asymptotic geometry
\cite{Bou91,CCW99,NSV02,carbery2001distributional}, e.g.~in analyzing
volumes of polynomial sections of convex bodies. As an example of
one such result, if $\mu_{n}$ denotes the standard Gaussian measure
in $\R^{n}$, then \cite[Corollary 4]{GM22} implies that there is
a constant $C$ such that for every $d,n\in\N$ and every degree $d$-polynomial
map $f:\R^{n}\rightarrow\R$, where $f(x)=\sum_{I}a_{I}x^{I}$ and
$\sum_{\left|I\right|=d}a_{I}^{2}=1$, 
\begin{equation}
f_{*}\mu_{n}\left(J\right)\leq C\cdot d\cdot\left|J\right|^{\frac{1}{d}},\label{eq:dimension independent small ball}
\end{equation}
for every interval $J\subseteq\mathbb{R}$.

Given a word $w\in F_{r}$, the collection $\left\{ (w_{\SU_{n}},\mu_{\SU_{n}})\right\} _{n\in\N}$
fits into this high dimensional variant of Problem \ref{prob:General problem}
with the extra complication that the dimension of the target $m_{i}$
grows to infinity as well. Theorem \ref{thm:small ball estimate}
gives a dimension independent small ball estimate for balls of all
magnitudes (up to constant radius). We expect this phenomenon to generalize
to the family of all compact, connected, semisimple Lie groups.

Analogous questions can be asked for word maps on finite simple groups.
If $G$ is a finite group, an element $g\in G$ can be seen as a ``small
ball'' of volume $\left|G\right|^{-1}$, and the analog of (\ref{eq:small ball condition})
is $\left|w_{G}^{-1}(g)\right|<\left|G\right|^{r-\epsilon}$. Such
small ball estimates were obtained in \cite[Theorem 1.1]{LaS12},
and dimension independent Fourier estimates as in (\ref{goal}) were
obtained in \cite[Theorem 4]{LST19}. In the setting of compact $p$-adic
groups, small ball estimates were obtained for the family $\left\{ \mathrm{SL}_{n}(\Zp)\right\} _{n,p}$
in \cite[Theorem I]{GHb} with dimension independent exponent $\delta$
and dimension dependent constant $C$.

\subsubsection{\label{subsec:Fourier-coefficients-in}Fourier coefficients in the
stable range}

The study of the Fourier coefficients of $\tau_{w,\U_{n}}$ gets a
different flavor when one restricts to characters of small degrees,
i.e.~to characters $\rho$ for which $\rho(1)\leq n^{D}$ (for $D$
fixed). Up to a twist, such characters are determined by an ordered
pair of partitions with total sum at most $D$. In the case of the
pair consisting of the partition $\lambda=(1)$ and the empty partition,
the corresponding character $\rho_{n}$ of $\U_{n}$ is the trace
function. In \cite{MP19}, Magee and Puder have shown that, fixing
a pair of partitions, the function $n\mapsto\mathbb{E}\left(\rho_{n}\left(w_{\U_{n}}(\mathsf{G}_{1},\ldots,\mathsf{G}_{r})\right)\right)$
coincides with a rational function of $n$ for $n$ sufficiently large,
and bounded its degree in terms of the commutator length of $w$.
The coefficients of these rational functions produce interesting invariants
of words (see e.g.~\cite[Corollaries 1.8 and 1.11]{MP19}).

The above setting includes the low moments of the random variables
$\tr\left(w_{\U_{n}}(\mathsf{G}_{1},\ldots,\mathsf{G}_{r})\right)$
whose limiting distribution is studied in the context of Voiculescu's
free probability (see e.g.~\cite{DNV92,MS17}). Using the moment
method, it was shown in \cite{Voi91,Rud06,MSS07} that, for a fixed
$w\in F_{r}$, the sequence of random variables $\tr\left(w_{\U_{n}}(\mathsf{G}_{1},\ldots,\mathsf{G}_{r})\right)$
converges in distribution to a complex normal random variable. An
important tool used in those works is the \emph{Weingarten Calculus},
developed in \cite{Wein78,Col03,CS06}, which expresses integrals
on unitary groups as sums of \emph{Weingarten functions} over symmetric
groups. We use the Weingarten Calculus in the proof of Theorem \ref{thm C:small Fourier coefficients}
in \S\ref{sec:Fourier-estimates-for small reps}.
\begin{acknowledgement*}
We thank Emmanuel Breuillard, Noam Lifshitz, Michael Magee, Dan Mikulincer,
Doron Puder and Ofer Zeitouni for useful conversations. NA was supported
by NSF grant DMS\textendash 1902041 and BSF grant 2018201, IG was
supported by AMS\textendash Simons travel grant and partially supported
by BSF grant 2018201. ML was supported by NSF grant DMS\textendash 2001349.
\end{acknowledgement*}

\section{\label{sec:Preliminaries-in-the}Preliminaries in the representation
theory of the symmetric and unitary groups }

A \emph{partition} $\lambda$ of $m\in\N$, denoted $\lambda\vdash m$,
is a sequence $\lambda=(\lambda_{1},..,\lambda_{k})$ of non-negative
integers that sum to $m$, and $\lambda_{1}\geq\ldots\geq\lambda_{k}\geq0$.
Two partitions are equivalent if they differ only by a string of $0$'s
at the end. A partition $\lambda=(\lambda_{1},..,\lambda_{k})$, with
$\lambda_{k}>0$, is graphically encoded by a \emph{Young diagram},
which is a finite collection of boxes (or cells) arranged in $k$
left-justified rows, where the $j$-th row has $\lambda_{j}$ boxes.
It is convenient to write $\lambda=(1^{a_{1}}\cdots m^{a_{m}})$ for
the partition
\[
(\underset{a_{m}\text{ times}}{\underbrace{m,\ldots,m}},\ldots,\underset{a_{1}\text{ times}}{\underbrace{1,\ldots,1}})\vdash m.
\]
The \emph{length} $\ell(\lambda)$ of a partition $\lambda\vdash m$
is the number of non-zero parts $\lambda_{i}$, or equivalently the
number of rows in the corresponding Young diagram.

To each partition $\lambda\vdash m$, one can attach an irreducible
character $\chi_{\lambda}$ of $S_{m}$, and the map $\lambda\mapsto\chi_{\lambda}$
is a bijection from the set of partitions of $m$ to $\Irr(S_{m})$.
For each cell $(i,j)$ in the Young diagram of $\lambda$, the \emph{hook
length} $\mathrm{hook}_{\lambda}(i,j)$ is the number of cells $(a,b)$
in the Young diagram of $\lambda$ such that either $a=i$ and $b\geq j$,
or $a\geq i$ and $b=j$. The hook-length formula states that 
\begin{equation}
\chi_{\lambda}(1)=\frac{m!}{\prod_{(i,j)\in\lambda}\mathrm{hook}_{\lambda}(i,j)}.\label{eq:Hook length}
\end{equation}

If $\ell(\lambda)\leq n$, then one can further attach to $\lambda$
an irreducible character $\rho_{\lambda,n}$ of $\U_{n}$. The image
of the map $\lambda\mapsto\rho_{\lambda,n}$ in $\Irr(\U_{n})$ is
the set of \emph{polynomial characters of }$\U_{n}$. More generally,
$\Irr(\U_{n})$ is in bijection with the set $\Lambda_{n}$ of dominant
weights, 
\[
\Lambda_{n}:=\left\{ (\lambda_{1},\ldots,\lambda_{n}):\lambda_{1}\geq\ldots\geq\lambda_{n},\,\,\lambda_{i}\in\Z\right\} .
\]
For each $\lambda\in\Lambda_{n}$ we write $\rho_{\lambda,n}$ for
the corresponding irreducible character. Note that if $\lambda_{n}\geq0$
then $\lambda\in\Lambda_{n}$ can be identified with a partition of
$\sum_{i=1}^{n}\lambda_{i}$, so the notation for $\rho_{\lambda,n}$
is consistent. The irreducible representation $\bigwedge\nolimits ^{m}\C^{n}$
of $\U_{n}$ whose character is $\rho_{1^{m},n}$, is called the\emph{
$m$-th fundamental representation. }The highest weight $\varpi_{m}$
of $\rho_{1^{m},n}$ is called the \emph{$m$-th fundamental weight}.
Any $\lambda\in\Lambda_{n}$ can be written as a linear combination
$\lambda=\sum_{i=1}^{n}a_{i}\varpi_{i}$ with $a_{i}\in\N$ for $i\leq n-1$
and $a_{n}\in\Z$.

The following lemma relates the degrees of $\chi_{\lambda}$ and $\rho_{\lambda,n}$. 
\begin{lem}[{\cite[I.6, Exc. 6.4]{FH91}}]
\label{lemma:formula for dimension of rep}For each $\lambda=(\lambda_{1},\ldots,\lambda_{n})\vdash m$,
we have: 
\begin{equation}
\rho_{\lambda,n}(1)=\frac{\chi_{\lambda}(1)\cdot\prod_{(i,j)\in\lambda}(n+j-i)}{m!}=\frac{\prod_{(i,j)\in\lambda}(n+j-i)}{\prod_{(i,j)\in\lambda}\mathrm{hook}_{\lambda}(i,j)},\label{eq:formula for dimension}
\end{equation}
where $(i,j)$ are the (row,column)-coordinates of the cells in the
Young diagram with shape $\lambda$. 
\end{lem}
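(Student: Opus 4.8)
The plan is to reduce the formula (\ref{eq:formula for dimension}) to two classical facts. The second equality is immediate: inserting the hook-length formula (\ref{eq:Hook length}), $\chi_{\lambda}(1)=m!/\prod_{(i,j)\in\lambda}\mathrm{hook}_{\lambda}(i,j)$, into the middle expression cancels the $m!$. So the real content is the first equality, i.e.\ that the dimension of the polynomial representation of $\U_{n}$ (equivalently of $\mathrm{GL}_{n}(\C)$) with highest weight $\lambda$ equals $\frac{\chi_{\lambda}(1)}{m!}\prod_{(i,j)\in\lambda}(n+j-i)$. Here I use that $\ell(\lambda)\le n$, so that $\rho_{\lambda,n}$ makes sense and all factors $n+j-i$ are positive; the rows $i$ with $\lambda_{i}=0$ will cause no trouble.

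I would compute both sides in terms of the shifted parts $\ell_{i}:=\lambda_{i}+n-i$, $1\le i\le n$, which are strictly decreasing. On one hand, Weyl's dimension formula gives
\[
\rho_{\lambda,n}(1)=\prod_{1\le i<j\le n}\frac{\ell_{i}-\ell_{j}}{j-i}=\frac{\prod_{1\le i<j\le n}(\ell_{i}-\ell_{j})}{\prod_{i=1}^{n}(n-i)!},
\]
using $\prod_{i<j}(j-i)=\prod_{i=1}^{n}(n-i)!$. On the other hand, Frobenius's formula for the degree of the symmetric-group character reads $\chi_{\lambda}(1)=m!\cdot\frac{\prod_{1\le i<j\le n}(\ell_{i}-\ell_{j})}{\prod_{i=1}^{n}\ell_{i}!}$, so that
\[
\frac{\chi_{\lambda}(1)}{m!}\prod_{(i,j)\in\lambda}(n+j-i)=\frac{\prod_{1\le i<j\le n}(\ell_{i}-\ell_{j})}{\prod_{i=1}^{n}\ell_{i}!}\cdot\prod_{(i,j)\in\lambda}(n+j-i).
\]
Comparing the two displays, the first equality of (\ref{eq:formula for dimension}) is equivalent to the purely combinatorial identity $\prod_{(i,j)\in\lambda}(n+j-i)=\prod_{i=1}^{n}\frac{\ell_{i}!}{(n-i)!}$.

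This last identity I would verify one row at a time: the factors of the left side contributed by row $i$ of the Young diagram of $\lambda$ are $\prod_{j=1}^{\lambda_{i}}(n+j-i)=\frac{(n-i+\lambda_{i})!}{(n-i)!}=\frac{\ell_{i}!}{(n-i)!}$, a telescoping product which equals $1$ precisely when $\lambda_{i}=0$; multiplying over $i=1,\dots,n$ yields the claim. I do not foresee a genuine obstacle here: once Weyl's and Frobenius's dimension formulas are granted, the argument is bookkeeping, the only mild care being the harmless passage between indexing $\lambda$ by its $\ell(\lambda)$ nonzero parts and padding it to length $n$ (both give the same value in Frobenius's formula, the extra Vandermonde and factorial factors cancelling). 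Alternatively, one can bypass Weyl's and Frobenius's formulas altogether by invoking the hook-content formula for the principal specialization of the Schur polynomial, $\rho_{\lambda,n}(1)=s_{\lambda}(1,\dots,1)=\prod_{(i,j)\in\lambda}\frac{n+j-i}{\mathrm{hook}_{\lambda}(i,j)}$ (with $n$ arguments), which is literally the rightmost expression in (\ref{eq:formula for dimension}); the middle expression then follows again from (\ref{eq:Hook length}).
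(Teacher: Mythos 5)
Your proof is correct. The paper does not prove this lemma at all—it simply cites \cite[I.6, Exc.~6.4]{FH91}—and your argument (Weyl's dimension formula in the shifted variables $\ell_i=\lambda_i+n-i$, Frobenius's degree formula, and the row-by-row telescoping identity $\prod_{(i,j)\in\lambda}(n+j-i)=\prod_i \ell_i!/(n-i)!$, with the hook-length formula handling the second equality) is exactly the standard derivation that the cited exercise has in mind, so there is nothing to add.
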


\begin{lem}[{Weyl Dimension Formula, \cite[Theorem 6.3]{FH91}}]
\label{lem:Weyl Dimension Formula}Let $\lambda=(\lambda_{1},\ldots,\lambda_{n})\vdash m$.
Then: 
\begin{equation}
\rho_{\lambda,n}(1)=\prod_{1\leq i<j\leq n}\frac{\lambda_{i}-\lambda_{j}+j-i}{j-i}.\label{eq:Weyl dimension formula}
\end{equation}
\end{lem}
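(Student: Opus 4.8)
The plan is to give a self-contained proof via the Weyl character formula for $\U_n$, reducing the dimension to a limiting value of a ratio of determinants. Recall that the value of the character $\rho_{\lambda,n}$ at a diagonal element $\mathrm{diag}(x_1,\ldots,x_n)$ is the Schur polynomial
\[
s_\lambda(x_1,\ldots,x_n)=\frac{\det\bigl(x_i^{\,\lambda_j+n-j}\bigr)_{1\le i,j\le n}}{\det\bigl(x_i^{\,n-j}\bigr)_{1\le i,j\le n}},
\]
the denominator being the Vandermonde determinant $\prod_{1\le i<j\le n}(x_i-x_j)$. The dimension $\rho_{\lambda,n}(1)$ is the value at $x_1=\cdots=x_n=1$, but numerator and denominator both vanish there, so instead I would evaluate along the curve $x_i=q^{\,i-1}$ and let $q\to1$. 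Writing $\ell_i:=\lambda_i+n-i$ (so $\ell_1>\cdots>\ell_n\ge0$), each alternant $\det\bigl((q^{m_j})^{\,i-1}\bigr)_{i,j}$ is itself a Vandermonde determinant in the variables $q^{m_1},\ldots,q^{m_n}$, applied with $m_j=\ell_j$ in the numerator and $m_j=n-j$ in the denominator, so
\[
s_\lambda(1,q,\ldots,q^{n-1})=\prod_{1\le i<j\le n}\frac{q^{\ell_i}-q^{\ell_j}}{q^{n-i}-q^{n-j}}.
\]
Since $s_\lambda$ is a polynomial, hence continuous, and since $\frac{q^a-q^b}{q^c-q^d}\to\frac{a-b}{c-d}$ as $q\to1$ whenever $c\ne d$ (factor out $q-1$ from top and bottom), letting $q\to1$ gives $\rho_{\lambda,n}(1)=\prod_{i<j}\frac{\ell_i-\ell_j}{(n-i)-(n-j)}=\prod_{i<j}\frac{\lambda_i-\lambda_j+j-i}{j-i}$, which is (\ref{eq:Weyl dimension formula}).

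Carried out in order, the steps are: (i) identify the character value of $\rho_{\lambda,n}$ on the diagonal torus with $s_\lambda$ and recall the bialternant formula; (ii) check that the specialization $x_i=q^{\,i-1}$ turns $\det(x_i^{m_j})_{i,j}$ into the Vandermonde $\prod_{1\le i<j\le n}(q^{m_i}-q^{m_j})$ after transposing, applied to the two exponent vectors $(\ell_j)_j$ and $(n-j)_j$; (iii) cancel and pass to the limit $q\to1$, using continuity of $s_\lambda$ to identify the limit with $\rho_{\lambda,n}(1)$.

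An alternative route, which fits the immediate context of the excerpt, bypasses the Weyl character formula and derives (\ref{eq:Weyl dimension formula}) directly from Lemma \ref{lemma:formula for dimension of rep}. It suffices to establish the combinatorial identities
\[
\prod_{(i,j)\in\lambda}(n+j-i)=\frac{\prod_{i=1}^n\ell_i!}{\prod_{i=1}^n(n-i)!},\qquad \prod_{(i,j)\in\lambda}\mathrm{hook}_{\lambda}(i,j)=\frac{\prod_{i=1}^n\ell_i!}{\prod_{1\le i<j\le n}(\ell_i-\ell_j)},
\]
together with $\prod_{i=1}^n(n-i)!=\prod_{1\le i<j\le n}(j-i)$; dividing the first identity by the second and substituting the third produces exactly the Weyl product, since $\ell_i-\ell_j=\lambda_i-\lambda_j+j-i$. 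The first identity is an immediate telescoping product row by row, the third is the familiar superfactorial identity, and the second is the classical reformulation of the hook-length formula (\ref{eq:Hook length}) in terms of the first-column hook lengths (beta-numbers) $\ell_1,\ldots,\ell_n$ of the diagram of $\lambda$ padded to $n$ rows.

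The only genuinely delicate point in the first route is the $0/0$ indeterminacy at the identity: one must take the limit along a curve on which both Schur alternants factor completely, and then invoke continuity of the polynomial $s_\lambda$ to conclude that the limiting value equals $s_\lambda(1,\ldots,1)=\rho_{\lambda,n}(1)$. In the combinatorial route, the corresponding work is concentrated in the second displayed identity — the bijective bookkeeping between the cells of $\lambda$ and the arm/leg data encoded by the $\ell_i$ — but this is entirely elementary. I would present the character-formula argument as the main proof and remark on the combinatorial derivation from Lemma \ref{lemma:formula for dimension of rep} as an alternative.
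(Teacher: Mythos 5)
Your proposal is correct. Note that the paper offers no proof of this lemma at all \textemdash{} it is quoted directly from \cite[Theorem 6.3]{FH91} \textemdash{} so there is nothing to compare against except the standard literature. Your first route (bialternant formula for $s_\lambda$, principal specialization $x_i=q^{i-1}$, limit $q\to1$) is essentially the textbook proof in the cited reference, and all the steps check out: both alternants become Vandermonde determinants in the $q^{m_j}$, the signs cancel in the ratio, and continuity of the polynomial $s_\lambda$ legitimately resolves the $0/0$ indeterminacy at the identity. Your second route is a genuinely different and pleasant alternative that stays entirely within the paper's own toolkit: it derives (\ref{eq:Weyl dimension formula}) from Lemma \ref{lemma:formula for dimension of rep} via the three identities you state, and each of these is sound \textemdash{} in particular the hook-product identity $\prod_{(i,j)\in\lambda}\mathrm{hook}_{\lambda}(i,j)=\prod_{i=1}^{n}\ell_i!\,/\prod_{i<j}(\ell_i-\ell_j)$ remains valid after padding $\lambda$ with empty rows to length $n$, since each empty row $i$ contributes $\ell_i!=(n-i)!=\prod_{j>i}(\ell_i-\ell_j)$ and hence cancels. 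The first route buys independence from the hook-length formula; the second buys a purely combinatorial derivation and makes explicit the equivalence between the two dimension formulas the paper records in Section \ref{sec:Preliminaries-in-the}.
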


We next describe the branching rules from $\U_{n}$ to $\U_{m}\times\U_{n-m}$.
We need the following definition: 
\begin{defn}
\label{def:=00005Cmu extansion of a Young diagram}~
\begin{enumerate}
\item Fix a Young Diagram $\lambda$ and let $n\in\mathbb{N}$. An \emph{$n$-expansion}
of $\lambda$ is any Young diagram obtained by adding $n$ boxes to
$\lambda$ in such a way that no two boxes are added in the same column. 
\item Given a partition $\lambda=(\lambda_{1},\ldots,\lambda_{l_{1}})\vdash k$
and a partition $\mu=(\mu_{1},\ldots,\mu_{l_{2}})\vdash l$, a $\mu$-expansion
of $\,\lambda$ is defined to be a $\mu_{l_{2}}$-expansion of a $\mu_{l_{2}-1}$-expansion
of a $\cdots$ of a $\mu_{1}$-expansion of the Young diagram of $\lambda$.
For a $\mu$-expansion of $\lambda$, we label the boxes added in
the $\mu_{l_{j}}$-expansion by the number $j$ and order the boxes
lexicographically by their position, first from top to bottom and
then from right to left. We say that a $\mu$-expansion of $\lambda$
is \emph{strict} if, for every $p\in\{1,\ldots,l_{2}-1\}$ and every
box $t$, the number of boxes coming before $t$ that are labeled
$p$ is greater than or equal to the number of boxes coming before
$t$ that are labeled $(p+1)$. 
\end{enumerate}
\end{defn}

\begin{prop}[{{Littlewood\textendash Richardson rule, see e.g.~\cite[I.6, Eq. (6.7)]{FH91}}}]
\label{prop:Littlewood-Richardson-unitary}Let $\mu,\nu$ be partitions,
with $\ell(\mu),\ell(\nu)\leq n$. Then: 
\[
\rho_{\mu,n}\otimes\rho_{\nu,n}=\bigoplus_{\lambda:\ell(\lambda)\leq n}N_{\mu,\nu}^{\lambda}\rho_{\lambda,n},
\]
where $\{N_{\mu,\nu}^{\lambda}\}$ are the Littlewood\textendash Richardson
coefficients, namely, $N_{\lambda\mu\nu}$ is the number of ways the
Young diagram of the partition $\nu$ can be realized as a strict
$\mu$-expansions of $\lambda$. 
\end{prop}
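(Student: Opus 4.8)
The plan is to recognise the assertion as the classical Littlewood--Richardson rule, reached by passing from representations to symmetric functions.

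\emph{Step 1: Reduction to an identity of symmetric functions.} Since $\mu$ and $\nu$ are partitions, $\rho_{\mu,n}$ and $\rho_{\nu,n}$ are polynomial representations of $\U_n$, hence so is $\rho_{\mu,n}\otimes\rho_{\nu,n}$, which therefore decomposes into the $\rho_{\lambda,n}$ with $\lambda\vdash|\mu|+|\nu|$ and $\ell(\lambda)\le n$. Restricting characters to the diagonal torus carries $\rho_{\lambda,n}$ to the Schur polynomial $s_{\lambda}(t_{1},\ldots,t_{n})$ and the character of $\rho_{\mu,n}\otimes\rho_{\nu,n}$ to $s_{\mu}(t_{1},\ldots,t_{n})\,s_{\nu}(t_{1},\ldots,t_{n})$; since every element of $\U_n$ is conjugate into the torus, two representations coincide iff these symmetric polynomials do. As the $s_{\lambda}(t_{1},\ldots,t_{n})$ with $\ell(\lambda)\le n$ form a basis of the symmetric polynomials in $n$ variables and are the specialisations of the corresponding symmetric functions---those with $\ell(\lambda)>n$ specialising to $0$, which is exactly why the statement restricts to $\ell(\lambda)\le n$---the multiplicities we want are precisely the coefficients $c^{\lambda}_{\mu\nu}$ in the expansion $s_{\mu}s_{\nu}=\sum_{\lambda}c^{\lambda}_{\mu\nu}s_{\lambda}$ in the ring of symmetric functions. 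One may reach the same point through Schur--Weyl duality, $(\C^{n})^{\otimes(|\mu|+|\nu|)}=\bigoplus_{\ell(\lambda)\le n}\rho_{\lambda,n}\boxtimes\chi_{\lambda}$ as a $\U_n\times S_{|\mu|+|\nu|}$-module: comparing isotypic components after restricting the symmetric-group factor to $S_{|\mu|}\times S_{|\nu|}$ identifies $c^{\lambda}_{\mu\nu}$ with the multiplicity of $\chi_{\lambda}$ in $\mathrm{Ind}_{S_{|\mu|}\times S_{|\nu|}}^{S_{|\mu|+|\nu|}}(\chi_{\mu}\boxtimes\chi_{\nu})$, which the Frobenius characteristic sends to the same coefficient.

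\emph{Step 2: A recursion from the Pieri rule.} I take Pieri's rule $s_{\kappa}\cdot h_{r}=\sum s_{\kappa^{+}}$---the sum over the horizontal $r$-strips $\kappa^{+}/\kappa$, i.e.~over the $r$-expansions of $\kappa$ of Definition~\ref{def:=00005Cmu extansion of a Young diagram}---as known (it is the instance $\nu=(r)$ of the proposition, in the form $s_{(r)}=h_{r}$). Iterating Pieri starting from $\mu$ gives $s_{\mu}\cdot h_{\nu_{1}}h_{\nu_{2}}\cdots=\sum_{\lambda}K_{\lambda/\mu,\nu}\,s_{\lambda}$, where $K_{\lambda/\mu,\nu}$ counts the chains $\mu=\kappa^{(0)}\subset\kappa^{(1)}\subset\cdots$ with $\kappa^{(i)}/\kappa^{(i-1)}$ a horizontal $\nu_{i}$-strip, equivalently the number of semistandard fillings of $\lambda/\mu$ of content $\nu$; iterating Pieri from the empty partition gives $h_{\nu_{1}}h_{\nu_{2}}\cdots=\sum_{\kappa}K_{\kappa\nu}\,s_{\kappa}$. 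Combining them yields $\sum_{\kappa}c^{\lambda}_{\mu\kappa}K_{\kappa\nu}=K_{\lambda/\mu,\nu}$ for every $\nu$; since $(K_{\kappa\nu})_{\kappa,\nu}$ is unitriangular for the dominance order, this system determines the $c^{\lambda}_{\mu\nu}$.

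\emph{Step 3: Identifying the coefficients.} It then remains to show that $N^{\lambda}_{\mu\nu}$---the number of strict $\nu$-expansions of $\mu$ reaching shape $\lambda$, equivalently (by the symmetry $c^{\lambda}_{\mu\nu}=c^{\lambda}_{\nu\mu}$, which also reconciles the index placement) of strict $\mu$-expansions of $\nu$ reaching $\lambda$---solves this system. Unwinding Definition~\ref{def:=00005Cmu extansion of a Young diagram}, such a strict expansion is the same datum as a semistandard filling of the skew diagram $\lambda/\mu$ of content $\nu$ whose reverse reading word---rows from top to bottom, each read from right to left---is a ballot (Yamanouchi) word: adjoining each block of equal entries as a horizontal strip is precisely semistandardness, and the ``strict'' condition is precisely the lattice condition (this translation is elementary but needs a little care with the shape inequalities). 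The identity to verify, $\sum_{\kappa}\bigl(\#\,\text{ballot semistandard skew tableaux of shape }\lambda/\mu,\;\text{content }\kappa\bigr)K_{\kappa\nu}=K_{\lambda/\mu,\nu}$, is then a bijection: a semistandard skew tableau of shape $\lambda/\mu$ and content $\nu$ is matched with the pair consisting of its jeu-de-taquin rectification (a straight-shape semistandard tableau of content $\nu$, of some shape $\kappa$) and the ballot skew tableau of shape $\lambda/\mu$ carried by its plactic class. The step I expect to be the genuine obstacle is exactly this last one: the well-definedness of rectification---independence of the order of slides---together with the fact that each plactic (Knuth) class contains exactly one ballot skew tableau of shape $\lambda/\mu$. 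This is Sch\"utzenberger's theorem, the combinatorial core of the Littlewood--Richardson rule, and it cannot be circumvented; it may instead be organised through the type-$A$ crystal structure on semistandard skew tableaux of shape $\lambda/\mu$, whose connected components are copies of $B(\kappa)$ occurring with multiplicity equal to the number of ballot tableaux of content $\kappa$ and whose highest-weight vectors are exactly those tableaux---this yields the recursion and the identification simultaneously. Transporting the result back through Step 1 gives the proposition. Every surrounding manipulation is formal; only this combinatorial core is substantial, and in a paper of this scope it is legitimately cited rather than reproved.
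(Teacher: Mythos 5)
The paper does not prove this proposition at all: it is stated as the classical Littlewood--Richardson rule and simply cited from \cite{FH91}, so there is no internal argument to compare against. Your blind outline is the standard derivation of that classical result, and it is correct as a sketch: the reduction from $\U_n$-tensor-product multiplicities to the symmetric-function coefficients $c^{\lambda}_{\mu\nu}$ via torus characters (or Schur--Weyl) is routine and handles the $\ell(\lambda)\le n$ truncation correctly; the Pieri recursion $\sum_{\kappa}c^{\lambda}_{\mu\kappa}K_{\kappa\nu}=K_{\lambda/\mu,\nu}$ together with unitriangularity of the Kostka matrix does determine the coefficients; and you correctly locate the genuine content in the last step --- the translation of the paper's ``strict $\mu$-expansion'' condition into the ballot/lattice-word condition on semistandard skew tableaux, plus Sch\"utzenberger's theorem on well-definedness of jeu-de-taquin rectification and uniqueness of the ballot representative in each plactic class (equivalently, the crystal-theoretic formulation). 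Citing that combinatorial core rather than reproving it is exactly what the paper's own citation amounts to, so in substance you have reconstructed, in expanded form, the argument of the reference the authors invoke; the only small point to watch is the indexing mismatch between $N^{\lambda}_{\mu,\nu}$ and the ``strict $\mu$-expansion of $\lambda$ giving $\nu$'' phrasing in the statement, which you correctly reconcile via the symmetry $c^{\lambda}_{\mu\nu}=c^{\lambda}_{\nu\mu}$.
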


\begin{prop}[{\cite[Theorem 9.2.3]{GW09}}]
\label{prop:branching to Un-k times Uk}Let $\rho_{\lambda,n}\in\mathrm{Irr}(\U_{n})$.
Then for all $k<n$: 
\[
\rho_{\lambda,n}|_{\U_{k}\times\U_{n-k}}=\sum_{\mu,\nu:\ell(\mu)\leq k,\ell(\nu)\leq n-k}N_{\mu,\nu}^{\lambda}\rho_{\mu,k}\boxtimes\rho_{\nu,n-k}.
\]
\end{prop}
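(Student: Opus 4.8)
The plan is to pass to characters and reduce the branching rule to a standard identity among Schur polynomials. Since finite-dimensional representations of a compact group are determined up to isomorphism by their characters, it suffices to prove the corresponding identity of characters on $\U_{k}\times\U_{n-k}$; and since restriction commutes with tensoring by the determinant character and $\det\nolimits_{n}|_{\U_{k}\times\U_{n-k}}=\det\nolimits_{k}\boxtimes\det\nolimits_{n-k}$, we may further assume $\lambda$ is a partition, so that $\rho_{\lambda,n}$ is a polynomial representation; write $m=|\lambda|$ and $\ell(\lambda)\leq n$. For a block-diagonal element $(g,h)\in\U_{k}\times\U_{n-k}$ whose two blocks have eigenvalues $x_{1},\ldots,x_{k}$ and $y_{1},\ldots,y_{n-k}$, the character of $\rho_{\lambda,n}$ evaluated there is the Schur polynomial $s_{\lambda}(x_{1},\ldots,x_{k},y_{1},\ldots,y_{n-k})$, while $\rho_{\mu,k}\boxtimes\rho_{\nu,n-k}$ contributes $s_{\mu}(x_{1},\ldots,x_{k})\,s_{\nu}(y_{1},\ldots,y_{n-k})$. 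Thus the proposition is equivalent to the polynomial identity
\[
s_{\lambda}(x_{1},\ldots,x_{k},y_{1},\ldots,y_{n-k})=\sum_{\mu,\nu}N_{\mu,\nu}^{\lambda}\,s_{\mu}(x_{1},\ldots,x_{k})\,s_{\nu}(y_{1},\ldots,y_{n-k}),
\]
the sum taken over all pairs of partitions $\mu,\nu$.

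To prove this, I would invoke the standard fact that the Littlewood\textendash Richardson coefficients are the structure constants of the comultiplication on the ring of symmetric functions that is adjoint, under the Hall inner product, to multiplication: concatenating alphabets gives $s_{\lambda}(x,y)=\sum_{\mu,\nu}\langle s_{\lambda},\,s_{\mu}s_{\nu}\rangle\,s_{\mu}(x)\,s_{\nu}(y)$ in infinitely many $x$- and $y$-variables, and $\langle s_{\lambda},s_{\mu}s_{\nu}\rangle=N_{\mu,\nu}^{\lambda}$ by the very definition $s_{\mu}s_{\nu}=\sum_{\lambda}N_{\mu,\nu}^{\lambda}s_{\lambda}$ of these coefficients, whose combinatorial value as a count of strict $\mu$-expansions of $\lambda$ is the one recorded in Proposition~\ref{prop:Littlewood-Richardson-unitary} (see \cite{FH91,GW09}). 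Specializing the $x$-alphabet to $x_{1},\ldots,x_{k}$ and the $y$-alphabet to $y_{1},\ldots,y_{n-k}$, with all remaining variables set to zero, kills exactly the terms with $\ell(\mu)>k$ or $\ell(\nu)>n-k$, because a Schur polynomial in fewer variables than the number of rows of its indexing partition vanishes; the surviving terms are precisely those in the statement, which is \cite[Theorem~9.2.3]{GW09}.

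An essentially equivalent route avoids characters and uses Schur\textendash Weyl duality: realize $\rho_{\lambda,n}$ on $\Hom_{S_{m}}(S^{\lambda},(\C^{n})^{\otimes m})$, decompose $(\C^{n})^{\otimes m}$ along $\C^{n}=\C^{k}\oplus\C^{n-k}$ into the $\bigl(\U_{k}\times\U_{n-k}\times S_{m}\bigr)$-module $\bigoplus_{a+b=m}\mathrm{Ind}_{S_{a}\times S_{b}}^{S_{m}}\bigl((\C^{k})^{\otimes a}\boxtimes(\C^{n-k})^{\otimes b}\bigr)$, apply Frobenius reciprocity together with the restriction rule $S^{\lambda}|_{S_{a}\times S_{b}}=\bigoplus_{\mu\vdash a,\,\nu\vdash b}N_{\mu,\nu}^{\lambda}\,S^{\mu}\boxtimes S^{\nu}$, and apply Schur\textendash Weyl again to $\U_{k}$ and $\U_{n-k}$ (noting $\Hom_{S_{a}}(S^{\mu},(\C^{k})^{\otimes a})=0$ when $\ell(\mu)>k$). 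I do not expect a genuine obstacle here: the statement is classical, and its entire content is the standard identification of the Littlewood\textendash Richardson coefficients with the comultiplication structure constants on symmetric functions, equivalently with the branching multiplicities for $S_{a}\times S_{b}\subseteq S_{m}$, all already packaged in Proposition~\ref{prop:Littlewood-Richardson-unitary}. The only points that require a little care are the bookkeeping of the constraints $\ell(\mu)\leq k$ and $\ell(\nu)\leq n-k$, and\textemdash if one wants $\lambda$ to range over all of $\Lambda_{n}$ rather than over partitions\textemdash the determinant-twist reduction noted above.
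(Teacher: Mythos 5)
Your proposal is correct, and there is nothing to compare it against inside the paper: the paper does not prove this proposition but simply cites \cite[Theorem 9.2.3]{GW09}. Both of your routes (the character identity $s_{\lambda}(x,y)=\sum_{\mu,\nu}N_{\mu,\nu}^{\lambda}s_{\mu}(x)s_{\nu}(y)$ coming from the coproduct on symmetric functions, and the Schur--Weyl argument via the restriction rule for $S_{a}\times S_{b}\subseteq S_{m}$) are the standard proofs of this classical branching rule, and your handling of the length constraints $\ell(\mu)\leq k$, $\ell(\nu)\leq n-k$ and of general dominant weights via a determinant twist is exactly the right bookkeeping.
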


The following Lemma \ref{lem:dimension of small representations}
and Definition \ref{def:first and last fundamental weights} will
be used in Sections \ref{subsec:Fourier-coefficients-of power word}
and \ref{sec:Fourier-estimates-for small reps}. 
\begin{lem}
\label{lem:dimension of small representations}If $m,n\in\N$, and
$\mu\vdash m$ is a partition, then: 
\begin{enumerate}
\item $\rho_{\mu,n}(1)\geq\left(\frac{n}{m}\right)^{m}$, for all $m\leq n$. 
\item $\rho_{\mu,n}(1)\geq\min\left\{ 2^{\frac{m}{4}},2^{\frac{n}{2}}\right\} $,
whenever $\ell(\mu)\leq\frac{n}{2}$. 
\end{enumerate}
\end{lem}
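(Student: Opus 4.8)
The plan is to prove the two parts from explicit dimension formulas, using the hook--content formula of Lemma~\ref{lemma:formula for dimension of rep} for (1) and the Weyl Dimension Formula of Lemma~\ref{lem:Weyl Dimension Formula} for (2); the two are handled differently because in (2) one may well have $m>n$ (e.g.\ a long single row). Assume $m\ge 1$. For part (1) I would start from $\rho_{\mu,n}(1)=\prod_{(i,j)\in\mu}\frac{n+j-i}{\mathrm{hook}_\mu(i,j)}$ and show that \emph{each} of the $m$ factors is $\ge n/m$; multiplying then gives $(n/m)^m$. Fix a cell $(i,j)$ and set $h=\mathrm{hook}_\mu(i,j)$. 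The cells of $\mu$ split into the hook of $(i,j)$, the $j-1$ cells of row $i$ strictly left of $(i,j)$, the $i-1$ cells of column $j$ strictly above $(i,j)$, and those in neither row $i$ nor column $j$; hence $m-h\ge(i-1)+(j-1)=i+j-2$, and in particular $h\le m$. The desired bound $\frac{n+j-i}{h}\ge\frac nm$ is equivalent to $n(m-h)+m(j-i)\ge 0$. When $j\ge i$ both summands are nonnegative. When $j<i$, we have $i-j\ge 1$; since $j\ge 1$ also $i-j\le i+j-2$, and we showed $i+j-2\le m-h$, so $m-h\ge i-j$; as $m\le n$ this gives $m-h\ge\frac mn(i-j)$, i.e.\ $n(m-h)\ge m(i-j)$, as needed.

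For part (2) I would apply the Weyl Dimension Formula to $\mu$ padded with zeros to length $n$. Writing $\ell=\ell(\mu)\le n/2$ and grouping the pairs $1\le i<j\le n$ according to whether $j\le\ell$, $i\le\ell<j$, or $\ell<i$, the first group contributes $\rho_{\mu,\ell}(1)\ge 1$ and the third contributes $1$, so
\[
\rho_{\mu,n}(1)\ \ge\ \prod_{i=1}^{\ell}\ \prod_{j=\ell+1}^{n}\Bigl(1+\frac{\mu_i}{j-i}\Bigr).
\]
For these indices $1\le j-i\le n-1<n$, so each factor is $\ge 1+\mu_i/n$, and there are $n-\ell\ge n/2$ choices of $j$; hence the inner product is $\ge(1+\mu_i/n)^{n/2}$, and using $\prod_{i=1}^{\ell}(1+\mu_i/n)\ge 1+\frac1n\sum_i\mu_i=1+m/n$ we get $\rho_{\mu,n}(1)\ge(1+m/n)^{n/2}$. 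To conclude I would split on $m$ versus $n$: if $m\ge n$ then $1+m/n\ge 2$, so $\rho_{\mu,n}(1)\ge 2^{n/2}\ge\min\{2^{m/4},2^{n/2}\}$; if $m\le n$, then by concavity of $x\mapsto\log_2(1+x)$ on $[0,1]$ (it lies above the chord from $(0,0)$ to $(1,1)$, i.e.\ $\log_2(1+x)\ge x$ there), $\rho_{\mu,n}(1)\ge 2^{(n/2)(m/n)}=2^{m/2}\ge 2^{m/4}\ge\min\{2^{m/4},2^{n/2}\}$.

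The cell-counting in (1) and the one-variable inequality in (2) are routine; the only load-bearing step is the regrouping of the Weyl product in (2), where the hypothesis $\ell(\mu)\le n/2$ is exactly what guarantees $n-\ell\ge n/2$ copies of each factor. I do not anticipate a real obstacle; the one point to be careful about is that $m$ need not be $\le n$ in part (2), which is precisely why we argue there through $(1+m/n)^{n/2}$ rather than through the hook--content formula used in part (1).
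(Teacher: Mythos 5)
Your proof is correct and follows essentially the same route as the paper: part (1) by bounding the hook--content formula factor by factor, and part (2) by isolating the cross factors $\frac{\mu_i+j-i}{j-i}$ in the Weyl dimension formula to get $\prod_i\left(1+\frac{\mu_i}{n}\right)^{n/2}$. The only (harmless) differences are in the endgame: the paper bounds hooks by $m+1-i$ in (1) and splits on $\mu_1\geq n$ versus $\mu_1<n$ using $1+x>e^{x/2}$ in (2), whereas you verify the per-cell inequality directly and aggregate via $\prod_i(1+\mu_i/n)\geq 1+m/n$, which even yields the slightly stronger exponent $m/2$.
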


\begin{proof}
By Lemma \ref{lemma:formula for dimension of rep}, and since $\mathrm{hook}_{\mu}(i,j)\leq m+1-i$,
we have: 
\[
\rho_{\mu,n}(1)=\frac{\prod_{(i,j)\in\mu}(n+j-i)}{\prod_{(i,j)\in\mu}\mathrm{hook}_{\mu}(i,j)}\geq\prod_{(i,j)\in\mu}\frac{n+j-i}{m+1-i}\geq\prod_{(i,j)\in\mu}\frac{n+1-i}{m+1-i}\geq\prod_{i=1}^{n}\left(\frac{n}{m}\right)^{\mu_{i}}=\left(\frac{n}{m}\right)^{m}.
\]
This proves Item (1). For Item (2), note that by (\ref{eq:Weyl dimension formula}),
since $\mu_{i}\geq\mu_{j}$ for $i<j$, and $\mu_{j}=0$ for $j>\frac{n}{2}$,
\begin{equation}
\rho_{\mu,n}(1)=\prod_{1\leq i<j\leq n}\frac{\mu_{i}-\mu_{j}+j-i}{j-i}\geq\prod_{1\leq i\leq\frac{n}{2}}\prod_{\frac{n}{2}<j\leq n}\frac{\mu_{i}+j-i}{j-i}\geq\prod_{1\leq i\leq\frac{n}{2}}\left(1+\frac{\mu_{i}}{n}\right)^{\frac{n}{2}}.\label{eq:lower bound for small partition}
\end{equation}
If $\mu_{1}\geq n$, (\ref{eq:lower bound for small partition}) is
bounded from below by $\left(1+\frac{\mu_{1}}{n}\right)^{\frac{n}{2}}\geq2^{\frac{n}{2}}$.
If $\mu_{1}<n$, we use the inequality $1+x>e^{x/2}$ (valid for $0\leq x<1$)
to get: 
\[
\rho_{\mu,n}(1)\geq\prod_{1\leq i\leq\frac{n}{2}}\left(1+\frac{\mu_{i}}{n}\right)^{\frac{n}{2}}\geq\prod_{1\leq i\leq\frac{n}{2}}e^{\frac{\mu_{i}}{4}}=e^{\frac{m}{4}}>2^{\frac{m}{4}}.\qedhere
\]
\end{proof}
\begin{defn}
~\label{def:first and last fundamental weights} 
\begin{enumerate}
\item Given $\lambda=\sum_{i=1}^{n}a_{i}\varpi_{i}\in\Lambda_{n}$, we set
$\lambda_{-}:=\sum_{i=1}^{\left\lfloor \frac{n}{2}\right\rfloor }a_{i}\varpi_{i}$
and $\lambda_{+}:=\sum_{i=\left\lfloor \frac{n}{2}\right\rfloor +1}^{n}a_{i}\varpi_{i}$,
so that $\lambda=\lambda_{+}+\lambda_{-}$. In particular, $\rho_{\lambda,n}\in\mathrm{Irr}(\U_{n})$
is embedded inside $\rho_{\lambda_{-},n}\otimes\rho_{\lambda_{+},n}$. 
\item Given a partition $\lambda\vdash m$ with $\ell(\lambda)\leq n$,
denote by $\lambda^{\vee,n}$ the partition $(-\lambda_{n},-\lambda_{n-1},\ldots,-\lambda_{1})+(\lambda_{1},\ldots,\lambda_{1})$.
In particular, $\rho_{\lambda,n}^{\vee}\simeq\rho_{\lambda^{\vee,n},n}\otimes\mathrm{det}^{-\lambda_{1}}$. 
\end{enumerate}
\end{defn}

\begin{rem}
\label{rem:useful fact}If $\lambda=\sum_{i=1}^{n}a_{i}\varpi_{i}\in\Lambda_{n}$,
then by Lemma \ref{lem:Weyl Dimension Formula}, $\rho_{\lambda,n}(1)\geq\rho_{\sum_{i=1}^{n}a'_{i}\varpi_{i},n}(1)$
if $a_{i}\geq a'_{i}$ for all $i\in[n]$. In particular, $\rho_{\lambda,n}(1)\geq\max\left\{ \rho_{\lambda_{-},n}(1),\rho_{\lambda_{+},n}(1)\right\} $. 
\end{rem}

\section{\label{sec:Character-bounds}Character bounds for well spread elements}

Let $\rho\in\mathrm{Irr}(\U_{n})$ with $\rho(1)>1$. Then $\left|\rho(g)\right|$
achieves its maximum $\rho(1)$ exactly for $g$ a scalar matrix.
In other words, $\left|\rho(g)\right|$ can be bounded away from $\rho(1)$
only if the eigenvalues of $g$ are not all contained in a small disk
in $\C$. This motivates the following definition: 
\begin{defn}
\label{def:separated and spread}Given $0<\beta<1$, $0<\gamma<\frac{1}{2}$
and $\epsilon>0$, we say that $g\in\U_{n}$ is: 
\begin{enumerate}
\item \emph{$(\gamma,\epsilon)$-separated} if the eigenvalues of $g$ can
be separated into two disjoint sets $Q,R$ such that $\left|Q\right|,\left|R\right|\geq\gamma n$
and every element of $Q$ is at distance at least $\epsilon$ from
every element of $R$. 
\item \emph{$(\beta,\epsilon)$-spread} if at most $(1-\beta)n$ of the
eigenvalues of $g$ lie in any arc of diameter $2\epsilon$ of $\U_{1}$. 
\end{enumerate}
\end{defn}

Our goal in this section is to show that if $\rho(1)$ is sufficiently
large, we can use a $(\gamma,\epsilon)$-separation condition to deduce
an exponential-type bound, $\left|\rho(g)\right|\le\rho(1)^{1-\delta}$.
The precise statement is Theorem~\ref{thm:Exponential bound for separated}
below. We further deduce character estimates for $(\beta,\epsilon)$-spread
elements, by showing that any $(\beta,\epsilon)$-spread element is
also $(\frac{\beta}{2},\frac{\epsilon}{n})$-separated (Lemma \ref{Lem: reduction from spread to separated}
and Corollary \ref{cor:character estimates for spread elements}).

We begin with an elementary inequality about dividing a finite metric
space (or, more generally, a pseudometric space, where distance zero
is allowed between distinct points) into two parts. If $Y$ and $Z$
are finite subsets of a pseudometric space $(X,d)$, we define 
\[
d(Y,Z):=\sum_{y\in Y}\sum_{z\in Z}d(y,z).
\]

\begin{lem}
\label{Metric}Let $X$ be a finite pseudometric space and $X=Y\coprod Z$
a partition of $X$ into two sets. Then 
\[
\frac{d(Y,Y)+d(Z,Z)}{d(X,X)}\le\frac{\left|Y\right|^{2}+\left|Z\right|^{2}}{\left|Y\right|^{2}+\left|Y\right|\left|Z\right|+\left|Z\right|^{2}}.
\]
\end{lem}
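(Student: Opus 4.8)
The plan is to clear denominators and reduce the statement to a single inequality that is then handled by the triangle inequality. First I would record the decomposition
\[
d(X,X) = d(Y,Y) + 2\,d(Y,Z) + d(Z,Z),
\]
which is immediate from splitting each of the two summations in the definition of $d(X,X)$ over the partition $X = Y \sqcup Z$ and using the symmetry $d(Y,Z) = d(Z,Y)$. We may assume $d(X,X) > 0$, since otherwise all pairwise distances within $X$ vanish and the asserted bound is vacuous. Substituting the decomposition into the claimed inequality, cross-multiplying by the two positive denominators, and cancelling the common summand $(d(Y,Y)+d(Z,Z))(|Y|^2 + |Z|^2)$ from both sides, one sees that the claim is equivalent to
\[
|Y|\,|Z|\,\bigl(d(Y,Y) + d(Z,Z)\bigr) \le 2\,d(Y,Z)\,\bigl(|Y|^2 + |Z|^2\bigr).
\]

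Next I would bound the within-part distances by the cross-part distance. For all $y, y' \in Y$ and $z \in Z$ the triangle inequality gives $d(y,y') \le d(y,z) + d(z,y')$; summing over all $y, y' \in Y$ and all $z \in Z$ yields $|Z|\,d(Y,Y) \le 2\,|Y|\,d(Y,Z)$, and symmetrically $|Y|\,d(Z,Z) \le 2\,|Z|\,d(Y,Z)$. Multiplying the first inequality by $|Y|$, the second by $|Z|$, and adding produces exactly the displayed inequality, completing the proof.

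I do not expect a genuine obstacle here: the argument is short, and the only points that need a word of care are the degenerate cases (one of $Y$, $Z$ empty, or all distances in $X$ equal to zero), where the bound holds trivially or with equality, together with the purely mechanical cross-multiplication step, which is where an arithmetic slip would be easiest to make.
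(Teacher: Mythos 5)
Your proof is correct and rests on the same key step as the paper's: summing the triangle inequality over $Y\times Y\times Z$ (and symmetrically) to get $\left|Z\right|d(Y,Y)\le2\left|Y\right|d(Y,Z)$ and $\left|Y\right|d(Z,Z)\le2\left|Z\right|d(Y,Z)$. The only difference is cosmetic bookkeeping\textemdash you clear denominators via $d(X,X)=d(Y,Y)+2d(Y,Z)+d(Z,Z)$, while the paper manipulates the ratios directly\textemdash so this is essentially the paper's argument.
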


\begin{proof}
For all $(y_{1},y_{2},z)\in Y\times Y\times Z$, we have 
\[
d(y_{1},y_{2})\le d(y_{1},z)+d(y_{2},z).
\]
Summing over all such triples, we have 
\[
\left|Z\right|d(Y,Y)\le\left|Y\right|d(Y,Z)+\left|Y\right|d(Y,Z).
\]
Thus, 
\[
d(Y,Y)\le\frac{2d(Y,Z)\left|Y\right|}{\left|Z\right|},
\]
and likewise, 
\[
d(Z,Z)\le\frac{2d(Y,Z)\left|Z\right|}{\left|Y\right|}.
\]
These inequalities give 
\[
\frac{d(Y,Y)+d(Z,Z)}{d(X,X)-d(Y,Y)-d(Z,Z)}=\frac{d(Y,Y)+d(Z,Z)}{2d(Y,Z)}\le\frac{\left|Y\right|^{2}+\left|Z\right|^{2}}{\left|Y\right|\left|Z\right|},
\]
so 
\[
\frac{d(X,X)}{d(Y,Y)+d(Z,Z)}=1+\frac{d(X,X)-d(Y,Y)-d(Z,Z)}{d(Y,Y)+d(Z,Z)}\ge\frac{\left|Y\right|^{2}+\left|Y\right|\left|Z\right|+\left|Z\right|^{2}}{\left|Y\right|^{2}+\left|Z\right|^{2}},
\]
which implies the lemma. 
\end{proof}
\begin{lem}
\label{Products}Let $X$ be a finite set of integers and $X=Y\coprod Z$
a partition. Then: 
\[
\prod_{\substack{y_{1},y_{2}\in Y\\
y_{1}<y_{2}
}
}(y_{2}-y_{1})\prod_{\substack{z_{1},z_{2}\in Z\\
z_{1}<z_{2}
}
}(z_{2}-z_{1})<2^{\left|X\right|}\Bigl(\!\!\prod_{\substack{x_{1},x_{2}\in X\\
x_{1}<x_{2}
}
}(x_{2}-x_{1})\Bigr)^{\frac{\left|Y\right|^{2}+\left|Z\right|^{2}}{\left|Y\right|^{2}+\left|Y\right|\left|Z\right|+\left|Z\right|^{2}}}.
\]
\end{lem}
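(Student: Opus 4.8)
Passing to logarithms turns this into an additive statement: writing $L(S):=\sum_{\{a,b\}\subseteq S}\log|a-b|$ for a finite $S\subseteq\Z$ and $\theta:=\frac{|Y|^{2}+|Z|^{2}}{|Y|^{2}+|Y||Z|+|Z|^{2}}$, the claim is equivalent to
\[
L(Y)+L(Z)<|X|\log 2+\theta\, L(X).
\]
This has exactly the shape of the conclusion of Lemma \ref{Metric}, so the plan is to apply Lemma \ref{Metric} to a pseudometric on $X$ built out of $\log|a-b|$. The naive choice $d(a,b)=\log|a-b|$ is \emph{not} a pseudometric — the triangle inequality breaks precisely when one point lies strictly between the other two and is adjacent to one of them (e.g.\ $0,1,3$). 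The device I would use is the modified distance
\[
d(a,b):=\log\max\bigl(2,|a-b|\bigr)\quad(a\neq b),\qquad d(a,a):=0.
\]
This \emph{is} a genuine pseudometric on $\Z$: from $|a-b|\le|a-c|+|b-c|$, the elementary bound $\max(2,x)\max(2,y)\ge x+y$ (for $x,y\ge 0$; split on $\min(x,y)\lessgtr 2$) and $\max(2,x)\max(2,y)\ge 4$, one gets $\max(2,|a-b|)\le\max(2,|a-c|)\max(2,|b-c|)$, i.e.\ the triangle inequality after taking $\log$. Its crucial feature is that $d(a,b)=\log|a-b|$ \emph{exactly} when $|a-b|\ge 2$, so $d$ differs from the naive distance only on pairs at distance $1$.

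Next I would apply Lemma \ref{Metric} to $(X,d)$ and the partition $X=Y\sqcup Z$, obtaining $d(Y,Y)+d(Z,Z)\le\theta\, d(X,X)$. For any $S\subseteq X$, the identity $\log\max(2,|a-b|)=\log|a-b|+\log 2$ if $|a-b|=1$ and $=\log|a-b|$ if $|a-b|\ge 2$ gives $d(S,S)=2L(S)+2\,e(S)\log 2$, where $e(S)$ denotes the number of pairs in $S$ at distance $1$. Substituting and dividing by $2$ yields
\[
L(Y)+L(Z)\ \le\ \theta\, L(X)+\bigl(\theta\, e(X)-e(Y)-e(Z)\bigr)\log 2.
\]
Since $\theta\le 1$ and $e(Y),e(Z)\ge 0$, the bracket is at most $e(X)$, and $e(X)$ — the number of pairs of consecutive integers occurring in $X$ — is at most $|X|-1$. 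Hence $L(Y)+L(Z)<|X|\log 2+\theta\, L(X)$, and exponentiating recovers the stated inequality. (The degenerate cases $|X|\le 1$, or $Y$ or $Z$ empty, are immediate: then $\theta=1$, the products on the two sides are equal or empty, and $2^{|X|}\ge 1$.)

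The one genuinely delicate point — and the reason for the slightly unusual $\max(2,\cdot)$ — is that one must upgrade $\log|a-b|$ to a bona fide pseudometric while losing at most $\log 2$ \emph{per element} of $X$, not per pair. Cheaper-looking repairs fail this: $d(a,b)=\log(|a-b|+1)$ is a pseudometric but perturbs all $\binom{|X|}{2}$ pairs, and summing the perturbations $\log\!\bigl(1+\tfrac1{|a-b|}\bigr)$ produces an error of order $\log|X|!$ rather than $|X|\log 2$; similarly $\log(2|a-b|)$ adds $\log 2$ to every pair. Replacing $|a-b|$ by $\max(2,|a-b|)$ confines the entire failure of the triangle inequality to unit-distance pairs, of which there are at most $|X|-1$ — exactly the bound that yields the factor $2^{|X|}$. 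Everything after that is bookkeeping.
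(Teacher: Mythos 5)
Your proof is correct and is essentially the paper's own argument: the same modified distance $d(a,b)=\log\max(2,|a-b|)$, the same application of Lemma \ref{Metric}, and the same observation that the correction is confined to unit-distance pairs, of which there are fewer than $|X|$ (the paper's $c(X)<|X|$ is your $e(X)\le|X|-1$). The only difference is cosmetic: you work additively with logarithms and verify the triangle inequality explicitly, while the paper states the metric and exponentiates directly.
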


\begin{proof}
We define a metric on $X$ by setting 
\[
d(x_{1},x_{2}):=\ln\left(\max\left(2,\left|x_{2}-x_{1}\right|\right)\right),
\]
for all $x_{1}\neq x_{2}$. If $c(X)$ denotes the cardinality of
$\{x\in X\mid x+1\in X\}$, then 
\[
\exp(\frac{1}{2}d(X,X))=2^{c(X)}\prod_{\substack{x_{1},x_{2}\in X\\
x_{1}<x_{2}
}
}\left(x_{2}-x_{1}\right).
\]
By Lemma \ref{Metric}, if $\beta:=\frac{\left|Y\right|^{2}+\left|Z\right|^{2}}{\left|Y\right|^{2}+\left|Y\right|\left|Z\right|+\left|Z\right|^{2}}$,
we have: 
\begin{align*}
 & 2^{c(Y)+c(Z)}\prod_{\substack{y_{1},y_{2}\in Y\\
y_{1}<y_{2}
}
}(y_{2}-y_{1})\prod_{\substack{z_{1},z_{2}\in Z\\
z_{1}<z_{2}
}
}(z_{2}-z_{1})=\exp(\frac{1}{2}d(Y,Y)+\frac{1}{2}d(Z,Z))\\
\leq & \exp\left(\frac{\beta}{2}d(X,X)\right)=\left(2^{c(X)}\prod_{\substack{x_{1},x_{2}\in X\\
x_{1}<x_{2}
}
}\left(x_{2}-x_{1}\right)\right)^{\beta}.
\end{align*}
Since $\beta<1$ and $c(X)<\left|X\right|$, the lemma follows. 
\end{proof}
For integers $1\leq m<n$, let $G:=\U_{n}$, let $H:=\U_{m}\times\U_{n-m}\subseteq G$,
and let $T:=\U_{1}^{n}\subseteq H\subseteq G$ be a maximal torus
in both $H$ and $G$. We identify weights of $T$ with ordered $n$-tuples
of integers. Recall that a weight $(\lambda_{1},\ldots,\lambda_{n})\in\Z^{n}$
is dominant for $G$ if $\lambda_{1}\ge\lambda_{2}\ge\cdots\ge\lambda_{n}$,
and dominant for $H$ if $\lambda_{1}\ge\cdots\ge\lambda_{m}$ and
$\lambda_{m+1}\ge\cdots\ge\lambda_{n}$. Let $\delta_{G}:=(n-1,n-2,\ldots,0)$
and $\delta_{H}:=(m-1,\ldots,0,n-m-1,\ldots,0)$ denote the half sums
of positive roots of $G$ and $H$ respectively. Let $W_{G}=S_{n}$
and $W_{H}=S_{m}\times S_{n-m}$ denote the Weyl groups of $G$ and
$H$ respectively. For $\lambda$ a dominant weight of $G$ (resp.
$H$), we denote by $\rho_{\lambda,G}=\rho_{\lambda,n}$ (resp. $\rho_{\lambda,H}$)
the character of the irreducible representation with highest weight
$\lambda$. Finally, for every weight $\lambda=(\lambda_{1},\ldots,\lambda_{n})\in\Z^{n}$
we denote by $\xi_{\lambda}=\rho_{\lambda,T}$ the corresponding character
of $T$, so that $\xi_{\lambda}(t_{1},\ldots,t_{n})=t_{1}^{\lambda_{1}}\cdots t_{n}^{\lambda_{n}}$. 
\begin{prop}
\label{Dim ineq}Let $\lambda$ denote a dominant weight of $G$ and
$\mu$ a dominant weight of $H$, such that $\lambda+\delta_{G}$
and $\mu+\delta_{H}$ lie in the same $W_{G}$-orbit. Then: 
\[
\prod_{i=1}^{m-1}i{}^{m-i}\prod_{j=1}^{n-m-1}j{}^{n-m-j}\cdot\rho_{\mu,H}(1)\le2^{n}\Bigl(\rho_{\lambda,G}(1)\cdot\prod_{k=1}^{n-1}k^{n-k}\Bigr)^{\frac{n^{2}-2mn+2m^{2}}{n^{2}-mn+m^{2}}}.
\]
\end{prop}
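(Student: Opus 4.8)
The plan is to use the Weyl dimension formula for both $G=\U_n$ and $H=\U_m\times\U_{n-m}$ to convert the claimed inequality into a purely combinatorial statement about products of differences of integers, and then apply Lemma~\ref{Products}. Write $x_i:=(\lambda+\delta_G)_i$ for $i=1,\ldots,n$; these are $n$ (not necessarily distinct, but after the orbit comparison we may assume distinct) integers, and by Lemma~\ref{lem:Weyl Dimension Formula},
\[
\rho_{\lambda,G}(1)=\frac{\prod_{i<j}(x_i-x_j)}{\prod_{i<j}(j-i)}=\frac{\prod_{i<j}(x_i-x_j)}{\prod_{k=1}^{n-1}k^{n-k}},
\]
using the standard identity $\prod_{1\le i<j\le n}(j-i)=\prod_{k=1}^{n-1}k!=\prod_{k=1}^{n-1}k^{n-k}$. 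So $\rho_{\lambda,G}(1)\cdot\prod_{k=1}^{n-1}k^{n-k}=\prod_{i<j}|x_i-x_j|$, which is exactly the quantity appearing on the right-hand side of the claim.

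Next I would do the same for $H$. Since $\mu+\delta_H$ lies in the same $W_G=S_n$-orbit as $\lambda+\delta_G$, the multiset $\{(\mu+\delta_H)_i\}$ equals $\{x_i\}$; moreover the first block $Y:=\{(\mu+\delta_H)_1,\ldots,(\mu+\delta_H)_m\}$ and second block $Z:=\{(\mu+\delta_H)_{m+1},\ldots,(\mu+\delta_H)_n\}$ give a partition $X=Y\sqcup Z$ of the set $X:=\{x_1,\ldots,x_n\}$ with $|Y|=m$, $|Z|=n-m$. Applying Weyl's dimension formula inside each factor of $H$ and using the same product-of-factorials identity for $\U_m$ and $\U_{n-m}$,
\[
\rho_{\mu,H}(1)=\frac{\prod_{y_1<y_2\in Y}(y_2-y_1)\cdot\prod_{z_1<z_2\in Z}(z_2-z_1)}{\prod_{i=1}^{m-1}i^{m-i}\cdot\prod_{j=1}^{n-m-1}j^{n-m-j}},
\]
so the left-hand side of the claim, namely $\prod_{i=1}^{m-1}i^{m-i}\cdot\prod_{j=1}^{n-m-1}j^{n-m-j}\cdot\rho_{\mu,H}(1)$, equals precisely $\prod_{Y}(y_2-y_1)\cdot\prod_{Z}(z_2-z_1)$. (One minor point: the orbit condition forces $Y$ and $Z$ to each be strictly increasing once we know $\mu$ is dominant for $H$, so the products over ordered pairs are products of positive integers and there are no absolute-value issues; if some $x_i$ coincide the statement degenerates but both sides vanish or the inequality is trivial.)

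With these two identifications in hand, the claimed inequality becomes
\[
\prod_{\substack{y_1<y_2\in Y}}(y_2-y_1)\prod_{\substack{z_1<z_2\in Z}}(z_2-z_1)\le 2^{n}\Bigl(\prod_{\substack{x_1<x_2\in X}}(x_2-x_1)\Bigr)^{\frac{n^2-2mn+2m^2}{n^2-mn+m^2}},
\]
which is exactly Lemma~\ref{Products} applied to the finite set of integers $X=Y\sqcup Z$, once one checks that the exponent $\frac{|Y|^2+|Z|^2}{|Y|^2+|Y||Z|+|Z|^2}$ there equals $\frac{m^2+(n-m)^2}{m^2+m(n-m)+(n-m)^2}=\frac{n^2-2mn+2m^2}{n^2-mn+m^2}$. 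So the proof is essentially assembling three bookkeeping steps plus one invocation of Lemma~\ref{Products}.

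The main obstacle is not any hard estimate but getting the orbit/dominance bookkeeping exactly right: one must verify that $\mu+\delta_H$ being $W_G$-conjugate to $\lambda+\delta_G$ together with $\mu$ dominant for $H$ really does force $Y$ and $Z$ to be the ``first $m$'' and ``last $n-m$'' coordinates in a way compatible with writing both dimensions as products of positive integer differences over the \emph{same} underlying set $X$ — in particular that the denominators $\prod k^{n-k}$, $\prod i^{m-i}$, $\prod j^{n-m-j}$ are exactly the Vandermonde of the $\delta$'s — and to handle (or dismiss as trivial) the degenerate case where $\lambda+\delta_G$ has a repeated entry, i.e.\ $\rho_{\lambda,G}$ is zero-dimensional or the weight is singular. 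Once that is pinned down, everything else is the identity $\prod_{1\le i<j\le N}(j-i)=\prod_{k=1}^{N-1}k^{N-k}$ and a direct appeal to Lemma~\ref{Products}.
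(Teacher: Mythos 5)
Your proof is correct and follows essentially the same route as the paper: express both dimensions via the Weyl dimension formula as products of positive differences over the common set $X=Y\sqcup Z$ of coordinates of $\lambda+\delta_{G}$, then invoke Lemma~\ref{Products} with the exponent identification $\frac{m^{2}+(n-m)^{2}}{m^{2}+m(n-m)+(n-m)^{2}}=\frac{n^{2}-2mn+2m^{2}}{n^{2}-mn+m^{2}}$. The only superfluous concern is the ``degenerate'' case you flag at the end: since $\lambda$ is dominant, the coordinates of $\lambda+\delta_{G}$ are strictly decreasing, hence automatically distinct, so no repeated-entry case arises.
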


\begin{proof}
If $\lambda=(\lambda_{1},\ldots,\lambda_{n})$, then the coordinates
of $\lambda+\delta_{G}=(\lambda_{1}+n-1,\lambda_{2}+n-2,\ldots,\lambda_{n})$
are all distinct; we define $X$ to be the set of coordinates of $\lambda+\delta_{G}$.
We define $Y$ to be the set consisting of the first $m$ coordinates
of $\mu+\delta_{H}$ and $Z$ the set consisting of the last $n-m$.
Since $\lambda+\delta_{G}$ and $\mu+\delta_{H}$ are in the same
$W$-orbit, $X=Y\coprod Z$. By the Weyl dimension formula (Lemma
\ref{lem:Weyl Dimension Formula}), 
\[
\rho_{\mu,H}(1)=\frac{\prod\limits _{\substack{y_{1},y_{2}\in Y\\
y_{1}<y_{2}
}
}(y_{2}-y_{1})\prod\limits _{\substack{z_{1},z_{2}\in Z\\
z_{1}<z_{2}
}
}(z_{2}-z_{1})}{\prod_{i=1}^{m-1}i{}^{m-i}\prod_{j=1}^{n-m-1}j{}^{n-m-j}},
\]
and 
\[
\rho_{\lambda,G}(1)=\frac{\prod\limits _{\substack{x_{1},x_{2}\in X\\
x_{1}<x_{2}
}
}(x_{2}-x_{1})}{\prod_{k=1}^{n-1}k^{n-k}},
\]
respectively. The proposition follows immediately from Lemma~\ref{Products}. 
\end{proof}
\begin{prop}
\label{Chi of t bound}Let $t\in T$ be an element whose characteristic
polynomial satisfies 
\[
P_{t}(x)=\prod_{i=1}^{m}(x-\alpha_{i})\prod_{j=1}^{n-m}(x-\beta_{j}),
\]
where $\left|\alpha_{i}-\beta_{j}\right|\ge\epsilon$ for all $i,j$.
Then for every dominant weight $\lambda$ of $G$,
\[
\left|\rho_{\lambda,G}(t)\right|\le2^{2n^{2}\log n}\epsilon^{-m(n-m)}\Bigl(\rho_{\lambda,G}(1)\Bigr)^{\frac{n^{2}-2mn+2m^{2}}{n^{2}-mn+m^{2}}}.
\]
\end{prop}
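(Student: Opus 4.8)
The plan is to pair $\rho_{\lambda,G}$ with an auxiliary virtual character of $H$ that is bounded below at $t$ and that turns $\rho_{\lambda,G}|_H$ into a short signed sum of irreducible $H$-characters whose dimensions are controlled by Proposition~\ref{Dim ineq}. Write $G=\U_n$, $H=\U_m\times\U_{n-m}$, and let $\mathfrak n$ be the $H$-module $\C^m\otimes(\C^{n-m})^*$, i.e.\ the sum of the $T$-weight spaces of weights $e_i-e_j$ with $1\le i\le m<j\le n$; it has dimension $m(n-m)$. Let $\xi$ be the character of the virtual representation $\sum_{i=0}^{m(n-m)}(-1)^i\wedge^i\mathfrak n$, so $\xi(g)=\det(I-g\mid\mathfrak n)$. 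If $t\in T$ has eigenvalues $\alpha_1,\dots,\alpha_m,\beta_1,\dots,\beta_{n-m}$ (all of absolute value $1$), then the eigenvalues of $t$ on $\mathfrak n$ are the $\alpha_i\beta_j^{-1}$, so
\[
|\xi(t)|=\prod_{i=1}^{m}\prod_{j=1}^{n-m}\bigl|1-\alpha_i\beta_j^{-1}\bigr|=\prod_{i,j}|\alpha_i-\beta_j|\ \ge\ \epsilon^{m(n-m)}
\]
by hypothesis; replacing $\mathfrak n$ by $\mathfrak n^*$ conjugates $\xi$ and does not change $|\xi(t)|$, so the precise version is immaterial below.

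The key algebraic input I would establish (from the Weyl character formula) is the following identity of $T$-characters: there are signs $\varepsilon_u\in\{\pm1\}$ and, for each $u$ in the set $W^H$ of minimal-length representatives of $W_H\backslash W_G$, a dominant $H$-weight $\mu_u$ with $\mu_u+\delta_H$ in the $W_G$-orbit of $\lambda+\delta_G$, such that, up to multiplication by a one-dimensional character of $T$,
\[
\rho_{\lambda,G}\big|_T\cdot\xi\ =\ \sum_{u\in W^H}\varepsilon_u\,\rho_{\mu_u,H}\big|_T .
\]
To obtain this, write $\rho_{\lambda,G}\cdot A_{\delta_G}=A_{\lambda+\delta_G}$ with $A_\nu=\sum_{w\in W_G}\sgn(w)\rho_{w\nu,T}$; decompose $W_G=\coprod_{u\in W^H}W_Hu$ to get $A_{\lambda+\delta_G}=\sum_{u\in W^H}\sgn(u)B_{u(\lambda+\delta_G)}$ with $B_\nu=\sum_{v\in W_H}\sgn(v)\rho_{v\nu,T}$; divide by $B_{\delta_H}$ and identify $A_{\delta_G}/B_{\delta_H}$, via the Weyl denominator formula, with $\xi$ (up to a one-dimensional twist and conjugation), since the positive roots of $G$ that are not roots of $H$ are exactly the weights of $\mathfrak n$; finally, each $u(\lambda+\delta_G)$ has pairwise distinct coordinates, hence is $W_H$-regular, so $B_{u(\lambda+\delta_G)}/B_{\delta_H}=\varepsilon_u\rho_{\mu_u,H}|_T$ for the $H$-dominant representative $\mu_u+\delta_H$ of its $W_H$-orbit, by the Weyl character formula for $H$.

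Evaluating the identity at $t$, using $|\rho_{\mu_u,H}(t)|\le\rho_{\mu_u,H}(1)$, the triangle inequality, the lower bound $|\xi(t)|\ge\epsilon^{m(n-m)}$, and that the one-dimensional twist has absolute value $1$ at $t$, gives
\[
|\rho_{\lambda,G}(t)|\ \le\ \epsilon^{-m(n-m)}\sum_{u\in W^H}\rho_{\mu_u,H}(1).
\]
Since each pair $(\lambda,\mu_u)$ satisfies the hypothesis of Proposition~\ref{Dim ineq}, and $\prod_{i=1}^{m-1}i^{m-i}\prod_{j=1}^{n-m-1}j^{n-m-j}\ge1$, every term obeys
\[
\rho_{\mu_u,H}(1)\ \le\ 2^{n}\Bigl(\rho_{\lambda,G}(1)\prod_{k=1}^{n-1}k^{\,n-k}\Bigr)^{\frac{n^{2}-2mn+2m^{2}}{n^{2}-mn+m^{2}}}\ \le\ 2^{n}\,\rho_{\lambda,G}(1)^{\frac{n^{2}-2mn+2m^{2}}{n^{2}-mn+m^{2}}}\,\prod_{k=1}^{n-1}k^{\,n-k},
\]
the last step using that the exponent is $\le1$ and that $\rho_{\lambda,G}(1)$ and $\prod_k k^{n-k}$ are $\ge1$. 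There are $|W^H|=\binom nm\le2^n$ terms, so summing and crudely estimating $4^{n}\prod_{k=1}^{n-1}k^{\,n-k}\le 4^{n}n^{n(n-1)/2}\le2^{2n^{2}\log n}$ for $n\ge2$ (the cases $n\le1$ or $m\in\{0,n\}$, where $\mathfrak n=0$ and the inequality is trivial, being set aside) yields the claimed estimate. The one genuinely delicate point is pinning down the displayed $T$-character identity — getting the one-dimensional twist and the product over the roots of $\mathfrak n$ exactly right, and checking $W_H$-regularity of the $u(\lambda+\delta_G)$ so that the $B_{u(\lambda+\delta_G)}/B_{\delta_H}$ really are $\pm$ irreducible $H$-characters — after which the proposition follows by combining the lower bound on $|\xi(t)|$ with a single application of Proposition~\ref{Dim ineq} and a crude numerical estimate.
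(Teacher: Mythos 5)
Your argument is correct and is essentially the paper's own proof: both rest on the Weyl character formula, the decomposition of $W_{G}$ into $W_{H}$-cosets whose representatives send $\lambda+\delta_{G}$ to $H$-dominant weights, the bound $\left|\rho_{\mu,H}(t)\right|\le\rho_{\mu,H}(1)$ combined with Proposition~\ref{Dim ineq}, and the count of at most $\binom{n}{m}\le2^{n}$ terms. Your packaging of the ratio of Weyl denominators as the virtual character $\xi=\sum_{i}(-1)^{i}\wedge^{i}\mathfrak{n}$ with $\left|\xi(t)\right|\ge\epsilon^{m(n-m)}$ is exactly how the paper describes the idea in its introduction, so the difference is purely presentational.
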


\begin{proof}
Let $\Phi_{G}$ (resp. $\Phi_{H}$) denote the set of roots of $G$
(resp. $H$) with respect to $T$. Write $\Phi_{G}^{+}$ (resp. $\Phi_{H}^{+}$)
for the set of positive roots. By the Weyl character formula, we have
\begin{equation}
\rho_{\lambda,G}(t)=\frac{\sum_{w\in W_{G}=S_{n}}\sgn(w)\cdot\xi_{w(\lambda+\delta_{G})}(t)}{\xi_{\delta_{G}}(t)^{-1}\prod_{\alpha\in\Phi_{G}^{+}}(\alpha(t)-1)}.\label{WG sum}
\end{equation}
By the hypothesis, for all $\alpha\in\Phi_{G}\setminus\Phi_{H}$,
$\left|\alpha(t)-1\right|\ge\epsilon$.

Every right $W_{H}$-coset in $W_{G}$ has a representative $\sigma$
such that $\sigma(\lambda+\delta_{G})$ is an $H$-dominant weight.
Let $\Sigma\subset W_{G}$ be the set of these coset representatives.
Then $W_{G}=W_{H}\Sigma$ and the right hand side of (\ref{WG sum})
is equal to 
\[
\sum_{\sigma\in\Sigma}\sgn(\sigma)\frac{\sum_{w\in W_{H}}\sgn(w)\cdot\xi_{w(\sigma(\lambda+\delta_{G}))}(t)}{\xi_{\delta_{G}}(t)^{-1}\prod_{\alpha\in\Phi_{G}^{+}}(\alpha(t)-1)}=\sum_{\mu}\pm\frac{\sum_{w\in W_{H}}\sgn(w)\cdot\xi_{w(\mu+\delta_{H})}(t)}{\xi_{\delta_{G}}(t)^{-1}\prod_{\alpha\in\Phi_{G}^{+}}(\alpha(t)-1)},
\]
where the outer sum on the right hand side is taken over all $H$-dominant
weights $\mu$ such that $\mu+\delta_{H}$ lies in the $W_{G}$-orbit
of $\lambda+\delta_{G}$. Taking absolute values, we have 
\begin{equation}
\begin{split}\left|\rho_{\lambda,G}(t)\right| & \le\sum_{\mu}\left|\frac{\sum_{w\in W_{H}}\sgn(w)\cdot\xi_{w(\mu+\delta_{H})}(t)}{\xi_{\delta_{H}}(t)^{-1}\prod_{\alpha\in\Phi_{G}^{+}}(\alpha(t)-1)}\right|\\
 & \le\epsilon^{-\left|\Phi_{G}^{+}\setminus\Phi_{H}^{+}\right|}\sum_{\mu}\left|\frac{\sum_{w\in W_{H}}\sgn(w)\cdot\xi_{w(\mu+\delta_{H})}(t)}{\xi_{\delta_{H}}(t)^{-1}\prod_{\alpha\in\Phi_{H}^{+}}(\alpha(t)-1)}\right|\\
 & =\epsilon^{-\left|\Phi_{G}^{+}\setminus\Phi_{H}^{+}\right|}\sum_{\mu}\left|\rho_{H,\mu}(t)\right|\le\epsilon^{-m(n-m)}\binom{n}{m}\max_{\mu}\rho_{H,\mu}(1).
\end{split}
\label{chi G of t}
\end{equation}
By Proposition~\ref{Dim ineq}, for all $\mu$, 
\[
\rho_{H,\mu}(1)\le2^{n}\Bigl(\prod_{k=1}^{n-1}k{}^{n-k}\rho_{\lambda,G}(1)\Bigr)^{\frac{n^{2}-2mn+2m^{2}}{n^{2}-mn+m^{2}}}\leq2^{n}\cdot2^{\frac{1}{2}n^{2}\log n}\Bigl(\rho_{\lambda,G}(1)\Bigr)^{\frac{n^{2}-2mn+2m^{2}}{n^{2}-mn+m^{2}}},
\]
which, together with the inequality $\binom{n}{m}\leq2^{n}\leq2^{\frac{1}{2}n^{2}\log n}$,
implies the proposition. 
\end{proof}
We can now state the first main theorem of the section: 
\begin{thm}
\label{thm:Exponential bound for separated}Let $n\geq2$ be an integer,
$g\in\U_{n}$, $\rho\in\Irr(\U_{n})$, and $0<\gamma<\frac{1}{2}$.
Denote $\epsilon=\rho(1)^{-\gamma(1-\gamma)/n^{2}}$ and assume that
$\rho(1)>2^{\frac{16}{\gamma}n^{2}\log n}$ and that $g$ is $(\gamma,\epsilon)$-separated.
Then 
\begin{equation}
\left|\rho(g)\right|\le\rho(1)^{1-\frac{1}{2}\gamma(1-\gamma)}.\label{delta bound}
\end{equation}
\end{thm}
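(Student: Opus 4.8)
The plan is to reduce to the maximal torus and then invoke Proposition~\ref{Chi of t bound}. Write $\rho=\rho_{\lambda,G}$ for its highest weight $\lambda$ (a dominant weight of $G=\U_n$). Since $|\rho(g)|$ depends only on the multiset of eigenvalues of $g$, and $g$ is $(\gamma,\epsilon)$-separated, I would conjugate $g$ into $T=\U_1^n$ so that it becomes $t\in T$ with characteristic polynomial $P_t(x)=\prod_{i=1}^m(x-\alpha_i)\prod_{j=1}^{n-m}(x-\beta_j)$, where $\{\alpha_i\}=Q$ and $\{\beta_j\}=R$ are the two parts witnessing the separation; setting $m:=|Q|$ we have $\gamma n\le m\le(1-\gamma)n$, and $|\alpha_i-\beta_j|\ge\epsilon$ for all $i,j$ by hypothesis. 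Proposition~\ref{Chi of t bound} then yields
\[
|\rho(g)|=|\rho_{\lambda,G}(t)|\le 2^{2n^2\log n}\,\epsilon^{-m(n-m)}\,\rho(1)^{\frac{n^2-2mn+2m^2}{n^2-mn+m^2}}.
\]

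Next I would bound each of the three factors by a power of $\rho(1)$, using $\rho(1)>1$ throughout. Put $x=m/n\in[\gamma,1-\gamma]$. A direct computation gives $\frac{n^2-2mn+2m^2}{n^2-mn+m^2}=1-\frac{m(n-m)}{n^2-mn+m^2}$; since $n^2-mn+m^2=n^2(1-x+x^2)\le n^2$ and $x(1-x)\ge\gamma(1-\gamma)$ on the interval $[\gamma,1-\gamma]$ (as $x\mapsto x(1-x)$ is concave, its minimum on this interval is at an endpoint), this exponent is at most $1-\gamma(1-\gamma)$. For the middle factor, $\epsilon=\rho(1)^{-\gamma(1-\gamma)/n^2}$ gives $\epsilon^{-m(n-m)}=\rho(1)^{\gamma(1-\gamma)\,m(n-m)/n^2}\le\rho(1)^{\gamma(1-\gamma)/4}$, because $m(n-m)/n^2=x(1-x)\le\frac14$. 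For the first factor, the hypothesis $\rho(1)>2^{(16/\gamma)n^2\log n}$ gives $2^{2n^2\log n}<\rho(1)^{\gamma/8}$.

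Combining the three bounds,
\[
|\rho(g)|<\rho(1)^{\,1-\gamma(1-\gamma)+\frac14\gamma(1-\gamma)+\frac18\gamma},
\]
so it remains to check that the exponent is at most $1-\tfrac12\gamma(1-\gamma)$. After dividing by $\gamma>0$ this is equivalent to $\tfrac18\le\tfrac14(1-\gamma)$, i.e.\ to $\gamma\le\tfrac12$, which holds since $\gamma<\tfrac12$. This gives \eqref{delta bound}.

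Structurally the argument is a bookkeeping assembly of Propositions~\ref{Dim ineq} and \ref{Chi of t bound} (which themselves rest on the combinatorial estimate Lemma~\ref{Products} and the pseudometric inequality Lemma~\ref{Metric}), together with elementary estimates on the rational function $\frac{1-2x+2x^2}{1-x+x^2}$; there is no serious obstacle. The only delicate point — and the thing I would pin down first — is the numerology: one must choose $\epsilon=\rho(1)^{-\gamma(1-\gamma)/n^2}$ small enough (and impose a lower bound on $\rho(1)$ large enough) that, after absorbing the separation loss $\epsilon^{-m(n-m)}$ and the harmless factor $2^{2n^2\log n}$ into powers of $\rho(1)$, one still retains a net gain of $\tfrac12\gamma(1-\gamma)$ in the exponent. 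I would also make explicit the (trivial) facts that the reduction to $T$ uses only that $\rho$ is a class function, and that the bound $|\alpha_i-\beta_j|\ge\epsilon$ needed by Proposition~\ref{Chi of t bound} is precisely the separation hypothesis on $g$.
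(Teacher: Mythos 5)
Your proposal is correct and follows essentially the same route as the paper: reduce to the torus, apply Proposition~\ref{Chi of t bound} with $m=|Q|$, bound the exponent $\frac{n^{2}-2mn+2m^{2}}{n^{2}-mn+m^{2}}$ by $1-\gamma(1-\gamma)$, and absorb $2^{2n^{2}\log n}$ and $\epsilon^{-m(n-m)}$ into powers $\rho(1)^{\gamma/8}$ and $\rho(1)^{\gamma(1-\gamma)/4}$ using the hypotheses. The only cosmetic difference is bookkeeping: the paper normalizes $m\le n/2$ and phrases the exponent bound via the function $f(x)=\frac{x(1-x)}{x^{2}+1+(1-x)^{2}}$, while you work directly on $[\gamma,1-\gamma]$; the numerology is identical.
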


\begin{proof}
Without loss of generality, we can assume that $g$ is diagonal with
eigenvalues $\lambda_{1},\ldots,\lambda_{n}$ and there exists $m\in[\gamma n,\frac{1}{2}n]$
such that $\left|\lambda_{i}-\lambda_{j}\right|\geq\epsilon$ if $i\leq m<j$.
Let 
\[
f(x):=\frac{x(1-x)}{x^{2}+1+(1-x)^{2}}.
\]
Then $f(x)$ is increasing on $[0,\frac{1}{2}]$ and $f(x)\geq\frac{1}{2}x(1-x)$.
Since $\frac{m}{n}\in[\gamma,\frac{1}{2}]$, 
\[
1-\gamma(1-\gamma)\ge1-2f(\gamma)\ge1-2f\bigl(\frac{m}{n}\bigr)=\frac{n^{2}-2mn+2m^{2}}{n^{2}-mn+m^{2}}.
\]
By Proposition~\ref{Chi of t bound}, 
\begin{equation}
\left|\rho(g)\right|\le2^{2n^{2}\log n}\epsilon^{-n^{2}/4}\rho(1)^{1-\gamma(1-\gamma)}.\label{ineq 1}
\end{equation}
Since $\rho(1)>2^{\frac{16}{\gamma}n^{2}\log n}$ we have: 
\begin{equation}
2^{2n^{2}\log n}\leq\rho(1)^{\frac{1}{8}\gamma}\leq\rho(1)^{\frac{1}{4}\gamma(1-\gamma)},\label{ineq 2}
\end{equation}
By definition of $\epsilon$, 
\begin{equation}
\epsilon^{-n^{2}/4}=\rho(1)^{\frac{1}{4}\gamma(1-\gamma)}.\label{ineq 3}
\end{equation}
The theorem follows immediately from inequalities (\ref{ineq 1}),
(\ref{ineq 2}), and (\ref{ineq 3}). 
\end{proof}
We now turn to proving character estimates for $(\beta,\epsilon)$-spread
elements. 
\begin{lem}
\label{Lem: reduction from spread to separated}Let $n\ge2$, and
suppose $0<\epsilon<1$ and $0<\beta<\frac{1}{2}$. If $g\in\U_{n}$
is $(2\beta,\epsilon)$-spread, then it is $(\beta,\frac{\epsilon}{n})$-separated. 
\end{lem}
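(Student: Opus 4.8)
The plan is to extract the separation from the connected components of the ``same-cluster'' graph on the eigenvalues. Let $\lambda_1,\dots,\lambda_n\in\U_1$ be the eigenvalues of $g$ (with multiplicity), and let $\Gamma$ be the graph on $\{1,\dots,n\}$ with an edge between $i$ and $j$ exactly when $|\lambda_i-\lambda_j|<\epsilon/n$. If $Q\sqcup R$ is any partition of the vertex set into two unions of connected components, then every eigenvalue indexed by $Q$ is at distance $\ge\epsilon/n$ from every eigenvalue indexed by $R$ (a smaller distance would be an edge). So the whole problem reduces to two claims: (i) every connected component of $\Gamma$ has at most $(1-2\beta)n$ vertices; and (ii) the components can be distributed into two bunches each of size $\ge\beta n$.

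Claim (ii) is a one-line greedy argument. Since $\beta>0$, claim (i) forces each component to have fewer than $n$ vertices, so there are at least two components; list them in any order and keep adding components to $Q$ until $|Q|\ge\beta n$. If $C$ is the last component added, then $|Q|<\beta n+|C|\le\beta n+(1-2\beta)n=(1-\beta)n$, so the remaining components form a set $R$ with $|R|>\beta n$; together with $|Q|\ge\beta n$ this exhibits $g$ as $(\beta,\epsilon/n)$-separated.

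The real content is claim (i). Fix a component $C$ with $k$ vertices. A spanning tree of $C$ has $\le n-1$ edges, and the edge $\{i,j\}$ spans a minor arc of angular length $2\arcsin(|\lambda_i-\lambda_j|/2)\le(\pi/2)|\lambda_i-\lambda_j|<\pi\epsilon/(2n)$, using convexity of $\arcsin$ on $[0,1]$. The union of these (at most $n-1$) arcs is a closed connected subset of $\U_1$ of total length $<\pi\epsilon/2<\pi$, hence a closed arc $A$ of length $<\pi$ containing $C$. Order the eigenvalues of $C$ as $q_1,\dots,q_k$ along $A$. I claim $|q_t-q_{t+1}|<\epsilon/n$ for all $t$: otherwise $\{q_1,\dots,q_t\}$ and $\{q_{t+1},\dots,q_k\}$ would be nonempty, and any cross-pair, having angular separation in $A$ at least that of $(q_t,q_{t+1})$ and lying in an arc of length $<\pi$ on which chord length is increasing in angular separation, would be at Euclidean distance $\ge\epsilon/n$, contradicting connectedness of $C$. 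Therefore $C$ lies in an arc of angular length $\le(k-1)\cdot\pi\epsilon/(2n)<\pi\epsilon/2$, whose diameter in $\C$ is $\le 2\sin(\pi\epsilon/4)<2\epsilon$ (since $\pi/4<1$). As any arc of diameter $<2\epsilon$ sits inside one of diameter exactly $2\epsilon$, the $(2\beta,\epsilon)$-spread hypothesis gives $k\le(1-2\beta)n$.

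The main obstacle is the geometric estimate inside claim (i): converting the purely combinatorial statement that $C$ is connected by chords of length $<\epsilon/n$ into the statement that all of $C$ fits inside an arc of diameter $\le 2\epsilon$. This is exactly where the factor $n$ in the radius $\epsilon/n$ is essential --- a component may be chained together through up to $n-1$ hops of size $\epsilon/n$, with total arc length still below $\epsilon$ and hence safely inside an arc of diameter $2\epsilon$ --- and one has to handle the chord-versus-arc-length conversions and rule out a component that winds far around $\U_1$.
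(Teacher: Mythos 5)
Your proof is correct, and it takes a genuinely different route from the paper's. The paper's argument first uses pigeonhole to find an empty open arc of angle $2\pi/n$, cuts the circle there and sorts the eigenvalue angles $\theta_{1}\le\cdots\le\theta_{n}$; the $(2\beta,\epsilon)$-spread hypothesis then forces, among the middle $n-2(\lceil\beta n\rceil-1)$ angles, a single angular gap of size $>2\epsilon/n$, and the two sides of that gap (each containing at least $\beta n$ eigenvalues, using the empty arc to control the wrap-around direction) give the separation at once. You instead form the proximity graph at threshold $\epsilon/n$, show every connected component is small by chaining through a spanning tree (at most $n-1$ hops of chordal length $<\epsilon/n$ give total angular extent $<\pi\epsilon/2$, hence an arc of diameter $<2\epsilon$, so the spread hypothesis caps each component at $(1-2\beta)n$ eigenvalues, the singleton case being immediate), and then greedily group components into two sides of size $\ge\beta n$. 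The paper's version is shorter: the cut at the empty $2\pi/n$-arc disposes of the winding issue for free, and one gap produces both sides simultaneously, whereas you need the extra geometric care about a cluster wrapping around $\U_{1}$ (your total-length bound $<\pi$) plus the greedy regrouping. In exchange, your argument yields a slightly more structured conclusion --- every $\epsilon/n$-cluster fits in an arc of diameter $<2\epsilon$, and any component-respecting bipartition with both sides of size $\ge\beta n$ works --- and it does not rely on choosing a fortunate cut point. Both arguments give exactly the asserted $(\beta,\frac{\epsilon}{n})$-separation.
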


\begin{proof}
Let $g\in\U_{n}$ be a $(2\beta,\epsilon)$-spread element. By the
pigeonhole principle, there exists an open arc on the unit circle
of angle $\frac{2\pi}{n}>\frac{2\epsilon}{n}$ which contains no eigenvalue
of $g$. The complement of this arc is the image of some interval
of the form $[a,a+2\pi(n-1)/n]$ under the map $\theta\mapsto e^{i\theta}$,
so we can identify the eigenvalues of $g$ with $e^{i\theta_{1}},\ldots e^{i\theta n}$,
where 
\[
a\le\theta_{1}\le\theta_{2}\le\cdots\le\theta_{n}\le a+\frac{2\pi(n-1)}{n}.
\]
Let $r:=\lceil\beta n\rceil-1$. Since $g$ is $(2\beta,\epsilon)$-spread,
and since $n-2r>(1-2\beta)n$, it is impossible that $\theta_{r+1},\theta_{r+2},\ldots,\theta_{n-r}$
all lie in a subinterval of length $2\epsilon$. Hence, there must
be some gap of size $>2\frac{\epsilon}{n}$ between some pair $\theta_{s}$
and $\theta_{s+1}$, where $r+1\le s\le n-r-1$. The arc between any
element of $\{\theta_{1},\ldots,\theta_{s}\}$ and any element of
$\{\theta_{s+1},\ldots,\theta_{n}\}$ is at least $2\frac{\epsilon}{n}$,
so the distance is greater than $\frac{\epsilon}{n}$, and hence $g$
is $(\beta,\frac{\epsilon}{n})$-separated. 
\end{proof}
\begin{cor}
\label{cor:character estimates for spread elements}Let $0<\beta<1$.
Then for every $n\ge2$, every $\rho\in\Irr(\U_{n})$ with $\rho(1)>2^{\frac{8}{\beta}n^{2}\log n}$
and every element $g$ of $\U_{n}$ which is $(\beta,\epsilon)$-spread,
for $\epsilon=\rho(1)^{-\frac{1}{8}\beta(2-\beta)/n^{2}}$, we have:
\begin{equation}
\left|\rho(g)\right|\le\rho(1)^{1-\frac{\beta(2-\beta)}{8}}.\label{delta bound for spread elements}
\end{equation}
\end{cor}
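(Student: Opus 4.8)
The plan is to deduce $(\ref{delta bound for spread elements})$ from the separated-element bound of Theorem~\ref{thm:Exponential bound for separated}, using Lemma~\ref{Lem: reduction from spread to separated} to turn the ``spread'' hypothesis into a ``separated'' one and then matching the numerical parameters.

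I would set $\gamma:=\beta/2$. Since $0<\beta<1$ we have $0<\gamma<\frac12$, and the assumption $\rho(1)>2^{\frac{8}{\beta}n^{2}\log n}>1$ guarantees $\epsilon=\rho(1)^{-\frac18\beta(2-\beta)/n^{2}}<1$, so Lemma~\ref{Lem: reduction from spread to separated} applies to the $(2\gamma,\epsilon)$-spread element $g$ and shows $g$ is $(\gamma,\frac{\epsilon}{n})$-separated. Now record the identity $\gamma(1-\gamma)=\frac{\beta}{2}\cdot\frac{2-\beta}{2}=\frac14\beta(2-\beta)$; it gives simultaneously $\epsilon=\rho(1)^{-\frac12\gamma(1-\gamma)/n^{2}}$ (so the separation parameter $\rho(1)^{-\gamma(1-\gamma)/n^{2}}$ required in Theorem~\ref{thm:Exponential bound for separated} is precisely $\epsilon^{2}$) and $1-\frac12\gamma(1-\gamma)=1-\frac18\beta(2-\beta)$ (the target exponent). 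The remaining check is that $\frac{\epsilon}{n}\ge\epsilon^{2}$, i.e.\ $\epsilon\le\frac1n$; taking logarithms this is $\frac18\beta(2-\beta)\,n^{-2}\log_{2}\rho(1)\ge\log_{2}n$, which follows from $\rho(1)>2^{\frac{8}{\beta}n^{2}\log n}$ since $2-\beta>1$. Granting also that this degree bound suffices in Theorem~\ref{thm:Exponential bound for separated} for our $\gamma$, we conclude that $g$ is $(\gamma,\rho(1)^{-\gamma(1-\gamma)/n^{2}})$-separated and
\[
\left|\rho(g)\right|\le\rho(1)^{1-\frac12\gamma(1-\gamma)}=\rho(1)^{1-\frac18\beta(2-\beta)}.
\]

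I expect the delicate point to be exactly that last ``granting'': a literal appeal to Theorem~\ref{thm:Exponential bound for separated} with $\gamma=\beta/2$ wants a somewhat larger degree than $2^{\frac{8}{\beta}n^{2}\log n}$, because Lemma~\ref{Lem: reduction from spread to separated} costs a factor $n$ in the separation parameter and Proposition~\ref{Chi of t bound} --- which underlies Theorem~\ref{thm:Exponential bound for separated} --- carries both an $\epsilon^{-m(n-m)}$ term and a $2^{2n^{2}\log n}$ prefactor. To get the stated constant I would instead apply Proposition~\ref{Chi of t bound} directly with the true separation parameter $\epsilon/n$ and the index $m$ coming from Lemma~\ref{Lem: reduction from spread to separated} (so $m/n\in[\gamma,\frac12]$), use the bound $\frac{n^{2}-2mn+2m^{2}}{n^{2}-mn+m^{2}}=1-2f(m/n)\le 1-2f(\gamma)$ on the exponent of $\rho(1)$, and then carefully balance the resulting $2^{O(n^{2}\log n)}$ factors and $\rho(1)^{O(\gamma(1-\gamma))}$ factors against the lower bound on $\rho(1)$; the choice of exponent $\frac18\beta(2-\beta)$ in the definition of $\epsilon$ --- half of $\gamma(1-\gamma)$ --- is precisely what leaves the slack for this absorption. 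Everything else is routine bookkeeping.
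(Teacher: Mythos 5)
Your argument is essentially the paper's own proof: set $\gamma=\beta/2$, use Lemma~\ref{Lem: reduction from spread to separated} to pass from $(\beta,\epsilon)$-spread to $(\beta/2,\epsilon/n)$-separated, check via the degree hypothesis that $\epsilon/n\ge\epsilon^{2}=\rho(1)^{-\gamma(1-\gamma)/n^{2}}$, and invoke Theorem~\ref{thm:Exponential bound for separated}; all of your parameter identities are correct. Your ``delicate point'' is also well spotted \textemdash{} the threshold $2^{\frac{8}{\beta}n^{2}\log n}$ is indeed smaller than the $2^{\frac{16}{\gamma}n^{2}\log n}=2^{\frac{32}{\beta}n^{2}\log n}$ that Theorem~\ref{thm:Exponential bound for separated} literally demands, a mismatch the paper's proof silently ignores (it is harmless where the corollary is applied, since there $\rho(1)>2^{512(\ell(w)+1)n^{2}\log n}$), and your instinct to repair the constant by returning to Proposition~\ref{Chi of t bound} is the right one.
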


\begin{proof}
By our assumption on $\rho(1)$, we have $\frac{1}{n}>\rho(1)^{-\frac{1}{8}\beta(2-\beta)/n^{2}},$
and hence 
\[
\frac{\epsilon}{n}=\frac{1}{n}\cdot\rho(1)^{-\frac{1}{8}\beta(2-\beta)/n^{2}}>\rho(1)^{-\frac{1}{4}\beta(2-\beta)/n^{2}}.
\]
By Lemma \ref{Lem: reduction from spread to separated}, if $g$ is
$(\beta,\epsilon)$-spread, then it is $(\frac{\beta}{2},\frac{\epsilon}{n})$-separated,
and hence $(\frac{\beta}{2},\rho(1)^{-\frac{1}{4}\beta(2-\beta)/n^{2}})$-separated.
Theorem \ref{thm:Exponential bound for separated} now implies the
corollary. 
\end{proof}

\subsection{Improved character estimates for symmetric and exterior power representations}

We conclude this section by observing that for symmetric and exterior
powers of the natural representation of $\U_{n}$, we can prove upper
bounds of exponential type for character degrees much smaller than
$2^{O(n^{2}\log n)}$. The following lemma is a direct consequence
of Stirling's approximation. 
\begin{lem}
\label{lem:Striling's inequality}For all $k\geq1$ and $n>k$, we
have 
\begin{equation}
\left(\frac{n}{k}\right)^{k}\leq\prod_{j=0}^{k-1}\left(\frac{n-j}{k-j}\right)=\binom{n}{k}\leq\frac{n^{k}}{k!}\leq\left(\frac{n}{k}\right)^{k}e^{k}.\label{eq:Striling on binomial coefficients}
\end{equation}
\end{lem}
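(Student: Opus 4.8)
The plan is to verify the chain of four (in)equalities from left to right; each link is elementary, and the only analytic input is the crude form of Stirling's approximation $k!\ge(k/e)^{k}$. First I would record the central identity $\prod_{j=0}^{k-1}\frac{n-j}{k-j}=\binom{n}{k}$: the numerator is the falling factorial $n(n-1)\cdots(n-k+1)$ and the denominator is $k(k-1)\cdots 1=k!$, so the product equals $\frac{n!}{k!(n-k)!}=\binom{n}{k}$.

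For the leftmost inequality $\left(\frac{n}{k}\right)^{k}\le\prod_{j=0}^{k-1}\frac{n-j}{k-j}$, I would argue termwise. For each $j$ with $0\le j\le k-1$ the inequality $\frac{n-j}{k-j}\ge\frac{n}{k}$, after clearing the positive denominators, is equivalent to $k(n-j)\ge n(k-j)$, i.e.\ to $nj\ge kj$, which holds because $n>k$ (with equality at $j=0$). Multiplying these $k$ inequalities yields the bound. For the next inequality $\binom{n}{k}\le\frac{n^{k}}{k!}$, I would bound each of the $k$ factors in $n(n-1)\cdots(n-k+1)$ by $n$, so that $\binom{n}{k}=\frac{n(n-1)\cdots(n-k+1)}{k!}\le\frac{n^{k}}{k!}$.

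Finally, the rightmost inequality $\frac{n^{k}}{k!}\le\left(\frac{n}{k}\right)^{k}e^{k}$ is equivalent to $k!\ge(k/e)^{k}$, and this is the single spot where Stirling (or, equivalently, the exponential series) is invoked: from $e^{k}=\sum_{i\ge0}\frac{k^{i}}{i!}\ge\frac{k^{k}}{k!}$ one gets $k!\ge\frac{k^{k}}{e^{k}}$, hence $\frac{n^{k}}{k!}\le\frac{n^{k}e^{k}}{k^{k}}=\left(\frac{n}{k}\right)^{k}e^{k}$. None of these steps is a genuine obstacle; the only thing worth being careful about is isolating exactly which weak form of Stirling is needed, namely $k!\ge(k/e)^{k}$, and noting that it drops out immediately from truncating the series for $e^{k}$ at the $k$-th term.
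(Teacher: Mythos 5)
Your proof is correct, and it fills in exactly what the paper leaves implicit: the paper offers no written proof, merely remarking that the lemma is a direct consequence of Stirling's approximation, and your termwise verification of the two elementary inequalities together with the weak Stirling bound $k!\geq(k/e)^{k}$ (obtained from truncating the series for $e^{k}$) is the intended argument. No gaps.
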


Let us denote by $h_{m,n}:=\rho_{m^{1},n}\in\mathrm{Irr}(\U_{n})$
the character of the $m$-th symmetric power of the standard representation
of $\U_{n}$. 
\begin{prop}
\label{prop:character bound for symmetric}Fix $0<\beta<1$. There
exist $A,c>0$ such that if $n\ge2$, $m\ge An$, then for every $(\beta,h_{m,n}(1)^{-\frac{c}{n}})$-spread
$g\in\U_{n}$, 
\[
\left|h_{m,n}(g)\right|<h_{m,n}(1)^{1-\frac{\beta}{4}}.
\]
\end{prop}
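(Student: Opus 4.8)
The plan is to reduce to the separated case and run essentially the same Weyl character formula argument as in Proposition~\ref{Chi of t bound}, but to exploit that $h_{m,n}$ has a very special highest weight, namely $\lambda = (m,0,\ldots,0)$, so its dimension $h_{m,n}(1) = \binom{n+m-1}{m}$ is only roughly $(m/k)^{k}$-type large (by Lemma~\ref{lem:Striling's inequality}) rather than $2^{\Theta(n^2\log n)}$. Concretely, I would first choose $m \geq An$ so that $h_{m,n}(1) = \binom{n+m-1}{m} \geq \left(\frac{m}{n}\right)^{n} \geq A^{n}$ is at least, say, $2^{cn}$ with $c$ as large as we like by taking $A$ large; this is what replaces the hypothesis $\rho(1) > 2^{\frac{8}{\beta}n^{2}\log n}$ used in Corollary~\ref{cor:character estimates for spread elements}. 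Then, given a $(\beta,\epsilon)$-spread $g$ with $\epsilon = h_{m,n}(1)^{-c/n}$, Lemma~\ref{Lem: reduction from spread to separated} shows $g$ is $(\frac{\beta}{2},\frac{\epsilon}{n})$-separated, and by the pigeonhole argument there we may assume $g$ is diagonal with a partition of its eigenvalues into sets of sizes $m'$ and $n-m'$, $m' \in [\frac{\beta}{2}n, \frac12 n]$, with all cross-distances $\geq \frac{\epsilon}{n}$.

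Next I would retrace Proposition~\ref{Chi of t bound} with $H = \U_{m'}\times\U_{n-m'}$, obtaining
\[
|h_{m,n}(g)| \leq \left(\frac{\epsilon}{n}\right)^{-m'(n-m')} \binom{n}{m'} \max_{\mu} \rho_{H,\mu}(1),
\]
where $\mu$ ranges over $H$-dominant weights with $\mu + \delta_H$ in the $W_G$-orbit of $\lambda + \delta_G$. The crucial point is that since $\lambda = (m,0,\ldots,0)$, the coordinates of $\lambda + \delta_G$ are $\{m+n-1, n-2, n-3, \ldots, 1, 0\}$, and any rearrangement splitting these into a size-$m'$ block $Y$ and size-$(n-m')$ block $Z$ (each in decreasing order) has
\[
\rho_{H,\mu}(1) = \frac{\prod_{y_1<y_2 \in Y}(y_2-y_1)\prod_{z_1<z_2\in Z}(z_2-z_1)}{\prod_{i=1}^{m'-1}i^{m'-i}\prod_{j=1}^{n-m'-1}j^{n-m'-j}}.
\]
Because $Y \sqcup Z = \{0,1,\ldots,n-2\} \cup \{m+n-1\}$ differs from a full arithmetic progression only in one entry, the numerator is at most a simple explicit multiple of $\binom{n+m-1}{m}$ — one can bound $\prod_{y_1<y_2\in Y}(y_2-y_1) \leq (m+n-1)!/(\text{stuff})$ directly, or more cleanly use Proposition~\ref{Dim ineq} but then discard the $2^{n}(\prod k^{n-k})^{\cdots}$ factor in favor of the sharper fact that here $\rho_{\lambda,G}(1) = \binom{n+m-1}{m}$ is polynomial-in-$m$-small. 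I expect $\max_\mu \rho_{H,\mu}(1) \leq \binom{n+m-1}{m} \leq h_{m,n}(1)$ up to a factor like $(m+n)^{n}$ or so.

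Finally I would collect the error factors. We have $\binom{n}{m'} \leq 2^{n}$, the correction factor above is at most $(Cm)^{n}$ for an absolute constant, and $\left(\frac{\epsilon}{n}\right)^{-m'(n-m')} \leq \left(\frac{\epsilon}{n}\right)^{-n^2/4} = n^{n^2/4} h_{m,n}(1)^{\frac{c}{n}\cdot\frac{n^2}{4}} = n^{n^2/4} h_{m,n}(1)^{\frac{cn}{4}}$. Hmm — this last exponent $\frac{cn}{4}$ on $h_{m,n}(1)$ is a genuine power, not a polynomial correction, so to absorb it I must instead set $\epsilon = h_{m,n}(1)^{-c/n}$ with $c$ \emph{small} and all the polynomial-size junk ($2^{n}$, $(Cm)^{n}$, $n^{n^2/4}$, the $H$-dimension correction) absorbed using $h_{m,n}(1) \geq A^{n}$ with $A$ chosen so large that each such junk factor is at most $h_{m,n}(1)^{\beta/100}$; one checks $n^{n^2/4} \leq h_{m,n}(1)^{\beta/100}$ provided $m/n \geq n^{C/\beta}$, i.e.\ we may need $A = A(\beta)$ to depend on $\beta$ and, more worryingly, $m$ to be \emph{super}linear in $n$. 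The main obstacle is precisely this: the naive $\epsilon^{-n^2/4}$ loss forces either $m \gg n\log n$ or a two-regime argument. The fix is to \emph{not} throw away the denominator: one shows $m'(n-m') \log\frac{n}{\epsilon}$ is actually compensated because the numerator $\prod(y_2-y_1)\prod(z_2-z_1)$, being essentially $\binom{n+m-1}{m}\cdot\prod i^{m'-i}\prod j^{n-m'-j}$, already has size $h_{m,n}(1) \cdot 2^{\Theta(n^2\log n)}$, so the ratio $\max_\mu \rho_{H,\mu}(1)$ is only $h_{m,n}(1)$ times a benign factor and it is the \emph{combination} $\epsilon^{-m'(n-m')}\max_\mu\rho_{H,\mu}(1)$ that must be bounded by $h_{m,n}(1)^{1-\beta/4}$, exactly as in the proof of Theorem~\ref{thm:Exponential bound for separated} where $2^{2n^2\log n}\epsilon^{-n^2/4}$ was killed by $\rho(1)^{\frac14\gamma(1-\gamma)}$. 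So with $\epsilon = h_{m,n}(1)^{-c/n}$ and $c = c(\beta)$ chosen so that $\left(\frac{\epsilon}{n}\right)^{-n^2/4} \leq h_{m,n}(1)^{\beta/8}$ — which holds once $h_{m,n}(1) \geq 2^{n^2\log n}$, i.e.\ once $m \geq An$ with $A$ large enough that $(m/n)^n \geq 2^{n^2 \log n}$, forcing $A \geq n^{\log n}$... — we are back to needing $m$ superlinear. Therefore the honest statement of the argument is: take $m \geq An$, note $\log h_{m,n}(1) \geq n\log A$, and choose $c$ so small and $A$ so large (both depending only on $\beta$) that $\frac{n^2}{4}\log\frac{n}{\epsilon} = \frac{n^2}{4}(\log n + \frac{c}{n}\log h_{m,n}(1)) \leq \frac{n^2\log n}{4} + \frac{cn}{4}\log h_{m,n}(1)$, and the first term is $\leq \frac{\beta}{8}\log h_{m,n}(1)$ iff $\log h_{m,n}(1) \geq \frac{2n^2\log n}{\beta}$ — which does require $m$ roughly $n^{C/\beta}$. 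I would therefore expect the final proof in the paper to either (i) allow $A$ to depend on $n$ implicitly by a different normalization, or (ii) more likely, improve the separation exponent: one gets a gap of size $\epsilon$ (not $\epsilon/n$!) directly when $g$ is well-spread with respect to $(\beta, \epsilon)$ with $\epsilon$ a fixed power, killing the spurious $n$. I will pursue route (ii): show a $(\beta,\epsilon)$-spread $g$ with $m \geq An$ is in fact $(\frac\beta2, \epsilon')$-separated with $\epsilon'$ a constant power of $h_{m,n}(1)^{-1/n}$ without the $1/n$ loss, by using that with $m \geq An$ eigenvalues there are many near-coincidences forced, then feed this into Proposition~\ref{Chi of t bound}; the main obstacle, as flagged, is controlling the interplay between the $\epsilon^{-m'(n-m')}$ factor and the genuinely-polynomial size of $h_{m,n}(1)$, which is why the threshold is $m \geq An$ rather than $m \geq 2$.
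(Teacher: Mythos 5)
There is a genuine gap, and it is precisely the one you flag but never resolve. Your plan is to run the separated-element machinery (Lemma \ref{Lem: reduction from spread to separated}, Proposition \ref{Chi of t bound}, Proposition \ref{Dim ineq}) on $\lambda=(m,0,\ldots,0)$. That route cannot prove the proposition at the scale $\epsilon=h_{m,n}(1)^{-c/n}$ with $m\asymp n$: the unavoidable loss in Proposition \ref{Chi of t bound} is $\epsilon^{-m'(n-m')}\asymp\epsilon^{-\Theta(n^{2})}=h_{m,n}(1)^{\Theta(cn)}$, a power of $h_{m,n}(1)$ whose exponent grows linearly in $n$, while the total possible gain from branching to $\U_{m'}\times\U_{n-m'}$ is only a fixed constant power of $h_{m,n}(1)$ (even computing $\max_{\mu}\rho_{H,\mu}(1)$ sharply for this very special $\lambda$ gives roughly $h_{m,n}(1)^{1/2+o(1)}$, not better). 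Moreover, since $\log h_{m,n}(1)\asymp n\log(m/n)\asymp n$ for $m\ge An$ with $A$ constant, even the "junk" factors $2^{O(n^{2}\log n)}$ already exceed every fixed power of $h_{m,n}(1)$; this is why Theorem \ref{thm:Exponential bound for separated} carries the hypothesis $\rho(1)>2^{\frac{16}{\gamma}n^{2}\log n}$, which for $h_{m,n}$ forces $m$ super-polynomial in $n$. Your proposed "route (ii)" (removing the $1/n$ loss in the separation scale) does not address this: the fatal term is $\epsilon^{-m'(n-m')}$ itself, not the $n$ in $\epsilon/n$, so no sharpening of Lemma \ref{Lem: reduction from spread to separated} can make the branching argument close with constants $A,c$ independent of $n$.

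The paper's proof abandons the branching approach entirely for large $n$ and instead uses the generating function $\sum_{m\ge0}h_{m,n}(g)t^{m}=\prod_{i=1}^{n}(1-\zeta_{i}t)^{-1}$ together with Cauchy's integral formula on the circle $\left|t\right|=\frac{m}{m+n}$. On that contour one always has $\left|1-\zeta_{i}t\right|\ge\frac{n}{m+n}$, and the $(\beta,\epsilon)$-spread hypothesis gives $\left|1-\zeta_{i}t\right|\ge\epsilon/2$ for at least $\beta n$ indices, so the loss is only $(2/\epsilon)^{\beta n}$ — exponent \emph{linear} in $n$ — which is exactly matched by $\epsilon=h_{m,n}(1)^{-c/n}$ and absorbed once $A$ is large enough that $h_{m,n}(1)^{\beta/4}\ge e^{3n}$; the finitely many small $n$ are then handled by Corollary \ref{cor:character estimates for spread elements} with constants depending on $n$. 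Some such change of method (exploiting that $h_{m,n}$ is the complete homogeneous symmetric function of the eigenvalues, rather than a generic high-degree character) is what your argument is missing.
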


\begin{proof}
For any fixed $n$, taking $c$ small enough, we have $\epsilon:=h_{m,n}(1)^{-\frac{c}{n}}>h_{m,n}(1)^{-\frac{\beta(2-\beta)}{8n^{2}}}$.
By taking $A$ large enough, we can guarantee that $h_{m,n}(1)>2^{\frac{8}{\beta}n^{2}\log n}$,
for Corollary~\ref{cor:character estimates for spread elements}
to apply. We may therefore take $n$ as large as we wish.

Let $\zeta_{1},\ldots,\zeta_{n}$ be the eigenvalues of $g$. Then:
\[
\sum_{m=0}^{\infty}h_{m,n}(g)t^{m}=\prod_{i=1}^{n}(1-\zeta_{i}t)^{-1}.
\]
Therefore, by Cauchy's differentiation formula, 
\begin{equation}
h_{m,n}(g)=\frac{1}{2\pi i}\oint\frac{\prod_{i=1}^{n}(1-\zeta_{i}t)^{-1}}{t^{m+1}}dt,\label{contour}
\end{equation}
where we take the integral counterclockwise around the circle of radius
$\frac{m}{m+n}$ centered at the origin.

For every $t$ on the contour of integration, we have $\left|1-\zeta_{i}t\right|\ge\frac{n}{m+n}$.
Furthermore, since $g$ is $(\beta,\epsilon)$-spread, we have $\left|1-\zeta_{i}t\right|\ge\epsilon/2$
for at least $\beta n$ values of $i$. It follows that the integrand
in (\ref{contour}) is bounded above by 
\begin{equation}
\bigl(1+\frac{m}{n}\bigr)^{(1-\beta)n}\bigl(\frac{2}{\epsilon}\bigr)^{\beta n}\bigl(1+\frac{n}{m}\bigr)^{m+1}.\label{eq:bound on integrand}
\end{equation}
By possibly enlarging $A$, we may assume $A>e^{24/\beta}$ so by
(\ref{eq:Striling on binomial coefficients}), 
\[
h_{m,n}(1)^{\frac{\beta}{4}}=\binom{m+n-1}{n-1}^{\frac{\beta}{4}}\ge\bigl(\frac{m}{n-1}+1\bigr)^{\frac{(n-1)\beta}{4}}>A^{\frac{(n-1)\beta}{4}}>e^{6(n-1)}\ge e^{3n}.
\]
Taking $c<\frac{1}{4}$, we may further assume $\epsilon>h_{m,n}(1)^{-\frac{1}{4n}}$.
By (\ref{eq:bound on integrand}) and Lemma \ref{lem:Striling's inequality},
and since $(1+\frac{n}{m})^{m}<e^{n}$, we have: 
\begin{align*}
\left|h_{m,n}(g)\right| & \leq\bigl(\frac{m+n-1}{n-1}\bigr)^{n(1-\beta)}\bigl(\frac{2}{\epsilon}\bigr)^{\beta n}2e^{n}\leq h_{m,n}(1)^{(1+\frac{1}{n-1})(1-\beta)}\epsilon^{-\beta n}e^{3n}\\
 & \leq h_{m,n}(1)^{1-\beta+\frac{1}{n-1}}\epsilon^{-\beta n}e^{3n}\leq h_{m,n}(1)^{1-\beta+\frac{1}{n-1}}h_{m,n}(1)^{\frac{\beta}{2}}\leq h_{m,n}(1)^{1-\frac{\beta}{4}},
\end{align*}
for $n\gg1$, as required.
\end{proof}
\begin{prop}
\label{prop:character bounds for exterior}Suppose $0<\alpha<\frac{1}{2}$
and $0<\epsilon,\beta<1$. Then there exists $\delta>0$ such that
if $n\ge2$ and $\alpha n\le m\le(1-\alpha)n$ then for every $(\beta,\epsilon)$-spread
$g\in G$, we have 
\[
\left|\rho_{1^{m},n}(g)\right|<\rho_{1^{m},n}(1)^{1-\delta}=\binom{n}{m}^{1-\delta}.
\]
\end{prop}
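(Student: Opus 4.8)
The plan is to run the contour-integral argument of Proposition~\ref{prop:character bound for symmetric}, replacing the symmetric-power generating function by the exterior-power one. Let $\zeta_1,\dots,\zeta_n$ be the eigenvalues of $g$; since $\sum_{m=0}^n\rho_{1^m,n}(g)\,t^m=\prod_{i=1}^n(1+\zeta_i t)$, Cauchy's integral formula gives
\[
\rho_{1^m,n}(g)=\frac{1}{2\pi i}\oint_{|t|=R}\frac{\prod_{i=1}^n(1+\zeta_i t)}{t^{m+1}}\,dt ,
\]
and I would take the radius to be $R:=m/(n-m)$, which lies in the fixed compact interval $[\tfrac{\alpha}{1-\alpha},\tfrac{1-\alpha}{\alpha}]\subseteq(0,\infty)$ because $\alpha n\le m\le(1-\alpha)n$; note $1+R=n/(n-m)$.

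The crucial step is to beat the trivial estimate $\prod_i|1+\zeta_i t|\le(1+R)^n$ on the contour. Writing $t=Re^{i\varphi}$ and $\zeta_i=e^{i\theta_i}$, an elementary computation gives $(1+R)^2-|1+\zeta_i t|^2=4R\sin^2\!\big(\tfrac{\theta_i+\varphi}{2}\big)$, so $|1+\zeta_i t|$ is close to its maximum $1+R$ only when $\theta_i$ is close to $-\varphi$. Fix $\eta=\eta(\epsilon)>0$ small enough that every arc of $\U_1$ of central angle at most $2\eta$ has diameter at most $2\epsilon$ (taking $\eta=\epsilon$ suffices, since a chord is no longer than the arc it subtends). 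By the $(\beta,\epsilon)$-spread hypothesis, for each $\varphi$ at most $(1-\beta)n$ of the $\theta_i$ lie within angular distance $\eta$ of $-\varphi$; for the remaining at least $\beta n$ indices one gets $|1+\zeta_i t|^2\le(1+R)^2-4R\sin^2(\eta/2)\le(1+R)^2\kappa^2$, where $\kappa=\kappa(\alpha,\epsilon)\in(0,1)$ is an explicit constant ($\kappa^2=1-4\sin^2(\eta/2)\min_{R}\tfrac{R}{(1+R)^2}$; it is strictly below $1$ because $R/(1+R)^2\le 1/4$ and $\sin(\eta/2)>0$). Combining this with $|1+\zeta_i t|\le 1+R$ for the other indices and $\kappa<1$ yields $\prod_i|1+\zeta_i t|\le(1+R)^n\kappa^{\beta n}$ on $|t|=R$, hence
\[
|\rho_{1^m,n}(g)|\;\le\;\kappa^{\beta n}\,\frac{(1+R)^n}{R^m}\;=\;\kappa^{\beta n}\,\frac{n^n}{m^m(n-m)^{n-m}}\;\le\;(n+1)\,\kappa^{\beta n}\binom{n}{m},
\]
the last inequality being the standard entropy bound $\frac{n^n}{m^m(n-m)^{n-m}}\le(n+1)\binom nm$ (a consequence of Stirling's formula, in the spirit of Lemma~\ref{lem:Striling's inequality}).

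To conclude I must absorb the polynomial factor $(n+1)$. Since $\alpha n\le m\le(1-\alpha)n$ and $0<\alpha<\tfrac12$ force $1\le m\le n-1$, we have $n\le\binom nm\le 2^n$; in particular $\binom nm>1$. Choosing $\delta_0:=\tfrac{\beta}{2}\log_2(1/\kappa)>0$, for all $n$ larger than some $N_0(\alpha,\epsilon,\beta)$ one has $(n+1)\kappa^{\beta n}\le\kappa^{\beta n/2}=2^{-\delta_0 n}\le\binom nm^{-\delta_0}$, and therefore $|\rho_{1^m,n}(g)|<\binom nm^{1-\delta_0}$. For the finitely many remaining values $2\le n<N_0$ I would argue by compactness: for a fixed such $n$ and $m$ (assuming $\U_n$ contains a $(\beta,\epsilon)$-spread element at all, else there is nothing to prove), the closure of the set of $(\beta,\epsilon)$-spread elements of $\U_n$ is compact and contains no scalar matrix — a sequence of spread elements converging to $\zeta I$ would eventually have all $n$ of its eigenvalues inside an arc of diameter $<2\epsilon$, contradicting spreadness — so the continuous function $|\rho_{1^m,n}(\cdot)|$ attains on it a maximum $M_{n,m}<\binom nm$, using that $|\rho_{1^m,n}(h)|=\binom nm$ forces $h$ to be scalar when $1\le m\le n-1$. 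Taking $\delta$ to be the minimum of $\delta_0$ and the finitely many numbers $1-\log_{\binom nm}M_{n,m}>0$ then works.

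The main obstacle is precisely the pointwise bound on the contour: the content is to convert the purely combinatorial non-concentration of the eigenvalues (the $(\beta,\epsilon)$-spread hypothesis) into a uniform — in $n$ and in the point $t$ on the circle — multiplicative saving $\kappa<1$ on a definite proportion $\beta n$ of the factors $|1+\zeta_i t|$. Once that is in hand, the generating-function identity, Stirling's estimate, and the reduction to large $n$ are all routine.
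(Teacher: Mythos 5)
Your proposal is correct and takes essentially the same approach as the paper: both extract $\rho_{1^{m},n}(g)$ from the generating polynomial $\prod_{i}(1+\zeta_{i}t)$ on the circle of radius $\frac{m}{n-m}$, use the $(\beta,\epsilon)$-spread hypothesis to gain a uniform multiplicative saving on at least $\beta n$ of the factors, and conclude with the entropy/Stirling comparison of $\frac{n^{n}}{m^{m}(n-m)^{n-m}}$ with $\binom{n}{m}$ followed by a reduction to large $n$. The only differences are cosmetic: you evaluate the coefficient by a continuous contour integral where the paper averages over $(n+1)$-st roots of unity on the same circle, and you spell out (via compactness of the closure of the spread locus, which contains no scalar matrices) the treatment of the finitely many small $n$ that the paper dispatches in a single sentence.
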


\begin{proof}
Without loss of generality, we assume $m\le n/2$. Let $\zeta_{1},\ldots,\zeta_{n}$
be the eigenvalues of $g$. Then 
\[
\sum_{k=0}^{n}\rho_{1^{k},n}(g)t^{k}=\prod_{i=1}^{n}(1+\zeta_{i}t).
\]
Let $\omega$ denote a primitive $n+1$st root of unity. Then 
\[
\rho_{1^{m},n}(g)=\frac{(\frac{n}{m}-1)^{m}}{n+1}\sum_{k=0}^{n}\omega^{-mk}\prod_{i=1}^{n}\bigl(1+\frac{m\omega^{k}\zeta_{i}}{n-m}\bigr).
\]
Thus, 
\[
\left|\rho_{1^{m},n}(g)\right|\le(\frac{n}{m}-1)^{m}\max_{0\le k\le n}\prod_{i=1}^{n}\left|1+\frac{m\omega^{k}\zeta_{i}}{n-m}\right|.
\]
At least $\beta n$ elements $i\in\{1,\ldots,n\}$ satisfy $\left|-1+\omega^{k}\zeta_{i}\right|\ge\epsilon/2$.
For these values, we have 
\begin{align*}
\left|1+\frac{m\omega^{k}\zeta_{i}}{n-m}\right|^{2} & \le\bigl(1+\frac{m}{n-m}\bigr)^{2}-\frac{\epsilon^{2}m}{4(n-m)}\\
 & \le\bigl(1-\frac{\epsilon^{2}m}{16(n-m)}\bigr)\bigl(1+\frac{m}{n-m}\bigr)^{2}\le\bigl(1-\frac{\epsilon^{2}\alpha}{16(1-\alpha)}\bigr)\bigl(\frac{n}{n-m}\bigr)^{2}.
\end{align*}
It follows that 
\[
\left|\rho_{1^{m},n}(g)\right|\le\bigl(\frac{n-m}{m}\bigr)^{m}\bigl(\frac{n}{n-m}\bigr)^{n}\bigl(1-\frac{\epsilon^{2}\alpha}{16(1-\alpha)}\bigr)^{\frac{\beta n}{2}}=\frac{n^{n}}{m^{m}(n-m)^{n-m}}\bigl(1-\frac{\epsilon^{2}\alpha}{16(1-\alpha)}\bigr)^{\frac{\beta n}{2}}.
\]
Using Stirling's approximation in the form 
\[
\sqrt{2\pi n}e^{-n}n^{n}<n!<e^{\frac{1}{12n}}\sqrt{2\pi n}e^{-n}n^{n}\le e^{\frac{1}{12}}\sqrt{2\pi n}e^{-n}n^{n}
\]
for $n\ge1$, we have 
\[
\frac{n^{n}}{m^{m}(n-m)^{n-m}}<e^{1/6}\sqrt{\pi n/2}\binom{n}{m},
\]
and since $\binom{n}{m}<2^{n}$, if $\delta>0$ is small enough, 
\[
\left|\rho_{1^{m},n}(g)\right|\le\binom{n}{m}^{1-\delta}=\rho_{1^{m},n}(1)^{1-\delta}
\]
for all $n$ sufficiently large. For any given $n\ge2$, there are
only finitely many possibilities for $m$, so the proposition holds
for all $n$. 
\end{proof}

\section{\label{sec:Probability-bounds}Probability bounds for poorly spread
elements}

In this section, we bound the probability that a random value of a
word $w$ in $\U_{n}$ fails to be $(\beta,\epsilon)$-spread, as
defined in Definition~\ref{def:separated and spread}. 
\begin{thm}
\label{thm:probabilistic result} Let $w\in F_{r}$ be a word of length
$\ell\geq1$, let $0<\beta<1$, and let $\mathsf{G}_{1},\ldots,\mathsf{G}_{r}$
be independent, Haar distributed random variables in $\U_{n}$. For
every $\epsilon>0$ and every $n>\max\left\{ \frac{4\ell}{1-\beta},\frac{8}{1-\beta},16\right\} $,
\[
\mathbb{P}\left(w(\mathsf{G}_{1},\ldots,\mathsf{G}_{r}))\text{ fails to be }(\beta,\epsilon)\text{-spread}\right)<2^{3n^{2}}\epsilon^{\frac{n^{2}(1-\beta)^{2}}{4(\ell+1)}}.
\]
\end{thm}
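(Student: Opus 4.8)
The plan is to estimate the failure probability by relating "not $(\beta,\epsilon)$-spread" to the existence of many approximate eigenvectors, and then to bound the probability that a random $w$-image admits such a configuration by a careful conditioning argument along word trajectories.

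First I would make the reduction at the spectral level. If $g = w(\mathsf{G}_1,\ldots,\mathsf{G}_r)$ fails to be $(\beta,\epsilon)$-spread, then more than $(1-\beta)n$ of its eigenvalues lie in an arc of diameter $2\epsilon$ in $\U_1$; call the center $\zeta$. Then $g - \zeta\cdot\mathrm{Id}$ has more than $(1-\beta)n$ eigenvalues of modulus at most (roughly) $\epsilon$, so the operator $g-\zeta$ is "small" on a subspace $U$ of dimension $> (1-\beta)n$. Concretely, one can find many unit vectors $v$ with $\|g(v) - \zeta v\|$ small, and — after projecting away an orthonormal family chosen greedily — one extracts $m := \lceil \tfrac{(1-\beta)n}{2}\rceil$ or so unit vectors $v_1,\ldots,v_m$ that are near-orthonormal and each satisfy $\|\pr_{v_i^\perp}(g(v_i))\|$ small, i.e. each $v_i$ is an approximate eigenvector. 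The scalar $\zeta$ ranges over a continuum, so I would discretize $\U_1$ into $O(1/\epsilon)$ arcs and union-bound over the center, absorbing the $1/\epsilon$ factor into the final exponent; similarly the choice of which coordinate subspace the $v_i$'s span contributes a factor bounded by $2^n$ or so, harmless against $2^{3n^2}$.

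Next comes the probabilistic heart, carried out exactly as sketched in \S\ref{sec:Probability-bounds} of the introduction. Writing $w = x_{i_\ell}^{\epsilon_\ell}\cdots x_{i_1}^{\epsilon_1}$, for each of the $m$ chosen unit vectors $v_i$ I form its trajectory $v_i = h_{i,0}, h_{i,1},\ldots,h_{i,\ell} = w(\mathsf{G}_1,\ldots,\mathsf{G}_r)(v_i)$, obtaining an $m\times(\ell+1)$ array of unit vectors. The key structural fact is that, conditioned on all array entries lexicographically before $(a,b+1)$, the vector $h_{a,b+1} = \mathsf{G}_{i_{b+1}}^{\epsilon_{b+1}}(h_{a,b})$ is obtained by applying a Haar-random unitary (or its inverse) to $h_{a,b}$ after the values of that unitary have been pinned down only on the span of certain previously-seen vectors; hence $h_{a,b+1}$ is uniformly distributed on a sphere of radius equal to the length of the projection of (the image of) $h_{a,b}$ onto the orthogonal complement of that span. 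Because each $\mathsf{G}_j$ appears roughly $\ell/r$ times and we have $m \approx \tfrac{(1-\beta)n}{2}$ trajectories, the total number of "pinning" vectors accumulated before any given step is at most $m\ell < n$ (this is where the hypothesis $n > \tfrac{4\ell}{1-\beta}$ enters — it guarantees $2m\ell \le n$ or so, keeping the relevant projections from degenerating to zero dimension), so each such conditional sphere genuinely has positive dimension $\ge n - m\ell$. The event that every $h_{i,\ell}$ is within $\sim\epsilon$ of a scalar multiple of $v_i$ then forces, at a well-chosen set of steps, each freshly sampled vector to land in an $\epsilon$-ball inside a sphere of dimension $\gtrsim n(1-\beta)/2$; by the elementary estimate that the normalized surface measure of a spherical cap of radius $\epsilon$ in $S^{d-1}$ is at most $\epsilon^{d-1}$, each such step contributes a factor $\epsilon^{\,\Omega(n(1-\beta))}$, and multiplying over $\sim m/(\ell+1)$ essentially independent such constraints (one per "return" of each letter) yields the exponent $\tfrac{n^2(1-\beta)^2}{4(\ell+1)}$.

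**The main obstacle**, and the part I expect to require the most care, is the bookkeeping in the conditioning argument: making precise which previously-sampled vectors get "pinned" at each step, verifying that the accumulated pinning set has size $< n$ uniformly (so the conditional spheres never collapse), and correctly counting how many genuinely constraining steps occur — this is what produces the crucial $\tfrac{1}{\ell+1}$ in the exponent rather than something worse. A secondary technical point is the first reduction: showing that failure of spreadness really does yield $m \gtrsim \tfrac{(1-\beta)n}{2}$ near-orthonormal approximate eigenvectors with quantitatively controlled error (comparable to $\epsilon$ up to absolute constants and the $1/\epsilon$ discretization loss), and checking that all the lossy union bounds — over the center $\zeta$, over the subspace, over which $v_i$ "returns" at which step — are swamped by the prefactor $2^{3n^2}$. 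Once these are in place, the stated inequality follows by assembling the per-step cap estimates and the union bounds.
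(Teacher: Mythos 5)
Your middle section (the trajectory/conditioning analysis) is essentially the paper's Proposition \ref{Prop: Random faults}, but the way you enter and exit it contains a genuine gap. You propose to \emph{deterministically extract} approximate eigenvectors of $g=w(\mathsf{G}_{1},\ldots,\mathsf{G}_{r})$ from the failure of spreadness and then union-bound over the center $\zeta$ and "the subspace" with a cost you estimate as $O(1/\epsilon)\cdot 2^{n}$. This does not work: the extracted vectors depend on the $\mathsf{G}_{j}$, so the conditional-uniformity statements (Lemma \ref{lem:cond.independence}) that drive the trajectory argument do not apply to them, and to decouple them you would have to union-bound over an $\epsilon$-net of $m$-tuples of unit vectors (or of the nearly invariant subspace), whose cardinality is of order $\epsilon^{-\Theta(nm)}$ — comparable to, and in fact swamping, the gain $\epsilon^{m(2n-2m(\ell+1)-1)}$ from the trajectory estimate. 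The paper avoids any such union bound by a ratio argument that your sketch has no analogue of: sample $m$ test vectors uniformly and independently of the $\mathsf{G}_{j}$, prove a \emph{lower} bound $\mathbb{P}(A\mid B)\ge 4^{-2nm}\epsilon^{2\beta nm}$ on the conditional probability that they are all $2\epsilon$-approximate eigenvectors given that $g$ is badly spread (Lemma \ref{Badly separated case}), and conclude $\mathbb{P}(B)\le\mathbb{P}(A)/\mathbb{P}(A\mid B)$. The subtraction of the exponent $2\beta nm$ coming from this lower bound is exactly what produces the final exponent, and it is absent from your accounting.

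A second, related error is your choice $m\approx\frac{(1-\beta)n}{2}$. With that many trajectories the accumulated span has size about $m(\ell+1)\approx\frac{(1-\beta)n(\ell+1)}{2}$, which for large $\ell$ (and any $n$ allowed by the hypothesis $n>\frac{4\ell}{1-\beta}$) far exceeds $n$; your claim "$m\ell<n$" is false, the conditional spheres do collapse, and the per-step cap estimates you invoke are unavailable. The correct choice, as in the paper, is $m\approx\frac{n(1-\beta)}{2(\ell+1)}$: then $m(\ell+1)\approx\frac{n(1-\beta)}{2}<n$, each trajectory contributes roughly $\epsilon^{2(n-m(\ell+1))}$, and after dividing by the conditional lower bound one optimizes to get exponent $2m\bigl(n(1-\beta)-m(\ell+1)\bigr)\approx\frac{n^{2}(1-\beta)^{2}}{2(\ell+1)}$, with the stated $\frac{n^{2}(1-\beta)^{2}}{4(\ell+1)}$ absorbing rounding and the prefactor. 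You have conflated the number of test vectors (which must be small, of order $n/\ell$) with the dimension of the nearly invariant subspace (of order $(1-\beta)n$); as written, neither the reduction step nor the exponent bookkeeping goes through.
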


The proof of Theorem \ref{thm:probabilistic result} follows the paper
\cite{LaS12} by the third author and Shalev. If $g\in\U_{n}$, we
say that a unit vector $v$ is an \emph{$\epsilon$-approximate eigenvector}
of $g$ if the orthogonal projection of $gv$ to the hyperplane $v^{\perp}$
has norm at most $\epsilon$. For $g\in\U_{n}$ and an integer $m<\frac{n}{\ell+1}$,
we randomize $m$ independent and uniformly distributed unit vectors
in $\mathbb{C}^{n}$ and consider the event $\Omega_{m,\epsilon}$
that all of those vectors are $\epsilon$-approximate eigenvectors
of $g$. We will show that: 
\begin{enumerate}
\item If $g$ is chosen as a random $w$-value, the probability of $\Omega_{m,\epsilon}$
is bounded from above by $(c_{1}\epsilon)^{m(2n-2m(\ell+1)-1)}$. 
\item Conditional on $g$ not being $(\beta,\epsilon)$-spread, the probability
of $\Omega_{m,\epsilon}$ is bounded from below by $(c_{2}\epsilon)^{2\beta mn}$. 
\end{enumerate}
It follows that the probability that a random $w$-value fails to
be $(\beta,\epsilon)$-spread is bounded by $\frac{(c_{1}\epsilon)^{m(2n-2m(\ell+1)-1)}}{(c_{2}\epsilon)^{2\beta mn}}$.
Optimizing over $m$, we get Theorem \ref{thm:probabilistic result}.

Let $G$ be a compact Lie group and let $H_{2}\subseteq H_{1}\subseteq G$
be closed subgroups of $G$. We denote the $G$-invariant probability
measures on $G/H_{1}$ and $G/H_{2}$ by $\mu_{G/H_{1}}$ and $\mu_{G/H_{2}}$,
respectively. The quotient map $\pi\colon G/H_{1}\to G/H_{2}$ is
$G$-equivariant so $\pi_{*}\mu_{G/H_{1}}=\mu_{G/H_{2}}$. In addition,
for every $g$, the conditional measure on the fiber $\pi^{-1}(gH_{2})$
is $gH_{2}g^{-1}$-invariant. Applying this to $G=\U_{n}$, $H_{1}=\U_{m+1}$,
and $H_{2}=\U_{m}$, we get that, if $V,W$ are $m$-dimensional subspaces
of $\mathbb{C}^{n}$, $v\in V^{\perp}$, and $w\in W^{\perp}$, then,
conditioning on $g(V)=W$ and on the restriction $g|_{V}$, the random
vector $g(v)$ is uniformly distributed on the unit sphere in $W^{\perp}$
and $g^{-1}(w)$ is uniformly distributed on the unit sphere of $V^{\perp}$.
Hence, 
\begin{lem}
\label{lem:cond.independence}Let $V,W\subseteq\mathbb{C}^{n}$ be
two linear subspaces and let $v\in V^{\perp},w\in W^{\perp}$ be unit
vectors. Suppose that $g\in\U_{n}$ is a Haar-random element. Then,
conditioning on $g(V)=W$, 
\begin{enumerate}
\item The random vector $g(v)$ is uniformly distributed on the unit sphere
in $W^{\perp}$ and $g^{-1}(w)$ is uniformly distributed on the unit
sphere of $V^{\perp}$. 
\item The random linear map $g|_{V}\in\Hom(V,W)$ and the random unit vector
$g(v)\in W^{\perp}$ are independent. 
\item The random linear map $g|_{V}\in\Hom(V,W)$ and the random unit vector
$g^{-1}(w)\in V^{\perp}$ are independent. 
\end{enumerate}
\end{lem}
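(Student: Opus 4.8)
The plan is to make the conditioning fully explicit: conditioned on $\{g(V)=W\}$, the element $g$ will be shown to have the law of $g_0 p$ for a fixed unitary $g_0$ with $g_0(V)=W$ and a Haar-random $p$ in the stabilizer of $V$, after which the product structure of that stabilizer does all the work. We may assume $\dim V=\dim W$, since otherwise the conditioning event is empty. Fix $g_0\in\U_n$ with $g_0(V)=W$ (possible since $\U_n$ acts transitively on subspaces of a given dimension), and let $P:=\{h\in\U_n:h(V)=V\}$. Then $\{g:g(V)=W\}=g_0P$, and by the disintegration of bi-invariant Haar measure along $\U_n\to\U_n/P$ — the statement recalled in the paragraph preceding the lemma — the conditional law of $g$ given $\{g(V)=W\}$ is that of $g_0 p$ with $p$ Haar-random in $P$.

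The key observation is that a unitary preserves $V$ if and only if it preserves $V^\perp$, so $P=\U(V)\times\U(V^\perp)$ and $\mathrm{Haar}_P=\mathrm{Haar}_{\U(V)}\otimes\mathrm{Haar}_{\U(V^\perp)}$; write $p=(p_V,p_{V^\perp})$ with independent, Haar-distributed coordinates. Now I would simply express the three quantities of interest in terms of $p_V$ and $p_{V^\perp}$: restricting to $V$ gives $g|_V=g_0|_V\circ p_V$, which depends on $p_V$ only; for the unit vector $v\in V^\perp$ we have $p(v)=p_{V^\perp}(v)\in V^\perp$, so $g(v)=g_0\bigl(p_{V^\perp}(v)\bigr)$, which depends on $p_{V^\perp}$ only; and symmetrically, since $g_0^{-1}$ carries $W^\perp$ isometrically onto $V^\perp$ (as $g_0$ is unitary and $g_0(V)=W$), $g^{-1}(w)=p_{V^\perp}^{-1}\bigl(g_0^{-1}(w)\bigr)$, again a function of $p_{V^\perp}$ alone.

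All three assertions then follow immediately. Because $p_{V^\perp}$ is Haar-random in $\U(V^\perp)$ and $v$ is a fixed unit vector of $V^\perp$, the vector $p_{V^\perp}(v)$ is uniform on the unit sphere of $V^\perp$; composing with the isometry $g_0|_{V^\perp}\colon V^\perp\to W^\perp$ shows $g(v)$ is uniform on the unit sphere of $W^\perp$, and the same argument, using that $p_{V^\perp}^{-1}$ is again Haar-random, shows $g^{-1}(w)$ is uniform on the unit sphere of $V^\perp$; this is (1). For (2) and (3), $g|_V$ is a function of $p_V$ while both $g(v)$ and $g^{-1}(w)$ are functions of $p_{V^\perp}$, and $p_V$ and $p_{V^\perp}$ are independent.

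The only point here that needs more than bookkeeping is the disintegration step, namely that conditioning bi-invariant Haar measure on the coset $g_0P$ returns the translate $g_0\cdot\mathrm{Haar}_P$ of Haar measure on $P$. This is standard: left-invariance reduces the conditional measure on $g_0P$ to the translate of the conditional measure over the base point, and right $P$-invariance of $\mathrm{Haar}_{\U_n}$ forces that measure to be right $P$-invariant, hence equal to $\mathrm{Haar}_P$. Since this is precisely the fact already invoked just before the lemma, in the write-up it can simply be cited rather than reproved.
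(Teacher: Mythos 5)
Your proof is correct, and it rests on the same mechanism the paper invokes: disintegration of Haar measure along an equivariant quotient, with the conditional measures on the fibers determined by invariance. The packaging differs slightly: the paper conditions on $g(V)=W$ \emph{together with} $g|_{V}$, i.e.\ works with the tower of Stiefel-type quotients $\U_{n}/H_{1}\to\U_{n}/H_{2}$ described just before the lemma, and reads off that the conditional law of $g(v)$ is uniform on the sphere of $W^{\perp}$ (hence independent of $g|_{V}$), whereas you condition only on $g(V)=W$, identify the conditional law of $g$ as $g_{0}\cdot\mathrm{Haar}_{P}$ for the stabilizer $P\cong\U(V)\times\U(V^{\perp})$, and deduce (1)\textendash(3) from the product structure of $\mathrm{Haar}_{P}$. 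Both arguments are valid and operate at the same level of rigor about the measure-zero conditioning; your version makes the independence statements (2) and (3) transparent as product-measure independence, while the paper's formulation directly yields the conditional distribution given $g|_{V}$, which is the form used later in the proof of Proposition \ref{Prop: Random faults}.
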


Let $r\in\mathbb{N}$, $w\in F_{r}$, and $\ell$ be as in Theorem
\ref{thm:probabilistic result}. We denote the generators of $F_{r}$
by $x_{1},\ldots,x_{r}$ and write $w=w_{\ell}\cdots w_{1}$ where
each $w_{i}$ is either $x_{j}$ or $x_{j}^{-1}$ and no two consecutive
terms are inverses of one another. Denote $w^{0}=1$ and $w^{j}=w_{j}\cdots w_{2}w_{1}$,
if $1\leq j\leq\ell$.

For a linear subspace $V\subseteq\mathbb{C}^{n}$, we denote the unit
sphere in $V$ by $S(V)$. Given a positive integer $m$, define maps
$\phi_{i,j}\colon\U_{n}^{r}\times S(\C^{n})^{m}\to S(\C^{n})$ by
\[
\phi_{i,j}(g_{1},\ldots,g_{r},v_{1},\ldots,v_{m})=w^{j}(g_{1},\ldots,g_{r})(v_{i}),
\]
and define $\phi\colon\U_{n}^{r}\times S(\C^{n})^{m}\to S(\C^{n})^{m(\ell+1)}$
by 
\[
\phi(g_{1},\ldots,g_{r},v_{1},\ldots,v_{m})=\Big(\phi_{i,j}(g_{1},\ldots,g_{r},v_{1},\ldots,v_{m})\Big)_{\substack{1\le i\le m\\
0\le j\le\ell
}
}.
\]
We denote the lexicographic order on $\left\{ 1,\ldots,m\right\} \times\left\{ 0,\ldots,\ell\right\} $
by $\prec$. Given an array 
\[
\bh:=\Big(h_{ij}\Big)_{\substack{1\le i\le m\\
0\le j\le\ell
}
}\in S(\mathbb{C}^{n})^{m(\ell+1)},
\]
let 
\[
Z_{i,j}(\bh):=\Span\left\{ h_{p,q}\mid(p,q)\prec(i,j)\right\} .
\]
We say that $\bh$ has an \emph{$\epsilon$-fault} at $i$ if 
\[
\left|\pr_{Z_{i,\ell}(\bh)^{\perp}}h_{i,\ell}\right|\leq\epsilon,
\]
where $\pr_{V}$ denotes the orthogonal projection on a subspace $V\subseteq\mathbb{C}^{n}$.

If $1\leq i\leq m$, then $Z_{i,\ell}(\phi(g_{1},\ldots,g_{r},v_{1},\ldots,v_{m}))^{\perp}\subseteq v_{i}^{\perp}$.
Thus, 
\begin{lem}
\label{lem:few approx eigenvalues for non faulty }If $v_{i}$ is
an $\epsilon$-approximate eigenvector of $w(g_{1},\ldots,g_{r})$
then $\phi(g_{1},\ldots,g_{r},v_{1},\ldots,v_{m})$ has an $\epsilon$-fault
at $i$. 
\end{lem}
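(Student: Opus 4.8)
The plan is to unwind the definitions and observe that the $\epsilon$-fault condition at $i$ is simply a weakening of the $\epsilon$-approximate eigenvector condition: one replaces the orthogonal projection onto $v_i^\perp$ by the orthogonal projection onto a smaller subspace, which can only decrease norms.

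First I would set $\bh := \phi(g_1,\ldots,g_r,v_1,\ldots,v_m)$, so that by definition $h_{i,j} = w^j(g_1,\ldots,g_r)(v_i)$ for all $i,j$. In particular $h_{i,0} = w^0(g_1,\ldots,g_r)(v_i) = v_i$ and $h_{i,\ell} = w^\ell(g_1,\ldots,g_r)(v_i) = w(g_1,\ldots,g_r)(v_i)$.

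Next, since $0 < \ell$ we have $(i,0) \prec (i,\ell)$ in the lexicographic order, so $v_i = h_{i,0}$ is one of the vectors spanning $Z_{i,\ell}(\bh) = \Span\{h_{p,q} \mid (p,q) \prec (i,\ell)\}$. Hence $v_i \in Z_{i,\ell}(\bh)$, and taking orthogonal complements gives the inclusion $Z_{i,\ell}(\bh)^\perp \subseteq v_i^\perp$ — this is exactly the observation recorded immediately before the lemma.

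Finally, I would invoke the elementary fact that if $U \subseteq U'$ are subspaces of $\C^n$ then $\left|\pr_U x\right| \le \left|\pr_{U'} x\right|$ for every $x \in \C^n$ (because $\pr_U = \pr_U \circ \pr_{U'}$ and orthogonal projections do not increase norms). Applying this with $U = Z_{i,\ell}(\bh)^\perp$, $U' = v_i^\perp$, and $x = h_{i,\ell}$, and combining with the hypothesis that $v_i$ is an $\epsilon$-approximate eigenvector of $w(g_1,\ldots,g_r)$ — which says precisely that $\left|\pr_{v_i^\perp}\bigl(w(g_1,\ldots,g_r)(v_i)\bigr)\right| = \left|\pr_{v_i^\perp} h_{i,\ell}\right| \le \epsilon$ — we obtain
\[
\left|\pr_{Z_{i,\ell}(\bh)^\perp} h_{i,\ell}\right| \le \left|\pr_{v_i^\perp} h_{i,\ell}\right| \le \epsilon,
\]
which is exactly the statement that $\bh$ has an $\epsilon$-fault at $i$. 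There is no genuine obstacle here; the only point requiring attention is the chain of subspace inclusions, and that hinges on $0 < \ell$, which guarantees that $v_i$ itself already occurs among the lexicographically earlier entries of the array.
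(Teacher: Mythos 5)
Your proof is correct and is exactly the argument the paper intends: the lemma is stated immediately after the observation $Z_{i,\ell}(\bh)^{\perp}\subseteq v_{i}^{\perp}$ (which follows from $h_{i,0}=v_{i}$ and $(i,0)\prec(i,\ell)$), and the conclusion is then the monotonicity of projection norms under inclusion of subspaces, just as you write.
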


We next prove a lemma about the norm of the projection of a random
unit vector to a subspace. 
\begin{defn}
\label{defn:Pdn}For natural numbers $1\leq d\leq n$, define a probability
distribution $P_{d,n}$ as follows. Let $W=\Span\left\{ e_{1},\ldots,e_{d}\right\} \subseteq\mathbb{C}^{n}$
and let $\sX$ be a uniformly distributed random unit vector in $\mathbb{C}^{n}$.
The distribution $P_{d,n}$ is the distribution of the random variable
$\left|\pr_{W}(\sX)\right|$. 
\end{defn}

\begin{lem}
\label{sphere.lemma}Let $1\leq d\leq n$ and let $\mathsf{d}$ be
a random variable with distribution $P_{d,n}$. Then: 
\end{lem}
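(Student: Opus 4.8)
The plan is to compute the distribution $P_{d,n}$ of $\mathsf{d} = |\pr_W(\sX)|$ explicitly and then extract the tail/density bounds needed in the sequel. First I would recall the standard fact that a uniformly random unit vector $\sX$ in $\C^n$ can be realized as $G/\|G\|$, where $G = (G_1,\ldots,G_n)$ has i.i.d. standard complex Gaussian coordinates; equivalently, writing $\sX = (\sX_1,\ldots,\sX_n)$, the vector $(|\sX_1|^2,\ldots,|\sX_n|^2)$ is uniformly (Dirichlet-$(1,\ldots,1)$) distributed on the standard simplex in $\R^n$. Since $|\pr_W(\sX)|^2 = |\sX_1|^2 + \cdots + |\sX_d|^2$, the random variable $\mathsf{d}^2$ is the sum of the first $d$ coordinates of a Dirichlet-$(1^n)$ vector, hence $\mathsf{d}^2 \sim \mathrm{Beta}(d, n-d)$. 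Therefore the density of $\mathsf{d}^2$ on $[0,1]$ is $\frac{(n-1)!}{(d-1)!(n-d-1)!} t^{d-1}(1-t)^{n-d-1}$ (interpreting the case $d = n$ as the point mass at $1$), and changing variables $t = s^2$ gives the density of $\mathsf{d}$ itself as
\[
f_{\mathsf{d}}(s) = \frac{2(n-1)!}{(d-1)!(n-d-1)!}\, s^{2d-1}(1-s^2)^{n-d-1}, \qquad 0 \le s \le 1.
\]

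From this closed form the desired estimates follow by elementary manipulations. The two statements I expect the lemma to assert — matching how it will be used in the proof of Theorem~\ref{thm:probabilistic result} — are a lower bound on $\P(\mathsf{d} \le \epsilon)$ of the shape $(c\epsilon)^{2d}$ (or $\epsilon^{2d}$ up to a binomial-coefficient factor), obtained by integrating the density from $0$ to $\epsilon$ and bounding $(1-s^2)^{n-d-1} \ge$ a constant on that range (or by just noting $\P(\mathsf{d}^2 \le \epsilon^2) = I_{\epsilon^2}(d,n-d) \ge \frac{(n-1)!}{(d-1)!(n-d)!}\epsilon^{2d}(1-\epsilon^2)^{n-d}$ via the incomplete Beta function), and an upper bound on $\P(\mathsf{d} \le \epsilon)$, again read off by bounding $s^{2d-1} \le \epsilon^{2d-1}$ and the normalizing constant by $\binom{n}{d}$ or $2^n$. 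One then feeds the lower bound into the computation of $\P(\Omega_{m,\epsilon})$ conditional on $g$ not being spread, and the upper bound into the estimate for a random $w$-value, using Lemma~\ref{lem:cond.independence} to see that each $h_{a,b+1}$ in the trajectory array is, conditionally, a uniform unit vector in a sphere of the appropriate (random) dimension, so that the conditional law of $|\pr_{Z^\perp}h|$ is exactly $P_{d,n}$ for the relevant codimension $d$.

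The main obstacle, such as it is, is purely bookkeeping: getting the Jacobian factor of $2s$ right in the change of variables $t = s^2$, correctly handling the degenerate endpoints $d = n$ (point mass at $1$) and the constant in front, and — more substantively — making sure the exponents in the stated bound are the ones actually needed downstream. In particular the codimension appearing when one conditions on the span of the lexicographically-earlier trajectory vectors is $n - \dim Z_{i,j}(\bh)$, which is at least $n - m(\ell+1)$ by a crude count of how many vectors can precede a given one; I would want the lemma phrased so that the bound $(c\epsilon)^{2(n-m(\ell+1))}$ (resp.\ the matching upper bound $\binom{n}{d}\epsilon^{2d}$) drops out directly. So the real work is choosing the cleanest form of the statement; the proof is a one-line appeal to the Dirichlet/Beta description of the uniform distribution on the sphere followed by elementary integral estimates. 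I would also remark that the Gaussian realization makes the Dirichlet claim transparent: $|G_i|^2$ are i.i.d.\ exponential, and a sum of $d$ of them divided by the sum of all $n$ is Beta$(d,n-d)$ by the standard gamma–beta algebra.
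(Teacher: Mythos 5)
Your identification of the law $P_{d,n}$ is correct: realizing $\sX$ as a normalized complex Gaussian vector, $\mathsf{d}^{2}=\left|\pr_{W}(\sX)\right|^{2}$ is indeed $\mathrm{Beta}(d,n-d)$-distributed (point mass at $1$ when $d=n$), and integrating the density gives the two-sided tail bound of the form $c^{-n}\epsilon^{2d}\leq\P(\mathsf{d}\leq\epsilon)\leq\binom{n}{d}\epsilon^{2d}\leq2^{n}\epsilon^{2d}$. This is a genuinely different route from the paper, which never writes down the density: it compares the uniform measure on the sphere with the uniform measures on the unit ball and on the annulus $2B(\C^{n})\smallsetminus B(\C^{n})$ and bounds the relevant sets by products such as $\epsilon B(W)\times B(W^{\perp})$, computing volumes. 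Your exact-law computation is cleaner and gives sharper constants; the paper's volume comparison avoids any distributional identity. For the tail-bound half of the lemma, either argument is fine.

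The gap is that the lemma has a second assertion which your reconstruction omits and your downstream plan does not replace: for every $0\leq s\leq2d-1$, $\E\,\mathsf{d}^{-s}\leq2^{n+s+1}$. This negative-moment bound is the part actually used for the random $w$-value estimate (Proposition \ref{Prop: Random faults}): the event there is that the \emph{product} $\widetilde{\mathsf{d}}_{i,0}\cdots\widetilde{\mathsf{d}}_{i,\ell}$ of $\ell+1$ independent factors is at most $\epsilon$, and this is handled by Markov's inequality applied to $\E\bigl(\prod_{j}\widetilde{\mathsf{d}}_{i,j}\bigr)^{-s}$ with $s=2(n-m(\ell+1))-1$, using independence to factor the expectation. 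Feeding only the per-factor upper tail bound $\P(\mathsf{d}\leq\epsilon)\leq2^{n}\epsilon^{2d}$ into that step, as you propose, does not bound the product event without a serious loss: for instance, union-bounding over which factor is at most $\epsilon^{1/(\ell+1)}$ degrades the exponent from roughly $2d$ to $2d/(\ell+1)$, which ruins the final exponent $\frac{n^{2}(1-\beta)^{2}}{4(\ell+1)}$ in Theorem \ref{thm:probabilistic result}. The fix is easy within your framework — either bound $\E\,\mathsf{d}^{-s}$ directly from the Beta density (the integral $\int_{0}^{1}t^{2d-1-s}(1-t^{2})^{n-d-1}\,dt$ converges precisely because $s\leq2d-1$), or, as the paper does, deduce it from the tail bound by a dyadic decomposition $\E(\mathsf{d}^{-s})\leq\sum_{k\geq0}\P(\mathsf{d}\leq2^{-k})2^{(k+1)s}$ — but as written the statement you prove is weaker than the lemma and would not support the use made of it.
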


\begin{enumerate}
\item For every $\epsilon>0$, 
\[
4^{-n}\epsilon^{2d}\leq\mathbb{P}\left(\mathsf{d}\leq\epsilon\right)\leq2^{n}\epsilon^{2d}.
\]
\item For every $0\leq s\leq2d-1$, 
\[
\mathbb{E}\mathsf{d}^{-s}\leq2^{n+s+1}.
\]
\end{enumerate}
\begin{proof}
Let $W,\mathsf{X}$ be as in Definition \ref{defn:Pdn}. For $r\le1$,
\[
\P\left(\left|\pr_{W}(r\sX)\right|\le\epsilon\right)\ge\P\left(\left|\pr_{W}(\sX)\right|\le\epsilon\right),
\]
and for $r\ge1$, we have the opposite inequality. Therefore, if $\sY$
is a random variable uniformly distributed on the unit ball $B(\C^{n})$
in $\C^{n}$ , then 
\[
\P\left(\left|\pr_{W}(\sY)\right|\le\epsilon\right)\ge\P\left(\left|\pr_{W}(\sX)\right|\le\epsilon\right),
\]
and if $\mathsf{Z}$ is a uniformly distributed random variable on
$2B(\C^{n})\smallsetminus B(\C^{n})$, then 
\[
\P\left(\left|\pr_{W}(\mathsf{Z})\right|\le\epsilon\right)\le\P\left(\left|\pr_{W}(\sX)\right|\le\epsilon\right),
\]
Recall that the volume of the unit ball in $\mathbb{C}^{n}$ is $\frac{\pi^{n}}{n!}$.
The set of points in $B(\C^{n})$ whose projection to $W$ has length
$\le\epsilon$ is contained in $\epsilon B(W)\times B(W^{\perp})$,
which has volume 
\[
\frac{\pi^{d}\epsilon^{2d}}{d!}\frac{\pi^{n-d}}{(n-d)!}=\epsilon^{2d}\binom{n}{d}\frac{\pi^{n}}{n!}.
\]
Therefore, 
\[
\P\left(\left|\pr_{W}(\sX)\right|\le\epsilon\right)<\epsilon^{2d}\binom{n}{d}<2^{n}\epsilon^{2d}.
\]
Similarly, the set of points in $2B(\C^{n})\smallsetminus B(\C^{n})$
whose projection to $W$ has length $\le\epsilon$ contains
\[
\epsilon B(W)\times\bigl(\sqrt{4-\epsilon^{2}}B(W^{\perp})\smallsetminus B(W^{\perp})\bigr),
\]
which implies 
\[
\P\left(\left|\pr_{W}(\sX)\right|\le\epsilon\right)>\frac{\epsilon^{2d}((4-\epsilon^{2})^{n-d}-1)\binom{n}{d}}{4^{n}-1}>4^{-n}\epsilon^{2d}.
\]
Finally, if $s\leq2d-1$, then 
\[
\E\left(\mathsf{d}^{-s}\right)\leq\sum_{k=0}^{\infty}\mathbb{P}(2^{-k-1}\leq\mathsf{d}\leq2^{-k})2^{(k+1)s}\leq2^{n+s}\sum_{k=0}^{\infty}2^{(s-2d)k}\leq2^{n+s+1}.\qedhere
\]
\end{proof}
We now bound the probability that, for a random $w$-value, $m$ random
unit vectors are all $\epsilon$-approximate eigenvectors. 
\begin{prop}
\label{Prop: Random faults}Let $r,w,\ell$ be as in Theorem \ref{thm:probabilistic result}
and let $m,n$ be natural numbers such that $n>m(\ell+1)$. Suppose
that $\sG_{1},\ldots,\sG_{r}$ are Haar-distributed random elements
in $\U_{n}$ and $\sX_{1},\ldots,\sX_{m}$ are uniformly distributed
random vectors in $S(\C^{n})$ such that, jointly, $\sG_{i}$ and
$\sX_{j}$ are independent. For every $\epsilon>0$, 
\begin{equation}
\mathbb{P}\Big(\text{all \ensuremath{\mathsf{X}_{i}} are \ensuremath{\epsilon}-approximate eigenvectors of \ensuremath{w(\mathsf{G}_{1},\ldots,\mathsf{G}_{r})}}\Big)<2^{3nm(\ell+1)}\epsilon^{m(2n-2m(\ell+1)-1)}.
\end{equation}
\end{prop}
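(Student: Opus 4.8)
The plan is to follow the route sketched just before Lemma~\ref{lem:cond.independence}: reveal the entries $h_{i,j}$ of the random array $\bh:=\phi(\sG_1,\ldots,\sG_r,\sX_1,\ldots,\sX_m)$ one at a time in the lexicographic order $\prec$, conditioning successively on the already-revealed vectors, and control each step by Lemmas~\ref{lem:cond.independence} and~\ref{sphere.lemma}. By Lemma~\ref{lem:few approx eigenvalues for non faulty } it suffices to bound $\P(\bh\text{ has an }\epsilon\text{-fault at every }i\in\{1,\dots,m\})$, and we may assume $\epsilon<1$, since otherwise the asserted right-hand side exceeds $1$. Writing $w_j=x_k^{\pm1}$, let $V_{i,j}$ be the span of the already-revealed vectors that have been fed to $\sG_k^{\pm1}$ (in the direction used at step $(i,j)$) by the time $h_{i,j}$ is revealed, and put $W_{i,j}:=\sG_k^{\pm1}(V_{i,j})$. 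The structural heart of the argument is the claim that, for $j\ge1$ and conditionally on $\{h_{p,q}\mid(p,q)\prec(i,j)\}$, one has $h_{i,j}=c_{i,j}+\rho_{i,j}\mathsf{U}_{i,j}$, where $c_{i,j}\in W_{i,j}$ is determined by the conditioning, $\rho_{i,j}=\bigl|\pr_{V_{i,j}^{\perp}}(h_{i,j-1})\bigr|$, and $\mathsf{U}_{i,j}$ is uniform on the unit sphere of $W_{i,j}^{\perp}$ and independent of the conditioning. This is the standard fact that a Haar-random unitary, queried along an adaptively chosen sequence of input vectors, returns at each step a vector uniformly distributed (with the correct radius) in the orthogonal complement of the span of the previous outputs, independently of everything revealed so far; it follows from Lemma~\ref{lem:cond.independence} by induction on the revealing order, once one checks that $\sG_k^{\pm1}|_{V_{i,j}}$ captures all the information about $\sG_k$ present in the conditioning.

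Two consequences of the reducedness of $w$ and of the order $\prec$ make the claim usable. First, the images under $\sG_k^{\pm1}$ of the already-fed inputs are themselves lexicographically earlier entries of $\bh$, so $W_{i,j}\subseteq Z_{i,j}(\bh)$; in particular $c_{i,\ell}\in Z_{i,\ell}(\bh)$, so the $\epsilon$-fault at $i$ is equivalent to $\bigl|\pr_{Z_{i,\ell}(\bh)^{\perp}}(\mathsf{U}_{i,\ell})\bigr|\le\epsilon/\rho_{i,\ell}$, whose conditional probability is at most $2^{n}(\epsilon/\rho_{i,\ell})^{2d_i}$ by Lemma~\ref{sphere.lemma}(1), where $d_i:=\dim Z_{i,\ell}(\bh)^{\perp}$. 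Second, since $w_j\ne w_{j-1}^{-1}$ the vector $h_{i,j-1}$ is not one of the inputs fed to $\sG_k^{\pm1}$ before step $(i,j)$, so $V_{i,j}\subseteq Z_{i,j-1}(\bh)$ and hence $\rho_{i,j}\ge\sigma_{i,j-1}:=\bigl|\pr_{Z_{i,j-1}(\bh)^{\perp}}(h_{i,j-1})\bigr|$; using $W_{i,j-1}\subseteq Z_{i,j-1}(\bh)$ once more one also gets $\sigma_{i,j-1}=\rho_{i,j-1}\bigl|\pr_{Z_{i,j-1}(\bh)^{\perp}}(\mathsf{U}_{i,j-1})\bigr|$ for $2\le j\le\ell$, while $\sigma_{i,0}=\bigl|\pr_{Y_i^{\perp}}(\sX_i)\bigr|$ with $Y_i:=\Span\{h_{p,q}\mid p<i\}$. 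From $\dim Z_{i,\ell}(\bh)\le(i-1)(\ell+1)+\ell$ we get $d_i\ge n-i(\ell+1)+1$, and with probability one the vectors of trajectory $i$ are in general position over $Y_i$, which makes every codimension occurring below at least $d_i+1$, so that Lemma~\ref{sphere.lemma}(2) will be applicable at each step with $s=2d_i$.

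It remains to assemble the estimate. Starting from the bound $2^{n}(\epsilon/\rho_{i,\ell})^{2d_i}$ on the conditional probability of an $\epsilon$-fault at $i$ and using $\rho_{i,\ell}\ge\sigma_{i,\ell-1}$, I would peel the trajectory level by level: at each level $\sigma_{i,j}=\rho_{i,j}T_{i,j}$ with $T_{i,j}=\bigl|\pr_{Z_{i,j}(\bh)^{\perp}}(\mathsf{U}_{i,j})\bigr|$ conditionally of the type $P_{d,D}$, so Lemma~\ref{sphere.lemma}(2) gives $\E[T_{i,j}^{-2d_i}\mid\text{past}]\le2^{n+2d_i+1}$, and likewise for $\sigma_{i,0}$; after the $\ell$ levels of trajectory $i$ this yields $\P(\epsilon\text{-fault at }i\mid\text{trajectories }<i)\le2^{\,n+\ell(3n+1)}\epsilon^{2d_i}$, a bound deterministic in the conditioning since $d_i\ge n-i(\ell+1)+1$. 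Multiplying over $i=1,\dots,m$, replacing each $\epsilon^{2d_i}$ by $\epsilon^{\,2n-2m(\ell+1)-1}$ (legitimate since $\epsilon<1$ and $2d_i\ge 2n-2m(\ell+1)+2$), and using $n>m(\ell+1)>\ell$ to absorb the powers of $2$ into $2^{3nm(\ell+1)}$, gives the claimed inequality. The step I expect to be the real obstacle is the first paragraph: making the adaptive conditional-uniformity claim rigorous and isolating precisely the reducedness input that keeps the ``new length'' $\rho_{i,j}$ bounded below by $\sigma_{i,j-1}$; once that structure is in place, the rest is careful but routine bookkeeping with Lemma~\ref{sphere.lemma}.
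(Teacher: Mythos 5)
Your proposal is correct and follows essentially the same route as the paper: reveal the array $\bh$ in lexicographic order, use Lemma \ref{lem:cond.independence} to see that each new output is a deterministic vector in the span of earlier outputs plus a uniformly distributed vector of radius $\bigl|\pr_{\mathsf{V}_{i,j}^{\perp}}h_{i,j-1}\bigr|$ on a sphere in the orthogonal complement, use reducedness of $w$ to get $\mathsf{V}_{i,j}^{w_j}\subseteq\mathsf{Z}_{i,j-1}$ and hence the lower bound on that radius, and control everything with Lemma \ref{sphere.lemma}. The only (immaterial) difference is bookkeeping: the paper packages the per-level ratios as independent variables $\widetilde{\mathsf{d}}_{i,j}$ with distributions $P_{a,b}$, $a\geq n-m(\ell+1)$, and applies one Markov inequality with $s=2(n-m(\ell+1))-1$ per trajectory, whereas you iterate conditional expectations with exponent $2d_i$; both yield the stated constants.
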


\begin{proof}
Consider the random array $\bh=\phi(\sG_{1},\ldots,\sG_{r},\sX_{1},\ldots,\sX_{m})$.
By Lemma \ref{lem:few approx eigenvalues for non faulty }, it is
enough to bound the probability that $\bh$ has an $\epsilon$-fault
at all $i$.

For each index $(i,j)$, the vectors $\left\{ h_{p,q}\right\} _{(p,q)\prec(i,j)}$
determine the restrictions of $\mathsf{G}_{i}$ to some (random) subspaces.
We collect this information as follows. For $(i,j)\in\left\{ 1,\ldots,m\right\} \times\left\{ 0,\ldots,\ell\right\} $
and $x\in\left\{ x_{1},\ldots,x_{r},x_{1}^{-1},\ldots,x_{r}^{-1}\right\} $,
define 
\begin{enumerate}
\item A random vector $\mathsf{v}_{i,j}:=h_{i,j}=\phi_{i,j}(\mathsf{G}_{1},\ldots,\mathsf{G}_{r},\mathsf{X}_{1},\ldots,\mathsf{X}_{m})$. 
\item A random subspace 
\[
\mathsf{V}_{i,j}^{x}:=\Span\Big(\left\{ \mathsf{v}_{p,q-1}\mid(p,q)\prec(i,j)\text{ and }w_{q}=x\right\} \cup\left\{ \mathsf{v}_{p,q}\mid(p,q)\prec(i,j)\text{ and }w_{q}=x^{-1}\right\} \Big),
\]
\item A random linear map $\mathsf{T}_{i,j}^{x}:\mathsf{V}_{i,j}^{x}\rightarrow\mathsf{V}_{i,j}^{x^{-1}}$
as the restriction of $x(\mathsf{G}_{1},\ldots,\mathsf{G}_{r})$ to
$\mathsf{V}_{i,j}^{x}$. 
\end{enumerate}
We denote the $\sigma$-algebra generated by $\left\{ \mathsf{v}_{p,q}\mid(p,q)\prec(i,j)\right\} $
by $\mathcal{F}_{i,j}$. Equivalently, $\mathcal{F}_{i,j}$ is the
$\sigma$-algebra generated by $\mathsf{T}_{i,j}^{x_{1}},\ldots,\mathsf{T}_{i,j}^{x_{r}^{-1}}$.
Finally, let 
\[
\mathsf{Z}_{i,j}:=Z_{i,j}(\bh)=\Span\left\{ \mathsf{v}_{p,q}\right\} _{(p,q)\prec(i,j)}=\Span\Big(\bigcup_{x\in\left\{ x_{1},\dots,x_{r}^{-1}\right\} }\left(\mathsf{V}_{i,j}^{x}\right)\Big).
\]
Define a sequence of random variables $\mathsf{d}_{i,j}$ by $\mathsf{d}_{i,0}:=\left|\pr_{\mathsf{Z}_{i,0}^{\perp}}\mathsf{v}_{i,0}\right|$
and, for $1\leq j\leq\ell$, 
\[
\mathsf{d}_{i,j}:=\frac{\left|\pr_{\mathsf{Z}_{i,j}^{\perp}}\mathsf{v}_{i,j}\right|}{\left|\pr_{\mathsf{Z}_{i,j-1}^{\perp}}\mathsf{v}_{i,j-1}\right|}.
\]
The array $\bh$ has $\epsilon$-fault in $i$ if and only if $\prod_{j=0}^{\ell}\mathsf{d}_{i,j}\leq\epsilon$.

Fix $(i,j)$ with $j\geq1$. Assume without loss of generality that
$w_{j}=x_{k}$. Decomposing $\mathsf{v}_{i,j-1}$ into its components
in $\mathsf{V}_{i,j}^{w_{j}}$ and $(\mathsf{V}_{i,j}^{w_{j}})^{\perp}$,
we have 
\[
\mathsf{v}_{i,j}=\mathsf{G}_{k}(\mathsf{v}_{i,j-1})=\mathsf{G}_{k}\left(\pr_{\mathsf{V}_{i,j}^{w_{j}}}\mathsf{v}_{i,j-1}\right)+\mathsf{G}_{k}\left(\pr_{(\mathsf{V}_{i,j}^{w_{j}})^{\perp}}\mathsf{v}_{i,j-1}\right).
\]
$\mathsf{G}_{k}$ maps $\pr_{\mathsf{V}_{i,j}^{w_{j}}}\mathsf{v}_{i,j-1}$
into $\mathsf{Z}_{i,j}$. Therefore, 
\[
\mathsf{d}_{i,j}=\frac{\left|\pr_{\mathsf{Z}_{i,j}^{\perp}}\left(\mathsf{G}_{k}\left(\pr_{(\mathsf{V}_{i,j}^{w_{j}})^{\perp}}\mathsf{v}_{i,j-1}\right)\right)\right|}{\left|\pr_{\mathsf{Z}_{i,j-1}^{\perp}}\mathsf{v}_{i,j-1}\right|}\geq\frac{\left|\pr_{\mathsf{Z}_{i,j}^{\perp}}\left(\mathsf{G}_{k}\left(\pr_{(\mathsf{V}_{i,j}^{w_{j}})^{\perp}}\mathsf{v}_{i,j-1}\right)\right)\right|}{\left|\pr_{(\mathsf{V}_{i,j}^{w_{j}})^{\perp}}\mathsf{v}_{i,j-1}\right|},
\]
where we used the fact that $\sZ_{i,j-1}^{\perp}\subset(\mathsf{V}_{i,j}^{w_{j}})^{\perp}$.
Let $\widetilde{\mathsf{d}}_{i,j}$ be the random variable
\[
\widetilde{\mathsf{d}}_{i,j}:=\frac{\left|\pr_{\mathsf{Z}_{i,j}^{\perp}}\left(\mathsf{G}_{k}\left(\pr_{(\mathsf{V}_{i,j}^{w_{j}})^{\perp}}\mathsf{v}_{i,j-1}\right)\right)\right|}{\left|\pr_{(\mathsf{V}_{i,j}^{w_{j}})^{\perp}}\mathsf{v}_{i,j-1}\right|}.
\]
It is clear that $\widetilde{\mathsf{d}}_{i,j}$ is measurable with
respect to $\mathcal{F}_{i,j}$. Crucially, by Lemma \ref{lem:cond.independence},
$\widetilde{\mathsf{d}}_{i,j}$ is independent of $\mathcal{F}_{i,j-1}$,
so the $\widetilde{\mathsf{d}}_{i,j}$ are all independent. By Lemma
\ref{lem:cond.independence} again, $\mathsf{G}_{k}\left(\pr_{(\mathsf{V}_{i,j}^{w_{j}})^{\perp}}\mathsf{v}_{i,j-1}\right)$
is a vector uniformly distributed on $\left|\pr_{(\mathsf{V}_{i,j}^{w_{j}})^{\perp}}\v_{i,j-1}\right|S\left((\mathsf{V}_{i,j}^{w_{j}^{-1}})^{\perp}\right)$.
Since $\dim\mathsf{Z}_{i,j}$ and $\dim\mathsf{V}_{i,j}^{x}$ are
constant almost surely and $\dim\mathsf{Z}_{i,j},\dim\mathsf{V}_{i,j}^{x}\leq m(\ell+1)$,
Lemma \ref{lem:cond.independence} also implies that $\widetilde{\mathsf{d}}_{i,j}$
has distribution $P_{a_{i,j},b_{i,j}}$, for $n-m(\ell+1)\leq a_{i,j}<b_{i,j}\leq n$.
By Lemma \ref{sphere.lemma} with $s=2(n-m(\ell+1))-1$, 
\[
\mathbb{P}\left(\widetilde{\mathsf{d}}_{i,0}\cdot\widetilde{\mathsf{d}}_{i,1}\cdots\widetilde{\mathsf{d}}_{i,\ell}\leq\epsilon\right)\leq\frac{\mathbb{E}\left(\widetilde{\mathsf{d}}_{i,0}\cdot\widetilde{\mathsf{d}}_{i,1}\cdots\widetilde{\mathsf{d}}_{i,\ell}\right)^{-s}}{\epsilon^{-s}}\leq2^{(\ell+1)(3n-2m(\ell+1))}\epsilon^{s}\leq2^{3n(\ell+1)}\epsilon^{2(n-m(\ell+1))-1}.
\]
Thus, 
\begin{align*}
 & \mathbb{P}\Big((\forall i)\quad\text{\ensuremath{\bh} has \ensuremath{\epsilon}-fault in \ensuremath{i}}\Big)=\mathbb{P}\Big((\forall i)\quad\mathsf{d}_{i,0}\cdot\mathsf{d}_{i,1}\cdots\mathsf{d}_{i,\ell}\leq\epsilon\Big)\\
\leq & \mathbb{P}\Big((\forall i)\quad\widetilde{\mathsf{d}}_{i,0}\cdot\widetilde{\mathsf{d}}_{i,1}\cdots\widetilde{\mathsf{d}}_{i,\ell}\leq\epsilon\Big)\leq\left(2^{3n(\ell+1)}\epsilon^{2n-2m(\ell+1)-1}\right)^{m},
\end{align*}
and the proposition follows.
\end{proof}
Next, we bound the probability that, for a badly spread matrix, $m$
random unit vectors will each be an $\epsilon$-approximate eigenvector. 
\begin{lem}
\label{Badly separated case}Let $0<\beta,\epsilon<1$, let $g\in\U_{n}$
be a matrix which is not $(\beta,\epsilon)$-spread, and let $\mathsf{X}_{1},\ldots,\mathsf{X}_{k}$
be i.i.d random vectors in $\mathbb{C}^{n}$, each uniformly distributed
on the unit sphere. Then 
\[
\mathbb{P}\left(\text{all \ensuremath{\mathsf{X}_{i}} are \ensuremath{2\epsilon}-approximate eigenvectors of \ensuremath{g}}\right)>4^{-2nk}\epsilon^{2\beta nk}.
\]
\end{lem}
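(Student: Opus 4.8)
The plan is to extract from the failure of $(\beta,\epsilon)$-spreadness a large subspace on which $g$ is close to a scalar, to observe that unit vectors essentially supported on that subspace are automatically $2\epsilon$-approximate eigenvectors, and then to count such vectors using Lemma~\ref{sphere.lemma}.

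First I would use the hypothesis: since $g$ is not $(\beta,\epsilon)$-spread, there is an arc of $\U_1$ of angular width $2\epsilon$ carrying more than $(1-\beta)n$ of the eigenvalues of $g$ (counted with multiplicity). Let $\lambda\in\U_1$ be its midpoint, so that every eigenvalue $\zeta$ lying on this arc satisfies $|\zeta-\lambda|\le2\sin(\epsilon/2)\le\epsilon$, and let $W\subseteq\mathbb{C}^n$ be the span of the corresponding eigenvectors, so that $\dim W>(1-\beta)n$ and hence $d:=\dim W^{\perp}<\beta n\le n$. (If $d=0$ the asserted inequality is trivial, its right-hand side being $<1$, so I may assume $d\ge1$.)

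The key elementary point I would establish is that every unit vector $v$ with $|\pr_{W^{\perp}}v|\le\epsilon/2$ is a $2\epsilon$-approximate eigenvector of $g$. Writing $v=v_1+v_2$ with $v_1=\pr_W v$ and $v_2=\pr_{W^{\perp}}v$: the restriction of $g$ to $W$ has all of its eigenvalues within $\epsilon$ of $\lambda$, so $|(g-\lambda\,\id)v_1|\le\epsilon|v_1|\le\epsilon$, while $|(g-\lambda\,\id)v_2|\le2|v_2|\le\epsilon$ since $g$ is unitary and $|\lambda|=1$; therefore $\mathrm{dist}(gv,\mathbb{C}v)\le|gv-\lambda v|\le2\epsilon$, i.e.\ $|\pr_{v^{\perp}}(gv)|\le2\epsilon$. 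Consequently, the probability that all $\mathsf{X}_i$ are $2\epsilon$-approximate eigenvectors of $g$ is at least the probability that $|\pr_{W^{\perp}}\mathsf{X}_i|\le\epsilon/2$ for every $i$; by independence of the $\mathsf{X}_i$ and unitary invariance (under which $|\pr_{W^{\perp}}\mathsf{X}_1|$ has distribution $P_{d,n}$) the latter equals $\mathbb{P}(\mathsf{d}\le\epsilon/2)^k$ with $\mathsf{d}\sim P_{d,n}$, which by Lemma~\ref{sphere.lemma}(1) (applied with $\epsilon/2$ in place of $\epsilon$) is at least $\bigl(4^{-n}(\epsilon/2)^{2d}\bigr)^k=\bigl(4^{-n-d}\epsilon^{2d}\bigr)^k$. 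Finally $d<n$ gives $4^{-n-d}>4^{-2n}$, and $d<\beta n$ with $0<\epsilon<1$ gives $\epsilon^{2d}>\epsilon^{2\beta n}$, so this exceeds $4^{-2nk}\epsilon^{2\beta nk}$, as required.

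I do not expect a genuine obstacle: the argument is a short reduction followed by a direct application of Lemma~\ref{sphere.lemma}. The only point needing care is the bookkeeping of constants — in particular one must require $|\pr_{W^{\perp}}v|\le\epsilon/2$ rather than $\le\epsilon$, so that the two error terms above contribute $\epsilon$ each and sum to exactly the $2\epsilon$ appearing in the statement.
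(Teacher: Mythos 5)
Your proof is correct and follows essentially the same route as the paper's: extract the $g$-invariant subspace $W$ of dimension $>(1-\beta)n$ on which $g$ is within $\epsilon$ of a scalar $\lambda$, show that any unit vector with $\left|\pr_{W^{\perp}}v\right|\leq\epsilon/2$ is a $2\epsilon$-approximate eigenvector, and then apply Lemma \ref{sphere.lemma} with $\epsilon/2$ together with independence. The only differences are cosmetic (you use the triangle inequality where the paper uses the orthogonal decomposition $\left|gv-\lambda v\right|^{2}\leq\epsilon^{2}+4\left|\pr_{W^{\perp}}v\right|^{2}$, and you treat the degenerate case $\dim W^{\perp}=0$ explicitly).
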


\begin{proof}
By assumption, there is $\zeta\in S(\mathbb{C})$ and a $g$-invariant
subspace $W$ of dimension at least $(1-\beta)n$ such that the eigenvalues
of $g$ on $W$ lie on an arc $I\subseteq\U_{1}$ of diameter $2\epsilon$,
centered at $\zeta$. If $w\in W$, then $\left|gw-\zeta w\right|\leq\epsilon\left|w\right|$.
Thus, for every unit vector $v$, 
\[
\left|\pr_{v^{\perp}}gv\right|^{2}\leq\left|gv-\zeta v\right|^{2}=\left|g(\pr_{W}v)-\zeta\pr_{W}v\right|^{2}+\left|g(\pr_{W^{\perp}}v)-\zeta\pr_{W^{\perp}}v\right|^{2}\leq\epsilon^{2}+4\left|\pr_{W^{\perp}}v\right|^{2}.
\]
From the inequality, if $\left|\pr_{W^{\perp}}v\right|\leq\frac{\epsilon}{2}$,
then $v$ is a $2\epsilon$-approximate eigenvector of $g$. By Lemma
\ref{sphere.lemma} $\mathbb{P}\Big(\left|\pr_{W^{\perp}}\mathsf{X}_{i}\right|\leq\frac{\epsilon}{2}\Big)>4^{-2n}\epsilon^{2\beta n}$
and the result follows. 
\end{proof}
\begin{proof}[Proof of Theorem \ref{thm:probabilistic result}]
Let $A$ be the event that $\mathsf{X}_{1},\ldots,\mathsf{X}_{m}$
are all $2\epsilon$-approximate eigenvectors of $w(\mathsf{G}_{1},\ldots,\mathsf{G}_{r})$,
and let $B$ be the event that $w(\mathsf{G}_{1},\ldots,\mathsf{G}_{r})$
is not $(\beta,\epsilon)$-spread. By Proposition \ref{Prop: Random faults},
\[
\mathbb{P}(A)\leq2^{nm(3\ell+5)}\epsilon^{2m(n-m(\ell+1))-m}.
\]
By Lemma \ref{Badly separated case}, 
\[
\frac{\mathbb{P}(A)}{\mathbb{P}(B)}\geq\frac{\mathbb{P}(A\cap B)}{\mathbb{P}(B)}=\mathbb{P}(A|B)\geq4^{-2nm}\epsilon^{2\beta nm}.
\]
Thus, 
\[
\mathbb{P}(B)\leq2^{3nm(\ell+3)}\epsilon^{2m(n(1-\beta)-m(\ell+1))-m}.
\]
We set $M:=\frac{n(1-\beta)}{2(\ell+1)}$ and $m:=\max\left\{ \left\lfloor M\right\rfloor ,2\right\} $.
Since $n(1-\beta)>\max\left\{ 4\ell,8\right\} $ we have $M\geq\frac{\max\left\{ 4\ell+1,9\right\} }{2(\ell+1)}\geq\max\left\{ \frac{3}{2},\frac{9}{2(\ell+1)}\right\} $.
If $M>2$, then $\frac{2}{3}M<m\leq M$ and hence: 
\[
\mathbb{P}(B)\leq2^{3n^{2}\frac{\ell+3}{2(\ell+1)}}\epsilon^{m\left(2(2M-m)(\ell+1)-1\right)}\leq2^{3n^{2}}\epsilon^{\frac{2}{3}M\left(2M(\ell+1)-1\right)}<2^{3n^{2}}\epsilon^{M^{2}(\ell+1)\left(\frac{4}{3}-\frac{2}{3M(\ell+1)}\right)}<2^{3n^{2}}\epsilon^{\frac{n^{2}(1-\beta)^{2}}{4(\ell+1)}}.
\]
If $M\leq2$ then $m=2$ and $M\leq m\leq\frac{4}{3}M$. Since $n>\max\left\{ 16,4\ell\right\} $,
we have $n>2(\ell+3)$. Hence, 
\[
\mathbb{P}(B)\leq2^{6n(\ell+3)}\epsilon^{m\left(2(2M-m)(\ell+1)-1\right)}<2^{3n^{2}}\epsilon^{M^{2}(\ell+1)\left(\frac{4}{3}-\frac{1}{M(\ell+1)}\right)}<2^{3n^{2}}\epsilon^{\frac{n^{2}(1-\beta)^{2}}{4(\ell+1)}}.\qedhere
\]
\end{proof}

\subsection{\label{subsec:Proof-of-Theorem B}Proof of Theorem \ref{Thm B, large Fourier coefficients}}

Theorem \ref{Thm B, large Fourier coefficients} follows from: 
\begin{thm}
\label{thm:Detalied Theorem B}There exist constants $a,b>0$ such
that, for every word $1\neq w\in F_{r}$, every $n\geq2$, and every
$\rho\in\Irr(\U_{n})$ satisfying $\rho(1)\ge2^{a(\ell(w)+1)n^{2}\log n}$,
\[
\left|\mathbb{E}\left(\rho\left(w(\mathsf{G}_{1},\ldots,\mathsf{G}_{r})\right)\right)\right|\leq\rho(1)^{1-\frac{b}{\ell(w)}}.
\]
In fact, if $n>8\ell(w)$, we can take $a=2^{9},b=2^{-10}$. 
\end{thm}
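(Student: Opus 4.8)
The plan is to estimate $\E\bigl(|\rho(g)|\bigr)$, where $g:=w(\mathsf{G}_{1},\ldots,\mathsf{G}_{r})$, since this dominates $|\E(\rho(g))|$; write $\ell:=\ell(w)$. Fix a constant $0<\beta<1$ --- for concreteness $\beta=\tfrac12$ --- and set $\epsilon:=\rho(1)^{-\beta(2-\beta)/(8n^{2})}$, the exact value for which Corollary~\ref{cor:character estimates for spread elements} is stated. I would split $\E\bigl(|\rho(g)|\bigr)$ according to whether $g$ is $(\beta,\epsilon)$-spread. On the event that it is, Corollary~\ref{cor:character estimates for spread elements} gives $|\rho(g)|\le\rho(1)^{1-\beta(2-\beta)/8}$; its hypothesis $\rho(1)>2^{(8/\beta)n^{2}\log n}$ follows from the standing assumption $\rho(1)\ge 2^{a(\ell+1)n^{2}\log n}$ once $a$ is large (for $\beta=\tfrac12$, already $a=2^{9}$ forces $\rho(1)>2^{16n^{2}\log n}$). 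On the complementary event I would use the trivial bound $|\rho(g)|\le\rho(1)$ together with Theorem~\ref{thm:probabilistic result} for the probability. This yields
\[
\E\bigl(|\rho(g)|\bigr)\le\rho(1)^{1-\beta(2-\beta)/8}+\rho(1)\cdot 2^{3n^{2}}\epsilon^{\,n^{2}(1-\beta)^{2}/(4(\ell+1))}.
\]

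Substituting the chosen $\epsilon$ turns the second summand into $2^{3n^{2}}\rho(1)^{\,1-\beta(2-\beta)(1-\beta)^{2}/(32(\ell+1))}$. The factor $2^{3n^{2}}$ is harmless: in the range where Theorem~\ref{thm:probabilistic result} applies, $\log n$ exceeds an absolute constant, so $2^{3n^{2}}\le\rho(1)^{3/(a(\ell+1)\log n)}$, which for $a$ large is a power of $\rho(1)$ smaller than $\beta(2-\beta)(1-\beta)^{2}/(64(\ell+1))$; hence the second summand is at most $\rho(1)^{1-c/(\ell+1)}$ for a constant $c=c(\beta)>0$. Both summands are then at most $\rho(1)^{1-b/\ell}$ for a suitable $b>0$, and the bounded multiplicative constant that survives is absorbed since $\rho(1)$ is enormous. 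The single constraint is that Theorem~\ref{thm:probabilistic result} requires $n>\max\{4\ell/(1-\beta),8/(1-\beta),16\}$; for $\beta=\tfrac12$ this reads $n>8\ell$ (and $n>16$, automatic once $\ell\ge 2$), and propagating the constants through Corollary~\ref{cor:character estimates for spread elements} and Theorem~\ref{thm:probabilistic result} gives the explicit values $a=2^{9}$, $b=2^{-10}$ of the ``in fact'' clause. One may optimise over $\beta$, but $\tfrac12$ yields the cleanest threshold.

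It remains to handle the bounded-rank range $2\le n\le 8\ell(w)$, where Theorem~\ref{thm:probabilistic result} is unavailable because it forces $n$ to grow linearly in $\ell$. Here $n$ is bounded in terms of $\ell(w)$, so only the existence of constants $a,b$ is needed and no effective control is required: one invokes the algebro-geometric estimate for the decay of Fourier coefficients of word maps in bounded rank proved in \S\ref{sec:Geometry-and-singularities} by the singularity-theoretic methods of Glazer and Handel \cite{GH19,GH21,GHb}. Combining the two ranges yields one pair $a,b>0$ valid for all $n\ge 2$, which is Theorem~\ref{thm:Detalied Theorem B}; Theorem~\ref{Thm B, large Fourier coefficients} is then the case $C:=a(\ell(w)+1)$, $\epsilon:=b/\ell(w)$.

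I expect essentially all the difficulty to sit in the two inputs invoked above: Corollary~\ref{cor:character estimates for spread elements} (which rests on the Weyl-dimension-formula comparison of Proposition~\ref{Dim ineq} and the Weyl-character-formula estimate behind Theorem~\ref{thm:Exponential bound for separated}) and Theorem~\ref{thm:probabilistic result} (the trajectory and approximate-eigenvector argument of \S\ref{sec:Probability-bounds}). Granting those, the assembly is a short computation whose only delicate point is the bookkeeping of powers of $\rho(1)$. With $\epsilon$ forced by the character bound, the spread-failure probability of Theorem~\ref{thm:probabilistic result} contributes a factor $\epsilon^{\Theta(n^{2}/\ell)}=\rho(1)^{-\Theta(1/\ell)}$, which must, and does, beat the trivial factor $\rho(1)$ from the non-spread event; at the same time the crude loss $2^{\Theta(n^{2})}=\rho(1)^{\Theta(1/((\ell+1)\log n))}$ must be, and is, swamped by the hypothesis $\rho(1)\ge 2^{\Theta((\ell+1)n^{2}\log n)}$ --- which is exactly why the degree threshold in the theorem has that shape, and why the resulting saving is of order $1/\ell$ rather than a constant.
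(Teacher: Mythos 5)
Your treatment of the main range $n>\max\{8\ell(w),16\}$ is exactly the paper's argument: the same choice $\beta=\tfrac12$, the same $\epsilon=\rho(1)^{-3/(32n^{2})}$, the same splitting of the expectation according to whether $w(\mathsf{G}_{1},\ldots,\mathsf{G}_{r})$ is $(\beta,\epsilon)$-spread, with Corollary \ref{cor:character estimates for spread elements} on the spread event and Theorem \ref{thm:probabilistic result} plus the trivial bound on its complement; your bookkeeping of the powers of $\rho(1)$ and the explicit constants $a=2^{9}$, $b=2^{-10}$ all check out there.

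The gap is in the bounded-rank range $2\le n\le 8\ell(w)$. You invoke \S\ref{sec:Geometry-and-singularities}, but that section does not prove a Fourier-decay estimate: it proves that $w_{\mathrm{SL}_{n}}^{*t}$ is (FRS) for $t\ge80\ell(w)n^{2}$, hence (Corollary \ref{cor:bounded density for small rank}) that $\tau_{w,\SU_{n}}^{*t}\in L^{\infty}(\SU_{n})$. The best character bound extractable from that is $\left|\E\rho(w(\mathsf{G}_{1},\ldots,\mathsf{G}_{r}))\right|^{t}/\rho(1)^{t-1}\le\|\tau_{w,\SU_{n}}^{*t}\|_{\infty}$, i.e.\ a saving of $1/t\asymp1/(\ell(w)n^{2})$ in the exponent, which for $n\asymp\ell(w)$ is of order $\ell(w)^{-3}$, not $\ell(w)^{-1}$; no universal $b>0$ satisfies $b/\ell(w)\le1/(80\ell(w)n^{2})$ once $n$ is allowed to grow like $8\ell(w)$. (This mismatch is exactly why Theorem \ref{thm: A, bounded density}(1) has threshold $\asymp\ell(w)^{3}$ while part (2) has $\asymp\ell(w)$.) Moreover, your remark that ``no effective control is required'' is incorrect: the constants $a,b$ must be uniform over all words and all $n\ge2$, so uniformity in $\ell(w)$ (and in $n\le8\ell(w)$) is precisely what is needed, and the constant $\|\tau_{w,\SU_{n}}^{*t}\|_{\infty}$ produced by the (FRS) machinery is ineffective, hence not absorbed by the hypothesis $\rho(1)\ge2^{a(\ell(w)+1)n^{2}\log n}$ for any fixed $a$. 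The paper closes this range by citing \cite[Proposition 7.2(1)]{AG}, an external character estimate with explicit dependence on $\ell(w)$ and $n$; \S\ref{sec:Geometry-and-singularities} is used in this paper only for the $L^{\infty}$ and small-ball statements in bounded rank, not for Theorem \ref{thm:Detalied Theorem B}. (A minor further point: for $\ell(w)=1$ your reduction leaves the window $8<n\le16$ untreated, though there the coefficient vanishes for nontrivial $\rho$; the paper folds this case into the same citation.)
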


\begin{proof}
By \cite[Proposition 7.2(1)]{AG}, it is enough to prove the theorem
assuming $n>\max\left\{ 8\ell(w),16\right\} $. We assume this in
the following.

Given $w$ and $\rho$ such that $\rho(1)>2^{512(\ell(w)+1)n^{2}\log n}$,
let $\beta=\frac{1}{2}$ and $\epsilon=\rho(1)^{-\frac{3}{32n^{2}}}$.
By Corollary \ref{cor:character estimates for spread elements}, if
$g\in\U_{n}$ is $(\beta,\epsilon)$-spread, then $\left|\rho(g)\right|<\rho(1)^{\frac{29}{32}}$.
By Theorem \ref{thm:probabilistic result}, 
\[
\mathbb{P}\left(\text{\ensuremath{w(\mathsf{G}_{1},\ldots,\mathsf{G}_{r})} is not \ensuremath{(\beta,\epsilon)}-spread}\right)<2^{3n^{2}}\epsilon^{\frac{n^{2}}{16(\ell(w)+1)}}=2^{3n^{2}}\rho(1)^{-\frac{3}{512(\ell(w)+1)}}.
\]
Since $\rho(1)>2^{512(\ell(w)+1)n^{2}\log n}$ and $n>16$, we get
that $\rho(1)^{\frac{2}{512(\ell(w)+1)}}>2^{3n^{2}+1}$. Thus, 
\[
\mathbb{P}\left(\text{\ensuremath{w(\mathsf{G}_{1},\ldots,\mathsf{G}_{r})} is not \ensuremath{(\beta,\epsilon)}-spread}\right)<\frac{1}{2}\rho(1)^{-\frac{1}{512(\ell(w)+1)}},
\]
and 
\begin{align*}
 & \E\left(\rho(w(\mathsf{G}_{1},\ldots,\mathsf{G}_{r}))\right)\\
= & \int_{\text{\ensuremath{w(\mathsf{G}_{1},\ldots,\mathsf{G}_{r})} is not \ensuremath{(\beta,\epsilon)}-spread}}\rho(w(\sG_{1},\ldots,\sG_{r}))\mu_{\U_{n}^{r}}+\int_{\text{\ensuremath{w(\mathsf{G}_{1},\ldots,\mathsf{G}_{r})} is \ensuremath{(\beta,\epsilon)}-spread}}\rho(w(\sG_{1},\ldots,\sG_{r}))\mu_{\U_{n}^{r}}\\
< & \mathbb{P}\left(\text{\ensuremath{w(\mathsf{G}_{1},\ldots,\mathsf{G}_{r})} is not \ensuremath{(\beta,\epsilon)}-spread}\right)\cdot\rho(1)+\rho(1)^{\frac{29}{32}}\leq\frac{1}{2}\rho(1)^{1-\frac{1}{512(\ell(w)+1)}}+\rho(1)^{\frac{29}{32}}.\qedhere
\end{align*}
\end{proof}

\section{\label{sec:Geometry-and-singularities}Geometry and singularities
of word maps }

Throughout this section, $K$ is a field of characteristic $0$. For
a morphism $\varphi:X\rightarrow Y$ of finite type $K$-schemes,
we denote by $X_{y,\varphi}$ the scheme theoretic fiber of $\varphi$
over $y\in Y$. We write $X^{\mathrm{sm}}$ (resp.~$X^{\mathrm{sing}}$)
for the smooth (resp.~singular) locus of $X$. Similarly, we write
$X^{\varphi,\mathrm{sm}}$ (resp.~$X^{\varphi,\mathrm{sing}}$) for
the smooth (resp.~singular) locus of $\varphi$. We denote by $\mathbb{A}_{R}^{N}$
the affine space over a base ring $R$.

We recall the following definitions from \cite{AA16} and \cite{GH21}. 
\begin{defn}
\label{def:(FRS)}A morphism $\varphi:X\rightarrow Y$ between smooth
$K$-varieties is called \textit{(FRS)} if it is flat and if every
$x\in X(\overline{K})$, the fiber $X_{\varphi(x),\varphi}$ is reduced
and has rational singularities. 
\end{defn}

\begin{defn}
\label{def:convolution}Let $\varphi:X\rightarrow\underline{G}$ and
$\psi:Y\rightarrow\underline{G}$ be morphisms from algebraic $K$-varieties
$X,Y$ to an algebraic $K$-group $\underline{G}$. We define their
\emph{convolution} by
\[
\varphi*\psi:X\times Y\rightarrow\underline{G},\text{ }(x,y)\mapsto\varphi(x)\cdot_{\underline{G}}\psi(y).
\]
We denote by $\varphi^{*k}:X^{k}\rightarrow\underline{G}$ the \emph{$k$-th
self convolution} of $\varphi$.
\end{defn}

In \cite{GH19,GH21,GHb}, the second author and Yotam Hendel studied
the behavior of morphisms $\varphi:X\rightarrow\underline{G}$ under
the convolution operation, where it was observed that singularities
of $\varphi^{*t}:X^{t}\rightarrow\underline{G}$ improve as one takes
higher and higher convolution powers. In \cite{GHb}, the focus shifted
towards the special case of word maps on simple algebraic groups and
simple Lie algebras. The goal of this section is to prove the following: 
\begin{thm}
\label{thm:word maps on SLn are (FRS) after enough convolutions}For
every $1\neq w\in F_{r}$ and $n\geq2$, the map $w_{\mathrm{SL}_{n}}^{*t}:\mathrm{SL}_{n}^{rt}\rightarrow\mathrm{SL}_{n}$
is (FRS), for all $t\geq80\ell(w)n^{2}$. 
\end{thm}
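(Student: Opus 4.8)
The plan is to build the (FRS) property for $w_{\mathrm{SL}_n}^{*t}$ from the analytic estimates obtained earlier in the paper, using the standard dictionary between rational singularities of fibers and $L^\infty$-bounds (or $L^2$-bounds on high convolution powers) for the associated pushforward densities. Concretely, by the work of Aizenbud--Avni and Glazer--Hendel \cite{AA16,GH21}, a flat morphism $\varphi\colon X\to\underline G$ between smooth varieties is (FRS) if and only if for every finite field extension $F/\mathbb{Q}_p$ (for almost all $p$) the pushforward of the normalized Haar measure on $X(F)$ under $\varphi$ has bounded density with respect to Haar measure on $\underline G(F)$; equivalently, one checks a uniform integrability / log-canonical-threshold condition on the fibers. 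So the first step is to reduce Theorem~\ref{thm:word maps on SLn are (FRS) after enough convolutions} to showing that the density of $(w_{\mathrm{SL}_n}^{*t})_*\mu$ is bounded (in fact, that the relevant singularity invariant $\lct$ of each fiber exceeds $1$) once $t\ge 80\ell(w)n^2$.

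Next I would separate into two regimes according to the size of $n$ relative to $\ell(w)$, exactly mirroring the architecture described in the introduction. In the \emph{large-rank} regime $n>8\ell(w)$, I would feed the Fourier estimate of Theorem~\ref{Thm B, large Fourier coefficients} (equivalently Theorem~\ref{thm:Detalied Theorem B}) into the Fourier expansion of the convolution power: since $|\widehat{\tau_{w,\mathrm{SU}_n}}(\rho)|\le\rho(1)^{-b/\ell(w)}$ for all $\rho$ with $\rho(1)$ above the threshold $2^{a(\ell(w)+1)n^2\log n}$, and since there are only finitely many small-degree $\rho$ (whose Fourier coefficients are trivially bounded by $\rho(1)$), the $t$-th convolution has $\sum_\rho \rho(1)^2 |\widehat{\tau}(\rho)|^t<\infty$ once $t$ is a large enough multiple of $\ell(w)$, giving $\tau_{w,\mathrm{SU}_n}^{*t}\in L^2$ and hence, after one more convolution, $L^\infty$. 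This $L^\infty$-bound for the compact group, together with the real-place instance of the $p$-adic integrability criterion and an Artin-approximation / Ax--Kochen--Ershov type transfer (as in \cite{AA16}), upgrades to (FRS) of the algebraic word map; the precise bookkeeping of how "$t$ a multiple of $\ell(w)$" and "$n^2$ factors from the number of representations and from the degree threshold" combine will produce the stated $t\ge 80\ell(w)n^2$.

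In the \emph{bounded-rank} regime $n=O(\ell(w))$, the Fourier route is unavailable (the threshold $2^{a(\ell(w)+1)n^2\log n}$ may exclude no characters), so here I would invoke the algebro-geometric machinery of Glazer--Hendel \cite{GH19,GH21,GHb}: word maps on semisimple groups are dominant with fibers whose singularities are controlled, and the key structural result is that the $\lct$ of the fibers of $\varphi^{*t}$ grows at least linearly in $t$ — more precisely, convolving $t$ copies multiplies (roughly) the log-canonical threshold by $t$, so a single application of the main convolution theorem of \cite{GHb} to the word map $w_{\mathrm{SL}_n}\colon\mathrm{SL}_n^r\to\mathrm{SL}_n$ (whose fibers, being nonempty by \cite{Bor83}, have $\lct\ge c/ n^2$ for an absolute $c$) shows that $\varphi^{*t}$ is (FRS) as soon as $t\cdot(c/n^2)\ge 1$, i.e. $t\gtrsim n^2$, which for $n=O(\ell(w))$ is absorbed into $t\ge 80\ell(w)n^2$. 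Finally, one checks the two regimes overlap (both are valid on a common range of $n$) so the uniform bound $t\ge 80\ell(w)n^2$ works for all $n\ge2$.

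The main obstacle I anticipate is \emph{not} either estimate in isolation but the quantitative matching at the interface: tracking how the $L^2$/$L^\infty$ thresholds coming from Theorem~\ref{thm:Detalied Theorem B} translate, via the representation-zeta-function bound $\sum_\rho\rho(1)^{2-\epsilon}<\infty$ for $\mathrm{U}_n$ and the $p$-adic criterion, into an explicit lower bound for $t$ that is simultaneously (i) linear in $\ell(w)$ up to the $n^2$ factor, (ii) valid uniformly in $n$, and (iii) compatible with the $\lct$-based bound in small rank. In particular one must be careful that the number $n^2-1=\dim\mathrm{SL}_n$ enters the exponent threshold in Theorem~\ref{thm:Detalied Theorem B} in the same way it enters the convergence of $\sum\rho(1)^2|\widehat\tau(\rho)|^t$, so that the final constant $80$ (rather than something growing with $n$) is achievable; this is the step where the explicit constants $a=2^9$, $b=2^{-10}$ from Theorem~\ref{thm:Detalied Theorem B} have to be pushed through carefully.
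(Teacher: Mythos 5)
There is a genuine gap, and it lies in the overall architecture. In the paper, Theorem \ref{thm:word maps on SLn are (FRS) after enough convolutions} is proved purely by the algebro-geometric/analytic route, uniformly in $n$; the Fourier estimates of Theorem \ref{Thm B, large Fourier coefficients} are never used for it (they enter only in the measure-theoretic Theorem \ref{thm: A, bounded density}(2)), and in fact they cannot be used in the way you propose. Your large-rank branch needs the implication ``$\tau_{w,\SU_{n}}^{*t}\in L^{\infty}(\SU_{n})\Rightarrow w_{\mathrm{SL}_{n}}^{*t}$ is (FRS)'', but the analytic criterion of \cite{AA16,GHS,Rei} (Theorem \ref{thm:analytic criterion for (FRS)}) characterizes (FRS) by the $L^{\infty}$-property at the \emph{complex} points (equivalently at all non-Archimedean local fields); boundedness of the pushforward density on the single compact real form $\SU_{n}$ is only a \emph{consequence} of (FRS) \textemdash{} this is exactly how Corollary \ref{cor:bounded density for small rank} is deduced from Theorem \ref{thm:word maps on SLn are (FRS) after enough convolutions} \textemdash{} and no Artin-approximation or Ax\textendash Kochen type transfer converts an $L^{\infty}$-bound at one real form into rational singularities of the complex fibers. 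So the logical direction in your main branch is reversed, and the constant $80\ell(w)n^{2}$ cannot be recovered from the Fourier thresholds at all.

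Your bounded-rank branch is closer in spirit but rests on an unjustified input: dominance of $w_{\mathrm{SL}_{n}}$ (Borel) gives no lower bound of the form $c/n^{2}$ with $c$ absolute for the log-canonical thresholds of its fibers, and no $\ell(w)$-independent bound can hold, since the required convolution length must grow with $\ell(w)$ (compare the power-word analysis in \S\ref{subsec:Optimality-of-the}). The paper obtains the quantitative bound differently: it first replaces $w_{\mathrm{SL}_{n}}$ by the genuinely polynomial map $\psi_{w,n}=w_{\mathrm{SL}_{n}}\circ(\psi_{n},\ldots,\psi_{n})$ on affine space, where $\psi_{n}$ parametrizes $\mathrm{SL}_{n}$ as a product of at most $18$ unipotent block subgroups and is surjective on all jet spaces (Lemma \ref{lem:reduction to affine}); then $\deg\psi_{w,n}\leq36\ell(w)$, so a nonzero maximal minor of its Jacobian has degree at most $36\ell(w)n^{2}$, whence $\lct(\mathcal{J}_{\Psi})\geq\frac{1}{36\ell(w)n^{2}}$ and $(\psi_{w,n})_{\C}$ is an $L^{1+\epsilon}$-map with $\epsilon=\frac{1}{40\ell(w)n^{2}}$ (Proposition \ref{prop:reduction to Lq maps}); Young's inequality makes the $t$-fold convolution an $L^{\infty}$-map for $t\geq2/\epsilon=80\ell(w)n^{2}$, the analytic criterion over $\C$ gives (FRS) of $\psi_{w,n}^{*t}$, and finally the jet-surjectivity of $\psi_{n}$, combined with Musta\c{t}\u{a}'s jet-scheme characterization of (FRS) (Lemmas \ref{lemma: jet characterization of (FRS)} and \ref{lem:auxilary lemma}), descends the property to $w_{\mathrm{SL}_{n}}^{*t}$. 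Your proposal is missing both this reduction to an affine polynomial map of controlled degree (which is where the dependence on $\ell(w)$ and the explicit constant come from) and the jet-surjectivity descent that lets one transfer (FRS) from the composition back to the word map itself.
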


As a consequence of Theorem \ref{thm:word maps on SLn are (FRS) after enough convolutions},
we will prove the following corollary, which implies Item (2) of Theorem
\ref{thm: A, bounded density}. 
\begin{cor}
\label{cor:bounded density for small rank}For every $1\neq w\in F_{r}$
and $n\geq2$, we have $\tau_{w,\SU_{n}}^{*t}\in L^{\infty}(\SU_{n})$
for $t\geq80\ell(w)n^{2}$. 
\end{cor}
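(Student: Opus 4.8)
The plan is to deduce Corollary~\ref{cor:bounded density for small rank} from Theorem~\ref{thm:word maps on SLn are (FRS) after enough convolutions} by translating the algebro-geometric (FRS) property into an analytic statement about the density of the pushforward measure. First I would recall the standard dictionary, going back to Aizenbud--Avni, between the (FRS) property of a morphism $\varphi\colon X\to\underline{G}$ of smooth $K$-varieties and the boundedness (indeed local constancy in the non-archimedean case, continuity in the archimedean case) of the density of $\varphi_*\mu$ with respect to Haar measure, where $\mu$ is a smooth compactly supported measure on $X(\mathbb{R})$ (or $X(\mathbb{C})$). Concretely: if $\varphi$ is (FRS), then for any such $\mu$ the pushforward $\varphi_*\mu$ is absolutely continuous with a bounded density. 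I would apply this to $\varphi = w_{\mathrm{SL}_n}^{*t}$ viewed over $\mathbb{C}$, with $X = \mathrm{SL}_n^{rt}$ and $\mu$ the Haar probability measure on the compact real form, noting that $\mathrm{SU}_n$ is a compact real form of $\mathrm{SL}_n(\mathbb{C})$ and the word map is defined over $\mathbb{Q}$, so base change is harmless.

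Next I would identify $\tau_{w,\mathrm{SU}_n}^{*t}$ with the pushforward of the Haar measure on $\mathrm{SU}_n^{rt}$ under $w_{\mathrm{SU}_n}^{*t}$: by definition of convolution of measures, $\tau_{w,\mathrm{SU}_n}^{*t}$ is the distribution of the product $w(\mathsf{G}_1^{(1)},\ldots)\cdots w(\mathsf{G}_1^{(t)},\ldots)$ of $t$ independent $w$-images, which is exactly $(w_{\mathrm{SL}_n}^{*t})_*(\mu_{\mathrm{SU}_n}^{rt})$ up to the identification of the convolution map $X^t\to\underline{G}$ with the iterated group multiplication. Since $\mu_{\mathrm{SU}_n}^{rt}$ is a smooth, compactly supported (indeed the total space $\mathrm{SU}_n^{rt}$ is compact) measure on the real points, the (FRS) property of $w_{\mathrm{SL}_n}^{*t}$ for $t\ge 80\ell(w)n^2$ yields that $\tau_{w,\mathrm{SU}_n}^{*t}$ has bounded density, i.e. lies in $L^\infty(\mathrm{SU}_n)$. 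One small point to address is that the relevant smoothness/convolution theorem is usually phrased for maps to affine space or to a group with its translation action; here I would invoke the group-equivariant version (e.g. the formulation in \cite{GHb} or \cite{GH21} for maps into an algebraic group), or alternatively work in a chart and use bi-invariance of Haar measure to reduce to the affine statement.

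The main obstacle I expect is purely bookkeeping rather than conceptual: making sure the hypotheses of the (FRS)$\Rightarrow$bounded-density theorem are met verbatim. Specifically, that theorem requires $\varphi$ to be a morphism of \emph{smooth} varieties and $\mu$ to be smooth and compactly supported; $\mathrm{SL}_n$ and its powers are smooth and $\mu_{\mathrm{SU}_n}^{rt}$ is smooth with compact support, so this is fine, but one must be careful that (FRS) is stated for the complex (or a general characteristic-zero) model while the measure lives on the compact real form --- this is handled by the fact that $\mathrm{SU}_n(\mathbb{R}) = \mathrm{SU}_n$ is Zariski-dense in $\mathrm{SL}_n$ and (FRS) is preserved under the base change $\mathbb{Q}\hookrightarrow\mathbb{R}$ (or $\mathbb{C}$), together with the archimedean version of the transfer principle relating rational singularities of fibers to $L^\infty$ densities (due to Aizenbud--Avni in the archimedean setting). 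I would then simply record that $80\ell(w)n^2$ is exactly the convolution threshold furnished by Theorem~\ref{thm:word maps on SLn are (FRS) after enough convolutions}, concluding the proof. Finally I would remark that this proves Item~(2) of Theorem~\ref{thm: A, bounded density} for every fixed $n$, and that the uniformity in $n$ claimed in Theorem~\ref{thm: A, bounded density} comes instead from combining this bounded-rank statement with the large-Fourier-coefficient estimates of Theorem~\ref{Thm B, large Fourier coefficients} for $n > C\ell(w)$, as explained in \S\ref{subsec:Proof of Thm A}.
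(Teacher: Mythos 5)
Your overall strategy is the paper's: deduce the corollary from Theorem \ref{thm:word maps on SLn are (FRS) after enough convolutions} via the analytic characterization of (FRS) (Theorem \ref{thm:analytic criterion for (FRS)}), identifying $\tau_{w,\SU_{n}}^{*t}$ with the pushforward of Haar measure on $\SU_{n}^{rt}$ under $w_{\SU_n}^{*t}$. But there is a genuine gap at the crucial step. You propose to apply the (FRS)$\Rightarrow$bounded-density theorem to $\varphi=w_{\mathrm{SL}_{n}}^{*t}$ ``viewed over $\C$'' with $X=\mathrm{SL}_{n}^{rt}$ and $\mu$ the Haar measure on the compact real form. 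That is not a legitimate application: the $L^{\infty}$-map property of $\varphi_{\C}$ concerns pushforwards of \emph{smooth} compactly supported measures on $X(\C)=\mathrm{SL}_{n}(\C)^{rt}$, whereas Haar measure on $\SU_{n}^{rt}$ is supported on a real submanifold of positive codimension in $\mathrm{SL}_{n}(\C)^{rt}$ and is singular with respect to any smooth measure there. Your suggested repair --- Zariski density of $\SU_{n}$ in $\mathrm{SL}_{n}$ together with base-change invariance of (FRS) --- does not address this, since Zariski density of the support says nothing about the measure being smooth on the $F$-points of the variety carrying the morphism, which is what the criterion requires.

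The missing idea is the one the paper supplies: replace $\mathrm{SL}_{n}$ by a real algebraic model of the compact form. One defines, via restriction of scalars, the real group $\underline{\mathrm{SU}}_{n}$ with $\underline{\mathrm{SU}}_{n}(\R)\simeq\SU_{n}$ and $\underline{\mathrm{SU}}_{n}(\C)\simeq\mathrm{SL}_{n}(\C)$. Since the (FRS) property is a condition on geometric fibers, it is detected after extension of scalars, so Theorem \ref{thm:word maps on SLn are (FRS) after enough convolutions} (applied to the base change to $\C$) gives that $w_{\underline{\mathrm{SU}}_{n}}^{*t}$ is (FRS) over $\R$ for $t\geq80\ell(w)n^{2}$. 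Now Theorem \ref{thm:analytic criterion for (FRS)} applied with $F=\R$ makes $w_{\SU_{n}}^{*t}:\SU_{n}^{rt}\rightarrow\SU_{n}$ an $L^{\infty}$-map, and Haar measure on $\SU_{n}^{rt}$ \emph{is} a smooth compactly supported measure on the real points, so $\tau_{w,\SU_{n}}^{*t}\in L^{\infty}(\SU_{n})$. Note also that the direction of base change you need is descent from $\C$ to the real form $\underline{\mathrm{SU}}_{n}$, not the base change $\Q\hookrightarrow\R$ of the split model (whose real points are $\mathrm{SL}_{n}(\R)$, not $\SU_{n}$); and a small attribution point: the Archimedean (FRS)$\Leftrightarrow L^{\infty}$ statement used here is from \cite{GHS,Rei}, the Aizenbud--Avni result being the non-Archimedean case.
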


In order to prove Theorem \ref{thm:word maps on SLn are (FRS) after enough convolutions}
we introduce the notion of jet-schemes.

If $n\in\mathbb{N}$, let $S_{\infty,n}$ be the ring of polynomials
over $K$ on the variables 
\[
x_{1},\ldots,x_{n},x_{1}^{(1)},\ldots,x_{n}^{(1)},x_{1}^{(2)},\ldots,x_{n}^{(2)}\ldots.
\]
$S_{\infty,n}$ has a unique $K$-derivation satisfying $D(x_{i}^{(j)})=x_{i}^{(j+1)}$
(e.g., $D(x_{1}^{2}x_{2})=2x_{1}x_{1}^{(1)}x_{2}+x_{1}^{2}x_{2}^{(1)}$).
If $X\subseteq\mathbb{A}_{K}^{n}$ is an affine $K$-scheme whose
coordinate ring is 
\[
K[x_{1},\dots,x_{n}]/(f_{1},\dots,f_{k}),
\]
we define the \emph{$m$-th jet scheme} $J_{m}(X)$ of $X$ to be
the affine scheme with the following coordinate ring: 
\[
K[x_{1},\dots,x_{n},x_{1}^{(1)},\dots,x_{n}^{(1)},\dots,x_{1}^{(m)},\dots,x_{n}^{(m)}]/(\{D^{u}f_{j}\}_{j=1,u=0}^{k,m}).
\]
We write $\pi_{m,X}:J_{m}(X)\rightarrow X$ for the natural projection.

If $\varphi:X\rightarrow Y$ is a morphism of affine $K$-schemes,
we define $J_{m}(\varphi):J_{m}(X)\rightarrow J_{m}(Y)$ by $J_{m}(\varphi)=(\varphi,D\varphi,\dots,D^{m}\varphi)$.
The definitions of $J_{m}(X),J_{m}(\varphi)$ and $\pi_{m,X}$ can
be glued to arbitrary $K$-schemes and $K$-morphisms (see \cite[Chapter 3]{CLNS18}
and \cite{EM09} for more details). 
\begin{lem}[{\cite[Theorem 29.2.11]{Vak} and \cite[III, Theorem 10.2]{Har77}}]
\label{Lemma:flat and smooth}Let $\varphi:X\rightarrow Y$ be a
morphism of smooth, geometrically irreducible $K$-varieties. Then: 
\begin{enumerate}
\item $\varphi$ is flat\emph{ }if and only if for every $x\in X(\overline{K})$
we have $\dim X_{\varphi(x),\varphi}=\dim X-\dim Y$. 
\item If $\varphi$ is flat, then $\varphi$ is smooth at $x\in X(\overline{K})$
if and only if $x\in X_{\varphi(x),\varphi}^{\mathrm{sm}}$. 
\end{enumerate}
\end{lem}

The following lemma follows from a jet-scheme characterization of
local complete intersection schemes with rational singularities, due
to Musta\c{t}\u{a} \cite{Mus01}. 
\begin{lem}
\label{lemma: jet characterization of (FRS)}Let $\varphi:X\rightarrow Y$
be a morphism of smooth, geometrically irreducible $K$-varieties.
Then the following are equivalent: 
\begin{enumerate}
\item $\varphi$ is (FRS) 
\item $J_{m}(\varphi)$ is flat with locally integral fibers, for every
$m\geq0$. 
\item $\varphi$ is flat, and each fiber $Z:=X_{\varphi(x),\varphi}$ of
$\varphi$ , where $x\in X(\overline{K})$, satisfies $\dim\pi_{m,Z}^{-1}(Z^{\mathrm{sing}})<(m+1)\dim Z$,
for every $m\geq0$. 
\end{enumerate}
\end{lem}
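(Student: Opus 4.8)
The plan is to reduce all three conditions to a single statement about the individual geometric fibers of $\varphi$ and then quote Musta\c{t}\u{a}'s jet-scheme criterion for rational singularities of local complete intersections. First, since $X$ and $Y$ are smooth and geometrically irreducible, Lemma~\ref{Lemma:flat and smooth}(1) says $\varphi$ is flat iff every geometric fiber $Z=X_{\varphi(x),\varphi}$ has dimension $d:=\dim X-\dim Y$; granting this, $Z$ is locally cut out by $c:=\dim Y$ equations in the smooth variety $X$, hence is a \emph{local complete intersection of pure dimension} $d$, in particular Cohen--Macaulay, so $S_1$. Likewise $J_m(X)$ and $J_m(Y)$ are smooth and geometrically irreducible --- affine bundles over $X$ and $Y$ with fibers $\mathbb{A}^{m\dim X}$ and $\mathbb{A}^{m\dim Y}$ --- so Lemma~\ref{Lemma:flat and smooth}(1) applies to $J_m(\varphi)$: it is flat iff all its fibers have dimension $(m+1)d$. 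By the standard compatibility of jet schemes with base change over a smooth base (\cite{EM09}, \cite[Chapter~3]{CLNS18}), the fiber of $J_m(\varphi)$ over the constant $m$-jet at $y$ is $J_m(X_{y,\varphi})$, and all fibers of $J_m(\varphi)$ over $\pi_{m,Y}^{-1}(y)$ have the same dimension; hence $J_m(\varphi)$ is flat for every $m$ iff $\dim J_m(Z)=(m+1)d$ for every geometric fiber $Z$ and every $m\ge 0$.

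The one non-formal input I would cite is Musta\c{t}\u{a}'s theorem \cite{Mus01} (see also \cite{EM09}): \emph{a local complete intersection scheme $Z$ of pure dimension $d$ over a field of characteristic $0$ is reduced with rational singularities if and only if $\dim\pi_{m,Z}^{-1}(Z^{\mathrm{sing}})<(m+1)d$ for every $m\ge0$.} Two remarks make this legible at the endpoints. For $m=0$, $\pi_{0,Z}=\mathrm{id}$, so the condition reads $\dim Z^{\mathrm{sing}}<d$; since $Z$ is $S_1$, this is equivalent to $Z$ being generically reduced, i.e.\ in characteristic $0$ to $Z$ being reduced with $Z^{\mathrm{sm}}$ dense. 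And a scheme with rational singularities is in particular normal, so if $Z$ has rational singularities then its connected components coincide with its irreducible components, each being a normal variety with rational singularities. Applying the theorem fiberwise to a flat $\varphi$ yields (1)$\iff$(3) at once: both (1) and (3) include flatness of $\varphi$, and given flatness, (3) says precisely that each fiber $Z$ is reduced with rational singularities, which --- by Definition~\ref{def:(FRS)} --- is (FRS).

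It remains to connect these with (2). Suppose the equivalence (1)$\iff$(3) just obtained. Then each fiber $Z$ is a disjoint union $\bigsqcup_i Z_i$ of normal lci varieties with rational singularities; for each $i$, condition~(3) forces $\pi_{m,Z_i}^{-1}(Z_i^{\mathrm{sing}})$ to have dimension $<(m+1)d$, so the complementary open $\pi_{m,Z_i}^{-1}(Z_i^{\mathrm{sm}})$ --- a smooth irreducible variety of dimension $(m+1)d$ --- is dense in $J_m(Z_i)$, whence $J_m(Z_i)$ is irreducible of dimension $(m+1)d$; being cut out by the expected number $(m+1)c$ of equations in the smooth variety $J_m(X)$ it is Cohen--Macaulay, and being generically smooth it is reduced, hence integral. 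Thus $J_m(Z)=\bigsqcup_i J_m(Z_i)$ is locally integral of dimension $(m+1)d$, i.e.\ $J_m(\varphi)$ is flat with locally integral fibers --- this is (2). Conversely, assume (2). Taking $m=0$, each fiber $Z=J_0(Z)$ is locally integral, hence reduced with pairwise disjoint irreducible components $Z=\bigsqcup_i Z_i$ ($Z_i$ integral). For $m\ge1$, $J_m(Z)=\bigsqcup_i J_m(Z_i)$ is locally integral, and the natural $\mathbb{G}_m$-action contracting a jet onto its base point exhibits $J_m(Z_i)$ as ``contracted'' onto the connected scheme $Z_i$, so $J_m(Z_i)$ is connected; being connected and locally integral it is integral, hence irreducible. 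By flatness of $J_m(\varphi)$, $J_m(Z_i)$ is equidimensional of dimension $(m+1)d$, and it already contains the $(m+1)d$-dimensional subvariety $\overline{\pi_{m,Z_i}^{-1}(Z_i^{\mathrm{sm}})}$; since $J_m(Z_i)$ is irreducible, nothing over $Z_i^{\mathrm{sing}}$ can have dimension $(m+1)d$, i.e.\ $\dim\pi_{m,Z_i}^{-1}(Z_i^{\mathrm{sing}})<(m+1)d$. Taking the maximum over $i$ gives (3), closing the cycle (1)$\iff$(2)$\iff$(3).

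The genuinely hard ingredient, which I would simply cite, is Musta\c{t}\u{a}'s theorem itself, proved via a resolution of singularities of $Z$ together with the change-of-variables formula for motivic integrals / the comparison of $\dim J_m(Z)$ with log discrepancies. On the elementary side, the step requiring the most care is the base-change reduction in the first paragraph --- that forming the $m$-th jet scheme commutes with passing to a geometric fiber of $\varphi$, and that all fibers of $J_m(\varphi)$ over a fixed point of $Y$ have the same dimension, so that the relative conditions on $\varphi$ and $J_m(\varphi)$ may be checked on the pure-dimensional lci fibers $Z$ --- and it is precisely there that the smoothness of $Y$ is used.
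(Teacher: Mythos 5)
Your handling of $(1)\Leftrightarrow(3)$ coincides with the paper's: both include flatness of $\varphi$, reduce to the pure-dimensional lci fibers, quote Musta\c{t}\u{a}, and bridge the reducedness issue via the $m=0$ case together with $S_1$ (the paper phrases this as Serre's criterion). The genuine problem is in your treatment of condition $(2)$, which the paper does not prove from scratch but outsources to \cite[Lemma 6.5]{GHS}. Condition $(2)$ concerns \emph{all} fibers of $J_m(\varphi)$, i.e.\ fibers over arbitrary jets $\gamma\in J_m(Y)$, not only over constant jets. Your reduction rests on the assertion that ``all fibers of $J_m(\varphi)$ over $\pi_{m,Y}^{-1}(y)$ have the same dimension,'' presented as a standard base-change compatibility of jet schemes. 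That assertion is false in general and is not in \cite{EM09} or \cite{CLNS18}: for $\varphi:\mathbb{A}^1\to\mathbb{A}^1$, $x\mapsto x^{2}$, and $m=1$, the fiber of $J_1(\varphi)$ over the constant jet at $0$ is $1$-dimensional, while the fiber over the jet $(0,1)$ is empty (the equations $x_0^2=0$, $2x_0x_1=1$ are inconsistent). What is true, and what your direction $(3)\Rightarrow(2)$ actually needs for flatness, is only an inequality $\dim J_m(\varphi)^{-1}(\gamma)\le\dim J_m(X_{y,\varphi})$ for every jet $\gamma$ based at $y$; this requires an argument (e.g.\ degenerating $\gamma$ to the constant jet via the contracting $\mathbb{A}^1$-action, using that this action is compatible with $J_m(\varphi)$ and that components of the resulting family are stable under the action, combined with the lower bound $(m+1)(\dim X-\dim Y)$ valid for every nonempty fiber since $J_m(X)$, $J_m(Y)$ are smooth and irreducible). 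As written, the flatness of $J_m(\varphi)$ in $(3)\Rightarrow(2)$ is unsupported.

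More seriously, even granting flatness, your proof of $(3)\Rightarrow(2)$ only shows that the fibers of $J_m(\varphi)$ over \emph{constant} jets, namely the schemes $J_m(Z)$, are locally integral; the fibers over non-constant jets (spaces of $m$-jets in $X$ lifting a given non-constant $m$-jet of $Y$, which are not jet schemes of any fiber of $\varphi$) are never addressed, and their local integrality does not follow from the fiberwise statement you proved. This is exactly the nontrivial content of the implication $(1)\Rightarrow(2)$ that the paper cites as \cite[Lemma 6.5]{GHS}. Your converse $(2)\Rightarrow(3)$ is fine (the constant-jet fiber of $J_m(\varphi)$ is $J_m(Z)$, flatness gives its dimension, and the $\mathbb{A}^1$-contraction gives connectedness, hence integrality and the dimension bound over $Z^{\mathrm{sing}}$), so the gap is confined to producing condition $(2)$; to close it you should either supply the missing analysis of fibers over non-constant jets or, as the paper does, cite \cite[Lemma 6.5]{GHS} for $(1)\Leftrightarrow(2)$ and obtain $(2)\Leftrightarrow(3)$ by composition.
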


\begin{proof}
The condition $(1)\Leftrightarrow(2)$ is proved in \cite[Lemma 6.5]{GHS}.
Note that in both $(1)$ and $(3)$, $\varphi$ is flat, and thus
by Lemma \ref{Lemma:flat and smooth}, each of its fibers $X_{\varphi(x),x}$
where $x\in X(\overline{K})$, is an equidimensional, local complete
intersection scheme, and in particular Cohen-Macaulay. 

If $(1)$ holds, then each fiber $X_{\varphi(x),\varphi}$ is a local
complete intersection variety with rational singularities, and $(3)$
follows by applying \cite[Theorem 0.1 and Proposition 1.4]{Mus01}.
Suppose on the other hand that $(3)$ holds, and let $Z=X_{\varphi(x),\varphi}$.
Then $\dim\pi_{0,Z}^{-1}(Z^{\mathrm{sing}})=\dim Z^{\mathrm{sing}}<\dim Z$.
Since $Z$ is also Cohen-Macaulay, Serre's criterion for reducedness
\cite[Proposition 5.8.5]{Gro65} implies that $Z$ is reduced. $(1)$
now follows from $(3)$ by \cite[Theorem 0.1 and Proposition 1.4]{Mus01}. 
\end{proof}
\begin{lem}
\label{lem:auxilary lemma}Let $\varphi:X\rightarrow Y$ and $\psi:X'\rightarrow X$
be morphisms of smooth $K$-varieties. Then 
\begin{enumerate}
\item If $\varphi\circ\psi:X'\rightarrow Y$ is flat and $\psi:X'(\overline{K})\rightarrow X(\overline{K})$
is surjective, then $\varphi:X\rightarrow Y$ is flat. 
\item If $\varphi\circ\psi:X'\rightarrow Y$ is (FRS) and $J_{m}(\psi):J_{m}X'(\overline{K})\rightarrow J_{m}X(\overline{K})$
is surjective for all $m\in\N$, then $\varphi:X\rightarrow Y$ is
(FRS). 
\end{enumerate}
\end{lem}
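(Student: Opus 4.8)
The plan is to reduce both statements to the dimension‑ and jet‑theoretic criteria already recorded in this section.

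For part (1) I would argue by contradiction via the ``miracle flatness'' criterion, Lemma \ref{Lemma:flat and smooth}(1): $X,X'$ are smooth, hence Cohen--Macaulay, and $Y$ is smooth, hence regular, so a morphism between them is flat if and only if every nonempty fiber has the expected dimension; one may assume $X,X',Y$ irreducible by passing to connected components. First note that $\varphi$ is dominant, since $\varphi\circ\psi$, being flat of finite type, is open, so its image (contained in that of $\varphi$) is dense. Suppose $\varphi$ were not flat. Then some fiber $\varphi^{-1}(y_0)$ has an irreducible component $Z$ with $\dim Z>\dim X-\dim Y$. Its preimage $\psi^{-1}(Z)$ is contained in $(\varphi\circ\psi)^{-1}(y_0)$; since $\psi$ is surjective on $\overline K$-points it surjects onto $Z$, and since every fiber of the dominant morphism $\psi$ has dimension at least $\dim X'-\dim X$ (upper semicontinuity of fiber dimension), we get $\dim\psi^{-1}(Z)\ge\dim Z+(\dim X'-\dim X)>\dim X'-\dim Y$. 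This contradicts the flatness of $\varphi\circ\psi$, which forces every fiber of $\varphi\circ\psi$ to be equidimensional of dimension $\dim X'-\dim Y$.

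For part (2) I would use that the jet functor $J_m$ preserves smoothness, fiber products and closed immersions, and that $J_m(\varphi\circ\psi)=J_m(\varphi)\circ J_m(\psi)$. Applying part (1) to $J_m(\varphi)$ and $J_m(\psi)$ --- morphisms of smooth varieties, with $J_m(\psi)$ surjective on $\overline K$-points by hypothesis and $J_m(\varphi)\circ J_m(\psi)=J_m(\varphi\circ\psi)$ flat because $\varphi\circ\psi$ is (FRS) --- shows that $J_m(\varphi)$ is flat for every $m$. In particular ($m=0$) $\varphi$ is flat, so each fiber $Z:=X_{\varphi(x),\varphi}$ is a local complete intersection; by Lemma \ref{lemma: jet characterization of (FRS)}(3) it then suffices to prove $\dim\pi_{m,Z}^{-1}(Z^{\mathrm{sing}})<(m+1)\dim Z$ for all $m$ (for $m=0$ this, together with Serre's criterion and the Cohen--Macaulayness of $Z$, also yields reducedness of $Z$). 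Put $Z':=\psi^{-1}(Z)=(\varphi\circ\psi)^{-1}(\varphi(x))$, an (FRS)-fiber, and $f:=\psi|_{Z'}\colon Z'\to Z$. Since $J_m(Z')=(J_m\psi)^{-1}(J_m Z)$, the map $J_m(f)\colon J_mZ'\to J_mZ$ is the restriction of $J_m(\psi)$ to a full preimage, hence surjective on $\overline K$-points; and because $\pi_{m,Z}\circ J_m(f)=f\circ\pi_{m,Z'}$, it carries $\pi_{m,Z'}^{-1}(f^{-1}(Z^{\mathrm{sing}}))$ onto $\pi_{m,Z}^{-1}(Z^{\mathrm{sing}})$, so
\[
\dim\pi_{m,Z}^{-1}(Z^{\mathrm{sing}})\ \le\ \dim\pi_{m,Z'}^{-1}\!\big(f^{-1}(Z^{\mathrm{sing}})\big).
\]
The goal is then to bound the right-hand side strictly below $(m+1)\dim Z$, using that $Z'$ has rational singularities --- so $\dim\pi_{m,Z'}^{-1}((Z')^{\mathrm{sing}})<(m+1)\dim Z'$ for all $m$ --- together with the inclusion $f^{-1}(Z^{\mathrm{sing}})\subseteq (Z')^{\mathrm{sing}}\cup\{z'\in Z':f\text{ is not smooth at }z'\}$ (indeed, if $Z'$ is smooth at $z'$ and $f$ is smooth at $z'$, then $Z$ is smooth at $f(z')$, by faithfully flat descent of regularity).

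The crux, and the main obstacle, is exactly this last estimate: transferring smallness of the singular locus (and of its jet preimages) \emph{downstairs} from smallness of the singular and ramification loci \emph{upstairs}. This is where one must use the full hypothesis that \emph{all} the maps $J_m(\psi)$, not just $\psi=J_0(\psi)$, are surjective on $\overline K$-points: surjectivity of $\psi$ alone is far too weak, as a disjoint union of smooth varieties can map onto a reducible, or non-reduced, complete intersection. I expect the argument to combine the surjectivity of every $J_m(f)$ with the dimension estimates of part (1) applied to the ramification locus of $f$, and with the jet-scheme criterion for rational singularities behind Lemma \ref{lemma: jet characterization of (FRS)}, to force the displayed inequality below $(m+1)\dim Z$.
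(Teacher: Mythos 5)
Part (1) of your proposal is correct and is essentially the paper's own argument (the paper phrases the same fiber-dimension comparison directly rather than by contradiction). Part (2), however, has a genuine gap, and you say so yourself: the ``crux'' you identify is exactly the step you do not carry out, and the route you sketch for it does not close. The inequality $\dim\pi_{m,Z}^{-1}(Z^{\mathrm{sing}})\le\dim\pi_{m,Z'}^{-1}\bigl(f^{-1}(Z^{\mathrm{sing}})\bigr)$ (image dimension bounded by source dimension) is true but far too lossy: the right-hand side lives inside $J_m(Z')$, which has dimension $(m+1)\dim Z'>(m+1)\dim Z$, so the rational-singularities bound $\dim\pi_{m,Z'}^{-1}((Z')^{\mathrm{sing}})<(m+1)\dim Z'$ cannot by itself force the right-hand side below $(m+1)\dim Z$. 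Worse, your inclusion $f^{-1}(Z^{\mathrm{sing}})\subseteq(Z')^{\mathrm{sing}}\cup\{f\text{ not smooth}\}$ leaves a ramification term that is not controlled at all: over points of $f^{-1}(Z^{\mathrm{sing}})$ lying in the smooth locus of $Z'$, the jet fibers have dimension $m\dim Z'$, so this piece alone can have dimension close to $(m+1)\dim Z'$, and rational singularities of $Z'$ say nothing about it.

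The paper closes the step with two ideas you are missing. First, a cleaner inclusion with no ramification term: $\psi^{-1}(Z^{\mathrm{sing}})\subseteq(Z')^{\mathrm{sing}}$. This comes from working with the ambient morphisms rather than with $f=\psi|_{Z'}$: since $X,X',Y$ are smooth, $d(\varphi\circ\psi)_{x'}=d\varphi_{\psi(x')}\circ d\psi_{x'}$, so non-smoothness of $\varphi$ at $\psi(x')$ forces non-smoothness of $\varphi\circ\psi$ at $x'$; combined with flatness and Lemma \ref{Lemma:flat and smooth}(2), which identifies $Z^{\mathrm{sing}}=X^{\varphi,\mathrm{sing}}\cap Z$ and $(Z')^{\mathrm{sing}}=X'^{\varphi\circ\psi,\mathrm{sing}}\cap Z'$, the inclusion follows. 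Second, the dimension transfer goes in the opposite direction from yours: instead of bounding the image by the source, one assumes for contradiction that $\dim\pi_{m,Z}^{-1}(Z^{\mathrm{sing}})=(m+1)\dim Z=\dim J_m(Z)$, notes that $J_m(Z)$ and $J_m(Z')$ are equidimensional (flatness of $J_m(\varphi)$ and $J_m(\varphi\circ\psi)$), and uses the fiber-dimension lower bound for the surjection $J_m(\psi|_{Z'})\colon J_m(Z')\to J_m(Z)$ to conclude that the preimage of this full-dimensional locus has dimension $\dim J_m(Z')$; by the inclusion above this preimage sits inside $\pi_{m,Z'}^{-1}((Z')^{\mathrm{sing}})$, contradicting criterion (3) of Lemma \ref{lemma: jet characterization of (FRS)} applied to the (FRS) morphism $\varphi\circ\psi$. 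Without these two ingredients your outline does not yield the bound $\dim\pi_{m,Z}^{-1}(Z^{\mathrm{sing}})<(m+1)\dim Z$, so the proof of Item (2) is incomplete as proposed.
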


\begin{proof}
For every $x\in X(\overline{K})$, the fiber $X'_{x,\psi}$ of $\psi$
is of dimension at least $\dim X'-\dim X$. Hence, for each $x'\in X'(\overline{K})$,
the codimension of $(X')_{\varphi\circ\psi(x'),\varphi\circ\psi}$
in $X'$ is less than or equal to the codimension of $X_{\varphi\circ\psi(x'),\varphi}$
in $X$. In particular, by Lemma \ref{Lemma:flat and smooth}(1),
Item (1) follows.

Suppose that $\varphi\circ\psi:X'\rightarrow X$ is (FRS). By Lemma
\ref{lemma: jet characterization of (FRS)}, $J_{m}(\varphi\circ\psi)=J_{m}(\varphi)\circ J_{m}(\psi)$
is flat for all $m\in\N$. Since $J_{m}(\psi):J_{m}X'(\overline{K})\rightarrow J_{m}X(\overline{K})$
is surjective, by Item (1), $J_{m}(\varphi)$ is flat.

Now fix $m\in\N$, and let $x\in X(\overline{K})$. Set $Z:=X_{\varphi(x),\varphi}$
and $Z'=X'_{\varphi(x),\varphi\circ\psi}$. Assume, in order to find
a contradiction, that 
\begin{equation}
\dim\pi_{m,Z}^{-1}(Z^{\mathrm{sing}})=(m+1)\dim Z=\dim(J_{m}(Z)).\label{eq:towards contradiction}
\end{equation}
Since $J_{m}(\psi|_{Z'}):J_{m}(Z')\rightarrow J_{m}(Z)$ is surjective,
and since $J_{m}(Z')$ and $J_{m}(Z)$ are equidimensional (as $J_{m}(\varphi\circ\psi)$
and $J_{m}(\varphi)$ are flat), (\ref{eq:towards contradiction})
implies that $\dim J_{m}(\psi|_{Z'})^{-1}\left(\pi_{m,Z}^{-1}(Z^{\mathrm{sing}})\right)=\dim J_{m}(Z')$,
and in particular: 
\begin{equation}
\dim\pi_{m,Z'}^{-1}(\psi^{-1}(Z^{\mathrm{sing}}))=\dim J_{m}(\psi|_{Z'})^{-1}\left(\pi_{m,Z}^{-1}(Z^{\mathrm{sing}})\right)=\dim J_{m}(Z').\label{eq:5.2}
\end{equation}
Note that if $\varphi$ is singular at $\widetilde{x}\in X(\overline{K})$,
then also $\varphi\circ\psi$ is singular at $\widetilde{x}'$ for
every $\widetilde{x}'\in\psi^{-1}(\widetilde{x})$. Together with
Lemma \ref{Lemma:flat and smooth}(2), and since $\varphi$ is flat,
this implies: 
\begin{equation}
\psi^{-1}(Z^{\mathrm{sing}})=\psi^{-1}(X^{\varphi,\mathrm{sing}}\cap Z)\subseteq X'^{\varphi\circ\psi,\mathrm{sing}}\cap Z'=Z'^{\mathrm{sing}}.\label{eq:compare singular locus}
\end{equation}
Finally, by (\ref{eq:5.2}) and (\ref{eq:compare singular locus})
we deduce that $\dim\pi_{m,Z'}^{-1}(Z'^{\mathrm{sing}})=\dim J_{m}(Z')$,
which, by Lemma \ref{lemma: jet characterization of (FRS)}, contradicts
that fact that $\varphi\circ\psi:X'\rightarrow X$ is (FRS). 
\end{proof}
To prove Theorem \ref{thm:word maps on SLn are (FRS) after enough convolutions},
we need the following lemma. A stronger version of this lemma was
proved in \cite[Lemma 7.10 and Proposition 7.12]{GHb} for $n$ even. 
\begin{lem}
\label{lem:reduction to affine}For every $n\geq2$, there exists
a polynomial map $\psi_{n}:\mathbb{A}_{\Q}^{N}\rightarrow\mathrm{SL}_{n}$,
such that $J_{m}(\psi_{n}):\overline{\Q}^{N(m+1)}\rightarrow\left(J_{m}\mathrm{SL}_{n}\right)(\overline{\Q})$
is surjective for all $m\in\N$, and such that $\psi_{w,n}:=w_{\mathrm{SL}_{n}}\circ(\psi_{n},\ldots,\psi_{n}):\mathbb{A}_{\Q}^{Nr}\rightarrow\mathrm{SL}_{n}$
is a polynomial map of degree $\leq36\ell(w)$. 
\end{lem}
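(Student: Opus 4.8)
The plan is to build $\psi_n$ from a short product of unipotent radicals of maximal parabolic subgroups of $\mathrm{SL}_n$ together with a bounded number of their conjugates by fixed signed permutation matrices. Fix $n=a+b$ with $a=\lfloor n/2\rfloor$, $b=\lceil n/2\rceil$, and let $P^{+}(X)$ (resp.\ $P^{-}(Y)$) be the block upper (resp.\ block lower) unipotent matrix with off-diagonal block $X\in\Mat_{a\times b}$ (resp.\ $Y\in\Mat_{b\times a}$). The key point is that $X\mapsto P^{+}(X)$ is an isomorphism of the affine space $\Mat_{a\times b}$ onto a subgroup of $\mathrm{SL}_n$, is a polynomial map of degree $1$, \emph{and its inverse $P^{+}(X)^{-1}=P^{+}(-X)$ is again of degree $1$}; the same holds for $P^{-}$ and for $g^{-1}P^{\pm}(\cdot)\,g$ with $g\in\mathrm{SL}_n(\mathbb{Q})$ fixed. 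I will take $\psi_n=F_1(v_1)\cdots F_C(v_C)$ with $C$ a fixed constant and each $F_k$ one of these block‑unipotent (or conjugate‑block‑unipotent) coordinatizations, arranged so that the sequence of types $(F_1,\dots,F_C)$ is a palindrome and paired factors use the same coordinates. Then $\psi_n(v)^{-1}=\psi_n(\sigma(v))$ for the \emph{linear} map $\sigma(v_1,\dots,v_C)=(-v_C,\dots,-v_1)$, so that both $\psi_n$ and $v\mapsto\psi_n(v)^{-1}$ are polynomial maps of degree $\le C$.

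Granting such a $\psi_n$, the degree bound on $\psi_{w,n}$ is automatic. Writing $w=x_{j_1}^{\epsilon_1}\cdots x_{j_\ell}^{\epsilon_\ell}$ with $\ell=\ell(w)$, we have $\psi_{w,n}(v_1,\dots,v_r)=\psi_n(v_{j_1})^{\epsilon_1}\cdots\psi_n(v_{j_\ell})^{\epsilon_\ell}$, a product of $\ell$ matrices each of whose entries is a polynomial of degree $\le C$; hence every entry of the product has degree $\le C\ell(w)$, and it suffices to arrange $C\le 36$ (a short list of factors will in fact do).

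It remains to ensure surjectivity of $\psi_n$ onto $\mathrm{SL}_n$ and of $J_m(\psi_n)$ for every $m$. For surjectivity onto $\mathrm{SL}_n$: when $n$ is even the two blocks are square, $P^{+}P^{-}P^{+}$ covers the open set where the lower‑left block of the target is invertible (one solves the block equations $X_1C=A-I$ and $X_2=C^{-1}(D-I)$), and one absorbs the complement with one additional $P^{-}$‑factor, exactly as $\mathrm{SL}_2=U^{+}U^{-}U^{+}U^{-}$; equivalently one uses $\mathrm{SL}_n=\Omega\cdot\Omega$ for the big cell $\Omega$ together with the fact that the diagonal torus lies in a bounded product of such unipotent subgroups. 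For $n$ odd the blocks are not square, and handling this bookkeeping (with $a\ne b$) is precisely where the argument departs from \cite[Lemma 7.10, Proposition 7.12]{GHb}, which only treats $n$ even; the same big‑cell/Bruhat strategy still applies once one allows a bounded number of conjugate factors $g^{-1}P^{\pm}g$. For the jet statement it suffices to show that $\psi_n$ is a \emph{smooth} morphism: the jet functor preserves smoothness, and a smooth surjective morphism of smooth varieties induces a surjection on all jet schemes at $\overline{\mathbb{Q}}$‑points (lift the given jet through the formal smoothness of $\psi_n$ over a chosen point of the fibre above $\gamma(0)$).

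The main obstacle is this last point, in tension with the degree cap. The derivative of a product of $P^{\pm}$‑factors at the identity spans only $\mathfrak{p}^{+}+\mathfrak{p}^{-}$, which misses the block‑diagonal part of $\mathfrak{sl}_n$; so one must include enough conjugate factors $g_i^{-1}P^{\pm}g_i$ that the subspaces $g_i^{-1}\mathfrak{p}^{\pm}g_i$ together span $\mathfrak{sl}_n$, and then check that the derivative of $\psi_n$ stays surjective at \emph{every} point of $\mathbb{A}^{N}$, not merely generically — failing that, one falls back on the composition lemma (Lemma~\ref{lem:auxilary lemma}) to transfer the property from a more convenient smooth model. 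Surjectivity (including the odd‑$n$ case) and everywhere‑submersiveness each push toward more factors while the degree constraint caps their number, and the crux of the proof is to exhibit one fixed short list of block‑parabolic and conjugate‑block‑parabolic factors meeting all three demands, with a careful count giving $C\le 36$ and hence $\deg\psi_{w,n}\le 36\ell(w)$.
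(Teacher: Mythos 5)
Your degree bookkeeping and the choice of block-unipotent coordinatizations are in the same spirit as the paper (which uses all the subgroups $\mathrm{Mat}_p^{\pm}$, $1\le p<n$, and invokes the proof of \cite[Lemma 2.1]{Kas07b} to write every element of $\mathrm{SL}_n(\overline{\Q})$ as a product of at most $18$ such factors). The genuine gap is in the jet statement. You propose to obtain surjectivity of $J_m(\psi_n)$ by showing that $\psi_n$ is a \emph{smooth} morphism, i.e.\ a submersion at every point of $\mathbb{A}^N$. You never establish this, and for products of (conjugated) block-unipotent parametrizations it is exactly the problematic claim: at the point where all coordinates vanish the differential is the sum of the Lie algebras of the factors, and even after inserting conjugate factors you would have to verify surjectivity of the differential at \emph{every} point, which you explicitly leave as ``the crux''. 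The fallback you offer, Lemma \ref{lem:auxilary lemma}, cannot close this: that lemma transfers the (FRS) property along a map \emph{already known} to have surjective jet maps, so it presupposes precisely what you are trying to prove about $\psi_n$.

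The missing idea is the paper's doubling trick, which removes any need for smoothness of the parametrization. One first takes $\widetilde{\psi}_n$ to be the multiplication map of the $\le 18$ block-unipotent factors; surjectivity on $\overline{\Q}$-points (from Kassabov) gives dominance, and dominance of a morphism between smooth irreducible varieties forces $J_m(\widetilde{\psi}_n)$ to be dominant (generic smoothness in characteristic zero, plus irreducibility of $J_m\mathrm{SL}_n$). Since $J_m\mathrm{SL}_n$ is an algebraic group and $J_m$ commutes with convolution, the convolution of two dominant maps is surjective on $\overline{\Q}$-points, so $\psi_n:=\widetilde{\psi}_n*\widetilde{\psi}_n$ has surjective jet maps for all $m$; this is why the factor count doubles from $18$ to $36$ and why each entry of $\psi_{w,n}$ has degree $\le 2\cdot 18\cdot\ell(w)$. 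A secondary weakness of your write-up is the surjectivity of $\psi_n$ itself: the big-cell computation $P^+P^-P^+$ plus ``one additional factor'' does not yield a factorization of all of $\mathrm{SL}_n(\overline{\Q})$ with a number of factors bounded independently of $n$ (and your palindrome constraint is unnecessary, since the inverse of each block-unipotent factor is again such a factor with negated coordinates). Without a uniform bounded factorization and without the dominance-plus-convolution argument, neither the bound $36\ell(w)$ nor the jet surjectivity is actually secured.
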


\begin{proof}
Consider the following subgroups of $\mathrm{SL}_{n}$, 
\[
\mathrm{Mat}_{p}^{+}:=\left\{ \left(\begin{array}{cc}
I_{p} & A\\
0 & I_{n-p}
\end{array}\right):A\in\mathrm{Mat}_{p\times n-p}\right\} \text{ and }\,\,\mathrm{Mat}_{p}^{-}:=\left\{ \left(\begin{array}{cc}
I_{p} & 0\\
B & I_{n-p}
\end{array}\right):B\in\mathrm{Mat}_{n-p\times p}\right\} ,
\]
for $1\leq p<n$. By the proof of \cite[Lemma 2.1]{Kas07b}, any matrix
in $\mathrm{SL}_{n}(\overline{\Q})$ can be written as a product $A_{1}\cdots A_{M}$
of $M\leq18$ matrices, where $A_{i}\in\mathrm{Mat}_{p_{i}}^{\epsilon_{i}}(\overline{\Q})$
for $1\leq p_{i}<n$ and $\epsilon_{i}\in\{+,-\}$. In particular,
the map $\widetilde{\psi_{n}}:\prod_{i=1}^{M}\mathbb{A}_{\Q}^{(n-p_{i})p_{i}}\rightarrow\mathrm{SL}_{n}$
is surjective on $\mathrm{SL}_{n}(\overline{\Q})$. Since $\widetilde{\psi_{n}}$
is dominant, $J_{m}(\widetilde{\psi_{n}})$ is dominant, and thus
$J_{m}(\widetilde{\psi_{n}}*\widetilde{\psi_{n}})=J_{m}(\widetilde{\psi_{n}})*J_{m}(\widetilde{\psi_{n}})$
is surjective on $\left(J_{m}\mathrm{SL}_{n}\right)(\overline{\Q})$,
for each $m\in\N$. Set $\psi_{n}:=\widetilde{\psi_{n}}*\widetilde{\psi_{n}}$.
Finally, since the inverse of $A\in\mathrm{Mat}_{p}^{+}$ is $-A$,
it follows that $\psi_{w,n}:=w_{\mathrm{SL}_{n}}\circ(\psi_{n},\ldots,\psi_{n}):\left(\prod_{i=1}^{M}\mathbb{A}_{\Q}^{(n-p_{i})p_{i}}\right)^{2r}\rightarrow\mathrm{SL}_{n}$
is a polynomial map of degree $2M\ell(w)\leq36\ell(w)$.
\end{proof}
Note that by Lemmas \ref{lem:auxilary lemma} and \ref{lem:reduction to affine},
in order to prove Theorem \ref{thm:word maps on SLn are (FRS) after enough convolutions}
it is enough to show: 
\begin{prop}
\label{prop:reduction of (FRS) to affine case}For every $1\neq w\in F_{r}$
and $n\geq2$, the map $\psi_{w,n}^{*t}:\mathbb{A}_{\Q}^{Nrt}\rightarrow\mathrm{SL}_{n}$
is (FRS), for all $t\geq80\ell(w)n^{2}$. 
\end{prop}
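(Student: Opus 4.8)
The plan is to prove that $\psi_{w,n}^{*t}\colon\mathbb{A}_\Q^{Nrt}\to\mathrm{SL}_n$ is (FRS) for $t\geq 80\ell(w)n^2$ by combining a bound on the log-canonical threshold of the fibers of $\psi_{w,n}$ (which is a polynomial map of degree $\leq 36\ell(w)$ by Lemma~\ref{lem:reduction to affine}) with the general principle from \cite{GH19,GH21,GHb} that convolving a dominant map with itself improves singularities, as measured by the log-canonical threshold, additively (or at least linearly) in the number of factors. Concretely, I would first recall the following facts: (i) a flat morphism $\varphi\colon X\to \mathrm{SL}_n$ between smooth varieties is (FRS) at a point of a fiber $Z$ iff $\mathrm{lct}$ of the fiber (or, equivalently in the l.c.i. case, of the defining ideal) is $\geq \dim\mathrm{SL}_n = n^2-1$; more precisely one uses that rational + l.c.i. is detected by $\mathrm{lct}\geq \mathrm{codim}$, via Musta\c{t}\u{a}'s jet criterion already cited in Lemma~\ref{lemma: jet characterization of (FRS)}; (ii) for the convolution $\varphi^{*t}$ the fiber over $g$ is $\{(x_1,\dots,x_t): \varphi(x_1)\cdots\varphi(x_t)=g\}$, and a Fubini-type argument for log-canonical thresholds (as in \cite{GHb}, or \cite[Theorem 4.12]{CGH23}-style estimates) shows $\mathrm{lct}$ of this fiber grows at least linearly with $t$ once $\psi_{w,n}$ is dominant.

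**Next I would** make the degree-to-lct estimate explicit. Since $\psi_{w,n}$ has degree $d\leq 36\ell(w)$ and target dimension $n^2-1$, a crude but sufficient bound is that each fiber of $\psi_{w,n}$, being cut out inside affine space of dimension $Nr$ by $n^2-1$ equations of degree $\leq d$, has log-canonical threshold at least $1/d \geq 1/(36\ell(w))$ times something — more carefully, one wants a lower bound of the form $\mathrm{lct}\geq c/d$ for the generic fiber, and then for $\psi_{w,n}^{*t}$ the convolution improves this to roughly $\mathrm{lct}\geq c\,t/d$ (this is the heart of the convolution-improves-singularities philosophy: repeated self-convolution of a map with lct $\alpha$ over the generic fiber gives a map whose fibers have lct growing linearly in the number of convolution powers, see \cite[Theorem B or its variants]{GHb}). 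Requiring $c\,t/d \geq n^2-1$ and using $d\leq 36\ell(w)$ then forces $t \geq (n^2-1)\cdot 36\ell(w)/c$, and choosing the absolute constant appropriately yields $t\geq 80\ell(w)n^2$. The bookkeeping with the constant $80$ versus $36$ must absorb the loss from the non-generic fibers and from the precise form of the convolution estimate; I expect the relevant cited statement from \cite{GHb} to give exactly what is needed for word maps on $\mathrm{SL}_n$, possibly after noting that $\psi_{w,n}$ factors through the word map $w_{\mathrm{SL}_n}$ whose generic fiber is a smooth variety (submersion off a proper subvariety by \cite{Bor83}), so the generic log-canonical threshold is already $\geq 1$ and only the bad locus — controlled by the degree — costs us.

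**The main obstacle** I anticipate is making the convolution-improves-lct step quantitatively sharp enough to land the explicit constant $80$, while only using results stated or citable at this point in the paper. The qualitative statement "enough self-convolutions make a dominant map (FRS)" is standard in \cite{GH19,GH21,GHb}; what requires care is (a) tracking how the log-canonical threshold of the $t$-fold convolution fiber compares to $t$ times that of a single fiber — this is not literally additive and one must use the correct inequality (typically via a resolution or via the jet-scheme dimension count of Lemma~\ref{lemma: jet characterization of (FRS)}(3) applied to the convolution), and (b) controlling the worst fibers of $\psi_{w,n}$, not just the generic one, since (FRS) is a condition on \emph{every} fiber. For (b) the key input is again that $\psi_{w,n}$ is a polynomial map of controlled degree, so by an effective Nullstellensatz / $\mathrm{lct}\geq 1/\deg$ type bound (see \cite{VZG08,CGH23} for the non-Archimedean lct estimates and their relation to degrees) every fiber has lct bounded below by $1/(36\ell(w))$ — no better, but uniformly — and then the convolution multiplies this by $\sim t$. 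I would structure the final proof as: (1) recall $\psi_{w,n}$ dominant of degree $\leq 36\ell(w)$; (2) cite/derive that its fibers have $\mathrm{lct}\geq \frac{1}{36\ell(w)}$; (3) invoke the convolution estimate of \cite{GHb} to get $\mathrm{lct}$ of $\psi_{w,n}^{*t}$-fibers $\geq \frac{t}{c\cdot36\ell(w)}$ for a suitable small absolute $c$; (4) check $\frac{t}{c\cdot36\ell(w)}\geq n^2-1$ when $t\geq 80\ell(w)n^2$; (5) conclude (FRS) via the lct-vs-codimension criterion, i.e. Lemma~\ref{lemma: jet characterization of (FRS)}. The arithmetic in (4) is routine once the constant in (3) is pinned down.
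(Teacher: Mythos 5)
Your proposal has two genuine gaps, and they sit exactly where the paper takes a different (analytic) route. First, your criterion (i) is not correct: for the ideal $I_Z$ of a fiber $Z$ of codimension $c$ in affine space one always has $\lct(I_Z)\leq c$, and equality characterizes \emph{log canonical} fibers, not fibers with rational singularities. For an l.c.i.\ fiber, rational is equivalent to canonical, which is strictly stronger than log canonical and is detected by Musta\c{t}\u{a}'s \emph{strict} jet-dimension inequality $\dim\pi_{m,Z}^{-1}(Z^{\mathrm{sing}})<(m+1)\dim Z$ (this is exactly Lemma \ref{lemma: jet characterization of (FRS)}(3)); it cannot be read off from $\lct(I_Z)$ alone. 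A fiber like $\{xy=0\}$ has $\lct=\mathrm{codim}=1$ but is not even normal, hence not rational. So step (5) of your plan, "conclude (FRS) via the lct-vs-codimension criterion", does not close, and moreover (FRS) is a condition on \emph{every} fiber being reduced with rational singularities, which your fiberwise lct bookkeeping cannot certify. Second, the step you yourself flag as the main obstacle --- that the fiberwise lct of $\psi_{w,n}^{*t}$ grows linearly in $t$ --- is not available as a citable statement in the form you need, and you do not prove it; a Thom--Sebastiani-type additivity holds for ideals in disjoint variables, but the fiber of a convolution over a group is a twisted fibration, not a product, and no such quantitative lct-additivity is invoked anywhere in the paper.

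The paper's actual proof avoids both issues by working analytically. It bounds the log-canonical threshold not of the fibers but of the \emph{Jacobian ideal} of $\Psi=\pi_{k,l}\circ(\psi_{w,n})_{\C}$: a nonzero maximal minor has degree at most $(n^{2}-1)\cdot 36\ell(w)$, so $\lct(\mathcal{J}_{\Psi})\geq\frac{1}{36\ell(w)n^{2}}$, and by \cite[Theorem 1.1]{GHS} this makes $(\psi_{w,n})_{\C}$ an $L^{1+\epsilon}$-map for $\epsilon=\frac{1}{40\ell(w)n^{2}}$ (Proposition \ref{prop:reduction to Lq maps}). The linear gain in $t$ then comes from Young's convolution inequality for the pushforward densities: an $L^{1+\epsilon}$ density convolved $t\geq\frac{1+\epsilon}{\epsilon}$ times is in $L^{\infty}$, which with $\epsilon=\frac{1}{40\ell(w)n^{2}}$ gives $t\geq 80\ell(w)n^{2}$; finally the analytic characterization of (FRS) (Theorem \ref{thm:analytic criterion for (FRS)}: (FRS) $\Leftrightarrow$ $L^{\infty}$-map over $\C$) converts boundedness of the density into the (FRS) property, handling rationality and reducedness of all fibers in one stroke. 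If you want to salvage your purely algebro-geometric plan, you would have to replace both the lct-equals-codimension criterion by the strict jet-scheme criterion of Lemma \ref{lemma: jet characterization of (FRS)} and supply a proof of the quantitative improvement of that criterion under convolution --- which is essentially redoing, in jet-scheme language, what Young's inequality gives for free.
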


For the proof of Proposition \ref{prop:reduction of (FRS) to affine case},
we introduce an analytic characterization of the (FRS) property. 
\begin{defn}
\label{def:epsilon of a map}Let $F$ be a local field, and $q\geq1$. 
\begin{enumerate}
\item If $X$ is a smooth $F$-manifold, we denote by $\mathcal{M}^{\infty}(X)$
(respectively $\mathcal{M}_{c}^{\infty}(X)$) the space of smooth
(respectively smooth and compactly supported) measures on $X$. 
\item An $F$-analytic map $\varphi:X\rightarrow Y$ between smooth $F$-analytic
manifolds is called an \emph{$L^{q}$-map} if, for every $\mu\in\mathcal{M}_{c}^{\infty}(X)$,
the pushforward measure $\varphi_{*}\mu$ is in $L^{q}(Y,\nu)$, with
respect to every nowhere-vanishing $\nu\in\mathcal{M}^{\infty}(Y)$. 
\end{enumerate}
\end{defn}

Let $\varphi:X\rightarrow Y$ be a morphism of smooth algebraic $\Q$-varieties.
In \cite[Theorem 3.4]{AA16}, Aizenbud and the first author showed
that $\varphi$ is (FRS) if and only if $\varphi_{F}:X(F)\rightarrow Y(F)$
is an $L^{\infty}$-map for every non-Archimedean local field $F$
of characteristic zero. This was extended to the Archimedean case
in \cite{Rei,GHS}: 
\begin{thm}[{\cite[Corollary 6.2]{GHS}, \cite{Rei}}]
\label{thm:analytic criterion for (FRS)}Let $\varphi:X\rightarrow Y$
be a morphism between smooth algebraic $\Q$-varieties. Then the following
are equivalent: 
\begin{enumerate}
\item $\varphi$ is (FRS). 
\item $\varphi_{\C}:X(\C)\rightarrow Y(\C)$ is an $L^{\infty}$-map. 
\item For every $F\in\{\R,\C\}$, $\varphi_{F}:X(F)\rightarrow Y(F)$ is
an $L^{\infty}$-map. 
\end{enumerate}
\end{thm}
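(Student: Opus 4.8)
The plan is to prove the chain $(1)\Rightarrow(3)\Rightarrow(2)\Rightarrow(1)$, where the only non-formal links are, for each $F\in\{\R,\C\}$ \emph{separately}, the equivalence ``$\varphi$ is (FRS)'' $\iff$ ``$\varphi_{F}\colon X(F)\to Y(F)$ is an $L^{\infty}$-map''. Granting these, the theorem is immediate: $(3)\Rightarrow(2)$ is trivial, $(2)\Rightarrow(1)$ is the $F=\C$ case, and $(1)\Rightarrow(3)$ is the forward direction applied over both $\R$ and $\C$. (This parallels the non-Archimedean equivalence of \cite[Theorem~3.4]{AA16}; the point of \cite{GHS, Rei} is that the same resolution-theoretic argument runs over $\R$ and $\C$ once one uses the Archimedean monomial integrability estimate.) The whole argument is local on $Y$: the $L^{\infty}$-condition and boundedness of a density are local, so by compactness of the support of a test measure $\mu$ and a partition of unity it suffices to analyze $\varphi$ on a small analytic neighborhood of a point $x\in X(F)$ over $y=\varphi(x)$, with $X,Y$ affine; at points where $\varphi$ is smooth it is locally a projection and everything is classical, so we may assume $x$ lies in the critical locus of $\varphi$.

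Next I would dispose of flatness. In the direction ``$L^{\infty}\Rightarrow$(FRS)'': if $\varphi$ is not dominant then $\varphi_{*}\mu$ (for $\mu$ of full support) is a nonzero measure carried by a proper closed subvariety, hence singular with respect to a smooth positive $\nu$ on $Y$, so not $L^{\infty}$; and if $\varphi$ is dominant but not flat then, $X$ being smooth hence Cohen--Macaulay and $Y$ regular, miracle flatness forces some fiber to have dimension $>\dim X-\dim Y$, and over the image of the locus where the fiber dimension jumps the density of $\varphi_{*}\mu$ is infinite on a set of positive $\nu$-measure. Thus in both directions we may assume $\varphi$ flat; then every fiber is a local complete intersection, hence Cohen--Macaulay and equidimensional. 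The relevant analytic object is the Gelfand--Leray (fiberwise quotient) measure $\mu/\varphi^{*}\nu$: the density of $\varphi_{*}\mu$ at $y'$ equals $\int_{X_{y'}(F)}\mu/\varphi^{*}\nu$, so the $L^{\infty}$-property near $y$ says exactly that these fiber integrals stay bounded as $y'\to y$. This also rules out non-reduced fibers: a flat local complete intersection fiber is Cohen--Macaulay, hence reduced iff it satisfies Serre's $R_{0}$, i.e. $\dim X_{y}^{\mathrm{sing}}<\dim X_{y}$; if instead $X_{y}$ has a generically non-reduced component, nearby fibers $X_{y'}$ acquire several sheets collapsing onto it and the Gelfand--Leray density grows like the reciprocal of the inter-sheet distance, so $\varphi_{*}\mu\notin L^{\infty}$. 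Hence the $L^{\infty}$-property forces $\varphi$ flat with reduced fibers, and for (FRS) we may assume this from the start.

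It remains, for $\varphi$ flat with reduced local complete intersection fibers, to show that all fibers have rational singularities iff the fiber integrals $\int_{X_{y'}(F)}\mu/\varphi^{*}\nu$ are locally bounded near $y$. This is where resolution of singularities enters, and is the only place the characteristic of $K$ being $0$ is used. Shrinking $Y$ around $y$, generic flatness and Noetherian induction let one choose an embedded resolution $p\colon\widetilde X\to X$ of the critical scheme of $\varphi$ which works simultaneously over all of the shrunk $Y$; pulling back, $\varphi\circ p$ becomes monomial in suitable analytic charts on $\widetilde X(F)$, and $p^{*}(\mu/\varphi^{*}\nu)$ acquires a weight $\prod_{i}|u_{i}|_{F}^{c_{i}}$ times a smooth positive measure along the exceptional components $E_{i}$, with the $c_{i}$ the discrepancies of $p$ corrected by the multiplicities of the critical divisor. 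The one-variable criterion $\int|u|_{F}^{c}\,du<\infty$ near $0\iff c>-1$ — valid verbatim over $\R$ and $\C$, using $|u|_{\C}=|u|^{2}$ — then shows the fiber integrals are locally bounded near $y$ iff every $c_{i}>-1$; by Elkik's theorem (rational $\iff$ canonical for local complete intersections) together with the discrepancy characterization of canonical singularities — equivalently, by Musta\c{t}\u{a}'s jet-scheme criterion already recorded in Lemma~\ref{lemma: jet characterization of (FRS)} — this numerical condition on a log resolution is precisely the statement that every fiber $X_{y'}$ has rational singularities. Uniformity of the bound in $y'$ is automatic because the resolution was chosen in family. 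This proves $(1)\iff$``$\varphi_{F}$ is an $L^{\infty}$-map'' for each $F\in\{\R,\C\}$, hence the theorem.

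The main obstacle is that last step: performing the resolution \emph{in family} over a neighborhood of $y$ so that the monomial estimate is uniform in $y'$, and matching the boundedness of the Gelfand--Leray fiber integrals against the exact discrepancy inequality characterizing rational (equivalently canonical) singularities of the fibers. Two secondary points also need care: that a non-flat or fiberwise-non-reduced $\varphi$ genuinely fails the $L^{\infty}$-property (a codimension and sheet-counting argument), and that restricting from $X(\overline K)$ to $F$-points loses nothing — clear over $\C$, while the $\R$-case of $(1)\Rightarrow(3)$ only needs an upper bound on the real density, which follows from the same monomial estimate since $X_{y'}(\R)\subseteq X_{y'}(\C)$.
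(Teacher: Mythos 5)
This theorem is not proved in the paper at all: it is quoted verbatim from \cite[Corollary 6.2]{GHS} and \cite{Rei} (the non-Archimedean prototype being \cite[Theorem 3.4]{AA16}), so there is no internal argument to compare yours against. Your outline does track the general strategy of those references --- localization, reduction to the flat case with reduced fibers, Gelfand--Leray fiber integrals, resolution of singularities, and rational $=$ canonical for local complete intersections via Elkik --- so as a roadmap it points in the right direction.

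As a proof, however, the decisive step is asserted rather than established, and its numerology is questionable. The one-variable criterion $\int_{|u|\le 1}|u|_{F}^{c}\,du<\infty\iff c>-1$ is an \emph{integrability} ($L^{1}$) statement, whereas what must be controlled is \emph{boundedness} of the fiber integrals uniformly in $y'$; after the correct normalization this corresponds to exponents $\ge 0$ (the canonical/discrepancy-$\ge 0$ condition), not to the log-terminal-type inequality $c_{i}>-1$ you write, and the extra unit has to be extracted from the family direction. Carrying out that bookkeeping --- together with the claim that an embedded resolution can be chosen ``in family'' over a neighborhood of $y$ so that the monomial estimate is uniform in $y'$, which you yourself flag as the main obstacle --- is exactly the content of the cited results; the actual proofs sidestep fiberwise resolutions by routing the argument through Musta\c{t}\u{a}'s jet-scheme criterion (recorded here as Lemma \ref{lemma: jet characterization of (FRS)}) and, in \cite{GHS}, the log canonical threshold of the Jacobian ideal. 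The secondary reductions are also only heuristic as written: the ``sheets collapsing'' argument for generically non-reduced fibers and the claim that a fiber-dimension jump forces infinite density both require genuine estimates. In short, the proposal is a fair summary of the known strategy, but it is not a self-contained proof of the equivalence.
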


\begin{prop}
\label{prop:reduction to Lq maps}For every $1\neq w\in F_{r}$ and
$n\geq2$, the map $(\psi_{w,n})_{\C}:\C^{Nr}\rightarrow\mathrm{SL}_{n}(\C)$
is an $L^{1+\epsilon}$-map, for every $\epsilon\leq\frac{1}{40\ell(w)n^{2}}$. 
\end{prop}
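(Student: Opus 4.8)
The target is to bound the $L^{1+\epsilon}$-norm of pushforwards of smooth compactly supported measures under $(\psi_{w,n})_{\C}$, and the natural route is to deduce this from a bound on the log-canonical threshold of the map $\psi_{w,n}$ (or rather, of its fibers), via the standard dictionary between integrability of pushforward densities and lct. First I would recall (Definition \ref{def:log canonical threshold} and the Archimedean local integrability results cited in \S\ref{subsec:Small-ball-estimates-and Fourier decay}, e.g.~\cite[Chapter 7]{AGV88}) that if $\varphi:X\to Y$ is a dominant morphism of smooth $\C$-varieties and $\min_{y}\mathrm{lct}(\varphi-y)>1$, then $\varphi_\C$ is an $L^{1+\epsilon}$-map for $\epsilon$ small enough in terms of this minimum; more precisely $\epsilon$ can be taken to be anything smaller than $\min_y \mathrm{lct}(\varphi-y)-1$. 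So the problem reduces to showing that every fiber of $\psi_{w,n}$ has log-canonical threshold at least $1+\frac{1}{40\ell(w)n^2}$, equivalently that $\mathrm{lct}\geq 1+\frac{1}{40\ell(w)n^2}$ uniformly in the fiber.

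\medskip
\noindent\textbf{Key steps.} The second step is a dimension count feeding into a general lower bound for the lct of a hypersurface (or complete intersection) in terms of its degree and the ambient dimension. The map $\psi_{w,n}:\mathbb{A}^{Nr}_\Q\to\mathrm{SL}_n$ has, by Lemma \ref{lem:reduction to affine}, coordinate functions of degree at most $36\ell(w)$, and its target has dimension $n^2-1$. A fiber $Z$ of $\psi_{w,n}$ is therefore cut out inside affine space of dimension $Nr$ by $n^2-1$ equations each of degree $\leq 36\ell(w)$. I would invoke the elementary bound that for a subvariety of $\mathbb{A}^m$ defined by equations of degree $\le d$, the log-canonical threshold is at least (codimension)$/d$ divided by... — more carefully, I would use the bound $\mathrm{lct}(\mathbb{A}^m, Z)\geq \frac{1}{\deg}$-type estimates, but since we need lct bounded below by something slightly bigger than $1$ rather than bounded below by a small number, this must be combined with the fact that we are free to convolve: the whole point of passing from $\psi_{w,n}$ to $\psi_{w,n}^{*t}$ (Proposition \ref{prop:reduction of (FRS) to affine case}) is that convolution powers improve singularities, and here we are proving the single-copy $L^{1+\epsilon}$ statement with $\epsilon$ explicitly tied to $\frac{1}{40\ell(w)n^2}$. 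So concretely: show $\mathrm{lct}$ of each fiber $\geq 1 + c$ with $c\geq \frac{1}{40\ell(w)n^2}$ — this should follow because $\psi_{w,n}$ is not merely dominant but flat with fibers whose singularities are controlled (they are, away from a proper subvariety, smooth since $w_{\mathrm{SL}_n}$ is a submersion generically by \cite{Bor83}), and along the singular locus one uses the degree-$36\ell(w)$ bound together with the codimension $\geq$ something linear in $n^2$ to get the lct past $1$. The bookkeeping is: $(1+\epsilon)\cdot(\text{something}) < \text{codim}/\deg$, and plugging in $\deg\le 36\ell(w)$, $\epsilon\le\frac{1}{40\ell(w)n^2}$ makes the inequality close with room to spare.

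\medskip
\noindent\textbf{Main obstacle.} The delicate point is getting the lct strictly past $1$ rather than merely positive: a crude degree bound gives $\mathrm{lct}\geq \frac{1}{36\ell(w)}$, which is useless here. What saves us is that we do not need this for $\psi_{w,n}$ in isolation with a naive estimate — rather, the relevant quantitative input is how the log-canonical threshold of the fiber relates to the number of free affine coordinates $Nr$ and the number of defining equations $n^2-1$. The genuinely hard step is therefore a careful analysis of the singularities of the fibers of $\psi_{w,n}$: one must show that the singular locus of a fiber has codimension large enough (in terms of $n^2$) inside the fiber that even with defining degree $36\ell(w)$ the lct exceeds $1 + \frac{1}{40\ell(w)n^2}$. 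I expect this to be handled by the algebro-geometric machinery of Glazer--Hendel \cite{GH19,GH21,GHb} cited at the start of \S\ref{sec:Geometry-and-singularities}: specifically, their results bound the codimension of the locus where the convolution (here, the doubled map $\psi_n = \widetilde{\psi_n}*\widetilde{\psi_n}$ built into $\psi_{w,n}$) fails to have good singularities, and combining that codimension bound with the degree bound from Lemma \ref{lem:reduction to affine} yields the numerical inequality. Once the fiberwise lct bound $\mathrm{lct}\geq 1+\frac{1}{40\ell(w)n^2}$ is in hand, the $L^{1+\epsilon}$ conclusion for $\epsilon\le\frac{1}{40\ell(w)n^2}$ is immediate from the Archimedean local integrability theory, completing the proof of the proposition.
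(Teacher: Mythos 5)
There is a genuine gap, and it stems from a misstated integrability criterion. You propose to deduce the $L^{1+\epsilon}$ property from a lower bound $\min_{y}\lct>1+\epsilon$ on the log-canonical threshold of the \emph{fibers} of $\psi_{w,n}$. That is essentially the threshold relevant for the $L^{\infty}$/(FRS) statement, not for $L^{1+\epsilon}$ with small $\epsilon$: already for a map to a one-dimensional target, the density of the pushforward near a critical value behaves like $\left|y\right|^{2(\lct-1)}$, so $L^{1+\epsilon}$ only requires $\lct>\frac{\epsilon}{1+\epsilon}$, and for higher-dimensional targets the natural benchmark for fibers scales with the codimension $n^{2}-1$, not with $1$. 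The criterion actually used in the paper is \cite[Theorem 1.1]{GHS}: after composing with a coordinate projection $\pi_{k,l}:\mathrm{SL}_{n}(\C)\rightarrow\C^{n^{2}-1}$, the map $\Psi=\pi_{k,l}\circ(\psi_{w,n})_{\C}$ is an $L^{1+\epsilon}$-map for every $\epsilon<\lct(\mathcal{J}_{\Psi})$, where $\mathcal{J}_{\Psi}$ is the \emph{Jacobian ideal} generated by the maximal minors of $D\Psi$. Since $\lct$ is monotone under ideal containment, a single nonvanishing minor $M_{I_{0}}$ suffices, and because $\deg M_{I_{0}}\leq(n^{2}-1)\cdot36\ell(w)\leq36\ell(w)n^{2}$, the elementary bound $\lct(\langle M_{I_{0}}\rangle)\geq1/\deg$ gives exactly the needed $\frac{1}{36\ell(w)n^{2}}>\frac{1}{40\ell(w)n^{2}}$. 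In other words, the "crude degree bound" you dismiss as useless is precisely the input that closes the argument, once it is applied to the Jacobian ideal rather than to the fiber ideals.

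Because of this misidentification, the step you flag as the "genuinely hard step" (a uniform bound on the codimension of the singular loci of the fibers, to push the fiberwise lct past $1$) is never actually carried out: you only say you expect the Glazer--Hendel machinery to provide it. This is not a detail one can defer. A single copy of $\psi_{w,n}$ is not expected to have fibers with near-maximal lct at all; that is exactly why the paper proves only $L^{1+\epsilon}$ for the single map and then uses Young's convolution inequality to upgrade $\psi_{w,n}^{*t}$ to an $L^{\infty}$-map for $t\gtrsim1/\epsilon$ (Proposition \ref{prop:reduction of (FRS) to affine case}). So as written, your proposal both relies on an incorrect dictionary between lct and $L^{1+\epsilon}$-integrability and leaves its central quantitative claim unproved; the fix is to replace the fiberwise lct analysis by the Jacobian-ideal criterion of \cite[Theorem 1.1]{GHS} together with the degree bound from Lemma \ref{lem:reduction to affine}, and to handle the passage between $\mathrm{SL}_{n}(\C)$ and $\C^{n^{2}-1}$ by noting that some $\pi_{k,l}$ is a local diffeomorphism at every point.
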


\begin{lem}
Proposition \ref{prop:reduction of (FRS) to affine case} follows
from Proposition \ref{prop:reduction to Lq maps}. 
\end{lem}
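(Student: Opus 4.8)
The plan is to bootstrap from the $L^{1+\epsilon}$-bound for a single copy of $\psi_{w,n}$ to an $L^\infty$-bound for a sufficiently high convolution power, and then invoke the analytic criterion for (FRS). First I would recall the general principle (used throughout \cite{GH19,GH21,GHb}) that convolution improves integrability: if $\varphi\colon X\to \underline{G}$ is an $L^{q}$-map for some $q>1$ on a compact Lie group or, more precisely, if it is an $L^q$-map in the sense of Definition \ref{def:epsilon of a map}, then the $k$-fold self-convolution $\varphi^{*k}$ is an $L^{q^{k}/(\ldots)}$-map, and in particular becomes an $L^\infty$-map once $k$ is large enough in terms of $q$. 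Concretely, Young's convolution inequality gives that if $f\in L^{p_1}$ and $g\in L^{p_2}$ with $\frac{1}{p_1}+\frac{1}{p_2}\ge 1$, then $f*g\in L^{p_3}$ with $\frac{1}{p_3}=\frac{1}{p_1}+\frac{1}{p_2}-1$; iterating, a $t$-fold convolution of densities each in $L^{1+\epsilon}$ lands in $L^\infty$ as soon as $t\cdot\frac{\epsilon}{1+\epsilon}\ge 1$, i.e. $t\ge \frac{1+\epsilon}{\epsilon}=1+\frac1\epsilon$.

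The key steps, in order: (i) By Proposition \ref{prop:reduction to Lq maps}, $(\psi_{w,n})_{\C}\colon\C^{Nr}\to \mathrm{SL}_n(\C)$ is an $L^{1+\epsilon}$-map for $\epsilon=\frac{1}{40\ell(w)n^2}$. (ii) Fix $\mu\in\mathcal{M}_c^\infty\big((\mathbb{A}_\Q^{Nrt})(\C)\big)$; we may reduce to product measures $\mu=\mu_1\otimes\cdots\otimes\mu_t$ by a partition-of-unity / linearity argument since the $L^\infty$-property is tested against all compactly supported smooth measures and these span a dense enough class. (iii) The pushforward $(\psi_{w,n}^{*t})_*\mu$ equals the convolution on $\mathrm{SL}_n(\C)$ of the $t$ pushforwards $(\psi_{w,n})_*\mu_i$, each of which lies in $L^{1+\epsilon}(\mathrm{SL}_n(\C))$ by step (i). Here I need that convolution of finitely many compactly supported $L^{1+\epsilon}$ densities against a (unimodular, second-countable) Lie group like $\mathrm{SL}_n(\C)$ obeys Young's inequality; compact support makes the local and global forms agree. (iv) Apply the iterated Young inequality: with $t\ge 1+\frac1\epsilon=1+40\ell(w)n^2$, the convolution lies in $L^\infty(\mathrm{SL}_n(\C))$ against Haar measure, hence against any nowhere-vanishing smooth measure. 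Since $80\ell(w)n^2\ge 1+40\ell(w)n^2$ for all $n\ge 2,\ \ell(w)\ge1$, the bound $t\ge 80\ell(w)n^2$ suffices. (v) Therefore $(\psi_{w,n}^{*t})_\C$ is an $L^\infty$-map for $t\ge 80\ell(w)n^2$; by Theorem \ref{thm:analytic criterion for (FRS)} (the equivalence (1)$\Leftrightarrow$(2)), $\psi_{w,n}^{*t}\colon\mathbb{A}_\Q^{Nrt}\to\mathrm{SL}_n$ is (FRS), which is exactly Proposition \ref{prop:reduction of (FRS) to affine case}.

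The main obstacle I anticipate is step (iii): making the passage from "$L^{1+\epsilon}$-map" in the sense of Definition \ref{def:epsilon of a map} (a statement about pushforwards of \emph{all} compactly supported smooth measures, with respect to \emph{all} nowhere-vanishing smooth reference measures) to a clean statement about $L^{1+\epsilon}$ functions on $\mathrm{SL}_n(\C)$ to which Young's inequality applies verbatim. One must check that the reference measure can be taken to be a fixed Haar measure on $\mathrm{SL}_n(\C)$ (legitimate, since Haar measure is smooth and nowhere vanishing and any other such measure differs by a smooth positive density, bounded on compacta), that the pushforward of a compactly supported smooth measure has compact support, and that the convolution identity $(\psi_{w,n}^{*t})_*(\mu_1\otimes\cdots\otimes\mu_t)=(\psi_{w,n})_*\mu_1 * \cdots * (\psi_{w,n})_*\mu_t$ holds as measures — this is immediate from the definition of convolution of maps (Definition \ref{def:convolution}) and Fubini. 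A secondary subtlety is reducing a general $\mu$ on the product space to product measures; this is handled by covering $\operatorname{supp}\mu$ by finitely many product charts and dominating $\mu$ by a finite sum of product measures, using that being an $L^\infty$-map is stable under such domination. None of these points is deep, but they are exactly the places where care is needed; I would carry out (ii) and (iii) in detail and treat (i), (iv), (v) as formal consequences of the quoted results and Young's inequality.
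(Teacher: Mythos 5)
Your proposal is correct and follows essentially the same route as the paper: identify the pushforward of a (product) measure under $\psi_{w,n}^{*t}$ with the $t$-fold convolution of the pushforwards under $\psi_{w,n}$, apply Young's convolution inequality for unimodular groups to pass from $L^{1+\epsilon}$ to $L^{\infty}$ once $t\geq\frac{1+\epsilon}{\epsilon}$ (which $t\geq80\ell(w)n^{2}$ ensures for $\epsilon=\frac{1}{40\ell(w)n^{2}}$), and conclude (FRS) via Theorem \ref{thm:analytic criterion for (FRS)}. Your extra care in steps (ii)--(iii) (reducing to product measures by domination, fixing Haar as reference measure) only spells out points the paper leaves implicit.
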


\begin{proof}
The convolution operation from Definition \ref{def:convolution} commutes
with the classical convolution from analysis, so that for every $\mu\in\mathcal{M}_{c}^{\infty}(\C^{Nr})$,
we have $((\psi_{w,n}^{*t})_{\C})_{*}\mu^{t}=\left(((\psi_{w,n})_{\C})_{*}\mu\right)^{*t}$.
Therefore, by Young's convolution inequality for unimodular groups
(see e.g.~\cite[Lemma 1.4]{BCD11}), if $(\psi_{w,n})_{\C}:\C^{Nr}\rightarrow\mathrm{SL}_{n}(\C)$
is an $L^{1+\epsilon}$-map, then $(\psi_{w,n}^{*t})_{\C}:\C^{Nrt}\rightarrow\mathrm{SL}_{n}(\C)$
is an $L^{\infty}$-map for $t\geq\frac{1+\epsilon}{\epsilon}$, so
in particular for $t\geq\max\left\{ 2,\frac{2}{\epsilon}\right\} $
(see \cite[Section 1.1, end of p.3]{GHS}). 
\end{proof}
We now turn to the proof of Proposition \ref{prop:reduction to Lq maps},
from which Theorem \ref{thm:word maps on SLn are (FRS) after enough convolutions}
follows. For the proof we need the notion of log-canonical threshold. 
\begin{defn}
\label{def:log canonical threshold}If $\mathfrak{a}=\langle f_{1},\ldots,f_{m}\rangle\subseteq\C[x_{1},\ldots,x_{n}]$
is an ideal, and $x\in\C^{n}$, define 
\begin{equation}
\operatorname{lct}(\mathfrak{a};x):=\sup\left\{ s>0:\exists U\ni x\text{ \,s.t.\, }\forall\mu\in\mathcal{M}_{c}^{\infty}(U),\,\int_{\C^{n}}\min_{1\leq i\leq m}\Big[\left|f_{i}(z)\right|^{-2s}\Big]\mu(z)<\infty\right\} ,\label{eq:lct-analytic definition}
\end{equation}
where $U\subseteq\C^{n}$ runs over all open neighborhoods of $x$.
We define $\operatorname{lct}(\mathfrak{a}):=\min\limits _{x\in X}\operatorname{lct}(\mathfrak{a};x)$.
This definition does not depend on the choice of the generators of
$\mathfrak{a}$ (see e.g.~\cite[Theorem 1.2]{Mus12}). 
\end{defn}

Note that it is immediate from Definition \ref{def:log canonical threshold}
that 
\begin{equation}
\mathfrak{a}\subseteq\mathfrak{b}\Longrightarrow\lct(\mathfrak{a};x)\leq\lct(\mathfrak{b};x).\label{eq:lct and containment of ideals}
\end{equation}

\begin{proof}[Proof of Proposition \ref{prop:reduction to Lq maps}]
Embed $\mathrm{SL}_{n}(\C)\subseteq\C^{n^{2}}$ and let $\pi_{k,l}:\mathrm{SL}_{n}(\C)\rightarrow\C^{n^{2}-1}$
be the restriction of the projection $(A_{ij})_{1\leq i,j\leq n}\rightarrow(A_{ij})_{1\leq i,j\leq n,(i,j)\neq(k,l)}$.
Then by Lemma \ref{lem:reduction to affine}, the map $\Psi:=\pi_{k,l}\circ(\psi_{w,n})_{\C}:\C^{Nr}\rightarrow\C^{n^{2}-1}$
is a dominant polynomial map of degree $\leq36\ell(w)$. Let $D_{x}\Psi:\C^{Nr}\rightarrow\C^{n^{2}-1}$
denotes the differential of $\Psi$ at $x\in\C^{Nr}$ and, for each
subset $I\subseteq[Nr]$ of size $n^{2}-1$, let $M_{I}$ be the corresponding
$I$-th minor of $D_{x}\Psi$. Denote by $\mathcal{J}_{\Psi}:=\langle\left\{ M_{I}\right\} _{I}\rangle$
the \emph{Jacobian ideal} of $\Psi$, generated by the collection
of those minors. By \cite[Theorem 1.1]{GHS} it follows that $\Psi$
is an $L^{1+\epsilon}$-map, for every $\epsilon<\lct(\mathcal{J}_{\Psi})$.
Note that $\lct(\mathcal{J}_{\Psi})\geq\lct(\langle M_{I_{0}}\rangle)$
for any fixed $I_{0}$, by (\ref{eq:lct and containment of ideals}).
By e.g.~\cite[Eq. 20.1]{Kol}, taking $M_{I_{0}}$ not identically
zero, and since $\deg(M_{I_{0}})\leq(n^{2}-1)\cdot\deg(\Psi)\leq36\ell(w)n^{2}$,
we get that $\lct(\langle M_{I_{0}}\rangle)\geq\frac{1}{36\ell(w)n^{2}}$.
In particular, $\Psi$ is an $L^{1+\epsilon}$-map for $\epsilon=\frac{1}{40\ell(w)n^{2}}$.
Since for any $g\in\mathrm{SL}_{n}(\C)$, one can find $1\leq k,l\leq n$
such that $\pi_{k,l}$ is a local diffeomorphism at $g$, it follows
that $(\psi_{w,n})_{\C}$ is an $L^{1+\epsilon}$-map for $\epsilon=\frac{1}{40\ell(w)n^{2}}$,
as required. 
\end{proof}
We can now use Theorems \ref{thm:word maps on SLn are (FRS) after enough convolutions}
and \ref{thm:analytic criterion for (FRS)} to deduce Corollary \ref{cor:bounded density for small rank}. 
\begin{proof}[Proof of Corollary \ref{cor:bounded density for small rank}]
Let $(\mathrm{SL}_{n})_{\C/\R}$ be the restriction of scalars of
$\mathrm{SL}_{n}$. Explicitly, $(\mathrm{SL}_{n})_{\C/\R}$ is defined
as the collection of pairs $(A,B)\in\mathrm{Mat}_{n\times n}$ such
that $\det(A+Bi)=1$. Note that $(\mathrm{SL}_{n})_{\C/\R}(\R)\simeq\mathrm{SL}_{n}(\C)$.
Let 
\[
\underline{\mathrm{SU}}_{n}:=\left\{ (A,B)\in(\mathrm{SL}_{n})_{\C/\R}:((A-Bi)^{t})(A+Bi)=I_{n}\right\} .
\]
Then $\underline{\mathrm{SU}}_{n}(\R)\simeq\SU_{n}$, and $\underline{\mathrm{SU}}_{n}(\C)\simeq\mathrm{SL}_{n}(\C)$.
By Theorem \ref{thm:word maps on SLn are (FRS) after enough convolutions},
the base change to $\C$ of the morphism $w_{\underline{\mathrm{SU}}_{n}}^{*t}:\underline{\mathrm{SU}}_{n}^{rt}\rightarrow\underline{\mathrm{SU}}_{n}$
is (FRS) for $t\geq80\ell(w)n^{2}$, and therefore $w_{\underline{\mathrm{SU}}_{n}}^{*t}:\underline{\mathrm{SU}}_{n}^{rt}\rightarrow\underline{\mathrm{SU}}_{n}$
is (FRS) as well, for $t\geq80\ell(w)n^{2}$. By Theorem \ref{thm:analytic criterion for (FRS)},
$w_{\SU_{n}}^{*t}:\SU_{n}^{rt}\rightarrow\SU_{n}$ is an $L^{\infty}$-map,
which implies the proposition. 
\end{proof}

\section{\label{sec:small ball}Boundedness of the density of word measures
and small ball estimates }

In this Section we prove Theorems \ref{thm: A, bounded density} and
\ref{thm:small ball estimate}. We start with a short discussion on
Fourier coefficients of word measures and their convolutions. 
\begin{defn}
\label{def:Concatenation of words}Given $w_{1}\in F_{r_{1}}$ and
$w_{2}\in F_{r_{2}}$, we denote by $w_{1}*w_{2}\in F_{r_{1}+r_{2}}$
their \emph{concatenation} with disjoint sets of letters\emph{. }For
example, if $w=[x,y]$, then $w*w=[x,y]\cdot[z,w]$. We denote by
$w^{*t}$ the $t$-th self concatenation of $w\in F_{r}$. 
\end{defn}

Given $w\in F_{r}$ and a compact group $G$, we denote by $\tau_{w,G}$
the distribution of $w_{G}(\mathsf{G}_{1},\ldots,\mathsf{G}_{r})$,
where $\mathsf{G}_{1},\ldots,\mathsf{G}_{r}$ are independent, Haar-distributed
random elements in $G$. For $\rho\in\Irr(G)$, let
\begin{equation}
a_{w,G,\rho}:=\int_{G}\rho(g)\tau_{w,G}=\E\left(\rho(w(\mathsf{G}_{1},\ldots,\mathsf{G}_{r}))\right),\label{eq:Fourier coefficients}
\end{equation}
be the Fourier coefficient of the measure $\tau_{w,G}$ corresponding
to $\rho$. Since $\tau_{w,G}$ is conjugate invariant, we have the
following equality of distributions $\tau_{w,G}=\sum_{\rho\in\Irr(G)}\overline{a_{w,G,\rho}}\cdot\rho$.
By Definition \ref{def:Concatenation of words}, we see that $\tau_{w_{1}*w_{2},G}=\tau_{w_{1},G}*\tau_{w_{2},G}$
for every $w_{1}\in F_{r_{1}}$ and $w_{2}\in F_{r_{2}}$. Since $\rho_{1}*\rho_{2}=\frac{\delta_{\rho_{1},\rho_{2}}}{\rho_{1}(1)}\cdot\rho_{1}$
for every $\rho_{1},\rho_{2}\in\Irr(G)$, we have:
\begin{equation}
a_{w_{1}*w_{2},G,\rho}=\int_{G}\rho(g)\tau_{w_{1}*w_{2},G}(g)=\int_{G}\rho(g)\tau_{w_{1},G}*\tau_{w_{2},G}(g)=\frac{a_{w_{1},G,\rho}\cdot a_{w_{2},G,\rho}}{\rho(1)}.\label{eq:Fourier coefficients of convolution}
\end{equation}

\subsection{\label{subsec:Proof of Thm A}Proof of Theorem \ref{thm: A, bounded density}}
\begin{proof}[Proof of Theorem \ref{thm: A, bounded density}]
We first prove Item (2). Let $t>3\cdot2^{10}\cdot\ell(w)$. By (\ref{eq:Fourier coefficients of convolution})
and by Theorem \ref{thm:Detalied Theorem B}, 
\[
\left|a_{w^{*t},\SU_{n},\rho}\right|=\left|\frac{a_{w,\SU_{n},\rho}^{t}}{\rho(1)^{t-1}}\right|<\rho(1)^{-2},
\]
whenever $\rho(1)>2^{512(\ell(w)+1)n^{2}\log n}$. By \cite[Theorem 5.1]{LL08},
the sum 
\[
\sum_{\rho\in\Irr(\SU_{n})}\left|\overline{a_{w^{*t},\SU_{n},\rho}}\right|\rho(1)\leq\sum_{\rho(1)\leq2^{512(\ell(w)+1)n^{2}\log n}}\rho(1)^{2}+\sum_{\rho(1)>2^{512(\ell(w)+1)n^{2}\log n}}\rho(1)^{-1}
\]
converges, so $\tau_{w,\SU_{n}}^{*t}=\tau_{w^{*t},\SU_{n}}\in L^{\infty}(\SU_{n})$.

Item (1) follows from Item (2) for $n>8\ell(w)$ and from Corollary
\ref{cor:bounded density for small rank} for $n\leq8\ell(w)$. 
\end{proof}

\subsection{\label{subsec:Small-ball-estimates}Small ball estimates}

Let $\langle A,B\rangle_{\mathrm{HS}}:=\tr(AB^{*})$ be the Hilbert\textendash Schmidt
inner product on $\Mat_{n}(\C)$ and let $\|A\|_{\mathrm{HS}}:=\langle A,A\rangle_{\mathrm{HS}}^{1/2}$
be the Hilbert\textendash Schmidt norm. We remind the reader that
we denote by $\gamma$ the bi-invariant Riemannian metric on $\SU_{n}$
induced from the inner product $\langle A,B\rangle_{\widetilde{\mathrm{HS}}}=\beta_{n}^{-1}\cdot\langle A,B\rangle_{\mathrm{HS}}$
on the Lie algebra $\mathfrak{su}_{n}$, with $\beta_{n}:=\frac{4\pi^{2}\lfloor n/2\rfloor\lceil n/2\rceil}{n}$.

Note that $\langle A,B\rangle_{\widetilde{\mathrm{HS}}}=-(2n\beta_{n})^{-1}\kappa(A,B)$,
where $\kappa$ is the Killing form on $\mathfrak{su}_{n}$. It follows
that 
\[
\Vol_{\gamma}(\SU_{n})=\frac{1}{(2n\beta_{n})^{\frac{n^{2}-1}{2}}}\Vol_{\kappa}(\SU_{n}).
\]
By a computation of Macdonald \cite{Mac80}, which was simplified
in \cite[Eq. (7)]{MV}, 
\[
\Vol_{\kappa}(\SU_{n})=\frac{2^{\frac{n^{2}-1}{2}}\cdot n^{\frac{n^{2}}{2}}\cdot(2\pi)^{\frac{n^{2}+n-2}{2}}}{\prod_{k=1}^{n-1}k!},
\]
so 
\begin{equation}
\mu_{\SU_{n}}=\alpha_{n}\Vol_{\gamma},\quad\text{ where }\quad\alpha_{n}=\frac{\beta_{n}^{\frac{n^{2}-1}{2}}\prod_{k=1}^{n-1}k!}{(2\pi)^{\frac{n^{2}+n-2}{2}}\sqrt{n}}.\label{eq:SUn.volume}
\end{equation}
We denote by $B(g,\delta)$ the $\gamma$-ball of radius $\delta$
around $g\in\SU_{n}$ and denote the norm on $\Mat_{n}(\C)$ induced
by $\langle-,-\rangle_{\widetilde{\mathrm{HS}}}$ by $\|\cdot\|_{\widetilde{\mathrm{HS}}}$. 
\begin{lem}
\label{lem:compare to hilbert Schmidt norm}For every $h_{1},h_{2}\in\SU_{n}$, 
\begin{enumerate}
\item 
\[
\left\Vert h_{1}-h_{2}\right\Vert _{\widetilde{\mathrm{HS}}}\leq\gamma(h_{1},h_{2})\leq\frac{\pi}{2}\left\Vert h_{1}-h_{2}\right\Vert _{\widetilde{\mathrm{HS}}}.
\]
\item 
\[
B(h_{1},\delta_{1})\cdot B(h_{2},\delta_{2})\subseteq B(h_{1}h_{2},\delta_{1}+\delta_{2}).
\]
\end{enumerate}
\end{lem}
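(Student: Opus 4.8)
The plan is to reduce Part~(1) to the case $h_{1}=1$ and then to a pointwise trigonometric estimate on the eigenvalue angles of $h_{1}^{-1}h_{2}$, and to deduce Part~(2) formally from bi-invariance of $\gamma$. For Part~(1), note that left multiplication by a fixed unitary preserves the Hilbert--Schmidt inner product, and $\gamma$ is left-invariant, so $\gamma(h_{1},h_{2})=\gamma(1,u)$ and $\|h_{1}-h_{2}\|_{\widetilde{\mathrm{HS}}}=\|1-u\|_{\widetilde{\mathrm{HS}}}$ with $u:=h_{1}^{-1}h_{2}\in\SU_{n}$. Diagonalizing $u$ and writing its eigenvalues as $e^{i\phi_{1}},\dots,e^{i\phi_{n}}$, we have
\[
\|1-u\|_{\widetilde{\mathrm{HS}}}^{2}=\beta_{n}^{-1}\sum_{j=1}^{n}|1-e^{i\phi_{j}}|^{2}=\beta_{n}^{-1}\sum_{j=1}^{n}4\sin^{2}(\phi_{j}/2).
\]

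Since $\gamma$ comes from a bi-invariant metric on the compact group $\SU_{n}$, $\gamma(1,u)$ equals the minimum of $\|X\|_{\widetilde{\mathrm{HS}}}$ over all $X\in\mathfrak{su}_{n}$ with $\exp(X)=u$; any such $X$ is unitarily diagonalizable with eigenvalues $i\theta_{1},\dots,i\theta_{n}$ where $\theta_{j}\equiv\phi_{j}\pmod{2\pi}$, so $\|X\|_{\widetilde{\mathrm{HS}}}^{2}=\beta_{n}^{-1}\sum_{j}\theta_{j}^{2}$, and crucially $\sin^{2}(\theta_{j}/2)=\sin^{2}(\phi_{j}/2)$ for every such lift. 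The lower bound follows by applying $|2\sin(t/2)|\le|t|$ to each $\theta_{j}$: this gives $\|X\|_{\widetilde{\mathrm{HS}}}^{2}\ge\|1-u\|_{\widetilde{\mathrm{HS}}}^{2}$ for all admissible $X$, hence $\gamma(1,u)\ge\|1-u\|_{\widetilde{\mathrm{HS}}}$. For the upper bound it suffices to exhibit a single $X\in\mathfrak{su}_{n}$ with $\exp(X)=u$ whose angles all satisfy $|\theta_{j}|\le\pi$: Jordan's inequality $|t|\le\frac{\pi}{2}|2\sin(t/2)|$ for $|t|\le\pi$ then yields $\|X\|_{\widetilde{\mathrm{HS}}}^{2}\le\frac{\pi^{2}}{4}\|1-u\|_{\widetilde{\mathrm{HS}}}^{2}$, and $\gamma(1,u)\le\|X\|_{\widetilde{\mathrm{HS}}}$. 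The step I expect to be the crux is arranging this lift: the $\theta_{j}$ must be chosen in $[-\pi,\pi]$ while simultaneously $\sum_{j}\theta_{j}=0$ so that $X$ genuinely lies in $\mathfrak{su}_{n}$ and not merely in $\mathfrak{u}_{n}$, and one must verify that a compatible choice always exists, adjusting individual angles by integer multiples of $2\pi$ as needed.

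Finally, Part~(2) is formal. If $a\in B(h_{1},\delta_{1})$ and $b\in B(h_{2},\delta_{2})$, then by the triangle inequality together with the right- and left-invariance of $\gamma$,
\[
\gamma(ab,h_{1}h_{2})\le\gamma(ab,h_{1}b)+\gamma(h_{1}b,h_{1}h_{2})=\gamma(a,h_{1})+\gamma(b,h_{2})\le\delta_{1}+\delta_{2},
\]
so $ab\in B(h_{1}h_{2},\delta_{1}+\delta_{2})$, which is the claimed inclusion.
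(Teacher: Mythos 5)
Your Item (2) is correct and is exactly the paper's justification (the paper dismisses it with ``holds for any bi-invariant metric'', which is your triangle-inequality computation), and your lower bound in Item (1) is also fine: for any $X\in\mathfrak{su}_n$ with $\exp(X)=u$ one has $\sin^{2}(\theta_{j}/2)=\sin^{2}(\phi_{j}/2)$ and $|2\sin(\theta_j/2)|\le|\theta_j|$, so every logarithm has norm at least $\|1-u\|_{\widetilde{\mathrm{HS}}}$.

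The step you yourself flagged as the crux of the upper bound is a genuine gap, and it is not a repairable technicality. Take $n=3$ and $u=e^{2\pi i/3}I_{3}\in\SU_{3}$. Any $X\in\mathfrak{su}_{3}$ with $\exp(X)=u$ has eigenvalue angles $\theta_{j}\equiv 2\pi/3\pmod{2\pi}$ with $\theta_{1}+\theta_{2}+\theta_{3}=0$; the only admissible angle in $[-\pi,\pi]$ is $2\pi/3$ itself, and three copies sum to $2\pi\neq0$, so the lift you need does not exist. Worse, the minimal-norm traceless logarithm is conjugate to $i\,\mathrm{diag}(2\pi/3,\,2\pi/3,\,-4\pi/3)$, of Hilbert--Schmidt norm $2\pi\sqrt{2/3}\approx5.13$, whereas $\frac{\pi}{2}\|1-u\|_{\mathrm{HS}}=\frac{3\pi}{2}\approx4.71$; so with your (natural) reading of $\gamma(1,u)$ as the minimum of $\|X\|$ over logarithms in $\mathfrak{su}_{n}$ --- i.e.\ the intrinsic geodesic distance of $\SU_{n}$ --- the bound with constant $\frac{\pi}{2}$ cannot be obtained by any choice of lift at such central elements (and the discrepancy only worsens near $e^{2\pi i/n}I_{n}$ for large $n$). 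The trace-zero condition, i.e.\ precisely the difference between $\mathfrak{su}_{n}$ and $\mathfrak{u}_{n}$, is the obstruction. The paper does not attempt your reduction: it simply cites Meckes \cite[Lemma 1.3]{Mec19}, which is the two-sided comparison for the full unitary group $\U_{n}$, where there is no trace constraint, every element has a logarithm with all angles in $(-\pi,\pi]$, and your argument goes through verbatim. So your write-up of the upper bound is incomplete, and the missing existence claim is false; a correct treatment has to measure length along paths (equivalently, logarithms) in $\U_{n}$ as in the cited lemma, or otherwise handle the central directions separately, rather than hope for a traceless logarithm with all angles in $[-\pi,\pi]$.
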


\begin{proof}
Item (1) follows e.g.~from \cite[Lemma 1.3]{Mec19}. Item (2) holds
for any bi-invariant metric on a group. 
\end{proof}
\begin{lem}[{Bishop\textendash Gromov Volume Comparison Theorem, \cite[Section 11.10, Corollary 4]{BC64}}]
\label{lem:Bishop=002013Gromov}Let $(M,g)$ be a complete $d$-dimensional
Riemannian manifold with a non-negative Ricci curvature. Denote the
volume induced from $g$ by $\Vol_{g}$. Let $p\in M$ and let $B_{M}(p,\delta)$
be the $g$-ball of radius $\delta$ around $p$. Then the function
$\delta\mapsto\frac{\Vol_{g}(B_{M}(p,\delta))}{\delta^{d}}$ is non-increasing,
and $\Vol_{g}(B_{M}(p,\delta))\leq\Vol_{\mathrm{Euc}}(B_{\R^{d}}(0,\delta))$. 
\end{lem}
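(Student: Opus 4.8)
The plan is to run the classical Bishop--Gromov argument in geodesic polar coordinates centered at $p$. First I would fix a unit vector $\omega$ in $T_pM$ and, writing $\exp_p$ for the exponential map, express the Riemannian volume form on the star-shaped injectivity domain as $A(r,\omega)\,dr\,d\omega$, where $d\omega$ is the standard measure on the unit sphere $S^{d-1}\subseteq T_pM$ and $A(r,\omega)=\sqrt{\det g}$ in these coordinates. I would extend $A(r,\omega)$ by $0$ for $r$ beyond the cut distance $c(\omega)$ in direction $\omega$, so that for every $\delta>0$
\[
\Vol_g(B_M(p,\delta))=\int_{S^{d-1}}\int_0^{\delta}A(r,\omega)\,dr\,d\omega,
\]
and record the normalization $A(r,\omega)/r^{d-1}\to 1$ as $r\to 0^+$, which is a form of the Gauss lemma.

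Next I would derive the differential inequality that drives the comparison. Along the radial geodesic $\gamma_\omega(r):=\exp_p(r\omega)$, the logarithmic derivative $m(r,\omega):=\partial_r\log A(r,\omega)$ is the mean curvature of the geodesic sphere through $\gamma_\omega(r)$, and the Riccati equation for the shape operator $S$ of the geodesic spheres together with $\operatorname{tr}(S^2)\ge (\operatorname{tr} S)^2/(d-1)$ gives
\[
\partial_r m+\frac{m^2}{d-1}+\operatorname{Ric}(\dot\gamma_\omega,\dot\gamma_\omega)\le 0.
\]
Invoking $\operatorname{Ric}\ge 0$, this yields $\partial_r m\le -m^2/(d-1)$, and an ODE comparison with the flat model (where $\operatorname{Ric}\equiv 0$, $A_0(r)=r^{d-1}$, $m_0(r)=(d-1)/r$) shows $m(r,\omega)\le (d-1)/r$ on $(0,c(\omega))$, i.e. $\partial_r\bigl(A(r,\omega)/r^{d-1}\bigr)\le 0$ there. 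Since $A(\cdot,\omega)$ is continuous and vanishes identically for $r\ge c(\omega)$, the function $r\mapsto A(r,\omega)/r^{d-1}$ is non-increasing on all of $(0,\infty)$.

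Finally I would invoke the elementary averaging lemma: if $f\ge 0$ and $r\mapsto f(r)/r^{d-1}$ is non-increasing on $(0,\infty)$, then $\delta\mapsto \delta^{-d}\int_0^{\delta}f(r)\,dr$ is non-increasing. Applying it to $f(r)=A(r,\omega)$ for each fixed $\omega$ and then integrating over $S^{d-1}$ shows that $\delta\mapsto \Vol_g(B_M(p,\delta))/\delta^d$ is non-increasing, which is the first assertion. Letting $\delta\to 0^+$ and using the normalization above gives $\lim_{\delta\to 0^+}\Vol_g(B_M(p,\delta))/\delta^d=\Vol_{\mathrm{Euc}}(B_{\R^d}(0,1))$, whence $\Vol_g(B_M(p,\delta))\le \Vol_{\mathrm{Euc}}(B_{\R^d}(0,\delta))$ for all $\delta>0$, the second assertion.

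I expect the main technical obstacle to be the cut-locus bookkeeping: the Riccati inequality and the pointwise monotonicity of $A(\cdot,\omega)/r^{d-1}$ are established only on the injectivity interval $(0,c(\omega))$, and one must argue that this monotonicity survives globally once $A$ is extended by $0$. The clean way is to observe that the extended $A$ remains a legitimate continuous, eventually vanishing density for $\Vol_g$, so it is still non-increasing after division by $r^{d-1}$; alternatively one runs the comparison in the barrier/support sense across the cut value. Everything else --- the Gauss-lemma normalization, the Cauchy--Schwarz step in the Riccati identity, and the averaging lemma --- is routine.
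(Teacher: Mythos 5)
Your argument is correct, but note that the paper does not prove this lemma at all: it is quoted as a known result with a citation to Bishop--Crittenden, so there is no in-paper proof to compare against. What you have written is the classical Riccati-comparison proof of Bishop--Gromov, and it is sound: the trace of the Riccati equation plus Cauchy--Schwarz gives $\partial_{r}m\leq-m^{2}/(d-1)$, the comparison with $m_{0}(r)=(d-1)/r$ (using the initial asymptotics $m(r,\omega)=(d-1)/r+O(r)$ as $r\to0^{+}$, which you use implicitly) gives $\partial_{r}\bigl(A(r,\omega)/r^{d-1}\bigr)\leq0$ up to the cut distance, and the averaging lemma plus integration over $S^{d-1}$ yields the monotonicity of $\delta\mapsto\Vol_{g}(B_{M}(p,\delta))/\delta^{d}$; the Euclidean bound then follows from the limit at $\delta\to0^{+}$. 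Two small points of bookkeeping: the extended density $A(\cdot,\omega)$ need not be continuous at $r=c(\omega)$ (it can jump down to zero when the cut point is not conjugate), but since $A\geq0$ this jump only helps the monotonicity, exactly as you observe; and one should recall that the cut locus has measure zero so that the polar-coordinate formula for $\Vol_{g}(B_{M}(p,\delta))$ is exact. Neither affects the validity of the proof.
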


\begin{cor}
\label{cor:Estimates on volume of metric balls}The following hold: 
\begin{enumerate}
\item For every $\delta>0$, every $t\geq1$, and every $h\in\SU_{n}$,
we have $\mu_{\SU_{n}}\left(B(h,t\delta)\right)\leq t^{n^{2}-1}\mu_{\SU_{n}}\left(B(h,\delta)\right)$. 
\item For every $0<\delta<1$ and every $h\in\SU_{n}$, we have $\delta^{n^{2}-1}\leq\mu_{\SU_{n}}\left(B(h,\delta)\right)\leq6^{n^{2}}\delta^{n^{2}-1}$. 
\end{enumerate}
\end{cor}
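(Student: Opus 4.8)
The plan is to derive both statements from the Bishop--Gromov Volume Comparison Theorem (Lemma~\ref{lem:Bishop=002013Gromov}), applied to $M=\SU_{n}$ with the bi-invariant Riemannian metric $\gamma$. The first point to record is that $(\SU_{n},\gamma)$ has non-negative Ricci curvature: a bi-invariant metric on a compact Lie group has non-negative sectional curvature --- for any $\gamma$-orthonormal pair $X,Y\in\mathfrak{su}_{n}$ the sectional curvature equals $\tfrac14\|[X,Y]\|_{\widetilde{\mathrm{HS}}}^{2}\ge 0$ --- and hence non-negative Ricci curvature; moreover $(\SU_{n},\gamma)$ is complete and of dimension $d=n^{2}-1$. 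Lemma~\ref{lem:Bishop=002013Gromov} then gives, for every $h\in\SU_{n}$, that $r\mapsto \Vol_{\gamma}(B(h,r))/r^{n^{2}-1}$ is non-increasing and that $\Vol_{\gamma}(B(h,r))\le \Vol_{\mathrm{Euc}}(B_{\R^{n^{2}-1}}(0,r))=\frac{\pi^{(n^{2}-1)/2}}{\Gamma(\tfrac{n^{2}+1}{2})}\,r^{n^{2}-1}$.

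Item~(1) is then immediate: since $\mu_{\SU_{n}}$ is a fixed positive multiple of $\Vol_{\gamma}$ by (\ref{eq:SUn.volume}), the monotonicity of $r\mapsto \mu_{\SU_{n}}(B(h,r))/r^{n^{2}-1}$ applied to the radii $\delta\le t\delta$ (valid since $t\ge 1$) gives $\mu_{\SU_{n}}(B(h,t\delta))/(t\delta)^{n^{2}-1}\le \mu_{\SU_{n}}(B(h,\delta))/\delta^{n^{2}-1}$, which is exactly the claimed inequality.

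For Item~(2), I would obtain the lower bound from the same monotonicity together with the normalization $\mathrm{diam}(\SU_{n},\gamma)=1$ recalled in the introduction: for any $\delta'>1$ one has $B(h,\delta')=\SU_{n}$, so $\mu_{\SU_{n}}(B(h,\delta'))=1$, and monotonicity gives $\mu_{\SU_{n}}(B(h,\delta))\ge \delta^{n^{2}-1}(\delta')^{-(n^{2}-1)}$; letting $\delta'\downarrow 1$ yields $\mu_{\SU_{n}}(B(h,\delta))\ge \delta^{n^{2}-1}$ for $0<\delta<1$. For the upper bound, combining the Euclidean comparison above with (\ref{eq:SUn.volume}) gives $\mu_{\SU_{n}}(B(h,\delta))\le \alpha_{n}\,\frac{\pi^{(n^{2}-1)/2}}{\Gamma(\tfrac{n^{2}+1}{2})}\,\delta^{n^{2}-1}$, so everything comes down to the purely numerical inequality $\alpha_{n}\,\pi^{(n^{2}-1)/2}/\Gamma(\tfrac{n^{2}+1}{2})\le 6^{n^{2}}$.

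The only real computation --- and the step I expect to be the main, if routine, obstacle --- is verifying this last inequality. Using $\lfloor n/2\rfloor\lceil n/2\rceil\le n^{2}/4$ we get $\beta_{n}\le \pi^{2}n$, hence $\beta_{n}^{(n^{2}-1)/2}\le \pi^{n^{2}-1}n^{(n^{2}-1)/2}$; substituting this and the formula for $\alpha_{n}$ from (\ref{eq:SUn.volume}) reduces the claim to an inequality of the shape
\[
\frac{\pi^{3(n^{2}-1)/2}\,n^{(n^{2}-1)/2}\prod_{k=1}^{n-1}k!}{(2\pi)^{(n^{2}+n-2)/2}\,\sqrt{n}\;\Gamma(\tfrac{n^{2}+1}{2})}\le 6^{n^{2}}.
\]
By Stirling's approximation, $\prod_{k=1}^{n-1}k!=n^{n^{2}/2}e^{-3n^{2}/4+o(n^{2})}$ and $\Gamma(\tfrac{n^{2}+1}{2})=(n^{2}/2)^{n^{2}/2}e^{-n^{2}/2+o(n^{2})}=n^{n^{2}}2^{-n^{2}/2}e^{-n^{2}/2+o(n^{2})}$, so the factors $n^{n^{2}}$ in numerator and denominator cancel and the left-hand side equals $(\pi e^{-1/4})^{n^{2}+o(n^{2})}$; since $\pi e^{-1/4}<2.5$, the inequality holds for all sufficiently large $n$, and the finitely many remaining values of $n$ are checked by hand (for instance, for $n=2$ the left-hand side is $\tfrac23\pi^{2}<6^{4}$).
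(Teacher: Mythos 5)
Your proposal is correct and follows essentially the same route as the paper: non-negative Ricci curvature of the bi-invariant metric (the paper cites Milnor rather than the sectional-curvature formula), Bishop--Gromov monotonicity for Item (1) and for the lower bound in Item (2) (the paper uses $\mu_{\SU_{n}}(B(h,1))=1$ directly instead of your limit $\delta'\downarrow1$), and the Euclidean comparison combined with (\ref{eq:SUn.volume}) for the upper bound. The only real difference is the closing numerical inequality: the paper settles it for every $n\geq2$ at once via the crude bounds $\prod_{k=1}^{n-1}k!\leq n^{\frac{n^{2}-n}{2}}$ and $\Gamma\left(\tfrac{n^{2}+1}{2}\right)\geq\tfrac{1}{n}\left(\tfrac{n^{2}}{2e}\right)^{n^{2}/2}$, yielding $(\pi\sqrt{e})^{n^{2}}\leq6^{n^{2}}$, whereas your asymptotic Stirling argument (correctly giving $(\pi e^{-1/4})^{n^{2}+o(n^{2})}$) only covers large $n$ and leaves an unspecified finite set of $n$ ``to check by hand,'' so to be fully complete you should replace that step by explicit bounds of this kind (or make the Stirling error terms effective).
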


\begin{proof}
By \cite[Theorem 2.2]{Mil76}, the metric $\gamma$ has positive Ricci
curvature. Equation (\ref{eq:SUn.volume}) and Lemma \ref{lem:Bishop=002013Gromov}
imply that 
\[
\frac{\mu_{\SU_{n}}\left(B(h,\delta t)\right)}{(\delta t)^{n^{2}-1}}=\alpha_{n}\frac{\Vol_{\gamma}B(h,\delta t)}{(\delta t)^{n^{2}-1}}\leq\alpha_{n}\frac{\Vol_{\gamma}B(h,\delta)}{\delta^{n^{2}-1}}=\frac{\mu_{\SU_{n}}\left(B(h,\delta)\right)}{\delta^{n^{2}-1}},
\]
proving Item (1).

The left inequality of Item (2) follows from $1=\mu_{\SU_{n}}\left(B(h,1)\right)\leq\mu_{\SU_{n}}\left(B(h,\delta)\right)\delta^{-n^{2}+1}$.
For the right inequality of Item (2), recall that the volume of an
Euclidean $(n^{2}-1)$-ball is 
\begin{equation}
\Vol(B_{\R^{n^{2}-1}}(h,\delta))=\frac{\pi^{\frac{n^{2}-1}{2}}}{\Gamma(\frac{n^{2}+1}{2})}\delta^{n^{2}-1}.\label{eq:volume of n-ball}
\end{equation}
Thus, by Lemma \ref{lem:Bishop=002013Gromov}, and Equation (\ref{eq:volume of n-ball}),
\[
\mu_{\SU_{n}}\left(B(h,\delta)\right)=\alpha_{n}\Vol_{\gamma}B(h,\delta)\leq\alpha_{n}\Vol B_{\R^{n^{2}-1}}(h,\delta)=\frac{\prod_{k=1}^{n-1}k!\beta_{n}^{\frac{n^{2}-1}{2}}}{(2\pi)^{\frac{n^{2}+n-2}{2}}\sqrt{n}}\frac{\pi^{\frac{n^{2}-1}{2}}\delta^{n^{2}-1}}{\Gamma(\frac{n^{2}+1}{2})}.
\]
Since $\prod_{k=1}^{n-1}k!\leq n^{n-1}\cdot n^{n-2}\cdots n^{1}=n^{\frac{n^{2}-n}{2}}$,
$\beta_{n}\leq\pi^{2}n$, and $\Gamma\left(\frac{n^{2}+1}{2}\right)\geq\Gamma\left(\frac{n^{2}}{2}\right)\geq\frac{1}{n}\left(\frac{n^{2}}{2e}\right)^{\frac{n^{2}}{2}}$,
we get 
\[
\mu_{\SU_{n}}\left(B(h,\delta)\right)\leq\frac{n^{\frac{n^{2}-n}{2}}(\pi^{2}n)^{\frac{n^{2}-1}{2}}}{(2\pi)^{\frac{n^{2}+n-2}{2}}\sqrt{n}}\frac{\pi^{\frac{n^{2}-1}{2}}\delta^{n^{2}-1}}{\frac{1}{n}\left(\frac{n^{2}}{2e}\right)^{n^{2}/2}}=n^{-\frac{n}{2}}\cdot\pi^{n^{2}-\frac{n}{2}-\frac{1}{2}}\cdot e^{\frac{n^{2}}{2}}\cdot2^{1-\frac{n}{2}}\cdot\delta^{n^{2}-1}\leq(\pi\sqrt{e})^{n^{2}}\delta^{n^{2}-1}\leq6^{n^{2}}\delta^{n^{2}-1}.\qedhere
\]
\end{proof}
\begin{proof}[Proof of Theorem \ref{thm:small ball estimate}]
We first prove the claim for $n\leq8\ell(w)$. Setting $t=5120\ell(w)^{3}\geq80\ell(w)n^{2}$,
Corollary \ref{cor:bounded density for small rank} implies that there
is a constant $C(n)\geq1$ such that the density of the measure $\tau_{w,\SU_{n}}^{*t}$
is bounded by $C(n)$. Thus, by Lemma \ref{lem:compare to hilbert Schmidt norm}
and Corollary \ref{cor:Estimates on volume of metric balls}(1), for
every $g\in\SU_{n}$, 
\[
\Big(\tau_{w,\SU_{n}}\left(B(g,\delta)\right)\Big)^{t}\leq\tau_{w,\SU_{n}}^{*t}\left(B(g^{t},t\delta)\right)\leq C(n)\mu_{\SU_{n}}\left(B(I_{n},t\delta)\right)\leq C(n)t^{n^{2}-1}\mu_{\SU_{n}}\left(B(I_{n},\delta)\right).
\]
If $\delta<\frac{1}{64\cdot t^{2}\cdot C(n)}$ then $C(n)t^{n^{2}-1}\mu_{\SU_{n}}\left(B(I_{n},\delta)\right)^{\frac{1}{2}}\leq1$
and we get
\[
\mathbb{P}\Big(w(\mathsf{G}_{1},\ldots,\mathsf{G}_{r})\in B(g,\delta)\Big)=\tau_{w,\SU_{n}}\left(B(g,\delta)\right)\leq\left(\mu_{\SU_{n}}\left(B(I_{n},\delta)\right)\right)^{\frac{1}{2t}}.
\]
Next, we prove the theorem for $g=I_{n}$ and $n>8\ell(w)$. We will
show that (\ref{eq:small ball estimate}) holds for $\epsilon(w)=\frac{1}{32(\ell(w)+1)}$
and $\delta(w)=2^{-128(\ell(w)+1)}$. Note that if $h\in\SU_{n}$
satisfies $\left\Vert h-I_{n}\right\Vert _{\mathrm{HS}}<\pi\sqrt{n}\delta$
then $h$ must have at least $n/2$ eigenvalues whose Euclidean distance
from $1$ is $\leq\sqrt{2}\pi\delta<\frac{16}{\pi}\delta$, and hence
it is not $(\frac{1}{2},8\delta)$-spread. By Lemma \ref{lem:compare to hilbert Schmidt norm},
\begin{align}
\mu_{\SU_{n}^{r}}\left(w^{-1}(B(I_{n},\delta))\right) & \leq\mu_{\SU_{n}^{r}}\left\{ (g_{1},\ldots,g_{r})\in\SU_{n}^{r}:\left\Vert w(g_{1},\ldots,g_{r})-I_{n}\right\Vert _{\mathrm{\widetilde{\mathrm{HS}}}}<\delta\right\} \nonumber \\
 & \leq\mu_{\SU_{n}^{r}}\left\{ (g_{1},\ldots,g_{r})\in\SU_{n}^{r}:\left\Vert w(g_{1},\ldots,g_{r})-I_{n}\right\Vert _{\mathrm{HS}}<\pi\sqrt{n}\delta\right\} \nonumber \\
 & \leq\mu_{\SU_{n}^{r}}\left\{ (g_{1},\ldots,g_{r})\in\SU_{n}^{r}:\text{\ensuremath{w(g_{1},\ldots,g_{r})} is not \ensuremath{\left(\frac{1}{2},8\delta\right)}-spread}\right\} .\label{eq:tranlating to Hilbert-Schmidt}
\end{align}
For every $a_{1},\ldots,a_{r}\in\U_{1}$ and every $g_{1},\ldots,g_{r}\in\SU_{n}$,
the element $w(g_{1},\ldots,g_{r})$ is $\left(\frac{1}{2},8\delta\right)$-spread
if and only if the element $w(a_{1}g_{1},\ldots,a_{r}g_{r})$ is $\left(\frac{1}{2},8\delta\right)$-spread.
Thus, 
\[
\mu_{\SU_{n}^{r}}\left(w^{-1}(B(I_{n},\delta))\right)\leq\mu_{\U_{n}^{r}}\left\{ (g_{1},\ldots,g_{r})\in\U_{n}^{r}:\text{\ensuremath{w(g_{1},\ldots,g_{r})} is not \ensuremath{\left(\frac{1}{2},8\delta\right)}-spread}\right\} ,
\]
so, by Theorem \ref{thm:probabilistic result} and Corollary \ref{cor:Estimates on volume of metric balls}(2),
\[
\mu_{\SU_{n}^{r}}\left(w^{-1}(B(I_{n},\delta))\right)\leq2^{3n^{2}}\left(8\delta\right)^{\frac{n^{2}}{16(\ell(w)+1)}}<2^{4n^{2}}\delta^{\frac{n^{2}}{16(\ell(w)+1)}}<\delta^{\frac{n^{2}}{32(\ell(w)+1)}}<\left(\mu_{\SU_{n}}(B(I_{n},\delta))\right)^{\frac{1}{32(\ell(w)+1)}}.
\]
Finally, we prove the claim for an arbitrary element $g\in\U_{n}$.
Given $w\in F_{r}$, let $\widetilde{w}\in F_{2r}$ be the word $\widetilde{w}(x_{1},\ldots,x_{r},y_{1},\ldots,y_{r})=w(x_{1},\ldots,x_{r})\cdot w(y_{1},\ldots,y_{r})^{-1}$.
For each $g\in\SU_{n}$ ,we have: 
\[
\mu_{\SU_{n}^{r}}\left(w^{-1}\left(B(g,\delta)\right)\right)^{2}\leq\mu_{\SU_{n}^{2r}}\left(\widetilde{w}^{-1}\left(B(I_{n},2\delta)\right)\right)\leq\left(\mu_{\SU_{n}}\left(B(I_{n},2\delta)\right)\right)^{\epsilon(\widetilde{w})}\leq\left(2^{n^{2}-1}\mu_{\SU_{n}}\left(B(I_{n},\delta)\right)\right)^{\epsilon(\widetilde{w})}.
\]
If $\delta<2^{-9}$, then, by Corollary \ref{cor:Estimates on volume of metric balls},
$2^{n^{2}-1}<\mu_{\SU_{n}}(B(I_{n},\delta))^{-\frac{1}{2}}$, so 
\[
\mu_{\SU_{n}^{r}}\left(w^{-1}\left(B(g,\delta)\right)\right)^{2}\leq\mu_{\SU_{n}}\left(B(I_{n},\delta)\right)^{\frac{\epsilon(\widetilde{w})}{2}}.
\]
Since $\epsilon(\widetilde{w})=\frac{1}{32(2\ell(w)+1)}>\frac{1}{64(\ell(w)+1)}$,
the proof is finished. 
\end{proof}

\section{\label{sec:Fourier-coefficients-of power word}Fourier coefficients
of the power word}

In this section we provide estimates for the Fourier coefficients
of the power word $w=x^{\ell}$, thus proving Theorem \ref{thm D:Fourier coefficients of power words}.
We will show in Proposition \ref{prop:reduction to multiplicities}
that for each $\rho\in\Irr(\U_{n})$ the Fourier coefficient $\E_{X\in\U_{n}}(\rho(X^{\ell}))$
is precisely the dimension of the subspace of $H$-invariants in the
representation with character $\rho$, where $H\leqslant\U_{n}$ is
a certain product of smaller unitary groups. In order to calculate
$\dim\rho^{H}$, we need a lemma on the multiplicities of the irreducible
characters appearing in the restriction of a representation $\rho\in\Irr(\U_{n})$
to $\U_{n-k}\times\U_{k}$.

Throughout the section we use the following notation. For a Lie group
$G$, a representation is a pair $(\pi,V_{\pi})$, with $\pi:G\rightarrow\mathrm{GL}(V_{\pi})$
a continuous homomorphism. We denote the dual of $(\pi,V_{\pi})$
by $(\pi^{\vee},V_{\pi}^{\vee})$. Given representations $(\pi,V_{\pi})$
and $(\pi',V_{\pi'})$ of $G$, we denote by $\langle\pi,\pi'\rangle_{G}:=\dim\Hom_{G}(\pi,\pi')$
the dimension of the space of $G$-homomorphisms. If $H\leq G$ is
a closed subgroup of a compact group, and $(\tau,V_{\tau})$ is a
finite dimensional representation of $H$, we denote by $(\mathrm{Ind}_{H}^{G}\tau,\mathrm{Ind}_{H}^{G}V_{\tau})$
the induced representation, where 
\[
\mathrm{Ind}_{H}^{G}V_{\tau}:=\left\{ f:G\rightarrow V_{\tau}:f(gh^{-1})=\tau(h)f(g)\text{ and }f\text{ is continuous}\right\} .
\]

\begin{lem}
\label{lem:multiplicities in spherical pairs}Let $G$ be a complex
connected reductive group, and $H\leq G$ be a spherical subgroup\footnote{A subgroup $H\leq G$ is \emph{spherical} if there exists an open
dense $B$-orbit in $G/H$, where $B$ is a Borel subgroup of $G$}. Then, for every finite dimensional irreducible representations $(\pi,V_{\pi})$
of $G$ and $(\tau,V_{\tau})$ of $H$, 
\[
\langle\pi|_{H},\tau\rangle_{H}\leq\dim\tau.
\]
\end{lem}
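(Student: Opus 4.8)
The plan is to combine Frobenius reciprocity with the geometric definition of sphericity, in a way that reduces to, and generalises, the Vinberg--Kimelfeld criterion (the case where $\tau$ is trivial, which says exactly that $\mathbb{C}[G/H]$ is multiplicity-free). By Frobenius reciprocity, $\langle\pi|_H,\tau\rangle_H=\langle\pi,\mathrm{Ind}_H^G\tau\rangle_G$, so the quantity to be bounded is the multiplicity of $\pi$ in the rational $G$-module $M:=\mathrm{Ind}_H^G\tau$, which I realise as the $G$-module of (algebraic) global sections of the $G$-equivariant vector bundle $\mathcal{E}_\tau:=G\times_H V_\tau$ on the homogeneous space $G/H$; note $\mathrm{rank}\,\mathcal{E}_\tau=\dim\tau$. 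Since $H$ is spherical, I fix a Borel subgroup $B=TU\le G$, with unipotent radical $U$, for which the orbit $\Omega:=B\cdot x_0$ of the base point $x_0=eH$ is open, and hence dense, in $G/H$, and set $B_0:=B\cap H=\mathrm{Stab}_B(x_0)$.

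Next I would pass from multiplicities to spaces of highest-weight vectors. Writing $M\cong\bigoplus_{\sigma\in\Irr(G)}\langle\sigma|_H,\tau\rangle_H\,V_\sigma$ and using that each $V_\sigma^{U}$ is one-dimensional, spanned by a highest-weight vector whose $T$-weight is the highest weight of $\sigma$, one sees that for every character $\chi$ of $B$ the space $M^{(B)}_\chi$ of $B$-eigenvectors of eigencharacter $\chi$ has dimension exactly $\langle\pi_\chi|_H,\tau\rangle_H$, where $\pi_\chi$ is the irreducible representation with highest weight $\chi$ (and $M^{(B)}_\chi=0$ when $\chi$ is not dominant). Choosing $\chi$ to be the highest weight of the given $\pi$, it therefore suffices to show that $\dim M^{(B)}_\chi\le\dim\tau$ for every $\chi$.

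The heart of the argument is that evaluation at $x_0$ is injective on $M^{(B)}_\chi$ and lands in the fibre $(\mathcal{E}_\tau)_{x_0}\cong V_\tau$. For $s\in M^{(B)}_\chi$ and $b\in B$ one computes $s(b\cdot x_0)=\chi(b)^{-1}\,b\cdot s(x_0)$, so the restriction $s|_\Omega$ is already determined by the single vector $s(x_0)\in V_\tau$; consistency of this formula on $\Omega\cong B/B_0$ in fact forces $s(x_0)$ into the $\chi|_{B_0}$-eigenspace of $\tau|_{B_0}$, although only $s(x_0)\in V_\tau$ is needed. Because $G/H$ is an integral variety, $\Omega$ is dense open in it, and $\mathcal{E}_\tau$ is locally free and hence a torsion-free sheaf, a global section vanishing on $\Omega$ vanishes identically; thus $s\mapsto s(x_0)$ is injective, giving $\langle\pi|_H,\tau\rangle_H=\dim M^{(B)}_\chi\le\dim V_\tau=\dim\tau$.

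The only bookkeeping that needs care is reconciling the algebraic induction and associated bundles used above with the analytic $\mathrm{Ind}_H^G$ of the excerpt's notation; this is harmless, since the finite-dimensional representation theories of a complex reductive $G$, of its maximal compact subgroup, and of its Lie algebra all agree, and Frobenius reciprocity holds for algebraic induction. The real content is exactly the two points highlighted: identifying a given multiplicity with a space of $B$-eigensections of $\mathcal{E}_\tau$, and the density of the open $B$-orbit, which together convert the sphericity hypothesis into the bound $\dim\tau$.
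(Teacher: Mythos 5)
Your proof is correct. It is, however, packaged differently from the paper's: the paper never passes to induced modules at all. There, one takes a highest weight vector $v_{\pi}\in V_{\pi}$, uses sphericity in the form that some double coset $Hg_{0}B$ is dense in $G$, and observes that any $\varphi\in\Hom_{H}(V_{\pi},V_{\tau})$ satisfies $\varphi(hg_{0}b\cdot v_{\pi})=\lambda(b)\,h\cdot\varphi(g_{0}v_{\pi})$, so that $\varphi\mapsto\varphi(g_{0}v_{\pi})\in V_{\tau}$ is injective by density and algebraicity of $g\mapsto\varphi(gv_{\pi})$; this gives the bound in three lines with no auxiliary machinery. Your route is the Frobenius-reciprocal (dual) version of the same density idea: you move to $M=\mathrm{Ind}_{H}^{G}\tau$ realized as sections of $G\times_{H}V_{\tau}$, identify the multiplicity with $\dim M^{(B)}_{\chi}$ (which quietly uses semisimplicity of rational modules of a reductive group in characteristic $0$), and show evaluation at a point of the open $B$-orbit is injective on $B$-eigensections because a section of a locally free sheaf on an irreducible variety vanishing on a dense open set vanishes. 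What your version buys is transparency of the link to the Vinberg--Kimelfeld multiplicity-freeness criterion ($\tau$ trivial) and to the general spherical-variety picture; what the paper's version buys is economy, since it avoids infinite-dimensional induced modules, equivariant bundles, and the decomposition of $M$ altogether. One cosmetic point: your parenthetical that $s(x_{0})$ lies in the $\chi|_{B_{0}}$-eigenspace of $\tau|_{B_{0}}$ needs an inverse/convention adjustment depending on how $G\times_{H}V_{\tau}$ is formed, but as you note it is not used, so it does not affect the argument.
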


\begin{proof}
Let $T\subseteq G$ be a maximal torus and let $T\subseteq B$ be
a Borel subgroup. Denote the highest weight of $\pi$ by $\lambda$
and let $v_{\pi}\in V_{\pi}$ be a highest weight vector. Extending
$\lambda$ to $B$ via the projection $B\rightarrow T$, the vector
$v_{\pi}$ is $(B,\lambda)$-equivariant. By assumption, there is
$g_{0}\in G$ such that $Hg_{0}B$ is dense in $G$. We claim that
every $H$-equivariant linear map $\varphi:V_{\pi}\rightarrow V_{\tau}$
is determined by $\varphi(g_{0}v_{\pi})$, which implies the lemma.
Indeed, if $h\in H$ and $b\in B$, then $\varphi(hg_{0}b.v_{\pi})=\lambda(b)\cdot h.\left(\varphi(g_{0}v_{\pi})\right)$.
Since $Hg_{0}B$ is Zariski dense and $Gv_{\pi}$ spans $V_{\pi}$,
$\varphi$ is determined. 
\end{proof}
\begin{prop}
\label{prop:bounds}Let $\rho\in\mathrm{Irr}(\U_{n})$ and let $\tau\in\mathrm{Irr}(\U_{n-k}\times\U_{k})$,
then: 
\begin{enumerate}
\item We have 
\[
\langle\rho|_{\U_{n-k}\times\U_{k}},\tau\rangle_{\U_{n-k}\times\U_{k}}\leq\dim\tau.
\]
\item Fix $0<\alpha\leq\frac{1}{2}$. Then for any $\delta>0$, there exists
$N(\delta)>0$ such that for every $n>N(\delta)$ and every $\alpha n\leq k\leq\frac{n}{2}$,
the number of characters $\sigma\boxtimes\sigma'\in\mathrm{Irr}(\U_{n-k}\times\U_{k})$,
such that $\left\langle \sigma\boxtimes\sigma',\rho|_{\U_{n-k}\times\U_{k}}\right\rangle _{\U_{n-k}\times\U_{k}}\neq0$
is at most $\rho(1)^{\delta}$. 
\end{enumerate}
\end{prop}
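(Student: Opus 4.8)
For part (1), the plan is to pass to the complexifications. The subgroup $\U_{n-k}\times\U_{k}\leq\U_{n}$ is the fixed-point subgroup of the involution $g\mapsto sgs^{-1}$ with $s=\mathrm{diag}(I_{n-k},-I_{k})$, so its complexification $\mathrm{GL}_{n-k}(\C)\times\mathrm{GL}_{k}(\C)$ is a symmetric — hence spherical — subgroup of $\mathrm{GL}_{n}(\C)$. Since every irreducible representation of $\U_{n}$ (resp.\ of $\U_{n-k}\times\U_{k}$) extends uniquely to an irreducible algebraic representation of $\mathrm{GL}_{n}(\C)$ (resp.\ of $\mathrm{GL}_{n-k}(\C)\times\mathrm{GL}_{k}(\C)$), and the intertwiner spaces agree, $\langle\rho|_{\U_{n-k}\times\U_{k}},\tau\rangle_{\U_{n-k}\times\U_{k}}$ equals the analogous multiplicity for the complex groups; part (1) is then immediate from Lemma~\ref{lem:multiplicities in spherical pairs}.

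For part (2), write $\rho=\rho_{\lambda,n}$. Twisting $\lambda$ by a power of $\det$ changes neither $\rho(1)$ nor the number of constituents of $\rho|_{\U_{n-k}\times\U_{k}}$ — it only tensors each $\sigma,\sigma'$ by determinants of the smaller factors — so we may assume $\lambda$ is a partition of $m:=|\lambda|$ with $\lambda_{n}=0$. By the branching rule (Proposition~\ref{prop:branching to Un-k times Uk}), the constituents are the $\rho_{\mu,n-k}\boxtimes\rho_{\nu,k}$ with $\ell(\mu)\le n-k$, $\ell(\nu)\le k$, $|\mu|+|\nu|=m$, and $N_{\mu,\nu}^{\lambda}\neq0$; in particular $\mu\subseteq\lambda$ and $\nu\subseteq\lambda$. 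Hence the number $M$ of constituents is at most the number of pairs of sub-Young-diagrams $(\mu,\nu)$ of $\lambda$ with $\ell(\mu)\le n-k$, $\ell(\nu)\le k$, and $|\mu|+|\nu|=m$. I plan to estimate this quantity in two complementary ways: crudely, $M$ is at most the square of the number of sub-diagrams of $\lambda$, which by the Hardy–Ramanujan bound on the partition function is at most $e^{C\sqrt{m}}$ for an absolute constant $C$; and, more sharply in the degenerate directions, by iterated Gelfand–Tsetlin branching the admissible $\mu$ (resp.\ $\nu$) form a set of size at most $\prod_{i=1}^{n-k}(\lambda_{i}-\lambda_{i+k}+1)$ (resp.\ $\prod_{i=1}^{k}(\lambda_{i}-\lambda_{i+(n-k)}+1)$), where $\lambda_{j}:=0$ for $j>n$. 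The constraint $|\mu|+|\nu|=m$ cuts this down further, and — crucially, since $\alpha n\le k\le n/2$ forces both $k$ and $n-k$ to be of order $n$ — both length constraints are genuinely effective (e.g.\ for $\lambda=(1^{k})$ this collapses a naive count of $4^{k}$ to roughly $k+1$). Item (1), combined with $\rho(1)=\sum N_{\mu,\nu}^{\lambda}\rho_{\mu,n-k}(1)\rho_{\nu,k}(1)$, also gives the a priori bound $M\le\rho(1)$.

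It then remains to compare $M$ with $\rho_{\lambda,n}(1)$, which I would do by a case analysis in $m$ versus $n$. If $m$ is bounded in terms of $\delta$, then $M$ is bounded while $\rho_{\lambda,n}(1)\ge(n/m)^{m}\ge n/m\to\infty$ by Lemma~\ref{lem:dimension of small representations}(1), so $M\le\rho_{\lambda,n}(1)^{\delta}$ once $n\ge N(\delta)$. For larger $m$ I would use $M\le e^{C\sqrt m}$ together with a dimension lower bound: when $\ell(\lambda)\le n/2$, Lemma~\ref{lem:dimension of small representations}(2) gives $\rho_{\lambda,n}(1)\ge\min(2^{m/4},2^{n/2})$, and when $\lambda$ is large and ``wide'' one reads directly off the Weyl dimension formula (\ref{eq:Weyl dimension formula}) that $\rho_{\lambda,n}(1)$ grows at least like a polynomial of degree of order $n$ in $\lambda_{1}$, which dominates $e^{C\sqrt m}$ once $n$ is large relative to $1/\delta$. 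The main obstacle is to make this final comparison uniform over all shapes $\lambda$ at once: a shape can be large in $m$ and yet yield a weak bound from Lemma~\ref{lem:dimension of small representations} — for instance when $\ell(\lambda)$ is close to $n$ and $\lambda$ is nearly a column — and there one must either replace $\lambda$ by its dual $\lambda^{\vee,n}$, or extract a sharper lower bound on $\rho_{\lambda,n}(1)$ from the Weyl product, or argue directly that $M$ is already small for such shapes. Carrying out this case division cleanly and tracking the constants so as to produce an explicit threshold $N(\delta)$ is where the bulk of the work lies; everything else is bookkeeping with the branching rule and Lemma~\ref{lem:dimension of small representations}.
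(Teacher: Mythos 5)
Part (1) of your proposal is correct and is exactly the paper's argument: pass to the holomorphic extensions, note that $(\mathrm{GL}_{n}(\C),\mathrm{GL}_{n-k}(\C)\times\mathrm{GL}_{k}(\C))$ is symmetric hence spherical, and apply Lemma~\ref{lem:multiplicities in spherical pairs}.

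Part (2) is where the gap is, and you have correctly located it yourself but not closed it. Your crude bound $M\le e^{C\sqrt{m}}$ via the partition function is genuinely insufficient, not just lossy: for $\lambda=(N,0,\ldots,0)$ with $n$ fixed in the allowed range and $N\to\infty$ we have $m=N$, $\rho(1)=\binom{N+n-1}{n-1}\le (N+n)^{n}$, so $e^{C\sqrt{N}}$ eventually exceeds $\rho(1)$ itself, let alone $\rho(1)^{\delta}$; the same happens for any thin shape with $m\gg n^{2}$ (e.g.\ near-hooks, or $(N^{n-1},0)$ where additionally $\ell(\lambda)>n/2$ so Lemma~\ref{lem:dimension of small representations}(2) is unavailable). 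In each such case the true count $M$ is small, but establishing $M\le\rho(1)^{\delta}$ uniformly over all dominant weights requires the finer Gelfand--Tsetlin-type counting you sketch, matched shape-by-shape against a dimension lower bound extracted from the Weyl product, together with the dualization trick; you explicitly defer this ("where the bulk of the work lies"), so as written the proof of (2) is incomplete. The paper avoids this case analysis entirely by a representation-growth argument: every constituent of $\rho|_{\U_{n-k}\times\U_{k}}$ has dimension at most $\rho(1)$, and by \cite[Corollary 3]{GLM12} a compact connected simple Lie group of rank $\ge\widetilde N(\delta)$ has at most $d^{\delta/2}$ irreducible representations of degree $\le d$; applied to $\SU_{n-k}\times\SU_{k}$ (both factors have rank $\gtrsim\alpha n$, which is where the hypothesis $\alpha n\le k\le n/2$ enters) this bounds the number of $\SU$-constituents by $\rho(1)^{\delta/3}$. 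The remaining issue is that $\U_{n-k}\times\U_{k}$ is not semisimple, so one must count determinant twists; the paper controls these by the central character ($|\mu|+|\nu|=m$ pins each twist to within $\frac{m^{2}}{\alpha(1-\alpha)n^{2}}$ choices) and then shows $(m/n)^{2}\le\rho(1)^{\delta/3}$ by the two-case comparison $m\ge 4n^{2}$ versus $m<4n^{2}$ using Lemma~\ref{lem:dimension of small representations}. If you want to salvage your route, you would need to carry out your deferred case division in full; otherwise the representation-growth input of \cite{GLM12} is the missing ingredient that makes the bound uniform over all shapes.
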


\begin{proof}
1) Since the pair $\left(\mathrm{GL}_{n}(\C),\mathrm{GL}_{n-k}(\C)\times\mathrm{GL}_{k}(\C)\right)$
is the complexification of the pair $\left(\U_{n},\U_{n-k}\times\U_{k}\right)$,
each $\rho\in\mathrm{Irr}(\U_{n})$ (resp. $\tau\in\mathrm{Irr}(\U_{n-k}\times\U_{k})$)
can be extended to the character $\widetilde{\rho}$ of a representation
of $\mathrm{GL}_{n}(\C)$ (resp. $\widetilde{\tau}$ of $\mathrm{GL}_{n-k}(\C)\times\mathrm{GL}_{k}(\C)$)
(see e.g.~\cite[Lemma 5.30]{Kna01}). In particular, we have 
\[
\langle\rho|_{\U_{n-k}\times\U_{k}},\tau\rangle_{\U_{n-k}\times\U_{k}}=\langle\widetilde{\rho}|_{\mathrm{GL}_{n-k}(\C)\times\mathrm{GL}_{k}(\C)},\widetilde{\tau}\rangle_{\mathrm{GL}_{n-k}(\C)\times\mathrm{GL}_{k}(\C)}.
\]
Since $\left(\mathrm{GL}_{n}(\C),\mathrm{GL}_{n-k}(\C)\times\mathrm{GL}_{k}(\C)\right)$
is a symmetric pair, it is also a spherical pair (see e.g.~\cite[Theorem 26.14 and Table 26.3]{Tim11}),
and hence Item (1) now follows from Lemma \ref{lem:multiplicities in spherical pairs}.

2) Given $\delta$, \cite[Corollary 3]{GLM12} implies that there
is $\widetilde{N}(\delta)$ such that if $G$ is a compact connected
simple Lie group of rank greater than $\widetilde{N}(\delta)$ and
$R_{d}(G)$ is the number of irreducible representations of $G$ of
degree $\leq d$, then $R_{d}(G)\leq d^{\delta/2}$. Hence, if $n>N_{1}(\delta):=\frac{1}{\alpha}\widetilde{N}(\delta)$,
then, for every $d$, $R_{d}(\SU_{n-k}\times\SU_{k})\leq d^{\delta}$.
In particular, $R_{\rho(1)}(\SU_{n-k}\times\SU_{k})\leq\rho(1)^{\delta}$,
proving Item (2) for $\SU_{n}$.

We now prove Item (2) for $\U_{n}$. Write $\rho=\rho_{\lambda,n}$,
where $\lambda\in\Lambda_{n}$. After twisting by $\det$, we can
assume that $\lambda_{n}=0$, i.e., $\lambda$ is a partition into
at most $n-1$ parts. Denote $m=\left|\lambda\right|$. Let $\mu\in\Lambda_{k},\nu\in\Lambda_{n-k}$
be such that $\rho_{\mu,k}\boxtimes\rho_{\nu,n-k}\hookrightarrow\rho$.
By assumption, $\rho$ is polynomial, so the representations $\rho_{\mu,k},\rho_{\nu,n-k}$
are also polynomial. Thus, $\mu$ and $\nu$ are partitions. By looking
at the restrictions of $\rho$ and $\rho_{\mu,k}\boxtimes\rho_{\nu,n-k}$
to the center of $\U_{n}$, we get that $\left|\mu\right|+\left|\nu\right|=m$
and, in particular, $0\leq\mu_{k}\leq\frac{m}{\alpha n}$ and $0\leq\nu_{n-k}\leq\frac{m}{(1-\alpha)n}$.
Denoting 
\[
S_{\mu}:=\mu+\mathbb{Z}\cdot(1^{k})=\left\{ \mu'\in\Lambda_{k}\mid\rho_{\mu,k}|_{\SU_{k}}=\rho_{\mu',k}|_{\SU_{k}}\right\} 
\]
and, similarly, $S_{\nu}:=\nu+\mathbb{Z}\cdot(1^{n-k})$, we get that
\[
\left|\left\{ (\mu',\nu')\in S_{\mu}\times S_{\nu}\mid\rho_{\mu',k}\boxtimes\rho_{\nu',n-k}\hookrightarrow\rho\right\} \right|\leq\frac{m^{2}}{\alpha(1-\alpha)n^{2}}.
\]
It follows that 
\begin{align}
S_{k,\rho} & :=\left|\left\{ \tau\in\Irr(\U_{k}\times\U_{n-k})\mid\tau\hookrightarrow\rho|_{\U_{k}\times\U_{n-k}}\right\} \right|\label{eq:bound on the number of reps}\\
 & \leq\frac{m^{2}}{\alpha(1-\alpha)n^{2}}\left|\left\{ \tau'\in\Irr(\SU_{k}\times\SU_{n-k})\mid\tau'\hookrightarrow\rho|_{\SU_{k}\times\SU_{n-k}}\right\} \right|.\nonumber 
\end{align}
Since we proved Item (2) for $G=\SU_{n}$, for every $n>N_{1}(\delta/3)$,
we have: 
\begin{equation}
\Big|\left\{ \tau'\in\Irr(\SU_{k}\times\SU_{n-k})\mid\tau'\hookrightarrow\rho|_{\SU_{k}\times\SU_{n-k}}\right\} \Big|\leq\rho(1)^{\delta/3}.\label{eq:bound on reps for SUn}
\end{equation}
In the notation of Definition \ref{def:first and last fundamental weights},
since $\lambda=\sum_{i=1}^{n}a_{i}\varpi_{i}$ is a partition (with
$a_{n}=0$), then $\lambda_{+}^{\vee,n},\lambda_{-}$ are both partitions,
with $\ell(\lambda_{-}),\ell(\lambda_{+}^{\vee,n})\leq\frac{n}{2}$.
Moreover, $\left|\lambda_{-}\right|+\left|\lambda_{+}\right|=m$,
and by the construction of $\lambda_{+}^{\vee,n}$ we have $\left|\lambda_{-}\right|+\left|\lambda_{+}^{\vee,n}\right|\geq(\lambda_{-})_{1}+(\lambda_{+}^{\vee,n})_{1}=\lambda_{1}\geq\frac{m}{n}$.
By Remark \ref{rem:useful fact} and Lemma \ref{lem:dimension of small representations}(2),
we have 
\begin{equation}
\rho(1)=\rho_{\lambda,n}(1)\geq\max\left\{ \rho_{\lambda_{-},n}(1),\rho_{\lambda_{+},n}(1)\right\} =\max\left\{ \rho_{\lambda_{-},n}(1),\rho_{\lambda_{+}^{\vee,n},n}(1)\right\} \geq\min\left\{ 2^{\frac{m}{8n}},2^{\frac{n}{2}}\right\} .\label{eq:lower bound on rho(1)}
\end{equation}
If $m\geq4n^{2}$ then $(\frac{m}{n})^{2}\leq2^{\frac{\delta m}{24n}}\leq\rho(1)^{\delta/3}$
for $n\gg_{\delta}1$. If $m<4n^{2}$ then $(\frac{m}{n})^{2}<16n^{2}\leq2^{\frac{\delta n}{6}}\leq\rho(1)^{\delta/3}$
for $n\gg_{\delta}1$. Hence, by combining (\ref{eq:bound on the number of reps})
and (\ref{eq:bound on reps for SUn}), for $n\gg_{\alpha,\delta}1$,
we have 
\[
S_{k,\rho}\leq\frac{m^{2}}{\alpha(1-\alpha)n^{2}}\rho(1)^{\delta/3}\leq(\frac{m}{n})^{2}\rho(1)^{2\delta/3}\leq\rho(1)^{\delta}.
\]
as required. 
\end{proof}

\subsection{\label{subsec:Fourier-coefficients-of power word}Fourier coefficients
of the power word}

We now turn to prove Theorem \ref{thm D:Fourier coefficients of power words}.
Let $\ell\geq2$ and let $w=x^{\ell}$ be a power word. Let $H_{n,\ell}:=\U_{\left\lfloor n/\ell\right\rfloor +1}^{j}\times\U_{\left\lfloor n/\ell\right\rfloor }^{\ell-j}$,
where $j:=n\mod\ell$. Denote by $\widetilde{\ell}:=\min(n,\ell)$. 
\begin{prop}
\label{prop:reduction to multiplicities}For every $n\geq2$, every
$\ell\geq1$ and every $\rho\in\Irr(\U_{n})$, we have: 
\[
\E_{X\in\U_{n}}(\rho(X^{\ell}))=\dim\rho^{H_{n,\ell}}.
\]
\end{prop}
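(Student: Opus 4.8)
The plan is to compute $\E_{X\in\U_n}(\rho(X^\ell))$ by interpreting it as an inner product of class functions and then invoking a result on the distribution of powers of a random unitary matrix.

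\textbf{Step 1: Reduce to a power map pushforward.} Write $p_\ell\colon\U_n\to\U_n$ for the map $X\mapsto X^\ell$, and let $\nu_{n,\ell}:=(p_\ell)_*\mu_{\U_n}$ be the distribution of $X^\ell$ for Haar-random $X$. Then $\E_{X\in\U_n}(\rho(X^\ell))=\int_{\U_n}\rho\,d\nu_{n,\ell}$. Since $\nu_{n,\ell}$ is a conjugation-invariant probability measure, it is determined by its values on class functions, and the integral above is the Fourier coefficient of $\nu_{n,\ell}$ at $\rho$.

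\textbf{Step 2: Identify $\nu_{n,\ell}$ via Rains.} The key input is the theorem of Rains (\cite{Rai97,Rai03}, as cited in the introduction): the eigenvalues of $X^\ell$ for Haar-random $X\in\U_n$ are distributed as the superposition of the eigenvalues of independent Haar-random matrices of sizes $\lfloor n/\ell\rfloor+1$ (with multiplicity $j$) and $\lfloor n/\ell\rfloor$ (with multiplicity $\ell-j$), where $j\equiv n\ \mathrm{mod}\ \ell$. In other words, as conjugation-invariant measures on $\U_n$, $\nu_{n,\ell}$ equals the pushforward of $\mu_{H_{n,\ell}}$ under the inclusion $H_{n,\ell}\hookrightarrow\U_n$ (more precisely, under $H_{n,\ell}\to\U_n$ followed by averaging over conjugacy, which is automatic since we only test against class functions). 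Here one must be slightly careful that $\widetilde\ell=\min(n,\ell)$ handles the degenerate case $n<\ell$, where $\lfloor n/\ell\rfloor=0$ and the statement reads that $X^\ell$ is Haar-random in $\U_n$ (consistent with $H_{n,\ell}=\U_1^n$... actually one should double-check: when $n<\ell$, $j=n$, so $H_{n,\ell}=\U_1^n$, a maximal torus; but $X^\ell$ is \emph{not} Haar-random in general — rather its eigenvalues are, and $\rho^{T}=\dim$ of the zero-weight space, which is exactly $\int\rho\,d\mu$ over a maximal torus... so this still needs the eigenvalue interpretation, not a naive one). I would state this cleanly: for every class function $f$ on $\U_n$, $\int_{\U_n}f\,d\nu_{n,\ell}=\int_{H_{n,\ell}}f|_{H_{n,\ell}}\,d\mu_{H_{n,\ell}}$.

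\textbf{Step 3: Conclude.} Applying Step 2 with $f=\rho$ gives
\[
\E_{X\in\U_n}(\rho(X^\ell))=\int_{H_{n,\ell}}\rho|_{H_{n,\ell}}\,d\mu_{H_{n,\ell}}=\langle\rho|_{H_{n,\ell}},\mathbf{1}\rangle_{H_{n,\ell}}=\dim\rho^{H_{n,\ell}},
\]
where the middle equality is the orthogonality relation: integrating a character against the trivial character over a compact group computes the dimension of the subspace of invariants.

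\textbf{Main obstacle.} The substance is entirely in Step 2 — extracting from Rains's work the precise statement that the eigenvalue distribution of $X^\ell$ is the superposition of independent Haar spectra of the smaller unitary groups, in exactly the block sizes defining $H_{n,\ell}$, and checking the boundary cases ($\ell=1$ trivially; $n<\ell$; $\ell\mid n$). Steps 1 and 3 are formal. I expect the write-up to quote Rains for the eigenvalue statement and then note that since both $\nu_{n,\ell}$ and the pushforward of $\mu_{H_{n,\ell}}$ are conjugation-invariant and have the same eigenvalue distribution, they agree as measures when tested against class functions, which is all that is needed.
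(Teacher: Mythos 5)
Your proposal is correct and follows essentially the same route as the paper: quote Rains's theorem identifying the eigenvalue distribution of $X^{\ell}$ with the union of spectra of independent Haar matrices in the blocks of $H_{n,\ell}$, use conjugation-invariance to test against the class function $\rho$, and finish with the orthogonality relation $\int_{H_{n,\ell}}\rho|_{H_{n,\ell}}\,d\mu_{H_{n,\ell}}=\dim\rho^{H_{n,\ell}}$. Your extra care about the degenerate case $n<\ell$ (where $H_{n,\ell}$ is a maximal torus and one must use the eigenvalue statement, not naive Haar-randomness of $X^{\ell}$) is a sound observation that the paper handles implicitly via the same eigenvalue interpretation.
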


\begin{proof}
This follows from the works of Rains \cite{Rai97,Rai03}. More precisely,
in \cite[Theorem 1.3]{Rai03} Rains showed that the eigenvalues of
$X^{\ell}$ for a random $X\in\U_{n}$ have the same distribution
as the union of the spectra of $\widetilde{\ell}$ independent Haar
random matrices in $\U_{\left\lfloor n/\ell\right\rfloor },\ldots,\U_{\left\lfloor n/\ell\right\rfloor },\U_{\left\lfloor n/\ell\right\rfloor +1},\ldots,\U_{\left\lfloor n/\ell\right\rfloor +1}$.
Since the distribution of $X^{\ell}$ is conjugate invariant, it is
determined by the distribution of eigenvalues. We therefore have:
\begin{equation}
\E_{X\in\U_{n}}(\rho(X^{\ell}))=\E_{X\in H_{n,\ell}}(\rho(X))=\dim\rho^{H_{n,\ell}}.\qedhere\label{eq:Fourier coefficients of power words}
\end{equation}
\end{proof}
\begin{prop}
\label{prop:Fourier coefficient of power words}Let $\ell\in\N$.
For every $\delta>0$ there exists $N(\delta)\in\N$ such that, for
every $n>N(\delta)$ and every $\rho\in\mathrm{Irr}(\U_{n})$, 
\[
\left|\mathbb{E}_{X\in\U_{n}}(\rho(X^{\ell}))\right|=\dim\rho^{H_{n,\ell}}\leq\rho(1)^{1-\frac{1}{\ell-1}+\delta}.
\]
\end{prop}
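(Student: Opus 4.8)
The plan is to bound $\dim\rho^{H_{n,\ell}}$ by an inductive/restriction argument, peeling off one unitary factor at a time. Write $H_{n,\ell} = H' \times \U_m$ where $\U_m$ is one of the factors ($m = \lfloor n/\ell\rfloor$ or $\lfloor n/\ell\rfloor+1$) and $H' = H_{n,\ell}'$ is the product of the remaining $\widetilde\ell - 1$ factors, sitting inside $\U_{n-m}$. Then
\[
\dim\rho^{H_{n,\ell}} = \dim\bigl(\rho|_{\U_{n-m}\times\U_m}\bigr)^{H'\times\U_m} = \sum_{\sigma\boxtimes\sigma'} \langle \sigma\boxtimes\sigma',\rho|_{\U_{n-m}\times\U_m}\rangle \cdot \dim\sigma^{H'}\cdot\dim(\sigma')^{\U_m},
\]
and $\dim(\sigma')^{\U_m}$ is $1$ if $\sigma'$ is trivial and $0$ otherwise. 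However, this naive single split is not enough on its own; the cleaner route is to split $n$ roughly in half. The plan is to restrict $\rho$ from $\U_n$ to $\U_k\times\U_{n-k}$ for $k$ chosen so that $H_{n,\ell}$ is (conjugate to) a subgroup of $\U_k\times\U_{n-k}$ with $H_{n,\ell}\cap\U_k$ consisting of $\lfloor\ell/2\rfloor$ of the factors and $H_{n,\ell}\cap\U_{n-k}$ the remaining $\lceil\ell/2\rceil$; concretely $k \approx \frac{n}{\ell}\lfloor\ell/2\rfloor$, which is bounded away from $0$ and from $n$ (here one uses $\ell\geq2$, and $\widetilde\ell=\ell$ for $n$ large). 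Then $\dim\rho^{H_{n,\ell}} = \sum_\tau \langle\tau,\rho|_{\U_k\times\U_{n-k}}\rangle\dim\tau^{H_{n,\ell}}$, where the sum is over $\tau=\sigma\boxtimes\sigma'$ with both factors nontrivial contributions.

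Next I would bound each ingredient. By Proposition~\ref{prop:bounds}(1), $\langle\tau,\rho|_{\U_k\times\U_{n-k}}\rangle \leq \dim\tau = \dim\sigma\cdot\dim\sigma'$. By Proposition~\ref{prop:bounds}(2), the number of $\tau$ with nonzero multiplicity is at most $\rho(1)^{\delta'}$ for any prescribed $\delta'>0$ once $n$ is large (using that $k/n$ is bounded away from $0$). For the inner dimension-of-invariants factors, I would use induction on $\ell$: $\dim\sigma^{H_{n,\ell}\cap\U_k} = \dim\sigma^{H_{k,\lfloor\ell/2\rfloor}'}$ for a product of $\lfloor\ell/2\rfloor$ unitary groups of size $\approx k/\lfloor\ell/2\rfloor \approx n/\ell$, which by the inductive hypothesis (for the shorter power word $x^{\lfloor\ell/2\rfloor}$, noting $\lfloor\ell/2\rfloor < \ell$) is at most $\dim\sigma^{1-\frac{1}{\lfloor\ell/2\rfloor-1}+\delta''}$, and similarly for $\sigma'$ with $\lceil\ell/2\rceil$. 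Combining,
\[
\dim\rho^{H_{n,\ell}} \leq \rho(1)^{\delta'}\cdot \max_{\sigma,\sigma'}\Bigl(\dim\sigma\cdot\dim\sigma'\cdot (\dim\sigma)^{1-\frac{1}{\lceil\ell/2\rceil-1}+\delta''}(\dim\sigma')^{1-\frac{1}{\lceil\ell/2\rceil-1}+\delta''}\Bigr).
\]
Since $\dim\sigma\cdot\dim\sigma' \leq \rho(1)$ (as $\sigma\boxtimes\sigma'$ embeds in $\rho|_{\U_k\times\U_{n-k}}$) and each of $\dim\sigma,\dim\sigma'\leq\rho(1)$, the exponent on $\rho(1)$ works out to roughly $1 + \bigl(1 - \frac{1}{\lceil\ell/2\rceil-1}\bigr)$ — which is worse than $2-\frac{1}{\ell-1}$ unless I'm more careful about how $\dim\sigma$ and $\dim\sigma'$ trade off against $\rho(1)$.

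The careful point — and the main obstacle — is the arithmetic of the exponents. One does not get to use $\dim\sigma\leq\rho(1)$ and $\dim\sigma'\leq\rho(1)$ independently; rather, by a sharper form of Lemma~\ref{lem:dimension of small representations} / Remark~\ref{rem:useful fact} one has both $\dim\sigma_- \lesssim \dim\sigma$ relations and, more importantly, a bound of the shape $\dim\sigma\cdot\dim\sigma' \leq \rho(1)$ combined with the Weyl-dimension monotonicity giving $\rho(1) \geq \dim\sigma^{n/k}$-type inequalities (since $\sigma$ occupies a $k/n$-fraction of the weight). Feeding $\dim\sigma \leq \rho(1)^{k/n}$ and $\dim\sigma'\leq\rho(1)^{(n-k)/n}$ into the display, with $k/n\approx\lfloor\ell/2\rfloor/\ell$, the total exponent becomes
\[
\frac{k}{n}\Bigl(2 - \tfrac{1}{\lfloor\ell/2\rfloor-1}\Bigr) + \frac{n-k}{n}\Bigl(2 - \tfrac{1}{\lceil\ell/2\rceil-1}\Bigr) + \delta',
\]
and one checks that the $\approx\frac{\lfloor\ell/2\rfloor}{\ell}\cdot\frac{1}{\lfloor\ell/2\rfloor}$-type terms sum to $\geq \frac{1}{\ell-1}$, yielding the bound $\rho(1)^{1-\frac{1}{\ell-1}+\delta}$ after absorbing all the small errors ($\delta'$, $\delta''$, the $k\approx\frac{n}{\ell}\lfloor\ell/2\rfloor$ rounding, the finitely many small $n$, and the base cases $\ell=1,2,3$ treated directly — for $\ell=2$, $H_{n,2}=\U_{\lceil n/2\rceil}\times\U_{\lfloor n/2\rfloor}$ and $\dim\rho^{H_{n,2}}$ is the multiplicity of the trivial, which by sphericity and Proposition~\ref{prop:bounds}(1) is $\leq 1$, matching $1-\frac{1}{1}+\delta=\delta$ vacuously... actually $\ell=2$ needs $\rho(1)^{\delta}$, fine since the count is $1$). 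I expect the bookkeeping of exactly how $\dim\sigma$, $\dim\sigma'$, and $\rho(1)$ constrain one another via the Weyl dimension formula — and verifying the resulting exponent inequality is sharp — to be the genuinely delicate part; everything else is assembling Propositions~\ref{prop:reduction to multiplicities} and \ref{prop:bounds} with induction on $\ell$.
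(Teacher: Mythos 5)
Your skeleton is the same as the paper's: induct on $\ell$, split $H_{n,\ell}$ across a product $\U_{n_L}\times\U_{n_R}$ with $n_L\approx\frac{n}{\ell}\lfloor\ell/2\rfloor$, expand $\dim\rho^{H_{n,\ell}}$ via Frobenius reciprocity over constituents $\sigma\boxtimes\sigma'$, bound multiplicities by Proposition \ref{prop:bounds}(1), the number of constituents by Proposition \ref{prop:bounds}(2), the inner invariant dimensions by the inductive hypothesis, and handle $\ell=1,2$ directly (the $\ell=2$ Gelfand-pair observation is exactly the paper's base case). The gap is in the step you yourself flag as delicate: the way you close the exponent arithmetic does not work. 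First, the inequality you invoke, $\dim\sigma\leq\rho(1)^{k/n}$ for a constituent $\sigma\boxtimes\sigma'$ of $\rho|_{\U_k\times\U_{n-k}}$, is false in general: take $\rho$ with highest weight $\varpi_1+\varpi_{n-1}$, so $\rho(1)=n^2-1$, and $k=n/2$; the restriction contains (adjoint of $\U_k$)$\boxtimes\C$, of dimension $k^2-1\approx n^2/4$, which vastly exceeds $\rho(1)^{1/2}\approx n$. There is no monotonicity statement in the paper (Remark \ref{rem:useful fact} or Lemma \ref{lem:dimension of small representations}) that yields such a fractional-power bound. Second, even granting that inequality, your displayed exponent $\frac{k}{n}\bigl(2-\frac{1}{\lfloor\ell/2\rfloor-1}\bigr)+\frac{n-k}{n}\bigl(2-\frac{1}{\lceil\ell/2\rceil-1}\bigr)$ evaluates to roughly $2-\frac{2}{\ell-2}$, which exceeds $1$ for $\ell\geq5$ and so is weaker than the trivial bound $\dim\rho^{H_{n,\ell}}\leq\rho(1)$; it cannot be $\leq1-\frac{1}{\ell-1}+\delta$, so the final "one checks" assertion is arithmetically wrong. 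The root problem is bounding the sum by (number of terms)$\times$(max term): the multiplicity factor $\dim\sigma\cdot\dim\sigma'$ must not be paid for separately, term by term.

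The paper's actual mechanism avoids both issues. After the Frobenius expansion and the inductive bounds $m_\sigma\leq(\dim\sigma)^{1-\min(1,\frac1{k-1})+\delta_L}$, $m_{\sigma'}\leq(\dim\sigma')^{1-\min(1,\frac1{\ell-k-1})+\delta_R}$, it raises the whole sum to a power $\beta(\ell)\in(1,2)$ and applies Jensen's inequality over the $N_k\leq\rho(1)^{\delta/2}$ nonzero terms; then it uses Proposition \ref{prop:bounds}(1) only in the form $m_{\rho,\sigma\boxtimes\sigma'}^{\beta(\ell)-1}\leq(\dim\sigma\dim\sigma')^{\beta(\ell)-1}$, chooses $\beta(\ell)$ so that the accumulated exponents on $\dim\sigma$ and $\dim\sigma'$ are each at most $1$, and finally invokes the exact budget $\sum_{\sigma,\sigma'}(\dim\sigma)(\dim\sigma')\,m_{\rho,\sigma\boxtimes\sigma'}\leq\rho(1)$ (the dimension count of the restriction) to conclude $\langle\rho,\C\rangle_{H_{n,\ell}}^{\beta(\ell)}\leq\rho(1)^{1+\delta/2}$, i.e.\ an exponent $1/\beta(\ell)$, which with $k=\lfloor\ell/2\rfloor$ is at most $1-\min\{\frac12,\frac1{2(k-1)},\frac1{2(\ell-k-1)}\}+o(1)\leq1-\frac{1}{\ell-1}+\delta$. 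Your intuition that splitting $\ell$ in half roughly halves the savings is correct, but realizing it requires this convexity argument against the global identity $\sum_\tau m_{\rho,\tau}\dim\tau\leq\rho(1)$, not per-constituent fractional-power bounds.
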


\begin{proof}
We may assume $\ell=\widetilde{\ell}$. We prove the claim by induction
on $\ell$. For the base of the induction, $\ell=2$, we have $\dim\rho^{H_{n,2}}\leq1$
since $(\U_{n},\U_{\lceil n/2\rceil}\times\U_{\left\lfloor n/2\right\rfloor })$
is a Gelfand pair (because $(\mathrm{GL}_{n}(\C),\mathrm{GL}_{\lceil n/2\rceil}(\C)\times\mathrm{GL}_{\left\lfloor n/2\right\rfloor }(\C))$
is a spherical pair). For $\ell=1$ we have $\mathbb{E}_{X\in\U_{n}}(\rho(X))=0=\rho(1)^{-\infty}$
for every non-trivial character. Now suppose that $\ell\geq3$ and
that the statement holds for all $\ell'<\ell$. Write $\ell=k+(\ell-k)$
for some $k<\ell$. Set $\widetilde{H}_{n,a,b}:=\U_{\left\lfloor n/\ell\right\rfloor +1}^{a}\times\U_{\left\lfloor n/\ell\right\rfloor }^{b}$,
and $\widetilde{L}_{k}:=\widetilde{H}_{n,k,0}$ if $k\leq j$ and
$\widetilde{L}_{k}:=\widetilde{H}_{n,j,k-j}$ otherwise. Similarly,
set $\widetilde{R}_{k}:=\widetilde{H}_{n,j-k,\ell-j}$ if $k\leq j$
and $\widetilde{R}_{k}:=\widetilde{H}_{n,0,\ell-k}$ otherwise. Set
$n_{L}:=k\left\lfloor n/\ell\right\rfloor +\min(k,j)$ and $n_{R}=(\ell-k)\left\lfloor n/\ell\right\rfloor +\max(j-k,0)$.

For $\sigma\in\mathrm{Irr}(\U_{n_{L}})$ and $\sigma'\in\mathrm{Irr}(\U_{n_{R}})$,
we denote: 
\[
m_{\sigma}:=\dim\sigma^{\widetilde{L}_{k}},\text{ \,\,\,and\,\,\,\,}m_{\sigma'}:=\dim\sigma'^{\widetilde{R}_{k}}.
\]
By the induction hypothesis, with $(n,\ell)$ replaced by $(n_{L},k)$
(resp. $(n_{R},\ell-k)$), we have $m_{\sigma}\leq(\dim\sigma)^{1-\min(1,\frac{1}{k-1})+\delta_{L}}$
and $m_{\sigma'}\leq(\dim\sigma')^{1-\min(1,\frac{1}{\ell-k-1})+\delta_{R}}$,
for $\delta_{L},\delta_{R}>0$ arbitrary small. Using Frobenius reciprocity,
we obtain:
\begin{equation}
\begin{split}\langle\rho,\C\rangle_{H_{n,\ell}} & =\langle\rho,\mathrm{Ind}_{\U_{n_{L}}\times\U_{n_{R}}}^{\U_{n}}(\mathrm{Ind}_{\widetilde{L}_{k}}^{\U_{n_{L}}}(\C)\boxtimes\mathrm{Ind}_{\widetilde{R}_{k}}^{\U_{n_{R}}}(\C))\rangle_{\U_{n}}\\
 & =\sum_{\sigma\hookrightarrow\mathrm{Ind}_{\widetilde{L}_{k}}^{\U_{n_{L}}}(\C),\sigma'\hookrightarrow\mathrm{Ind}_{\widetilde{R}_{k}}^{\U_{n_{R}}}(\C)}m_{\sigma}m_{\sigma'}\langle\rho,\mathrm{Ind}_{\U_{n_{L}}\times\U_{n_{R}}}^{\U_{n}}(\sigma\boxtimes\sigma')\rangle_{\U_{n}}\\
 & \leq\sum_{\sigma\hookrightarrow\mathrm{Ind}_{\widetilde{L}_{k}}^{\U_{n_{L}}}(\C),\sigma'\hookrightarrow\mathrm{Ind}_{\widetilde{R}_{k}}^{\U_{n_{R}}}(\C)}(\dim\sigma)^{1-\min(1,\frac{1}{k-1})+\delta_{L}}(\dim\sigma')^{1-\min(1,\frac{1}{\ell-k-1})+\delta_{R}}m_{\rho,\sigma\boxtimes\sigma'},
\end{split}
\label{eq:wasstar}
\end{equation}
where $m_{\rho,\sigma\boxtimes\sigma'}:=\langle\rho,\mathrm{Ind}_{\U_{n_{L}}\times\U_{n_{R}}}^{\U_{n}}(\sigma\boxtimes\sigma')\rangle_{\U_{n}}$.
Note that by Proposition \ref{prop:bounds}(2), the number $N_{k}$
of non-zero summands in (\ref{eq:wasstar}) is at most $\rho(1)^{\delta/2}$
for $n\gg_{\delta}1$. Let $2>\beta(\ell)>1$ be a constant to be
determined later. Then using Jensen's inequality:
\begin{equation}
\langle\rho,\C\rangle_{H_{n,\ell}}^{\beta(\ell)}\leq N_{k}^{\beta(\ell)-1}\sum_{\substack{\sigma\hookrightarrow\mathrm{Ind}_{\widetilde{L}_{k}}^{\U_{n_{L}}}(\C)\\
\sigma'\hookrightarrow\mathrm{Ind}_{\widetilde{R}_{k}}^{\U_{n_{R}}}(\C)
}
}(\dim\sigma)^{\left(1-\min(1,\frac{1}{k-1})+\delta_{L}\right)\beta(\ell)}(\dim\sigma')^{\left(1-\min(1,\frac{1}{\ell-k-1})+\delta_{R}\right)\beta(\ell)}m_{\rho,\sigma\boxtimes\sigma'}^{\beta(\ell)}.\label{eq:formula for multiplicity}
\end{equation}
We take 
\begin{equation}
\beta(\ell):=\min\left\{ \frac{2}{\left(2-\min(1,\frac{1}{k-1})+\delta_{L}\right)},\frac{2}{\left(2-\min(1,\frac{1}{\ell-k-1})+\delta_{R}\right)}\right\} ,\label{eq:beta_l}
\end{equation}
so that $\left(1-\min(1,\frac{1}{k-1})+\delta_{L}\right)\beta(\ell),\left(1-\min(1,\frac{1}{\ell-k-1})+\delta_{R}\right)\beta(\ell)\leq2-\beta(\ell)$.
Combining (\ref{eq:formula for multiplicity}) and (\ref{eq:beta_l})
with the fact that $\left(m_{\rho,\sigma\boxtimes\sigma'}\right)^{\beta(\ell)-1}\leq(\dim\sigma)^{\beta(\ell)-1}(\dim\sigma')^{\beta(\ell)-1},$which
holds by Proposition \ref{prop:bounds}, we deduce: 
\[
\langle\rho,\C\rangle_{H_{n,\ell}}^{\beta(\ell)}\leq N_{k}^{\beta(\ell)-1}\sum_{\sigma\hookrightarrow\mathrm{Ind}_{\widetilde{L}_{k}}^{\U_{n_{L}}}(\C),\sigma'\hookrightarrow\mathrm{Ind}_{\widetilde{R}_{k}}^{\U_{n_{R}}}(\C)}(\dim\sigma)(\dim\sigma')m_{\rho,\sigma\boxtimes\sigma'}\leq N_{k}^{\beta(\ell)-1}\rho(1)\leq\rho(1)^{1+\delta/2}.
\]
In particular, we get, 
\begin{align*}
\langle\rho,\C\rangle_{H_{n,\ell}} & \leq\rho(1)^{\frac{1}{\beta(\ell)}+\frac{\delta}{2\beta(\ell)}}\leq\rho(1)^{\max\left\{ 1-\min(\frac{1}{2},\frac{1}{2(k-1)})+\frac{\delta_{L}}{2},1-\min(\frac{1}{2},\frac{1}{2(\ell-k-1)})+\frac{\delta_{R}}{2}\right\} +\frac{\delta}{2}}\\
 & \leq\rho(1)^{1-\min\left\{ \frac{1}{2},\frac{1}{2(k-1)},\frac{1}{2(\ell-k-1)}\right\} +\max\left\{ \frac{\delta_{L}}{2},\frac{\delta_{R}}{2}\right\} +\frac{\delta}{2}}.
\end{align*}
Taking $k=\left\lfloor \ell/2\right\rfloor \geq\frac{\ell}{2}-\frac{1}{2}$
we have $\min\left\{ \frac{1}{2},\frac{1}{2(k-1)},\frac{1}{2(\ell-k-1)}\right\} \geq\frac{1}{\ell-1}$,
and hence 
\[
\langle\rho,\C\rangle_{H_{n,\ell}}\leq\rho(1)^{1-\frac{1}{\ell-1}+\max\left\{ \frac{\delta_{L}}{2},\frac{\delta_{R}}{2}\right\} +\frac{\delta}{2}}.
\]
Taking $\delta_{R},\delta_{L}<\delta$, we deduce the proposition. 
\end{proof}

\subsection{\label{subsec:Optimality-of-the}Optimality of the bounds in Theorem
\ref{thm: A, bounded density} }

The following proposition shows that the dependence of $t(w)$ on
$\ell(w)$ in Item (2) of Theorem \ref{thm: A, bounded density} is
at least linear. 
\begin{prop}
\label{prop:Theorem A is optimal}Let $\ell\geq2$ and let $w_{\ell}=x^{\ell}$.
Then for every $n\gg_{\ell}1$, we have $\tau_{w_{\ell},\SU_{n}}^{*\ell-1}\notin L^{\infty}(\SU_{n})$.
\end{prop}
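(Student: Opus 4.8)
The plan is to reduce the statement to the failure of flatness of a word map and then to exhibit that failure by a crude dimension count of a fiber, rather than to argue through the Fourier coefficients directly. Write $w=w_{\ell}=x^{\ell}$, so $w^{*(\ell-1)}(x_{1},\ldots,x_{\ell-1})=x_{1}^{\ell}\cdots x_{\ell-1}^{\ell}$ and $\tau_{w_\ell,\SU_{n}}^{*\ell-1}=\tau_{w^{*(\ell-1)},\SU_{n}}=(w_{\SU_{n}}^{*(\ell-1)})_{*}\mu_{\SU_{n}^{\ell-1}}$, which is absolutely continuous by \cite{Bor83}. The first step is the elementary observation that, since $\SU_{n}^{\ell-1}$ is compact, every $\mu\in\mathcal{M}_{c}^{\infty}(\SU_{n}^{\ell-1})$ equals $\phi\cdot\mu_{\SU_{n}^{\ell-1}}$ for a bounded function $\phi$, whence $(w_{\SU_{n}}^{*(\ell-1)})_{*}\mu\le\|\phi\|_{\infty}\cdot\tau_{w_\ell,\SU_{n}}^{*\ell-1}$ as measures. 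Therefore, if $\tau_{w_\ell,\SU_{n}}^{*\ell-1}\in L^{\infty}(\SU_{n})$, then $w_{\SU_{n}}^{*(\ell-1)}$ is an $L^{\infty}$-map over $\R$ in the sense of Definition~\ref{def:epsilon of a map}.

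Next I would transport this to the algebraic setting exactly as in the proof of Corollary~\ref{cor:bounded density for small rank}: using the $\Q$-group $\underline{\mathrm{SU}}_{n}$ with $\underline{\mathrm{SU}}_{n}(\R)=\SU_{n}$ and $\underline{\mathrm{SU}}_{n}(\C)=\mathrm{SL}_{n}(\C)$, the morphism $w_{\underline{\mathrm{SU}}_{n}}^{*(\ell-1)}$ would be an $L^{\infty}$-map over $\R$, and so by Theorem~\ref{thm:analytic criterion for (FRS)} (in the archimedean form of \cite{GHS,Rei}) the morphism $w_{\mathrm{SL}_{n}}^{*(\ell-1)}\colon\mathrm{SL}_{n}^{\ell-1}\to\mathrm{SL}_{n}$ would be (FRS), hence in particular flat (Definition~\ref{def:(FRS)}). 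Since $w_{\mathrm{SL}_{n}}^{*(\ell-1)}$ is dominant (again because $\tau_{w_\ell,\SU_{n}}^{*\ell-1}$ is absolutely continuous), Lemma~\ref{Lemma:flat and smooth}(1) would then force every one of its fibers to have dimension exactly $(\ell-1)(n^{2}-1)-(n^{2}-1)=(\ell-2)(n^{2}-1)$. The whole argument will therefore be complete once this is contradicted.

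To produce the contradiction I would estimate the fiber over $I_{n}$ from below. It contains the closed subvariety $Z:=\{(x_{1},\ldots,x_{\ell-1})\in\mathrm{SL}_{n}^{\ell-1}:x_{i}^{\ell}=I_{n}\ \text{for all }i\}$, and $\dim Z=(\ell-1)\dim\{x\in\mathrm{SL}_{n}:x^{\ell}=I_{n}\}$. Now $\{x\in\mathrm{SL}_{n}:x^{\ell}=I_{n}\}$ is the union of the semisimple conjugacy classes $C_{\mu}$ whose eigenvalues are $\ell$-th roots of unity occurring with a multiplicity vector $\mu=(\mu_{0},\ldots,\mu_{\ell-1})$ satisfying $\sum_{j}\mu_{j}=n$ and the determinant condition $\sum_{j}j\mu_{j}\equiv0\pmod{\ell}$; the centraliser of such an element in $\mathrm{SL}_{n}$ is $S(\prod_{j}\mathrm{GL}_{\mu_{j}})$, so $\dim C_{\mu}=n^{2}-\sum_{j}\mu_{j}^{2}$. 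Choosing $\mu$ as balanced as the determinant condition allows — which is possible after permuting which root of unity carries which multiplicity and, if necessary, perturbing the balanced vector by an amount bounded in terms of $\ell$ — gives $\sum_{j}\mu_{j}^{2}\le n^{2}/\ell+2n+O_{\ell}(1)$, hence $\dim Z\ge\frac{(\ell-1)^{2}}{\ell}n^{2}-2(\ell-1)n-O_{\ell}(1)$. Because $\frac{(\ell-1)^{2}}{\ell}-(\ell-2)=\frac{1}{\ell}>0$, for $n\gg_{\ell}1$ one obtains $\dim Z>(\ell-2)(n^{2}-1)$, so the fiber over $I_{n}$ strictly exceeds the generic fiber dimension, contradicting flatness. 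Hence $\tau_{w_{\ell},\SU_{n}}^{*\ell-1}\notin L^{\infty}(\SU_{n})$ for all $n\gg_{\ell}1$.

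The step I expect to require the most care is the combinatorial construction of a multiplicity vector $\mu$ that is simultaneously compatible with $\det=1$ and balanced up to a bounded error, together with the bookkeeping of the resulting linear‑in‑$n$ error terms, so that the inequality $\dim Z>(\ell-2)(n^{2}-1)$ genuinely survives for all large $n$; the subcase $\ell\mid n$ with $\ell$ even, where the naively balanced $\mu$ fails the determinant condition, has to be handled by an explicit bounded perturbation. A second, more routine point is to pin down the exact citation for the implication ``$L^{\infty}$-map over $\R$ $\Rightarrow$ (FRS)'' used in the second paragraph; should one wish to avoid it, one can instead observe that $w_{\mathrm{SL}_{n}}^{*(\ell-1)}$ simply fails to be flat, hence is not (FRS) by Definition~\ref{def:(FRS)}, and then run Theorem~\ref{thm:analytic criterion for (FRS)} in the complex form, which suffices once the real $L^{\infty}$-map condition is upgraded to the complex one via the same equivalences.
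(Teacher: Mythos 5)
Your dimension count is sound, and it isolates exactly the mechanism the paper exploits: the fibre of $x_{1}^{\ell}\cdots x_{\ell-1}^{\ell}$ over $I_{n}$ has dimension roughly $\frac{(\ell-1)^{2}}{\ell}n^{2}$, exceeding the generic fibre dimension $(\ell-2)(n^{2}-1)$ by $\frac{n^{2}}{\ell}-O_{\ell}(n)$. The reduction in your second paragraph, however, has a genuine gap. Theorem \ref{thm:analytic criterion for (FRS)} does \emph{not} give the implication ``$\varphi_{\R}$ is an $L^{\infty}$-map $\Rightarrow$ $\varphi$ is (FRS)'': its item (3) is the conjunction of the real and the complex conditions, so from ``not (FRS)'' one only learns that the map fails to be $L^{\infty}$ over $\R$ \emph{or} over $\C$. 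The missing implication is false in general: $(x,y)\mapsto x^{2}+y^{2}$ is an $L^{\infty}$-map over $\R$, but its complex zero fibre is a nodal curve, so the map is not (FRS). Your fallback---deduce ``not $\C$-$L^{\infty}$'' from non-flatness and then ``upgrade'' the real condition to the complex one ``via the same equivalences''---is circular, since the only route from the real to the complex condition offered by the theorem passes through (FRS), which is exactly what is at stake.

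The gap is repairable without any (FRS) machinery, and doing so lands you essentially on the paper's proof. The real points of your variety $Z$ inside $\SU_{n}^{\ell-1}$ (tuples of elements of order dividing $\ell$) form a real form of the corresponding product of semisimple conjugacy classes, of real dimension equal to the complex dimension you computed; a $\delta$-tubular neighbourhood of this set has Haar measure at least of order $\delta^{(\ell-1)(n^{2}-1)-\dim Z}$, while its image under the word map lies in a ball of measure of order $\delta^{n^{2}-1}$, so the density of the pushforward at $I_{n}$ is at least of order $\delta^{(\ell-2)(n^{2}-1)-\dim Z}$, which tends to infinity since the exponent is negative for $n\gg_{\ell}1$. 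The paper carries out this estimate exactly rather than asymptotically: by Rains' identity $\E(\rho(X^{\ell}))=\dim\rho^{H_{n,\ell}}$ it computes $\tau_{w_{\ell},\U_{n}}(B_{\U_{n}}(I_{n},\delta))=\mu_{H_{n,\ell}}(B_{H_{n,\ell}}(I_{n},\delta))\asymp\delta^{\dim H_{n,\ell}}$---note that $H_{n,\ell}$ is precisely the centralizer of your balanced order-$\ell$ element, so $\dim H_{n,\ell}$ is the codimension appearing in your count---and then concludes from $(\ell-1)\dim H_{n,\ell}<\dim\U_{n}$. This also spares the paper your combinatorial fuss over the determinant condition, since it works in $\U_{n}$ and descends to $\SU_{n}$ by a local-diffeomorphism argument with the multiplication map $\SU_{n}\times\U_{1}\rightarrow\U_{n}$.
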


\begin{proof}
It is enough to show the claim for $\U_{n}$, namely, that $\tau_{w_{\ell},\U_{n}}^{*\ell-1}\notin L^{\infty}(\U_{n})$.
Indeed, let $\mathrm{m}:\SU_{n}\times\U_{1}\rightarrow\U_{n}$ be
the multiplication map and note that $\mu_{\U_{n}}=\mathrm{m}_{*}(\mu_{\SU_{n}}\times\mu_{\U_{1}})$.
Since $\mathrm{m}\circ((w_{\ell}^{*\ell-1})_{\SU_{n}},(w_{\ell}^{*\ell-1})_{\U_{1}})=(w_{\ell}^{*\ell-1})_{\U_{n}}\circ\mathrm{m}^{\ell-1}$
and since $\mathrm{m}$, $\mathrm{m}^{\ell-1}$, and $(w_{\ell}^{*\ell-1})_{\U_{1}}$
are all local diffeomorphisms, $\tau_{w_{\ell},\SU_{n}}^{*\ell-1}\in L^{\infty}(\SU_{n})$
implies $\tau_{w_{\ell},\U_{n}}^{*\ell-1}\in L^{\infty}(\U_{n})$.

For every $\delta>0$ and every closed subgroup $H\leq\U_{n}$, and
$g\in\U_{n}$, let 
\[
B_{H}(g,\delta):=\left\{ h\in H:\left\Vert h-g\right\Vert _{\mathrm{HS}}<\delta\right\} ,
\]
and let $\chi_{H,\delta}:H\rightarrow\R$ be the characteristic function
of $B_{H}(I_{n},\delta)$. Since $\chi_{\U_{n},\delta}$ is a conjugate
invariant function, it can be written as $\chi_{\U_{n},\delta}=\sum_{\rho\in\Irr(\U_{n})}b_{\rho}\rho$
for $b_{\rho}\in\C$, and therefore by (\ref{eq:Fourier coefficients of power words}),
\begin{align*}
\tau_{w_{\ell},\U_{n}}(B_{\U_{n}}(I_{n},\delta)) & =\int_{\U_{n}}\chi_{\U_{n},\delta}(g^{\ell})\mu_{\U_{n}}=\sum_{\rho\in\Irr(\U_{n})}b_{\rho}\int_{\U_{n}}\rho(g^{\ell})\mu_{\U_{n}}=\sum_{\rho\in\Irr(\U_{n})}b_{\rho}\int_{H_{n,\ell}}\rho(h)\mu_{H_{n,\ell}}\\
 & =\int_{H_{n,\ell}}\chi_{\U_{n},\delta}(h)\mu_{H_{n,\ell}}=\int_{H_{n,\ell}}\chi_{H_{n,\ell},\delta}(h)\mu_{H_{n,\ell}}=\mu_{H_{n,\ell}}(B_{H_{n,\ell}}(I_{n},\delta)).
\end{align*}
For every $a,b\geq0$, $B_{\U_{n}}(I_{n},a)\cdot B_{\U_{n}}(I_{n},b)\subseteq B_{\U_{n}}(I_{n},a+b)$,
so 
\[
\underbrace{B_{\U_{n}}\left(I_{n},\frac{\delta}{\ell-1}\right)\cdots B_{\U_{n}}\left(I_{n},\frac{\delta}{\ell-1}\right)}_{\ell-1\text{ times}}\subseteq B_{\U_{n}}\left(I_{n},\delta\right).
\]
Thus, for every $\delta>0$,
\[
\tau_{w_{\ell},\U_{n}}^{*(\ell-1)}\left(B_{\U_{n}}(I_{n},\delta)\right)\geq\left(\tau_{w_{\ell},\U_{n}}\left(B_{\U_{n}}\left(I_{n},\frac{\delta}{\ell-1}\right)\right)\right)^{\ell-1}=\Big(\mu_{H_{n,\ell}}\left(B_{H_{n,\ell}}(I_{n},\frac{\delta}{\ell-1})\right)\Big)^{\ell-1}
\]
For every closed subgroup $H\leq\U_{n}$ there are constants $c_{H},C_{H}>0$
such that $c_{H}\leq\frac{\mu_{H}(B_{H}(I_{n},\delta))}{\delta^{\dim H}}\leq C_{H}$,
for every $0<\delta\leq1$. Thus, there is a constant $C_{n,\ell}$
such that
\[
\frac{\tau_{w_{\ell},\U_{n}}^{*(\ell-1)}\left(B_{\U_{n}}(I_{n},\delta)\right)}{\mu_{\U_{n}}(B_{\U_{n}}(I_{n},\delta))}\geq\frac{\left(\mu_{H_{n,\ell}}\left(B_{H_{n,\ell}}(I_{n},\frac{\delta}{\ell-1})\right)\right)^{\ell-1}}{\mu_{\U_{n}}(B_{\U_{n}}(I_{n},\delta))}\geq C_{n,\ell}\delta^{(\ell-1)\dim H_{n,\ell}-\dim\U_{n}}.
\]
Taking $\delta\rightarrow0$, since $(\ell-1)\dim H_{n,\ell}<\dim\U_{n}$
whenever $n\gg_{\ell}1$, the density of $\tau_{w_{\ell},\U_{n}}^{*(\ell-1)}$
is unbounded. 
\end{proof}

\section{\label{sec:Fourier-estimates-for small reps}Fourier estimates for
small representations}

In this section we prove Theorem \ref{thm C:small Fourier coefficients}.
Our main tool is the \emph{Weingarten calculus}, which we describe
next. We refer to \cite{CMN22} for a beautiful survey on the topic.

Given a permutation $\sigma\in S_{m}$, let $\cyc(\sigma)$ be the
number of cycles of $\sigma$. Given two natural numbers $m$ and
$m\leq n$, the group $S_{m}$ acts on $\left(\mathbb{C}^{n}\right)^{\otimes m}$,
by $\sigma.\left(v_{1}\otimes\cdots\otimes v_{m}\right)=v_{\sigma(1)}\otimes\cdots\otimes v_{\sigma(m)}$,
and the function $\sigma\mapsto n^{\cyc(\sigma)}$ is the character
of this representation. By the Schur\textendash Weyl duality \cite{Wey39},
the irreducible representation of $S_{m}$ with character $\chi_{\lambda}$
appears in $\left(\mathbb{C}^{n}\right)^{\otimes m}$ with (non-zero)
multiplicity $\rho_{\lambda,n}(1)$, and, therefore, the function
$\sigma\mapsto n^{\cyc(\sigma)}$ is an invertible element of the
group algebra $\mathbb{C}[S_{m}]$ (with respect to the convolution
operation). This gives rise to the following definition: 
\begin{defn}[{{{{{\cite[Proposition 2.3]{CS06}}}}}}]
\label{def:The-Weingarten-function}Let $m\leq n$. The \emph{Weingarten
function} $\Wg_{m,n}:S_{m}\rightarrow\mathbb{C}$ is the inverse of
the function $\sigma\mapsto n^{\cyc(\sigma)}$ in $\mathbb{C}[S_{m}]$.
Its Fourier expansion is given by 
\begin{equation}
\Wg_{m,n}(\sigma)=\frac{1}{m!^{2}}\sum_{\lambda\vdash m}\frac{\chi_{\lambda}(1)^{2}}{\rho_{\lambda}(1)}\chi_{\lambda}(\sigma).\label{eq:Weingarten function}
\end{equation}
\end{defn}

\begin{thm}[{{{{{\cite[Theorem 4.4]{CMN22}}}}}}]
\label{thm:Weingarten} Let $m\leq n$ be natural numbers, let $F_{1},F_{2},H_{1},H_{2}:[m]\rightarrow[n]$
and let $\sX$ be a $\U_{n}$-valued random variable distributed according
to the Haar measure. Then, 
\[
\mathbb{E}\left(\prod_{i\in[m]}\sX_{F_{1}(i),F_{2}(i)}\cdot\prod_{i\in[m]}\overline{\sX_{H_{1}(i),H_{2}(i)}}\right)=\sum_{\substack{\sigma,\tau\in S_{m}\\
F_{1}\circ\sigma=H_{1}\\
F_{2}\circ\tau=H_{2}
}
}\Wg_{m,n}(\sigma\tau^{-1}).
\]
\end{thm}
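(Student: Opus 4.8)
The plan is to identify the left-hand side as a single Hilbert--Schmidt matrix entry of the averaging operator for the conjugation action of $\U_n$ on $\mathrm{End}(V)$, where $V:=(\C^n)^{\otimes m}$, and then to diagonalize it over the Schur--Weyl basis. First I would write $\mathsf{X}_{a,b}=\langle e_a,\mathsf{X}e_b\rangle$, so that, with $e_F:=e_{F(1)}\otimes\cdots\otimes e_{F(m)}$, one has $\prod_{i\in[m]}\mathsf{X}_{F_1(i),F_2(i)}=\langle e_{F_1},\mathsf{X}^{\otimes m}e_{F_2}\rangle$ and $\prod_{i\in[m]}\overline{\mathsf{X}_{H_1(i),H_2(i)}}=\overline{\langle e_{H_1},\mathsf{X}^{\otimes m}e_{H_2}\rangle}$. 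Introducing the rank-one operators $A:=e_{F_2}e_{H_2}^{*}$ and $B:=e_{F_1}e_{H_1}^{*}$ in $\mathrm{End}(V)\cong V\otimes\overline V$, a short trace manipulation shows
\[
\prod_{i\in[m]}\mathsf{X}_{F_1(i),F_2(i)}\cdot\prod_{i\in[m]}\overline{\mathsf{X}_{H_1(i),H_2(i)}}=\bigl\langle B,\ \mathsf{X}^{\otimes m}A(\mathsf{X}^{\otimes m})^{*}\bigr\rangle_{\mathrm{HS}},
\]
so that, after taking expectations, the quantity in question equals $\langle B,\Phi(A)\rangle_{\mathrm{HS}}$ with $\Phi(A):=\mathbb{E}\bigl(\mathsf{X}^{\otimes m}A(\mathsf{X}^{\otimes m})^{*}\bigr)$.

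The operator $\Phi$ is the Haar average of a Hilbert--Schmidt-unitary representation of the compact group $\U_n$ on $\mathrm{End}(V)$, hence it is the orthogonal projection onto the fixed space $\mathrm{End}_{\U_n}(V)$. By Schur--Weyl duality \cite{Wey39} --- already invoked in the excerpt, since for $m\le n$ every $\chi_\lambda$ with $\lambda\vdash m$ occurs in $V$ with multiplicity $\rho_{\lambda,n}(1)>0$ --- the space $\mathrm{End}_{\U_n}(V)$ is $m!$-dimensional and is spanned by the \emph{linearly independent} permutation operators $P_\sigma$, $\sigma\in S_m$, defined by $P_\sigma(v_1\otimes\cdots\otimes v_m)=v_{\sigma^{-1}(1)}\otimes\cdots\otimes v_{\sigma^{-1}(m)}$. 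The orthogonal projection onto the span of a linearly independent family $\{P_\sigma\}$ is $\Phi(A)=\sum_{\sigma,\tau\in S_m}(G^{-1})_{\sigma\tau}\,\langle P_\tau,A\rangle_{\mathrm{HS}}\,P_\sigma$, where $G_{\sigma\tau}:=\langle P_\sigma,P_\tau\rangle_{\mathrm{HS}}$. Using $P_\sigma^{*}=P_{\sigma^{-1}}$, $P_\sigma P_\tau=P_{\sigma\tau}$, and $\tr(P_\rho)=n^{\cyc(\rho)}$, I get $G_{\sigma\tau}=\tr(P_{\sigma^{-1}\tau})=n^{\cyc(\sigma^{-1}\tau)}$.

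Now $G$ is precisely the matrix of convolution by the class function $g\mapsto n^{\cyc(g)}$ in the regular representation of $\C[S_m]$, so by Definition~\ref{def:The-Weingarten-function} its inverse has entries $(G^{-1})_{\sigma\tau}=\Wg_{m,n}(\sigma^{-1}\tau)$. It then remains to evaluate the two inner products: $\langle B,P_\sigma\rangle_{\mathrm{HS}}=\tr(e_{H_1}e_{F_1}^{*}P_\sigma)=\langle e_{F_1},P_\sigma e_{H_1}\rangle=\langle e_{F_1},e_{H_1\circ\sigma^{-1}}\rangle$, which equals $1$ if $F_1\circ\sigma=H_1$ and $0$ otherwise, and similarly $\langle P_\tau,A\rangle_{\mathrm{HS}}$ equals $1$ if $F_2\circ\tau=H_2$ and $0$ otherwise. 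Substituting gives
\[
\mathbb{E}\Bigl(\prod_{i\in[m]}\mathsf{X}_{F_1(i),F_2(i)}\prod_{i\in[m]}\overline{\mathsf{X}_{H_1(i),H_2(i)}}\Bigr)=\sum_{\substack{\sigma,\tau\in S_m\\ F_1\circ\sigma=H_1\\ F_2\circ\tau=H_2}}\Wg_{m,n}(\sigma^{-1}\tau),
\]
and since $\Wg_{m,n}$ is a class function --- visible from its Fourier expansion (\ref{eq:Weingarten function}) --- and every permutation is conjugate to its inverse, $\Wg_{m,n}(\sigma^{-1}\tau)=\Wg_{m,n}(\tau^{-1}\sigma)=\Wg_{m,n}(\sigma\tau^{-1})$, which is the stated formula.

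The substantive input is Schur--Weyl duality for $m\le n$: it supplies both the linear independence of the $P_\sigma$ (so that the Gram matrix $G$ is invertible and the projection formula applies) and the equality $\dim\mathrm{End}_{\U_n}(V)=m!$, and it is already the tool used in the excerpt to define $\Wg_{m,n}$, so no new ingredient is required. The only delicate point is bookkeeping: one must fix the conjugate-linear identification $V\otimes\overline V\cong\mathrm{End}(V)$ once and for all (equivalently, stay inside $\mathrm{End}(V)$ throughout, as above), keep the composition convention $P_\sigma P_\tau=P_{\sigma\tau}$ consistent with the action convention, and check that the constraints emerge as $F_1\circ\sigma=H_1$ and $F_2\circ\tau=H_2$ rather than their inverses; with a fixed convention these are all routine trace computations.
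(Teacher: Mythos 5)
Your proof is correct. Note, however, that the paper does not prove Theorem \ref{thm:Weingarten} at all: it is imported verbatim from \cite[Theorem 4.4]{CMN22}, so there is no internal argument to compare yours against. What you have written is the standard derivation of the Weingarten convolution formula, and all of its steps check out: the identification of the left-hand side with $\langle B,\Phi(A)\rangle_{\mathrm{HS}}$ is a correct trace computation; the Haar average $\Phi$ of the unitary conjugation action on $\mathrm{End}\left((\C^{n})^{\otimes m}\right)$ is indeed the orthogonal projection onto the commutant; Schur--Weyl duality (already invoked in the paper to define $\Wg_{m,n}$) gives both that the commutant is spanned by the $P_{\sigma}$ and that these are linearly independent for $m\leq n$, so the Gram matrix $G_{\sigma\tau}=n^{\cyc(\sigma^{-1}\tau)}$ is invertible; since $G$ is the matrix of convolution by $\sigma\mapsto n^{\cyc(\sigma)}$, Definition \ref{def:The-Weingarten-function} gives $(G^{-1})_{\sigma\tau}=\Wg_{m,n}(\sigma^{-1}\tau)$; and the delta constraints $F_{1}\circ\sigma=H_{1}$, $F_{2}\circ\tau=H_{2}$ come out correctly from the two rank-one inner products. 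The final bookkeeping point --- replacing $\Wg_{m,n}(\sigma^{-1}\tau)$ by $\Wg_{m,n}(\sigma\tau^{-1})$ --- is handled properly: $\Wg_{m,n}$ is a class function by (\ref{eq:Weingarten function}), and in $S_{m}$ every element is conjugate to its inverse, so the two arguments have the same cycle type. The only caveat worth stating explicitly is the one you already flag: the precise form of the constraints depends on the composition convention for $P_{\sigma}$, and with your convention $P_{\sigma}e_{H}=e_{H\circ\sigma^{-1}}$ the computation is consistent throughout, so the proof is complete as written.
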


The following lemma follows from \cite[Theorem 1.1]{CM17}: 
\begin{lem}
\label{lem:Weingarten.bounds}If $(8m)^{7/4}\leq n$, then, for every
$\sigma\in S_{m}$, $\left|\Wg_{m,n}(\sigma)\right|\leq\Wg_{m}(1)\leq\frac{2}{n^{m}}$. 
\end{lem}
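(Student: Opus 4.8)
The plan is to derive both inequalities from the effective estimate for the Weingarten function in \cite[Theorem 1.1]{CM17}. First I would recall the relevant normalisation: for $\sigma\in S_{m}$ with cycle lengths $|c_{1}|,\dots,|c_{\cyc(\sigma)}|$, the leading term of $\Wg_{m,n}(\sigma)$ is $\mathrm{Mob}(\sigma)\,n^{\cyc(\sigma)-2m}$, where $\mathrm{Mob}(\sigma):=\prod_{i}(-1)^{|c_{i}|-1}\mathrm{Cat}(|c_{i}|-1)$ and $\mathrm{Cat}(k)=\frac{1}{k+1}\binom{2k}{k}$ is the $k$-th Catalan number; note that $\mathrm{Mob}(1)=1$ and $2m-\cyc(1)=m$, so at the identity this leading term is $n^{-m}$. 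The content of \cite[Theorem 1.1]{CM17} is that $n^{2m-\cyc(\sigma)}\Wg_{m,n}(\sigma)$ agrees with $\mathrm{Mob}(\sigma)$ up to a small relative error once $n$ is at least a fixed multiple of $m^{7/4}$; since $8^{7/4}\approx 38$ is far above that threshold, the hypothesis $(8m)^{7/4}\le n$ places us well inside the effective range, where the relative error is comfortably below $\tfrac12$. I would therefore extract from loc.\ cit.\ the two facts
\[
\tfrac12\,n^{-m}\;\le\;\Wg_{m,n}(1)\;\le\;\tfrac32\,n^{-m}
\qquad\text{and}\qquad
\bigl|\Wg_{m,n}(\sigma)\bigr|\;\le\;\tfrac32\,\bigl|\mathrm{Mob}(\sigma)\bigr|\,n^{\cyc(\sigma)-2m}\quad(\sigma\in S_{m}),
\]
the first chain of which already yields $\Wg_{m,n}(1)\le 2n^{-m}$, i.e.\ the second inequality of the lemma (with $\Wg_{m}(1)$ read as $\Wg_{m,n}(1)$).

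For the remaining inequality $\bigl|\Wg_{m,n}(\sigma)\bigr|\le\Wg_{m,n}(1)$, the case $\sigma=1$ is an equality, so I would assume $\sigma\ne1$ and exploit the resulting gap in the exponent of $n$. Bounding Catalan numbers cycle by cycle via $\mathrm{Cat}(k)\le 4^{k}$ gives $|\mathrm{Mob}(\sigma)|\le\prod_{i}4^{|c_{i}|-1}=4^{\,m-\cyc(\sigma)}$, and $\sigma\ne1$ forces $m-\cyc(\sigma)\ge1$, hence $2m-\cyc(\sigma)\ge m+1$. Substituting into the second displayed bound,
\[
\bigl|\Wg_{m,n}(\sigma)\bigr|\;\le\;\tfrac32\cdot 4^{\,m-\cyc(\sigma)}\,n^{-m-(m-\cyc(\sigma))}\;=\;\frac{3}{2n^{m}}\Bigl(\frac{4}{n}\Bigr)^{m-\cyc(\sigma)}\;\le\;\frac{6}{n^{m+1}},
\]
using $n\ge(8m)^{7/4}>4$ and $m-\cyc(\sigma)\ge1$. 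Comparing with $\Wg_{m,n}(1)\ge\tfrac12 n^{-m}$ then gives $\bigl|\Wg_{m,n}(\sigma)\bigr|\le(12/n)\,\Wg_{m,n}(1)\le\Wg_{m,n}(1)$, since $n\ge 8^{7/4}>12$, completing the argument. (For the second inequality one could instead try a bare-hands estimate from the Fourier expansion $\Wg_{m,n}(1)=\frac{1}{m!^{2}}\sum_{\lambda\vdash m}\chi_{\lambda}(1)^{3}/\rho_{\lambda,n}(1)$ in (\ref{eq:Weingarten function}) using $\rho_{\lambda,n}(1)\ge\frac{\chi_{\lambda}(1)}{m!}\prod_{(i,j)\in\lambda}(n+j-i)\ge\frac{\chi_{\lambda}(1)}{m!}(n-m+1)^{m}$, which gives $\Wg_{m,n}(1)\le(n-m+1)^{-m}$; but $(n-m+1)^{-m}\le 2n^{-m}$ only for $m$ up to a fixed bound, so the sharp input of \cite{CM17} really is needed for a statement uniform in $m$.)

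The only genuine work is locating in \cite[Theorem 1.1]{CM17} a two-sided estimate for $\Wg_{m,n}(\sigma)$ — both an upper bound on $|\Wg_{m,n}(\sigma)|$ for all $\sigma$ and a matching lower bound on $\Wg_{m,n}(1)$ — valid throughout the range in which $n$ is at least a constant times $m^{7/4}$, with relative error small enough that, on the subrange $n\ge(8m)^{7/4}$, it stays well below $\tfrac12$; given this, the factor $(4/n)^{m-\cyc(\sigma)}\le 4/n$ saved from the exponent gap for $\sigma\ne1$, together with $n\ge 12$, does the rest and the remainder is elementary. One should also confirm that the constant $8^{7/4}$ in the hypothesis dominates whatever explicit threshold appears in loc.\ cit.\ (in the version I have in mind this threshold is $\sqrt6$, and $8^{7/4}\approx 38\gg\sqrt6\approx 2.45$, leaving ample room).
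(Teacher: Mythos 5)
Your argument is correct and follows exactly the route the paper intends: the paper offers no proof beyond the citation of \cite[Theorem 1.1]{CM17}, and your derivation (two-sided comparison with $\mathrm{Mob}(\sigma)n^{\cyc(\sigma)-2m}$, the Catalan bound $|\mathrm{Mob}(\sigma)|\le 4^{m-\cyc(\sigma)}$, and the exponent gap $2m-\cyc(\sigma)\ge m+1$ for $\sigma\neq 1$, all comfortably within the $n\ge\sqrt{6}\,m^{7/4}$ range) is precisely how the lemma follows from that theorem. Your reading of $\Wg_{m}(1)$ as $\Wg_{m,n}(1)$ is also the intended one.
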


\begin{lem}
\label{lem:moments.trace}Let $w\in F_{r}$ be a non-trivial word
of length $\ell$, let $\sX_{1},\ldots,\sX_{r}$ be independent $\U_{n}$-valued
random variables, each distributed according to the Haar measure,
and let $(8M\ell)^{7/4}\leq n$. Then 
\[
\mathbb{E}\left|\tr w(\sX_{1},\ldots,\sX_{r})\right|^{2M}\leq2^{\ell}(M\ell)!^{2}.
\]
\end{lem}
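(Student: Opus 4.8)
The plan is to expand the $2M$-th moment using the Weingarten calculus (Theorem~\ref{thm:Weingarten}) and then bound the resulting sum crudely, using the estimate on Weingarten functions (Lemma~\ref{lem:Weingarten.bounds}) together with a combinatorial count of the index functions that appear. First I would write $w = w_\ell \cdots w_1$ as a reduced word, so that $w(\sX_1,\ldots,\sX_r)$ is a product of $\ell$ matrices drawn from the list $\sX_1,\ldots,\sX_r$ and their conjugate-transposes. Expanding $\tr w(\sX_1,\ldots,\sX_r)$ as a sum over internal indices, and likewise $\overline{\tr w(\sX_1,\ldots,\sX_r)} = \tr w(\sX_1,\ldots,\sX_r)^*$, the quantity $|\tr w|^{2M}$ becomes a sum, over a bounded number of index assignments, of products of matrix entries of the $\sX_k$ and their conjugates. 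Collecting, for each generator $x_k$, the entries of $\sX_k$ that appear (say $m_k$ of them, with $\sum_k m_k = M\ell$), we get that each occurrence of $x_k^{\pm 1}$ contributes one factor $(\sX_k)_{ab}$ or one factor $\overline{(\sX_k)_{ab}}$, and by reducedness the number of each is exactly $m_k$ (each letter of $w$ contributes one "holomorphic" and, via the conjugate copy, one "antiholomorphic" entry of the same $\sX_k$).

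Next I would apply Theorem~\ref{thm:Weingarten} to each generator separately: since the $\sX_k$ are independent, the expectation of the product factors over $k$, and for each $k$ we get a sum over pairs $(\sigma_k,\tau_k) \in S_{m_k}^2$ of $\Wg_{m_k,n}(\sigma_k\tau_k^{-1})$, subject to the constraint that the row and column index functions match under $\sigma_k$ and $\tau_k$ respectively. The hypothesis $(8M\ell)^{7/4}\le n$ guarantees $(8m_k)^{7/4}\le n$ for every $k$, so Lemma~\ref{lem:Weingarten.bounds} gives $|\Wg_{m_k,n}(\sigma_k\tau_k^{-1})| \le 2 n^{-m_k}$. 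Multiplying over $k$, each nonzero term of the fully expanded sum is bounded in absolute value by $\prod_k 2 n^{-m_k} = 2^r n^{-M\ell}$ — or more simply by $2^\ell n^{-M\ell}$ since $r \le \ell$. The total expectation is therefore at most $2^\ell n^{-M\ell}$ times the number of valid configurations: the choices of the internal trace indices (at most $n^{M\ell}$ of them, since there are $M\ell$ free internal indices in the expansion of $|\tr w|^{2M}$ before imposing any constraints) times the choices of the permutations $(\sigma_k,\tau_k)$, of which there are $\prod_k (m_k!)^2 \le (M\ell)!^2$ by sub-multiplicativity of factorials along a composition of $M\ell$. Combining, $\mathbb{E}|\tr w(\sX_1,\ldots,\sX_r)|^{2M} \le 2^\ell n^{-M\ell} \cdot n^{M\ell} \cdot (M\ell)!^2 = 2^\ell (M\ell)!^2$, as claimed.

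The step I expect to need the most care is the bookkeeping of the index functions and the verification that the count of free internal indices is exactly $M\ell$, so that it cancels the $n^{-M\ell}$ from the Weingarten bounds. Concretely, in $\tr w(\sX_1,\ldots,\sX_r) = \sum_{j_1,\ldots,j_\ell} (\sX_{i_1}^{\varepsilon_1})_{j_1 j_2}(\sX_{i_2}^{\varepsilon_2})_{j_2 j_3}\cdots(\sX_{i_\ell}^{\varepsilon_\ell})_{j_\ell j_1}$ there are $\ell$ summation indices, so $|\tr w|^{2M}$ has $2M\ell$ of them; however, the Weingarten formula in Theorem~\ref{thm:Weingarten} is stated for $\mathbb{E}(\prod \sX_{F_1(i)F_2(i)} \prod \overline{\sX_{H_1(i)H_2(i)}})$ with the row functions of the conjugated entries constrained to match the row functions of the un-conjugated ones via $\sigma$ (and similarly columns via $\tau$), which is precisely what lets one identify the conjugate half of the indices with the holomorphic half and leaves $M\ell$ genuinely free indices — I would need to organize the matching so that the $M$ conjugated copies of the trace reuse, after summing out the $\sigma_k,\tau_k$, the same index range without overcounting. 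Once that identification is set up cleanly the rest is the routine arithmetic above; a mild point to watch is that some generators may not appear in $w$ at all (then $m_k = 0$ and the corresponding factor is trivially $1$), and that the bound $r \le \ell$ is what permits replacing $2^r$ by $2^\ell$.
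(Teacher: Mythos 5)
Your overall strategy is the same as the paper's: expand $\left|\tr w\right|^{2M}$ over matrix indices, apply Theorem \ref{thm:Weingarten} generator by generator, bound each Weingarten value by $2n^{-m_k}$ via Lemma \ref{lem:Weingarten.bounds}, and multiply by a count of admissible configurations. The count of permutation tuples ($\le (M\ell)!^{2}$) and the Weingarten bound are fine. The gap is in your count of admissible index assignments: you assert there are at most $n^{M\ell}$ of them because "the conjugate half of the indices is identified with the holomorphic half," and you flag this as bookkeeping. It is not bookkeeping; it is the heart of the proof. Before any Weingarten constraints there are $2M\ell$ free indices (from the $2M$ trace factors). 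For a fixed choice of the $(\sigma_k,\tau_k)$, the Weingarten matching conditions impose a second perfect matching $S$ on the $4M\ell$ index positions, in addition to the matching $T$ coming from the trace contractions. The number of admissible index functions is $n$ raised to the number of orbits of $\langle S,T\rangle$, and this is $\le n^{M\ell}$ only if every orbit has size at least $4$, i.e., only if no $S$-edge coincides with a $T$-edge. Such a coincidence occurs exactly when some letter $x_i$ is cyclically adjacent to $x_i^{-1}$ in $w$; ruling this out is precisely where the paper uses that $w$ is (cyclically) reduced, via the claim that $S\circ T$ has no fixed points.

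Your argument never uses reducedness in the counting step, and as written it would equally "prove" the bound for $w=x_{1}x_{1}^{-1}$, for which $\tr w\equiv n$ and hence $\mathbb{E}\left|\tr w\right|^{2M}=n^{2M}\gg2^{\ell}(M\ell)!^{2}$ once $n\ge(16M)^{7/4}$. In that example the orbit count is $2M\ell$ rather than $M\ell$, which is exactly the failure mode your heuristic overlooks. A further reason the "conjugate half determined by holomorphic half" picture cannot be taken at face value: for a letter occurring as $x_i^{-1}$, the holomorphic entries of $\sX_i$ sit inside the \emph{conjugated} trace copies, so the holomorphic/antiholomorphic split does not align with the split into the $M$ unconjugated and $M$ conjugated traces, and the determined indices must still be checked against the trace-contraction constraints $T$. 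To close the gap you need the paper's combinatorial argument (or an equivalent one) showing that, for cyclically reduced $w$, the union of the two matchings has all cycles of length $\ge 4$, so that the number of free indices is indeed at most $M\ell$.
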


\begin{proof}
Write $w$ as a cyclically reduced word $w=w_{1}\cdots w_{\ell}$,
where, for each $i$, $w_{i}$ is either $x_{j}$ or $x_{j}^{-1}$,
for some $j$. Denote the cyclic group of order $\ell$ by $C_{\ell}$
and, for each natural number $i$, let $[i]=\left\{ 1,\ldots,i\right\} $.
Finally, for $i\in[r]$, let $A_{i}=\left\{ t\in C_{\ell}\mid w_{t}=x_{i}\right\} $
and $B_{i}=\left\{ t\in C_{\ell}\mid w_{t}=x_{i}^{-1}\right\} $.
Let $A:=\bigcup_{i=1}^{r}A_{i}$ and $B:=\bigcup_{i=1}^{r}B_{i}$.
Denoting 
\[
\sW_{i}=\begin{cases}
\sX_{j} & w_{i}=x_{j}\\
\sX_{j}^{-1} & w_{i}=x_{j}^{-1}
\end{cases},
\]
we have $w(\sX_{1},\ldots,\sX_{r})=\sW_{1}\cdots\sW_{\ell}$, so 
\[
\tr w(\sX_{1},\ldots,\sX_{r})=\sum_{i_{1},\ldots,i_{\ell}\in[n]}(\sW_{1})_{i_{1},i_{2}}\cdots(\sW_{\ell})_{i_{\ell},i_{1}}=\sum_{\substack{F:C_{\ell}\times[2]\rightarrow[n]\\
F(j,2)=F(j+1,1)
}
}\prod_{j\in C_{\ell}}(\sW_{j})_{F(j,1),F(j,2)}.
\]
Therefore 
\[
\left|\tr w(\sX_{1},\ldots,\sX_{r})\right|^{2}=\sum_{\substack{F:C_{\ell}\times[4]\rightarrow[n]\\
F(j,2)=F(j+1,1)\\
F(j,4)=F(j+1,3)
}
}\prod_{j\in C_{\ell}}(\sW_{j})_{F(j,1),F(j,2)}\overline{(\sW_{j})_{F(j,3),F(j,4)}},
\]
and 
\[
\left|\tr w(\sX_{1},\ldots,\sX_{r})\right|^{2M}=\sum_{\substack{F:C_{\ell}\times[M]\times[4]\rightarrow[n]\\
F(j,t,2)=F(j+1,t,1)\\
F(j,t,4)=F(j+1,t,3)
}
}\prod_{(j,t)\in C_{\ell}\times[M]}(\sW_{j})_{F(j,t,1),F(j,t,2)}\overline{(\sW_{j})_{F(j,t,3),F(j,t,4)}}.
\]
Let $T:C_{\ell}\times[M]\times[4]\rightarrow C_{\ell}\times[M]\times[4]$
be the involution that switches between $(j+1,t,1)$ and $(j,t,2)$
and between $(j+1,t,3)$ and $(j,t,4)$, for all $j,t$. We have 
\begin{align*}
\left|\tr w(\sX_{1},\ldots,\sX_{r})\right|^{2M} & =\sum_{\substack{F:C_{\ell}\times[M]\times[4]\rightarrow[n]\\
F\circ T=F
}
}\prod_{(j,t)\in C_{\ell}\times[M]}(\sW_{j})_{F(j,t,1),F(j,t,2)}\overline{(\sW_{j})_{F(j,t,3),F(j,t,4)}}\\
 & =\sum_{\substack{F:C_{\ell}\times[M]\times[4]\rightarrow[n]\\
F\circ T=F
}
}\prod_{i=1}^{r}\left(\prod_{(j,t)\in A_{i}\times[M]}(\sX_{i})_{F(j,t,1),F(j,t,2)}\cdot\prod_{(j,t)\in B_{i}\times[M]}(\sX_{i})_{F(j,t,4),F(j,t,3)}\cdot\right.\\
 & \left.\cdot\prod_{(j,t)\in A_{i}\times[M]}\overline{(\sX_{i})_{F(j,t,3),F(j,t,4)}}\cdot\prod_{(j,t)\in B_{i}\times[M]}\overline{(\sX_{i})_{F(j,t,2),F(j,t,1)}}\right).
\end{align*}
For $k,k'\in[4]$, let $p_{k,k'}:(A\times[M]\times\left\{ k\right\} )\cup(B\times[M]\times\left\{ k'\right\} )\rightarrow C_{\ell}\times[M]$
be the map induced from the projection $C_{\ell}\times[M]\times[4]\rightarrow C_{\ell}\times[M]$.
Then 
\begin{align*}
 & \left|\tr w(\sX_{1},\ldots,\sX_{r})\right|^{2M}\\
= & \sum_{\substack{F:C_{\ell}\times[M]\times[4]\rightarrow[n]\\
F\circ T=F
}
}\prod_{i=1}^{r}\left(\prod_{(j,t)\in(A_{i}\cup B_{i})\times[M]}(\sX_{i})_{F\circ p_{1,4}^{-1}(j,t),F\circ p_{2,3}^{-1}(j,t)}\cdot\prod_{(j,t)\in(A_{i}\cup B_{i})\times[M]}\overline{(\sX_{i})_{F\circ p_{3,2}^{-1}(j,t),F\circ p_{4,1}^{-1}(j,t)}}\right).
\end{align*}
Applying Theorem \ref{thm:Weingarten} for each $i\in[r]$, and Lemma
\ref{lem:Weingarten.bounds}, 
\begin{align}
\mathbb{E}\left|\tr w(\sX_{1},\ldots,\sX_{r})\right|^{2M} & =\sum_{\substack{F:C_{\ell}\times[M]\times[4]\rightarrow[n]\\
F\circ T=F
}
}\prod_{i=1}^{r}\Biggl(\sum_{\substack{\sigma_{i},\tau_{i}\in\Sym\left((A_{i}\cup B_{i})\times[M]\right)\\
F\circ p_{1,4}^{-1}\circ\sigma_{i}=F\circ p_{3,2}^{-1}\\
F\circ p_{2,3}^{-1}\circ\tau_{i}=F\circ p_{4,1}^{-1}
}
}\Wg_{|A_{i}|+|B_{i}|,n}(\sigma_{i}\circ\tau_{i}^{-1})\Biggl)\nonumber \\
 & \leq\frac{2^{r}}{n^{M\ell}}\sum_{\substack{F:C_{\ell}\times[M]\times[4]\rightarrow[n]\\
F\circ T=F
}
}\prod_{i=1}^{r}\Biggl(\sum_{\substack{\sigma_{i},\tau_{i}\in\Sym\left((A_{i}\cup B_{i})\times[M]\right)\\
F\circ p_{1,4}^{-1}\circ\sigma_{i}=F\circ p_{3,2}^{-1}\\
F\circ p_{2,3}^{-1}\circ\tau_{i}=F\circ p_{4,1}^{-1}
}
}1\Biggl)\nonumber \\
 & =\frac{2^{r}}{n^{M\ell}}\sum_{\substack{\sigma_{1},\tau_{1},\ldots,\sigma_{r},\tau_{r}\\
\sigma_{i},\tau_{i}\in\Sym\left((A_{i}\cup B_{i})\times[M]\right)
}
}\left|\left\{ F:C_{\ell}\times[M]\times[4]\rightarrow[n]\mid\substack{F\circ T=F\\
F\circ p_{1,4}^{-1}\circ\sigma_{i}=F\circ p_{3,2}^{-1}\\
F\circ p_{2,3}^{-1}\circ\tau_{i}=F\circ p_{4,1}^{-1}
}
\right\} \right|.\label{eq:Weingarten.final.estimate}
\end{align}
Since $\sigma_{i}$ have disjoint supports, the number of tuples $\sigma_{1},\ldots,\sigma_{r}$
is at most $(M\ell)!$. Similarly, the number of tuples $\tau_{1},\ldots,\tau_{r}$
is at most $(M\ell)!$.

Fix $\sigma_{1},\ldots,\sigma_{r},\tau_{1},\ldots,\tau_{r}$. Denote
$A=\cup A_{i},B=\cup B_{i}$. The union of the graphs of $p_{1,4}^{-1}\circ\sigma_{i}\circ p_{3,2}$,
$i\in[r]$ is a perfect matching between 
\[
(A\times[M])\times\left\{ 1\right\} \cup(B\times[M])\times\left\{ 4\right\} \text{ and }(A\times[M])\times\left\{ 3\right\} \cup(B\times[M])\times\left\{ 2\right\} .
\]
Similarly, the union of the graphs of $p_{2,3}^{-1}\circ\sigma_{i}\circ p_{4,1}$,
$i\in[k]$ is a perfect matching between 
\[
(A\times[M])\times\left\{ 2\right\} \cup(B\times[M])\times\left\{ 3\right\} \text{ and }(A\times[M])\times\left\{ 4\right\} \cup(B\times[M])\times\left\{ 1\right\} .
\]
The union of these matchings is an involution $S:C_{\ell}\times[M]\times[4]\rightarrow C_{\ell}\times[M]\times[4]$.

Neither $S$ nor $T$ has fixed points. We claim that $S\circ T$
does not have a fixed point either. For example, if $j\in A_{i}$
and $t\in[M]$, then 
\[
S(j,t,1)\in A_{i}\times[M]\times\left\{ 3\right\} \cup B_{i}\times[M]\times\left\{ 2\right\} ,
\]
since $T(j,t,1)=(j-1,t,2)$, the only possibility that $T(j,t,1)=S(j,t,1)$
is when $j-1\in B_{i}$. But this will mean that $w_{j-1}=x_{i}^{-1}$
whereas the assumption that $j\in A_{i}$ means that $w_{j}=x_{i}$,
contradicting the assumption that $w_{1}\cdots w_{\ell}$ is cyclically
reduced. A similar argument holds if $j\in B_{i}$.

The set 
\[
\left\{ F:C_{\ell}\times[M]\times[4]\rightarrow[n]\mid\substack{F\circ T=F\\
F\circ p_{1,4}^{-1}\circ\sigma_{i}=F\circ p_{3,2}^{-1}\\
F\circ p_{2,3}^{-1}\circ\tau_{i}=F\circ p_{4,1}^{-1}
}
\right\} 
\]
consists of all functions $F:C_{\ell}\times[M]\times[4]\rightarrow[n]$
that are both $T$ and $S$ invariant. By the claims above, each orbit
of $\langle T,S\rangle$ has size at least 4, so the size of this
set is bounded by $n^{M\ell}$. The result now follows from (\ref{eq:Weingarten.final.estimate}). 
\end{proof}
We now prove Theorem \ref{thm C:small Fourier coefficients}. 
\begin{proof}[Proof of Theorem \ref{thm C:small Fourier coefficients}]
Note that by (\ref{eq:Fourier coefficients of convolution}), for
any $w\in F_{r}$, the Fourier coefficients $a_{w*w^{-1},\U_{n},\rho}$
of $w*w^{-1}$ for $\rho\in\mathrm{Irr}(\U_{n})$ are positive, and
$a_{w*w^{-1},\U_{n},\rho}=\frac{\left|a_{w,\U_{n},\rho}\right|^{2}}{\rho(1)}$.
Hence, by replacing $w$ with $w*w^{-1}$, we may assume that $w$
has positive Fourier coefficients.

Let $\lambda\in\Lambda_{n}$, and let $\rho_{\lambda,n}\in\Irr(\U_{n})$.
By Definition \ref{def:first and last fundamental weights}, there
exists partitions $\mu\vdash m_{1}$ and $\nu\vdash m_{2}$, such
that $\mu:=\lambda_{-}$ and $\nu:=\lambda_{+}^{\vee,n}$, and such
that $\rho_{\lambda,n}\hookrightarrow\rho_{\mu,n}\otimes\left(\rho_{\nu,n}^{\vee}\cdot(\det)^{t}\right)$
for some $t\in\Z$. Moreover, by Remark \ref{rem:useful fact}, we
have $\rho_{\mu,n}(1),\rho_{\nu,n}(1)\leq\rho_{\lambda,n}(1)$. By
the positivity of Fourier coefficients of $w$, we have: 
\begin{align}
\left(\E(\rho_{\lambda,n}w(\sX_{1},\ldots,\sX_{r}))\right)^{2} & \le\left(\E\left(\left(\rho_{\mu,n}\otimes\left(\rho_{\nu,n}^{\vee}\cdot(\det)^{t}\right)\right)w(\sX_{1},\ldots,\sX_{r})\right)\right)^{2}\nonumber \\
 & \leq\E\left(\left|\rho_{\mu,n}w(\sX_{1},\ldots,\sX_{r})\right|^{2}\right)\E\left(\left|\rho_{\nu,n}w(\sX_{1},\ldots,\sX_{r})\right|^{2}\right).\label{eq:redution to partitions}
\end{align}
Hence, in order to prove Theorem \ref{thm C:small Fourier coefficients},
it is enough to show there exist $C,\alpha>0$ such that: 
\[
\E\left(\left|\rho_{\mu',n}w(\sX_{1},\ldots,\sX_{r})\right|^{2}\right)<\rho_{\mu',n}(1)^{1-\epsilon},
\]
for $\mu'\vdash m$, such that $\rho_{\mu',n}(1)<2^{Cn^{\alpha}}$
and $\ell(\mu')\leq n/2$. Let us take $\alpha=\frac{1}{4\ell+1}$
and $C=\frac{1}{8\ell}$. Then by Lemma \ref{lem:dimension of small representations}(2),
$\rho_{\mu',n}(1)\geq2^{\frac{m}{4}}$ and hence, $m\leq4Cn^{\alpha}\leq\frac{1}{2\ell}n^{\frac{1}{4\ell+1}}$.
In particular, $(2m\ell)^{4\ell}\leq\frac{n}{2m\ell}\leq\frac{n}{m}$,
and therefore, by Lemma \ref{lem:dimension of small representations}(1):
\begin{equation}
2^{\ell}(m\ell)!^{2}\leq(2m\ell)^{2m\ell}\leq\left(\frac{n}{m}\right)^{\frac{m}{2}}<\rho_{\mu',n}(1)^{\frac{1}{2}}.\label{eq:silly.inequality}
\end{equation}
Using the positivity of the Fourier coefficients of $w$, since $\rho_{\mu',n}\hookrightarrow\left(\C^{n}\right)^{\otimes m}$,
we have: 
\begin{equation}
\E\left(\left|\rho_{\mu',n}w(\sX_{1},\ldots,\sX_{r})\right|^{2}\right)\leq\mathbb{E}\left(\left|\tr w(\sX_{1},\ldots,\sX_{r})\right|^{2m}\right).\label{eq:reduction to powers of trace}
\end{equation}
Since the condition $m\leq\frac{1}{2\ell}n^{\frac{1}{4\ell+1}}$ implies
that $(8m\ell)^{7/4}\leq n$, we get by Lemma \ref{lem:moments.trace}
and (\ref{eq:silly.inequality}), 
\begin{equation}
\E\left(\left|\rho_{\mu',n}w(\sX_{1},\ldots,\sX_{r})\right|^{2}\right)<\rho_{\mu',n}(1)^{\frac{1}{2}}.\label{eq:required bounds}
\end{equation}
Applying (\ref{eq:required bounds}) to $\mu'=\mu,\nu$, and by (\ref{eq:redution to partitions}),
we can now finish the proof: 
\begin{align*}
\E(\rho_{\lambda,n}w(\sX_{1},\ldots,\sX_{r})) & \leq\E\left(\left|\rho_{\mu,n}w(\sX_{1},\ldots,\sX_{r})\right|^{2}\right)^{\frac{1}{2}}\E\left(\left|\rho_{\nu,n}w(\sX_{1},\ldots,\sX_{r})\right|^{2}\right)^{\frac{1}{2}}\\
 & \leq\rho_{\mu,n}(1)^{\frac{1}{4}}\rho_{\nu,n}(1)^{\frac{1}{4}}\leq\rho_{\lambda,n}(1)^{\frac{1}{2}}.\qedhere
\end{align*}
\end{proof}

\section{\label{sec:Fourier-estimates-for symmetric powers}Fourier estimates
for the symmetric powers}

In this section we prove Theorem \ref{thm:main thm symmetric powers}.
Our first step in the proof will be to use the character bounds of
Proposition \ref{prop:character bound for symmetric} to obtain Fourier
estimates for $h_{m,n}$ for $m\geq An$ for some $A>1$. 
\begin{prop}
\label{prop:proof for large symmetric powers}For every $w\in F_{r}$,
there exist $\epsilon,A>0$, such that for every $n\geq2$, and $m\geq An$:
\[
\mathbb{E}\left(\left|h_{m,n}(w(\sX_{1},...,\sX_{r}))\right|^{2}\right)\leq\binom{n+m-1}{m}^{2(1-\epsilon)}.
\]
\end{prop}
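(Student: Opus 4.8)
The plan is to follow the strategy sketched in the introduction, combining the character bound of Proposition~\ref{prop:character bound for symmetric} with the non-concentration result of Theorem~\ref{thm:probabilistic result}, exactly in the spirit of the proof of Theorem~\ref{thm:Detalied Theorem B}. Since we want a bound on the \emph{second moment} rather than on the Fourier coefficient itself, I would first replace $w$ by $\widetilde{w}:=w*w^{-1}\in F_{2r}$ (of length $2\ell(w)$), so that $\mathbb{E}(|h_{m,n}(w(\sX_1,\dots,\sX_r))|^2)=\mathbb{E}(h_{m,n}(\widetilde{w}(\sX_1,\dots,\sX_{2r})))$; more precisely $h_{m,n}(g)\overline{h_{m,n}(g)}=h_{m,n}(g)h_{m,n}(g^{-1})$ when evaluated on unitary $g$ --- wait, $h_{m,n}$ is not real, so I should be careful: one has $|h_{m,n}(g)|^2 = h_{m,n}(g)\rho_{m^1,n}^\vee(g)$, and since $\rho_{m^1,n}^\vee$ is again an irreducible character whose absolute value equals $|h_{m,n}|$ on unitary elements, the bound $|h_{m,n}(g)|<h_{m,n}(1)^{1-\beta/4}$ of Proposition~\ref{prop:character bound for symmetric} already controls $|h_{m,n}(g)|^2 < h_{m,n}(1)^{2(1-\beta/4)}$ for $(\beta,\epsilon)$-spread $g$. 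So there is no real need to pass to $\widetilde w$; I will instead work directly with $|h_{m,n}|^2$ on the spread locus and with the trivial bound $h_{m,n}(1)^2$ off it.

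The main body of the argument: fix $\beta=\tfrac12$, and let $c>0$, $A>0$ be the constants furnished by Proposition~\ref{prop:character bound for symmetric} for this $\beta$. Enlarging $A$ if necessary, assume also $A>8\ell(w)$ and $A$ large enough that $n>\max\{8\ell(w),16\}$ is automatic once $m\ge An$ forces $n$ large --- actually $n$ can be small even when $m$ is large, so I must handle bounded $n$ separately; for fixed $n$ there are only finitely many $m$ below any given threshold and the whole statement is vacuous-or-trivial for $n$ in a bounded range once $\epsilon$ is chosen small enough depending on that range, so I may and do assume $n>\max\{8\ell(w),16\}$. Set $\epsilon_0:=h_{m,n}(1)^{-c/n}$. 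By Proposition~\ref{prop:character bound for symmetric}, for every $(\tfrac12,\epsilon_0)$-spread $g\in\U_n$ we have $|h_{m,n}(g)|^2<h_{m,n}(1)^{2-\beta/2}=h_{m,n}(1)^{7/4}$. By Theorem~\ref{thm:probabilistic result} with $\beta=\tfrac12$,
\[
\mathbb{P}\bigl(w(\sX_1,\dots,\sX_r)\text{ is not }(\tfrac12,\epsilon_0)\text{-spread}\bigr)<2^{3n^2}\epsilon_0^{\frac{n^2}{16(\ell(w)+1)}}=2^{3n^2}h_{m,n}(1)^{-\frac{cn}{16(\ell(w)+1)}}.
\]
Now split the expectation over the spread and non-spread events:
\[
\mathbb{E}\bigl(|h_{m,n}(w(\sX_1,\dots,\sX_r))|^2\bigr)\le \mathbb{P}(\text{not spread})\cdot h_{m,n}(1)^2+h_{m,n}(1)^{7/4}.
\]
It remains to absorb the first term. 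Using $h_{m,n}(1)=\binom{n+m-1}{m}\ge(1+\tfrac mn)^{\,?}$ --- concretely, since $m\ge An$, Lemma~\ref{lem:Striling's inequality} gives $h_{m,n}(1)=\binom{n+m-1}{n-1}\ge (1+\tfrac{m}{n-1})^{n-1}\ge A^{\,n-1}$, so $\log h_{m,n}(1)\ge (n-1)\log A$, which beats $3n^2$ once $A$ is large enough \emph{relative to $n$}; but $A$ is an absolute constant and $3n^2$ grows faster, so I instead use the stronger fact that $h_{m,n}(1)\ge(1+m/n)^{n/2}\cdot(\dots)$ is at least $m^{cn}$-type: actually the clean route is $h_{m,n}(1)=\binom{n+m-1}{m}\ge\binom{n+m-1}{n-1}\ge\bigl(\tfrac{n+m-1}{n-1}\bigr)^{n-1}$, and since we may also take $m$ as large as we wish relative to $n$ only when $n$ is small, whereas for $n$ large the exponent $n-1$ already dominates $3n^2/\log A$... this is the step that needs care.

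The main obstacle, then, is making the non-spread contribution $2^{3n^2}h_{m,n}(1)^{2-\frac{cn}{16(\ell(w)+1)}}$ smaller than, say, $h_{m,n}(1)^{2-\epsilon}$ \emph{uniformly in $n$}, given only $m\ge An$. The resolution is to observe that $\tfrac{cn}{16(\ell(w)+1)}$ is linear in $n$ while, for $m\ge An$ with $A$ a sufficiently large absolute constant, $h_{m,n}(1)\ge\binom{n+m-1}{m}$ is at least $2^{c'n^2}$ for a constant $c'=c'(A)>0$ that grows with $A$ (indeed $\binom{n+m-1}{m}\ge\binom{n+m-1}{\lfloor n/2\rfloor}\ge (m/n)^{n/2}\ge A^{n/2}$, and more sharply $\binom{n+m-1}{m}\ge 2^{\Omega(n\log(m/n))}\ge 2^{\Omega(n\log A)}$); choosing $A$ so large that $2^{3n^2}\le h_{m,n}(1)^{\frac{cn}{32(\ell(w)+1)}}$ for all $n\ge 2$ --- which is possible because the right side is $\ge 2^{\Omega(n^2\log A/(\ell(w)+1))}$ --- the non-spread term is bounded by $h_{m,n}(1)^{2-\frac{cn}{32(\ell(w)+1)}}\le h_{m,n}(1)^{2-\frac{c}{16(\ell(w)+1)}}$. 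Combining with the $h_{m,n}(1)^{7/4}$ term and taking $\epsilon:=\min\{\tfrac18,\tfrac{c}{32(\ell(w)+1)}\}$ (halved if needed to absorb the sum of the two terms into a single power, and further shrunk to cover the finitely many small $n$), we obtain $\mathbb{E}(|h_{m,n}(w(\sX_1,\dots,\sX_r))|^2)\le\binom{n+m-1}{m}^{2(1-\epsilon)}$, as required. I expect the only genuinely delicate point to be bookkeeping the dependence of $A$ and $c'$ on $\ell(w)$ and verifying the inequality $2^{3n^2}\le h_{m,n}(1)^{\frac{cn}{32(\ell(w)+1)}}$ holds for all $n\ge2$ and not merely $n\gg1$; everything else is a routine repetition of the Theorem~\ref{thm:Detalied Theorem B} template.
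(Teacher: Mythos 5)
Your main argument for large $n$ is essentially the paper's: split according to whether $w(\sX_1,\ldots,\sX_r)$ is $(\tfrac12,\epsilon_0)$-spread with $\epsilon_0=h_{m,n}(1)^{-c/n}$, use Proposition \ref{prop:character bound for symmetric} on the spread event and Theorem \ref{thm:probabilistic result} off it, and absorb the factor $2^{3n^2}$ by taking $A$ so large that $h_{m,n}(1)\geq A^{n-1}$ makes $h_{m,n}(1)^{cn/(32(\ell(w)+1))}$ exceed $2^{3n^2}$ for all $n$; this is exactly the computation in the paper (which chooses $A$ with $2^{\frac{200\ell}{c}n}<A^{n-1}<h_{m,n}(1)$). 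A minor slip: your parenthetical claim that $h_{m,n}(1)\geq 2^{c'n^2}$ is false for $m$ comparable to $An$ (one only has $h_{m,n}(1)\geq 2^{\Omega(n\log A)}$), but the inequality you actually use, namely $h_{m,n}(1)^{cn/(32(\ell(w)+1))}\geq 2^{\Omega(n^2\log A)}$, is correct because of the extra factor $n$ in the exponent, so this does not affect the large-$n$ case.

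The genuine gap is your treatment of bounded $n$. You correctly notice that $m\geq An$ does not force $n$ to be large, but then dismiss the case $n\leq\max\{8\ell(w),16\}$ as ``vacuous-or-trivial once $\epsilon$ is chosen small enough.'' That is not so: for each fixed small $n$ the hypothesis still allows all $m\geq An$, hence infinitely many characters $h_{m,n}$ with $h_{m,n}(1)=\binom{n+m-1}{m}\to\infty$ as $m\to\infty$, and the required bound $\binom{n+m-1}{m}^{2(1-\epsilon)}$ is a genuine power saving over the trivial bound $h_{m,n}(1)^2$ that no choice of $\epsilon>0$ makes automatic. Theorem \ref{thm:probabilistic result} is unavailable in this range (it needs $n>8\ell(w)$), so some fixed-rank input is needed; the paper supplies it by invoking \cite[Proposition 7.2]{AG}, which gives, for each fixed $n$, an exponential saving $\left|\E\rho(w(\sX_1,\ldots,\sX_r))\right|\leq\rho(1)^{1-\epsilon(w,n)}$ uniformly over all $\rho\in\Irr(\U_n)$, and then takes the minimum of $\epsilon(w,n)$ over the finitely many small $n$. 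Without this (or an equivalent bounded-rank argument, e.g.\ via Section \ref{sec:Geometry-and-singularities}), your proof does not cover all $n\geq2$.
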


\begin{proof}
For every fixed $n\geq2$ the proposition follows from \cite[Proposition 7.2]{AG},
so we may assume $n\gg1$. Let $\ell\geq2$ be the length of $w$.
Taking $A$ large enough, and using (\ref{eq:Striling on binomial coefficients}),
we may assume $2^{\frac{200\ell}{c}n}<A^{n-1}<h_{m,n}(1)$ for any
fixed $c>0$. By Theorem \ref{thm:probabilistic result}, taking $\epsilon'=h_{m,n}(1)^{-\frac{c}{n}}$,
and $\beta=\frac{1}{2}$, we have for $n\gg1$, 
\begin{equation}
\mathbb{P}\left(w(\sX_{1},\ldots,\sX_{r})\text{ fails to be }(\beta,\epsilon')\text{-spread}\right)<2^{3n^{2}}\epsilon'^{\frac{n^{2}}{32\ell}}<h_{m,n}(1)^{\frac{nc}{64\ell}-\frac{nc}{32\ell}}=h_{m,n}(1)^{-\frac{nc}{64\ell}}.\label{eq:probabilistic bounds}
\end{equation}
Moreover, by Proposition \ref{prop:character bound for symmetric},
there exist $A>0$ such that if $n\ge2$, $m\ge An$: 
\begin{equation}
\left|h_{m,n}(g)\right|<h_{m,n}(1)^{1-\frac{1}{8}},\label{eq:character bounds}
\end{equation}
for every $(\beta,\epsilon')$-spread element $g\in\U_{n}$ for suitable
$c>0$. Splitting $\U_{n}^{r}$ into two parts, corresponding to the
event in (\ref{eq:probabilistic bounds}) and its complement, and
using (\ref{eq:probabilistic bounds}) and (\ref{eq:character bounds}),
we get: 
\[
\mathbb{E}\left(\left|h_{m,n}(w(\sX_{1},\ldots,\sX_{r}))\right|^{2}\right)\leq h_{m,n}(1)^{-\frac{nc}{64\ell}}h_{m,n}(1)^{2}+h_{m,n}(1)^{2-\frac{1}{4}}.
\]
Taking $n\gg1$ implies the proposition. 
\end{proof}
To deal with the case when $m\leq An$ we need the following lemma: 
\begin{lem}
\label{lem:Technical lemma}Let $1<a<b$. Then for every $n\geq1$
such that $an,bn$ are integers, we have: 
\[
\binom{an}{n}<\binom{bn}{n}^{1-\frac{\log_{b}(\frac{b}{a})}{1+\log_{b}(e)}}.
\]
\end{lem}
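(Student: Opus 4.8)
The plan is to take logarithms and compare $\ln\binom{an}{n}$ with $\ln\binom{bn}{n}$ through Stirling's formula, the crucial input being a monotonicity property of the entropy function $H(c):=c\ln c-(c-1)\ln(c-1)$. First, an elementary manipulation of the exponent: since $\log_b(b/a)=1-\frac{\ln a}{\ln b}$ and $\log_b e=\frac1{\ln b}$, one has $1-\frac{\log_b(b/a)}{1+\log_b e}=\frac{1+\ln a}{1+\ln b}$, so writing $p:=1+\ln a$ and $q:=1+\ln b$ (hence $1<p<q$) the inequality to be proved is equivalent to $q\ln\binom{an}{n}<p\ln\binom{bn}{n}$. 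For $n=1$ this reads $q\ln a<p\ln b$, i.e.\ $\ln a<\ln b$, which holds; so I may assume $n\ge2$. Next I would apply the two–sided Stirling estimate $N!=\sqrt{2\pi N}(N/e)^N e^{\lambda_N}$ with $0<\lambda_N<\tfrac1{12N}$, exactly as in the proof of Proposition~\ref{prop:character bounds for exterior}. Since $a,b>1$, $n\ge1$ and $an,bn\in\N$, all of $an,(a-1)n,bn,(b-1)n,n$ are positive integers, so applying this to each factorial in $\binom{cn}{n}=\frac{(cn)!}{n!\,((c-1)n)!}$ gives, for $c\in\{a,b\}$,
\[
\ln\binom{cn}{n}=nH(c)+\tfrac12\ln\frac{c}{2\pi n(c-1)}+\mu_c,\qquad \mu_c:=\lambda_{cn}-\lambda_n-\lambda_{(c-1)n},\quad|\mu_c|<\tfrac16 .
\]

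The conceptual heart of the proof is the claim that $g(c):=H(c)/(1+\ln c)$ is strictly increasing on $(1,\infty)$. Indeed $H'(c)=\ln\frac{c}{c-1}$, and a short computation shows that the numerator of $g'(c)$ equals
\[
\ln c\cdot\ln\tfrac{c}{c-1}-\tfrac1c\ln(c-1),
\]
which is positive because $\ln\frac{c}{c-1}=-\ln(1-\frac1c)>\frac1c$ (as $-\ln(1-x)>x$ on $(0,1)$) while $\ln(c-1)<\ln c$. Consequently $qH(a)<pH(b)$; set $\eta:=pH(b)-qH(a)>0$, which is the exponential gap that will dominate everything else.

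Substituting the Stirling expansion we obtain
\[
p\ln\binom{bn}{n}-q\ln\binom{an}{n}=n\eta+\tfrac12\Bigl(p\ln\tfrac{b}{b-1}-q\ln\tfrac{a}{a-1}\Bigr)+\tfrac{q-p}{2}\ln(2\pi n)+(p\mu_b-q\mu_a),
\]
and it remains to check the right–hand side is positive. Here $\tfrac{q-p}{2}\ln(2\pi n)\ge0$ and $\tfrac12 p\ln\tfrac{b}{b-1}>0$; the possibly negative $-\tfrac q2\ln\tfrac{a}{a-1}$ is bounded below using $\tfrac{a}{a-1}\le an$ — the one place integrality of $an$ enters, via $(a-1)n\ge1$ — and in $p\mu_b-q\mu_a$ the two $\lambda_n$–contributions combine into the harmless quantity $(q-p)\lambda_n\ge0$, leaving only $O(1/n)$ errors from the remaining $\lambda$'s. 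I expect the main obstacle to be the bookkeeping in the two borderline regimes where $n\eta$ is not obviously large enough: when $a$ is close to $1$ the term $\ln\tfrac{a}{a-1}$ is large, but then $(a-1)n\ge1$ forces $n$ large, so the factor $\ln(2\pi n)$ in the positive term compensates; when $b-a$ is small $\eta$ is small, but then $(b-a)n\ge1$ again forces $n$ large, keeping $n\eta$ adequate. In both cases the $\tfrac{q-p}{2}\ln(2\pi n)$ term supplies the needed slack, and this careful (but entirely elementary) estimation finishes the argument. Alternatively, one could avoid the $n=1$ split and the error accounting by extending $c\mapsto\ln\binom{cn}{n}$ to $c>1$ via the Gamma function and proving directly that $c\mapsto\frac{\ln\Gamma(cn+1)-\ln\Gamma(n+1)-\ln\Gamma((c-1)n+1)}{1+\ln c}$ is strictly increasing, at the cost of replacing the Stirling error terms by digamma estimates.
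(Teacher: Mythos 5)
Your reduction of the exponent to $\frac{1+\ln a}{1+\ln b}$, the Stirling expansion $\ln\binom{cn}{n}=nH(c)+\tfrac12\ln\frac{c}{2\pi n(c-1)}+\mu_c$, and the monotonicity of $g(c)=H(c)/(1+\ln c)$ (your derivative identity and its positivity are correct) are all sound, and they do produce a positive main term $n\eta$ with $\eta=pq\,(g(b)-g(a))$. But the proof is not finished at the point where it actually has to be: positivity of $n\eta$ \emph{plus} the correction terms is only asserted, and the error accounting you sketch does not go through as stated. Concretely, take $n=2$, $b=a+\tfrac12$, and $a$ large. Then the quantity you must show positive is $\asymp\frac{1}{a}$ (main term $n\eta\asymp\frac{\ln a}{a^2}$, the $\frac{q-p}{2}\ln(2\pi n)$ term $\asymp\frac1a$, the bracket $\frac12\bigl(p\ln\frac{b}{b-1}-q\ln\frac{a}{a-1}\bigr)\asymp-\frac{\ln a}{a^2}$), while your uniform allowance $|\mu_c|<\tfrac16$ permits an error of size $\frac{p+q}{6}\asymp\ln a$, and even the per-term bounds $q\lambda_{an}\le\frac{1+\ln b}{12an}$, $p\lambda_{(b-1)n}\le\frac{1+\ln a}{12(b-1)n}$ (your ``$O(1/n)$ errors'', whose constants depend on $a,b$) are of order $\frac{\ln a}{a}$ and swamp everything. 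The statement survives only because of near-cancellations such as $q(\lambda_{(a-1)n}-\lambda_{an})>0$ and $|\lambda_{(b-1)n}-\lambda_{bn}|=O\bigl(\frac{1}{b^2n}\bigr)$, which require two-sided bounds $\frac{1}{12N+1}<\lambda_N<\frac1{12N}$ and a finer pairing than the single $(q-p)\lambda_n$ grouping you mention. Likewise, the claim that the $\frac{q-p}{2}\ln(2\pi n)$ term ``supplies the needed slack'' in your borderline regimes is not correct: for $a\to1$ with $b$ bounded, $\frac{q-p}{2}\ln(2\pi n)$ has the smaller coefficient $\tfrac12\ln\frac ba<\tfrac q2$ against the loss $\tfrac q2\ln\frac{a}{a-1}\approx\tfrac q2\ln n$, and what saves you there is $n\eta\approx nH(b)$. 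So the concluding ``careful but elementary estimation'' is a genuine missing piece, not routine bookkeeping.

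For comparison, the paper proves the lemma in two lines and avoids all asymptotics: since $\frac{bn-j}{an-j}\ge\frac ba$ for $0\le j\le n-1$, one gets $\bigl(\frac ba\bigr)^n<\binom{bn}{n}\binom{an}{n}^{-1}$, and then $\binom{bn}{n}\le(be)^n=b^{(1+\log_b e)n}$ from (\ref{eq:Striling on binomial coefficients}) converts the factor $\bigl(\frac ab\bigr)^n=b^{-n\log_b(b/a)}$ into the power $\binom{bn}{n}^{-\frac{\log_b(b/a)}{1+\log_b e}}$. If you want to keep your entropy-monotonicity route, either carry out the signed pairing of the Stirling errors described above, or (as you suggest at the end) work with $\Gamma$ and prove monotonicity in $c$ directly; as written, the argument has a gap exactly where the inequality can be tight relative to crude error bounds.
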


\begin{proof}
Note that 
\[
\bigl(\frac{b}{a}\bigr)^{n}<\frac{(bn)\cdots((b-1)n+1)}{(an)\cdots((a-1)n+1)}=\binom{bn}{n}\binom{an}{n}^{-1}.
\]
By (\ref{eq:Striling on binomial coefficients}), ${bn \choose n}\leq(be)^{n}=b^{(1+\log_{b}(e))n}$.
Hence, 
\[
\binom{an}{n}<\bigl(\frac{a}{b}\bigr)^{n}\binom{bn}{n}=b^{-\log_{b}(\frac{b}{a})n}\binom{bn}{n}=\left(b^{-(1+\log_{b}(e))n}\right)^{\frac{\log_{b}(\frac{b}{a})}{1+\log_{b}(e)}}\binom{bn}{n}\leq\binom{bn}{n}^{1-\frac{\log_{b}(\frac{b}{a})}{1+\log_{b}(e)}}.\qedhere
\]
\end{proof}
We now restate and prove Theorem \ref{thm:main thm symmetric powers}. 
\begin{thm}
\label{thm:proof for symmetric powers}For every $w\in F_{r}$, there
exists $\epsilon(w)>0$, such that for every $n\geq2$ and every $m\geq1$:
\[
\mathbb{E}\left(\left|h_{m,n}(w(\sX_{1},\ldots,\sX_{r}))\right|^{2}\right)\leq\binom{n+m-1}{m}^{2(1-\epsilon(w))}.
\]
\end{thm}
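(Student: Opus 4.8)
The plan is to split into the two ranges $m \geq An$ and $m < An$, where $A$ is the constant from Proposition~\ref{prop:proof for large symmetric powers}. For $m \geq An$, Proposition~\ref{prop:proof for large symmetric powers} already gives the result directly, with its own $\epsilon$ and $A$. So the real content is handling $1 \leq m < An$, and the idea is to reduce this to the already-established bound at a \emph{larger} symmetric power whose index is a fixed multiple of $n$.

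The key observation for the small-$m$ range is a monotonicity-type comparison: a bound on $\E(|h_{m',n}(w(\mathsf{X}_1,\ldots,\mathsf{X}_r))|^2)$ for one value $m'$ should imply a (weaker, but still power-saving) bound for any $m < m'$, because $h_{m,n}$ ``sits inside'' a tensor construction controlled by $h_{m',n}$, or more concretely because the relevant power of a trace dominates. More precisely, first I would use the positivity reduction as in the proof of Theorem~\ref{thm C:small Fourier coefficients}: replacing $w$ by $w * w^{-1}$, we may assume the Fourier coefficients of $w$ are non-negative, so that $\E(|h_{m,n}(w(\mathsf{X}_1,\ldots,\mathsf{X}_r))|^2) = a_{w*w^{-1},\U_n,h_{m,n}} \cdot h_{m,n}(1)$ and these expectations behave submultiplicatively under the operations we need. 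Since $h_{m,n} = \rho_{m^1,n} \hookrightarrow (\C^n)^{\otimes m}$, and $(\C^n)^{\otimes m} \hookrightarrow (\C^n)^{\otimes m'}$ (tensoring with a fixed factor and projecting), positivity of the Fourier coefficients gives
\[
\E\left(\left|h_{m,n}(w(\mathsf{X}_1,\ldots,\mathsf{X}_r))\right|^2\right) \leq \E\left(\left|\tr w(\mathsf{X}_1,\ldots,\mathsf{X}_r)\right|^{2m}\right) \leq \E\left(\left|\tr w(\mathsf{X}_1,\ldots,\mathsf{X}_r)\right|^{2\lceil An\rceil}\right)^{m/\lceil An\rceil}
\]
by Jensen (or H\"older) applied to the random variable $|\tr w|^2$, and the last quantity is bounded using the already-proved estimate for $h_{\lceil An\rceil,n}$ together with $h_{\lceil An\rceil,n} \hookrightarrow (\C^n)^{\otimes \lceil An\rceil}$. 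Actually the cleaner route is to bound $\E(|\tr w|^{2M})$ directly for $M = \lceil An \rceil$ using Proposition~\ref{prop:proof for large symmetric powers} (since $\tr w = \rho_{1,n}(w)$ and $|\tr w|^{2M}$ expands into a sum of characters of subrepresentations of $(\C^n)^{\otimes M} \otimes \overline{(\C^n)^{\otimes M}}$, dominated by $h_{M,n} \boxtimes \overline{h_{M,n}}$-type terms), then feed this into the H\"older step. This produces a bound of the shape $\E(|h_{m,n}(w)|^2) \leq \binom{n+M-1}{M}^{2(1-\epsilon)m/M}$, and now Lemma~\ref{lem:Technical lemma} (with $a = m/n + o(1)$ and $b = M/n \sim A$, or rather the binomial coefficients $\binom{n+m-1}{m}$ versus $\binom{n+M-1}{M}$) is exactly what converts the exponent $2(1-\epsilon)m/M$ into something of the form $2(1-\epsilon')$ times $\log\binom{n+m-1}{m}$, giving the desired power saving uniformly in $2 \leq m < An$.

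There is a finite-range wrinkle: the estimate $\E(|\tr w|^{2M}) \leq 2^\ell (M\ell)!^2$ of Lemma~\ref{lem:moments.trace} needs $(8M\ell)^{7/4} \leq n$, which fails when $n$ is small compared to $A\ell$. For the finitely many pairs $(n,m)$ with $n$ bounded (in terms of $\ell(w)$) one instead invokes \cite[Proposition 7.2]{AG}, which for each fixed $n$ already gives $\tau_{w,\SU_n}^{*t} \in L^\infty$ and hence a uniform-in-$m$ power saving on all Fourier coefficients $a_{w,\U_n,\rho}$, in particular for $\rho = h_{m,n}$; one then takes the minimum of the resulting $\epsilon$'s over this finite set of $n$ together with the $\epsilon$ coming from the large-$n$ argument. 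Combining the three regimes --- $m \geq An$ via Proposition~\ref{prop:proof for large symmetric powers}, $2 \leq m < An$ with $n$ large via the H\"older-plus-Lemma~\ref{lem:Technical lemma} argument, and $n$ bounded via \cite[Proposition 7.2]{AG} --- and setting $\epsilon(w)$ to be the minimum of the exponents produced, yields the theorem. The main obstacle I anticipate is getting the bookkeeping in Lemma~\ref{lem:Technical lemma} to interact correctly with the $m/M$ exponent so that the resulting $\epsilon'$ is bounded away from $0$ uniformly over the whole range $2 \leq m < An$ (in particular not degenerating as $m \to An$ or as $m$ stays bounded while $n \to \infty$); this requires choosing $A$ and tracking constants carefully, but it is a routine optimization rather than a conceptual difficulty.
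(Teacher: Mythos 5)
Your overall architecture (the range $m\ge An$ via Proposition \ref{prop:proof for large symmetric powers}, a reduction for smaller $m$, and finitely many small $n$ via \cite[Proposition 7.2]{AG}) matches the paper's, but the argument you propose for $1\le m<An$ has a genuine gap precisely in the regime $m\asymp n$. The first inequality $\E\left(\left|h_{m,n}(w(\sX_{1},\ldots,\sX_{r}))\right|^{2}\right)\le\E\left(\left|\tr w(\sX_{1},\ldots,\sX_{r})\right|^{2m}\right)$ is correct (it is (\ref{eq:reduction to powers of trace})), but it is already fatally lossy once $m$ is a constant multiple of $n$: since $\tau_{w,\U_{n}}$ gives mass $c>0$ to a fixed ball around the identity, on which $\left|\tr g\right|\ge n/2$, one has $\E\left(\left|\tr w\right|^{2m}\right)\ge c(n/2)^{2m}=n^{\Theta(n)}$, whereas the target $\binom{n+m-1}{m}^{2(1-\epsilon)}$ is only $C^{n}$. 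The later steps cannot repair this. In particular, the claim that $\left|\tr w\right|^{2M}$ for $M=\lceil An\rceil$ is ``dominated by $h_{M,n}\boxtimes\overline{h_{M,n}}$-type terms'' is false: by Schur--Weyl, $(\C^{n})^{\otimes M}$ contains $\rho_{\lambda,n}$ with multiplicity $\chi_{\lambda}(1)$, and for $M\asymp n$ the symmetric power accounts for only $\binom{n+M-1}{M}\le C^{n}$ of the total dimension $n^{M}=n^{\Theta(n)}$, so Proposition \ref{prop:proof for large symmetric powers} controls a vanishing fraction of the expansion. Likewise Lemma \ref{lem:moments.trace} requires $(8M\ell)^{7/4}\le n$, which for $M=\lceil An\rceil$ fails for \emph{every} large $n$, not merely for bounded $n$ as you suggest. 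So no bound on $\E\left(\left|\tr w\right|^{2\lceil An\rceil}\right)$ of the shape you need is available, and indeed none can hold.

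The paper closes exactly this window ($\delta(w)n\le m\le An$) by a mechanism that never passes through full tensor powers: the multiplicity-free decomposition $\left|h_{m,n}\right|^{2}=\sum_{c=0}^{m}\rho_{c}$ with $\rho_{c}=\rho_{(c,0,\ldots,0,-c),n}$ (via the Littlewood--Richardson rule), the input $\E(\rho_{c}(w))\le\rho_{c}(1)^{2/3}$ for $c\le\delta(w)n$ coming from \cite[Theorem 1.4]{AG}, the embedding $\Sym^{m}(\C^{n})\hookrightarrow\Sym^{m-\delta(w)n}(\C^{n})\otimes\Sym^{\delta(w)n}(\C^{n})$ combined with Lemma \ref{lem:Technical lemma} to extract a power saving for the convolved word $w^{*9}$, and H\"older's inequality to transfer that saving back to $w$ itself. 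Your instinct that Lemma \ref{lem:Technical lemma} is the device for converting a bound at one symmetric power into a bound at a smaller one is right, but it must be applied to the crude dimension bound $\binom{n+m-\delta(w)n-1}{n-1}^{2}$ on the tensor complement, not to moments of the trace; a trace-moment (Weingarten) argument can only ever cover $m\lesssim n^{4/7}$, leaving the range $n^{4/7}\ll m<An$ untreated in your proposal.
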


\begin{proof}
By Proposition \ref{prop:proof for large symmetric powers}, it is
enough to prove the claim for $m\leq An$ for any fixed $A>0$. The
case when $m\leq\delta(w)n$, for some $\delta(w)>0$ follows from
\cite[Theorem 1.4]{AG}. To treat the intermediate case $\delta(w)n\leq m\leq An$
we use a similar argument to the one given for the exterior powers
in \cite[Proof of Theorem 7.3, pp.\ 22--23]{AG}. Replacing $w$ by
$w*w^{-1}$, we may assume that all Fourier coefficients of $w$ are
non-negative real numbers. By \cite[Theorem 1.4]{AG}, we can choose
$0<\delta(w)<1$ so that for every $c$ less than the integer $\delta(w)n$,
\[
\mathbb{E}\left(\left|h_{c,n}(w(\sX_{1},\ldots,\sX_{r}))\right|^{2}\right)\leq\binom{n+c-1}{c}.
\]
We claim that there is an isomorphism of $U_{n}$-representations
\begin{equation}
\Sym^{m}(\C^{n})\otimes\left(\Sym^{m}(\C^{n})\right)^{\vee}\cong\bigoplus_{c=0}^{m}V_{c},\label{equation:decomp}
\end{equation}
where $V_{c}$ is the irreducible representation with character $\rho_{c}:=\rho_{(c,0,\ldots,0,-c),n}$.
This follows from the Littlewood-Richardson rule (Proposition~\ref{prop:Littlewood-Richardson-unitary}),
since all strict $(m,m,\ldots,m,0)$-expansions of $(m,0,\ldots,0)$
are obtained by adding, for some $c\in[0,m]$ and for all $i\in[1,m]$,
$c$ boxes labeled '$i-1$' followed by $m-c$ boxes labeled '$i$'
to the $i$-th row, except that we omit $0$'s and $n$'s. This gives
$\rho_{(m+c,m,\ldots,m,m-c),n}$, which is $\det^{m}\rho_{c}$. The
largest irreducible factor in the decomposition (\ref{equation:decomp})
has character $\rho_{m}$, which implies 
\begin{equation}
\rho_{m}(1)\geq\frac{1}{m+1}\binom{n+m-1}{m}^{2}\geq\binom{n+m-1}{m}^{3/2}\label{eq:rho_k ineq}
\end{equation}
for $n\ge4$. The decomposition (\ref{equation:decomp}) gives the
equality 
\begin{equation}
\left|h_{m,n}\right|^{2}=\sum_{c=0}^{m}\rho_{c}.\label{eq:h as sum of rho c}
\end{equation}
This implies $\rho_{m}=\left|h_{m,n}\right|^{2}-\left|h_{m-1,n}\right|^{2}$,
and from this and (\ref{eq:rho_k ineq}), it follows that 
\begin{equation}
\mathbb{E}(\rho_{c}(w(\sX_{1},\ldots,\sX_{r})))\leq\mathbb{E}\left(\left|h_{c,n}(w(\sX_{1},\ldots,\sX_{r}))\right|^{2}\right)\leq\binom{n+c-1}{c}\leq\rho_{c}(1)^{\frac{2}{3}}.\label{eq:bound on rho_c}
\end{equation}
In particular, by (\ref{eq:Fourier coefficients of convolution}),
we have, 
\begin{equation}
\mathbb{E}(\rho_{c}(w^{*9}(\sX_{1},...,\sX_{9r})))=\frac{\mathbb{E}(\rho_{c}(w(\sX_{1},...,\sX_{r})))^{9}}{\rho(1)^{8}}\leq\rho_{c}(1)^{-2}.\label{eq:bounds on convolutions}
\end{equation}
By (\ref{eq:bounds on convolutions}) we have, 
\begin{equation}
\mathbb{E}\left(\left|h_{\delta(w),n}(w^{*9}(\sX_{1},...,\sX_{9r}))\right|^{2}\right)=\sum_{c=0}^{\delta(w)n}\mathbb{E}\left(\rho_{c}\circ w^{*9}\right)\leq\sum_{c=0}^{\delta(w)n}\rho_{c}(1)^{-2}\leq\sum_{j=1}^{\infty}\frac{1}{j^{2}}<2.\label{eq:bound.on.small.wedge.power}
\end{equation}

For any $\delta(w)n\leq m\leq An$, we have an embedding $\Sym^{m}(\C^{n})\hookrightarrow\Sym^{m-\delta(w)n}(\C^{n})\otimes\Sym^{\delta(w)n}(\C^{n})$.
Moreover, since $\delta(w)(n-1)\leq m<An\leq2A(n-1)$, we get: 
\[
\frac{\log_{\frac{n+m-1}{n-1}}(\frac{n+m-1}{n+m-\delta(w)n-1})}{1+\log_{\frac{n+m-1}{n-1}}(e)}\geq\frac{\log_{2A+1}(\frac{A+1}{A+1-\delta(w)})}{1+\log_{1+\delta(w)}(e)}.
\]
Hence, by (\ref{eq:bound.on.small.wedge.power}) and by Lemma \ref{lem:Technical lemma},
taking $\widetilde{\delta}:=\frac{1}{2}\frac{\log_{2A+1}(\frac{A+1}{A+1-\delta(w)})}{1+\log_{1+\delta(w)}(e)}$,
we have for $n\gg_{A,w}1$: 
\begin{equation}
\begin{split}\mathbb{E}\left(\left|h_{m,n}(w^{*9}(\sX_{1},\ldots,\sX_{9r}))\right|^{2}\right) & \leq\mathbb{E}\left(\left|h_{\delta(w)n,n}(w^{*9}(\sX_{1},\ldots,\sX_{9r}))\right|^{2}\left|h_{m-\delta(w)n,n}(w^{*9}(\sX_{1},\ldots,\sX_{9r}))\right|^{2}\right)\\
 & \leq\mathbb{E}\left(\left|h_{\delta(w)n,n}(w^{*9}(\sX_{1},\ldots,\sX_{9r}))\right|^{2}\right)\cdot\binom{n+m-\delta(w)n-1}{n-1}^{2}\\
 & \leq2\binom{n+m-\delta(w)n-1}{n-1}^{2}\\
 & \leq2\binom{n+m-1}{n-1}^{2\biggl(1-\frac{\log_{\frac{n+m-1}{n-1}}\frac{n+m-1}{n+m-1-\delta(w)n}}{1+\log_{\frac{n+m-1}{n-1}}e}\biggr)}\\
 & \leq2h_{m,n}(1)^{2(1-2\widetilde{\delta})}\leq h_{m,n}(1)^{2(1-\widetilde{\delta})}.
\end{split}
\label{eq:delta tilde savings}
\end{equation}
For $n\gg_{A,w}1$, therefore, by (\ref{eq:h as sum of rho c}), H\"older's
inequality, (\ref{eq:bounds on convolutions}), (\ref{eq:bound on rho_c}),
(\ref{eq:delta tilde savings}), (\ref{eq:rho_k ineq}), the subadditivity
of $x^{1/(9-\widetilde{\delta})}$, and the assumptions that $m\ge\delta(w)n$
and $n$ is sufficiently large, 
\begin{align*}
\mathbb{E}\left(\left|h_{m,n}(w(\sX_{1},\ldots,\sX_{r}))\right|^{2}\right) & =\mathbb{E}\left(\sum_{c=0}^{m}\rho_{c}\circ w\right)\\
 & \leq(m+1)^{\frac{8}{9}}\left(\sum_{c=0}^{m}\mathbb{E}\left(\rho_{c}\circ w\right)^{9}\right)^{\frac{1}{9}}=(m+1)^{\frac{8}{9}}\left(\sum_{c=0}^{m}\mathbb{E}\left(\rho_{c}\circ w^{*9}\right)\rho_{c}(1)^{8}\right)^{\frac{1}{9}}\\
 & \leq(m+1)^{\frac{8}{9}}\left(\sum_{c=0}^{m}\mathbb{E}\left(\left|h_{c,n}(w^{*9})\right|^{2}\right)\rho_{c}(1)^{8}\right)^{\frac{1}{9}}\\
 & \leq(m+1)^{\frac{8}{9}}\left(\sum_{c=0}^{m}h_{c,n}(1)^{2(1-\widetilde{\delta})}\rho_{c}(1)^{8}\right)^{\frac{1}{9}}\\
 & \leq(m+1)\left(\sum_{c=0}^{m}\rho_{c}(1)^{9-\widetilde{\delta}}\right)^{\frac{1}{9}}\\
 & \leq(m+1)\left(\sum_{c=0}^{m}\rho_{c}(1)\right)^{1-\frac{\widetilde{\delta}}{9}}<h_{m,n}(1)^{2(1-\frac{\widetilde{\delta}}{10})}.
\end{align*}
All that remains is to deal with finitely many values of $n$. The
theorem therefore follows from \cite[Proposition 7.2]{AG}. 
\end{proof}
\bibliographystyle{alpha}
\bibliography{bibfile}

\end{document}